\definecolor{header_color}{RGB}{255,255,240}
\definecolor{mylinkcolor}{RGB}{0,0,255}
\definecolor{mycitecolor}{RGB}{169,169,169}
\definecolor{myurlcolor}{RGB}{255,20,147} \hypersetup{colorlinks=true,
\newcounter{counter}[subsection] 
\renewcommand{\thecounter}{\arabic{section}.\arabic{subsection}.\arabic{counter}}
\newtheorem{theorem}[counter]{Theorem}
\newtheorem{lemma}[counter]{Lemma}
\newtheorem{coro}[counter]{Corollary}
\newtheorem{prop}[counter]{Proposition}
 \theoremstyle{definition}
\newtheorem{defn}[counter]{Definition}
\newtheorem{example}[counter]{Example}
\theoremstyle{remark} 
\newtheorem*{notation}{Notation} \newtheorem{remark}[counter]{Remark}
\newtheorem{remarks}[counter]{Remarks}
\newcommand{\github}{\textsc{GitHub} }
\newcommand{\lmfdb}{\textsc{LMFDB} }
\newcommand{\sage}{\textsc{SageMath} }
\newcommand{\RR}{\mathbf{R}} 
\newcommand{\ZZ}{\mathbf{Z}} 
\newcommand{\QQ}{\mathbf{Q}} 
\newcommand{\CC}{\mathbf{C}} 
\newcommand{\FF}{\mathbf{F}} 
\newcommand{\Qbar}{\overline{\QQ}} 
\newcommand{\md}{\text{ mod }} 
\newcommand{\inv}{^{-1}} 
\newcommand{\unit}{^{\times}} 
\newcommand{\dd}{\,\mathrm{d}} 
\newcommand{\paren}[1]{\left( #1 \right)} 
\newcommand{\brk}[1]{\left\lbrace #1 \right\rbrace} 
\newcommand{\cdef}[1]{\textsf{#1}} 
\newcommand{\ol}{\overline}
\newcommand{\mbf}[1]{\mathbf{#1}} 
\newcommand{\msf}[1]{\mathsf{#1}} 
\newcommand{\mand}{\text{ and }} 
\newcommand{\mfor}{\text{ for }} 
\newcommand{\SL}{\msf{SL}} 
\newcommand{\UU}{\msf{U}} 
\newcommand{\USp}{\msf{USp}} 
\newcommand{\SF}{\msf{SF}} 
\let\originalleft\left \let\originalright\right
\renewcommand{\left}{\mathopen{}\mathclose\bgroup\originalleft}
  \renewcommand{\right}{\aftergroup\egroup\originalright}
\DeclareMathOperator{\ord}{ord} 
\DeclareMathOperator{\Aut}{Aut} 
\DeclareMathOperator{\Gal}{Gal} 
\DeclareMathOperator{\Tr}{Tr} 
\DeclareMathOperator{\rank}{rk} 
\DeclareMathOperator{\lcm}{lcm} 
\DeclareMathOperator{\diag}{diag} 
\DeclareMathOperator{\sheafHom}{\mathscr{H}\text{\kern -1pt
    {om}}} 
\newcommand{\smallmat}[4]{\bigl(\begin{smallmatrix}#1&#2\\#3&#4\end{smallmatrix}\bigr)}
\title{Frobenius distributions of low dimensional\\ abelian varieties over
  finite fields} \subjclass[2020]{11G10, 11G25, 11M38, 14K02, 14K15}
\keywords{Abelian varieties over finite fields, Frobenius traces,
  Equidistribution}
\author{Santiago Arango-Piñeros} \address{Department of Mathematics, Emory
  University, Atlanta, GA 30322, USA} \email{santiago.arango.pineros@gmail.com}
\urladdr{\url{https://sarangop1728.github.io/}}
\author{Deewang Bhamidipati} \address{Mathematics Department, University of
  California, Santa Cruz, CA 95064, USA} \email{dbhamidi@ucsc.edu}
\urladdr{\url{https://bdeewang.com/}}
\author{Soumya Sankar} \address{Mathematical Institute, Utrecht University,
  Hans Freudenthal building, Budapest 6, 3584 CD Utrecht, The Netherlands}
\email{s.sankar@uu.nl}
\urladdr{\url{https://sites.google.com/site/soumya3sankar/}}
\begin{document}
\begin{abstract}
  Given a $g$-dimensional abelian variety $A$ over a finite field $\FF_q$, the
  Weil conjectures imply that the normalized Frobenius eigenvalues generate a
  multiplicative group of rank at most $g$. The Pontryagin dual of this group
  is a compact abelian Lie group that controls the distribution of high powers
  of the Frobenius endomorphism. This group, which we call the Serre--Frobenius
  group, encodes the possible multiplicative relations between the Frobenius
  eigenvalues. In this article, we classify all possible Serre--Frobenius
  groups that occur for $g \le 3$. We also give a partial classification for
  simple ordinary abelian varieties of prime dimension $g\geq3$.
\end{abstract}

\maketitle

\setcounter{tocdepth}{1}

\sloppy

\section{Introduction}
\label{sec:intro}

Let $E$ be an elliptic curve over a finite field $\FF_q$ of characteristic
$p>0$. The zeros $\alpha_1, \overline{\alpha}_1$ of the characteristic
polynomial of Frobenius acting on the Tate module of $E$ are complex numbers of
absolute value $\sqrt{q}$. Consider $u_1 \colonequals \alpha_1/\sqrt{q}$ and
$\overline{u}_1$ the normalized zeros in the unit circle $\UU(1)$. The curve
$E$ is \emph{ordinary} if and only if $u_1$ is not a root of unity, and in this
case, the sequence $(u_1^r)_{r=1}^\infty$ is equidistributed in $\UU(1)$.
Further, the normalized Frobenius traces
$x_r \colonequals u_1^r + \overline{u}_1^{r}$ are equidistributed on the
interval $[-2,2]$ with respect to the pushforward of the probability Haar
measure on $\UU(1)$ via $u \mapsto u + \overline{u}$, namely
\begin{equation} \label{eq:lambda1} \lambda_1(x) \colonequals
  \frac{\mathrm{d}x}{\pi\sqrt{4-x^2}}, \end{equation} where $\dd x$ is the
restriction of the Lebesgue measure to $[-2,2]$ (see \cite[Proposition
2.2]{fite2015equidistribution}).

In contrast, if $E$ is supersingular, the sequence $(u_1^r)_{r=1}^{\infty}$
generates a finite cyclic subgroup of order $m$, $C_m \subset \UU(1)$. In this
case, the normalized Frobenius traces are equidistributed with respect to the
pushforward of the uniform measure on $C_m$.

This dichotomy branches out in an interesting way for abelian varieties of
higher dimension $g > 1$: potential non-trivial multiplicative relations
between the Frobenius eigenvalues
$\alpha_1, \overline{\alpha}_1, \dots, \alpha_g, \overline{\alpha}_g $ increase
the complexity of the problem of classifying the distribution of normalized
traces of high powers of Frobenius,
\begin{equation}
  x_r \colonequals (\alpha_1^r + \overline{\alpha}_1^r + \cdots + \alpha_g^r + \overline{\alpha}_g^r)/q^{r/2} \in [-2g, 2g], \mfor r \geq 1.
  \label{eq:normalized-traces}
\end{equation}
In analogy with the case of elliptic curves, we identify a compact abelian
subgroup of $\USp_{2g}(\CC)$ controlling the distribution of Sequence
(\ref{eq:normalized-traces}) via pushforward of the Haar measure. In this
article, we provide a complete classification of the conjugacy class of this
subgroup, which we call the \emph{Serre--Frobenius group}, for abelian
varieties of dimension up to 3. We do this by classifying the possible
multiplicative relations between the Frobenius eigenvalues. This classification
provides a description of all the possible distributions of Frobenius traces in
these cases (see \Cref{cor:equidist}). We also provide a partial classification
for simple ordinary abelian varieties of odd prime dimension.

\begin{defn}[Serre--Frobenius group]
  \label{def:SF-group}
  Let $A$ be an abelian variety of dimension $g$ over $\FF_q$. Let
  $\alpha_1, \alpha_2 \ldots, \alpha_g, \overline{\alpha}_1,
  \overline{\alpha}_2 \ldots \overline{\alpha}_g $ denote the eigenvalues of
  Frobenius. Let $u_i = \alpha_i/\sqrt{q}$ denote the normalized Frobenius
  eigenvalues. The \cdef{Serre--Frobenius group of $A$}, denoted by $\SF(A)$,
  is the closure of the subgroup of $\USp_{2g}(\CC)$ generated by the diagonal
  matrix $\mathrm{diag}(u_1, \dots, u_g, \ol{u}_1, \dots, \ol{u}_g)$. This
  group is well defined up to relabelling of the eigenvalues of Frobenius (see
  \Cref{rmk:relabelling}).
\end{defn}

The classification of Serre--Frobenius groups relies crucially on the relation
between the Serre--Frobenius group and the multiplicative subgroup of
$U_A \subset \CC\unit$ generated by the normalized eigenvalues
$u_1, \dots, u_g$. Indeed, the isomorphism class of the former is determined by
the Pontryagin dual of the latter (see \Cref{lemma:equiv-char-of-SFgroup}). The
rank of the group $U_A$ is called the \cdef{angle rank} of the abelian variety
and the order of the torsion subgroup is called the \cdef{angle torsion order}.
The relation between $\SF(A)$ and the group generated by the normalized
eigenvalues gives us the following structure theorem.

\begin{theorem}
  \label{mainthm:structure-thm}
  Let $A$ be an abelian variety defined over $\FF_q$. Then
  \[\SF(A) \cong \UU(1)^\delta \times C_m,\]
  where $\delta = \delta_A$ is the angle rank and $m = m_A$ is the angle
  torsion order. Furthermore, the connected component of the identity is
  $\SF(A)^\circ = \SF(A\times_{\FF_q} \FF_{q^m})$.
\end{theorem}

By definition, the Serre--Frobenius group carries the data of the embedding
into the \(\USp_{2g}(\CC)\), which in turn is captured by the relations among
the Frobenius eigenvalues. While in general these relations can be hard to pin
down (see for instance, \cite[Theorem 3.25]{dupuy2022angle}), in our cases, we
are able to write them down explicitly and use them to deduce the angle torsion
order. In particular, we classify the Serre--Frobenius groups of abelian
varieties of dimension $g \le 3$.

\begin{theorem}[Elliptic curves]
  \label{mainthm:elliptic-curves}
  Let $E$ be an elliptic curve defined over $\FF_q$. Then
  \begin{enumerate}
  \item $E$ is ordinary if and only if $\SF(E) = \UU(1)$.
  \item $E$ is supersingular if and only if
    $\SF(E) \in \brk{C_1, C_2, C_3, C_4, C_6, C_8, C_{12}}$.
  \end{enumerate}
  Here, $C_m$ is the subgroup of $\USp_2(\CC) = \SL_2(\CC)$ generated by
  $\smallmat{\zeta_m}{0}{0}{{\zeta}^{-1}_m}$ for $\zeta_m$ a primitive $m$-th
  root, and
  $\UU(1) = \brk{\smallmat{u}{0}{0}{\ol{u}} : u \in \CC^\times, |u| = 1}$.
  Moreover, each one of these groups is realized for some prime power $q$.
\end{theorem}

We note that the classification of supersingular Serre--Frobenius groups of
elliptic curves follows from Deuring \cite{deuring1941typen} and Waterhouse's
\cite{waterhouse1969abelian} classification of Frobenius traces (see also
\cite[Section 14.6]{oortAbvar} and \cite[Theorem 2.6.1]{serre2020rational}).

\begin{theorem}[Abelian surfaces]
  \label{mainthm:surfaces}
  Let $S$ be an abelian surface over $\FF_q$. Then $S$ has Serre--Frobenius
  group according to \Cref{fig:proof-2}. In particular, the possible options
  for the connected component of the identity $\SF(S)^{\circ}$, and the size of
  the cyclic component group $\SF(S)/\SF(S)^\circ \cong C_m$ are given below.
  Moreover, each one of these groups is realized for some prime power $q$.
\end{theorem}
\begin{table}[h]
  \setlength{\arrayrulewidth}{0.3mm} \setlength{\tabcolsep}{5pt}
  \renewcommand{\arraystretch}{1.2}
  \begin{tabular}{|c|c|}
    \hline
    \rowcolor{header_color} 
    $\SF(S)^\circ$ & $ m$ \\ \hline
    $1$            &          1,2,3,4,5,6,8,10,12,24                 \\ \hline
    $\UU(1)$       &       1,2,3,4,6,8,12            \\ \hline
    $\UU(1)^2$     &          1                \\ \hline
  \end{tabular}
\end{table}

\begin{theorem}[Abelian threefolds]
  \label{mainthm:threefolds}
  Let $X$ be an abelian threefold over $\FF_q$. Then, $X$ has Serre--Frobenius
  group according to \Cref{fig:proof-3}. In particular, the possible options
  for the connected component of the identity, $\SF(X)^{\circ}$, and the size
  of the cyclic component group $\SF(X)/\SF(X)^\circ \cong C_m$ are given
  below. Moreover, each one of these groups is realized for some prime power
  $q$.
\end{theorem}
\begin{table}[h]
  \setlength{\arrayrulewidth}{0.3mm} \setlength{\tabcolsep}{5pt}
  \renewcommand{\arraystretch}{1.2}
  \begin{tabular}{|c|c|}
    \hline
    \rowcolor{header_color} 
    $\SF(X)^\circ$ & $m$ \\ \hline
    $1$            &       1,2,3,4,5,6,7,8,9,10,12,14,15,18,20,24,28,30,36 \\ \hline
    $\UU(1)$       &   1,2,3,4,5,6,7,8,10,12,24                            \\ \hline
    $\UU(1)^2$     &       1,2,3,4,6,8,12,24                               \\ \hline
    $\UU(1)^3$     &       1                                               \\ \hline
  \end{tabular}
\end{table}

If $g$ is an odd prime, we have the following classification for simple
ordinary abelian varieties; in the following theorem, we say that an abelian
variety $A$ \cdef{splits} over a field extension $\FF_{q^m}$ if $A$ is
isogenous over $\FF_{q^m}$ to a product of proper abelian subvarieties.

\begin{theorem}[Prime dimension]
  \label{mainthm:odd-prime}
  Let $A$ be a simple ordinary abelian variety defined over $\FF_q$ of
  \cdef{prime} dimension $g > 2$. Then, exactly one of the following conditions
  holds.
  \begin{enumerate}
  \item $A$ is absolutely simple.
  \item $A$ splits over a degree $g$ extension of $\FF_q$ as a power of an
    elliptic curve, and $\SF(A) \cong \UU(1)\times C_g$.
  \item $A$ splits over a degree $2g + 1$ extension of $\FF_q$ as a power of an
    elliptic curve, and $\SF(A) \cong \UU(1)\times C_{2g+1}$. This case only
    occurs if \(2g+1\) is also a prime, i.e., if \(g\) is a Sophie Germain
    prime.
  \end{enumerate}
\end{theorem}

\subsection{Application to distributions of Frobenius traces}
\label{subsec:application-to-traces}

Our results can be applied to understanding the distribution of Frobenius
traces of an abelian variety over $\FF_q$ as we range over finite extensions of
the base field. Indeed, for each integer $r \ge 1$, we may rewrite Equation
(\ref{eq:normalized-traces}) as
\[
  x_r = u_1^r +\overline{u}_1^r + \cdots + u_g^r +\overline{u}_g^r \in [-2g,
  2g]
\]
denote the \cdef{normalized Frobenius trace} of the base change of an abelian
variety $A$ to $\FF_{q^r}$.

In \cite{ahmadi2010shparlinski}, the authors study Jacobians of smooth
projective genus $g$ curves with maximal angle rank\footnote{In their notation,
  this is the condition that the Frobenius angles are linearly independent
  modulo 1.} and show that the sequence $(x_r/2g)_{r = 1}^\infty$ is
equidistributed on $[-1,1]$ with respect to an explicit measure. The
Serre--Frobenius group enables us to remove the assumption of maximal angle
rank.

\begin{coro}
  \label{cor:equidist}
  Let $A$ be a $g$-dimensional abelian variety defined over $\FF_q$. Then, the
  sequence $(x_r)_{r=1}^\infty$ of normalized traces of Frobenius is
  equidistributed in $[-2g,2g]$ with respect to the pushforward of the Haar
  measure on $\SF(A) \subseteq \USp_{2g}(\CC)$ via the trace
  \begin{equation}
    \label{eq:pushforward}
    \SF(A) \subseteq \USp_{2g}(\CC) \to [-2g,2g], \quad M \mapsto \Tr(M).
  \end{equation}
\end{coro}

The classification of the Serre--Frobenius groups in our theorems can be used
to distinguish between the different Frobenius trace distributions occurring in
each dimension.

\begin{example}
  \label{example:S-iso-ExE}
  Let $S$ be a simple abelian surface over $\FF_q$ with Frobenius eigenvalues
  $R_S = \brk{\alpha_1,\alpha_2, \overline{\alpha}_1, \overline{\alpha}_2}$ and
  suppose that $S_{(2)} \colonequals S \times_{\FF_q} \FF_{q^2}$ is isogenous
  to $E^2$ for some ordinary elliptic curve $E/\FF_{q^2}$. In this case,
  $\brk{\alpha_1^2, \overline{\alpha}_1^2} = R_E = \brk{\alpha_2^2,
    \overline{\alpha}_2^2}$. Normalizing, and possibly after re-indexing, we
  see that either \(u_2 = u_1\) or \(u_2 = -u_1\). Since \(S\) is simple and
  ordinary, the characteristic polynomial of Frobenius of $S$ is irreducible
  (see \Cref{rmk:simple-ord-irreducible}), and we must have \(u_2 = -u_1\). The
  Serre--Frobenius groups of $S$ and $S_{(2)}$ are calculated as follows.
  \begin{align*}
    \SF(S) &= \overline{\brk{
             \begin{bmatrix}
               u_1^r & & &  \\
                     & (-u_1)^r & &  \\
                     & & \ol{u}_1^r &  \\
                     & & & (-\ol{u}_1)^r
             \end{bmatrix}
             : r \in \ZZ }} 
             = \brk{
             \begin{bmatrix}
               u & & &  \\
                 & -u & &  \\
                 & & \ol{u} &  \\
                 & & & -\ol{u}
             \end{bmatrix}
             : u \in \UU(1) }, \\
    \SF(S_{(2)}) &= \overline{\brk{
             \begin{bmatrix}
               u_1^{2r} & & &  \\
                        & (-u_1)^{2r} & &  \\
                        & & \ol{u}_1^{2r} &  \\
                        & & & (-\ol{u}_1)^{2r}
             \end{bmatrix}
                   : r \in \ZZ }} 
                   = \brk{
             \begin{bmatrix}
               u & & &  \\
                 & u & &  \\
                 & & \ol{u} &  \\
                 & & & \ol{u}
             \end{bmatrix}
                   : u \in \UU(1) }.
  \end{align*}
  The sequence of normalized traces, henceforth referred to as the
  $a_1$-sequence, is given by $x_r(S) = 0$ when $r$ is odd, and
  $x_r(S) = 2u_1^r + 2\overline{u}_1^r$ when $r$ is even. Extending the base
  field to $\FF_{q^2}$ yields the sequence of normalized traces
  $x_r(S_{(2)}) = x_{2r}(S) = 2x_r(E)$. The data of the embedding
  $\SF(S) \subseteq \USp_{4}(\CC)$ precisely captures the (non-trivial)
  multiplicative relations between the Frobenius eigenvalues.
  
  The sequence of normalized traces $x_r(S)$ is equidistributed with respect to
  the pushforward of the Haar measure under the trace map
  $\SF(S) \subseteq \USp_4(\CC) \to [-4,4]$ given by
  $\diag(z_1, z_2, \ol{z}_1, \ol{z}_2) \mapsto z_1 + z_2 + \overline{z}_1 +
  \overline{z}_2$, and similarly for \(S_{(2)}\). These can be computed
  explicitly for $S$ and $S_{(2)}$ as
  \begin{equation}
    \tfrac{1}{2}\delta_0 + \frac{\dd x}{2\pi\sqrt{16-x^2}} \quad \mand \quad  \frac{\dd x}{\pi\sqrt{16-x^2}},
  \end{equation}
  where $\dd x$ is the restriction of the Haar measure to $[-4,4]$, and
  $\delta_0$ is the Dirac measure supported at $0$.

  For instance, choose the surface $S$ to be in the isogeny class with \lmfdb
  \cite{lmfdb} label\footnote{Recall the
    \href{https://www.lmfdb.org/Variety/Abelian/Fq/Labels}{labelling
      convention} for isogeny classes of abelian varieties over finite fields
    in the \lmfdb is \texttt{g.q.iso} where \texttt{g} is the dimension,
    \texttt{q} is the cardinality of the base field, and \texttt{iso} specifies
    the isogeny class by writing the coefficients of the Frobenius polynomial
    in base 26.}
  \href{https://www.lmfdb.org/Variety/Abelian/Fq/2/5/a_ab}{\texttt{2.5.a\_ab}}
  and Weil polynomial $P(T) = T^4 - T^2 + 25$. This isogeny class is ordinary
  and simple, but not geometrically simple. Indeed, $S_{(2)}$ is in the isogeny
  class $\texttt{1.25.ab}^2 =$
  \href{https://www.lmfdb.org/Variety/Abelian/Fq/2/25/ac_bz}{\texttt{2.25.ac\_bz}}
  corresponding to the square of an ordinary elliptic curve. The corresponding
  $a_1$-histograms describing the frequency of the sequence
  $(x_r)_{r=1}^\infty$ are depicted in \Cref{fig:example}. Each graph
  represents a histogram of $16^6 = 16777216$ samples placed into $4^6 = 4096$
  buckets partitioning the interval $[-2g,2g]$. The vertical axis has been
  suitably scaled, with the height of the uniform distribution, $1/4g$,
  indicated by a gray line.

\begin{figure}[H]
  \centering
  \begin{subfigure}{0.49\textwidth}
    \centering \includegraphics[scale=0.35]{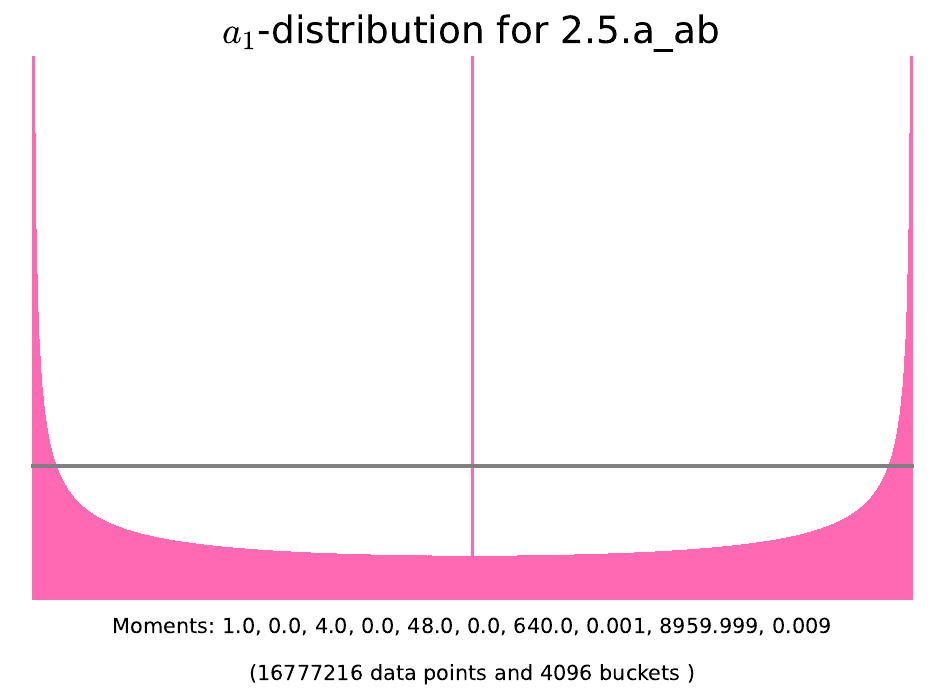}
  \end{subfigure}
  \begin{subfigure}{0.49\textwidth}
    \centering \includegraphics[scale=0.35]{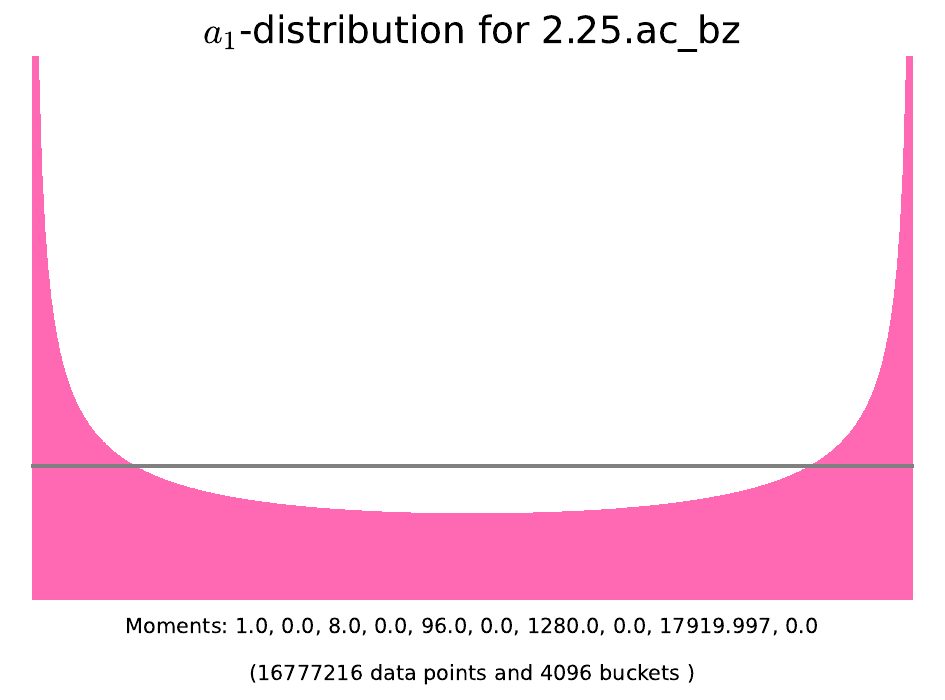}
  \end{subfigure}
  \caption{$a_1$-histograms for
    \href{https://www.lmfdb.org/Variety/Abelian/Fq/2/5/a_ab}{\texttt{2.5.a\_ab}}
    and
    \href{https://www.lmfdb.org/Variety/Abelian/Fq/2/25/ac_bz}{\texttt{2.25.ac\_bz}}.}
  \label{fig:example}
\end{figure}
\end{example}

\subsection{Relation to other work} The reason for adopting the name
``Serre--Frobenius group'' is that the Lie group $\SF(A)$ is closely
related to Serre's \cdef{Frobenius torus} \cite{serre_oeuvres}, as
explained in \Cref{rmk:frob-torus}.

\subsubsection{Angle rank}
\label{subsubsec:angle-rank-survey}

In this article, we study multiplicative relations between Frobenius
eigenvalues, a subject studied extensively by Zarhin \cite{
  Zarhin1991K3, Zarhin1990K3, lenstra1993, Zarhin1994-tate-nonsimple,
  Zarhin2015EigenFrob}. Our classification relies heavily on being
able to understand multiplicative relations in low dimension, and we
use results of Zarhin in completing parts of it. The number of
multiplicative relations is quantified by the angle rank, an invariant
studied in \cite{dupuy2022angle}, \cite{dupuy2022LMFDB} for absolutely
simple abelian varieties by elucidating its interactions with the
Galois group and Newton polygon of the Frobenius polynomial. We study
the angle rank as a stepping stone to classifying the full
Serre--Frobenius group. While our perspective differs from that in
\cite{dupuy2022angle}, the same theme is continued here: the
Serre--Frobenius groups depend heavily on the Galois group of the
Frobenius polynomial. It is worth noting that here that the results
about the angle rank in the non-absolutely simple case cannot be
pieced together by knowing the results in the absolutely simple cases
(for instance, see Zywina's exposition of Shioda's example
\cite[Remark 1.16]{zywina-monodromy}).

\subsubsection{Sato--Tate groups}
\label{subsec:Sato-Tate-survey}

The Sato--Tate group of an abelian variety defined over a number field
controls the distribution of the Frobenius of the reduction modulo
prime ideals, and it is defined via its $\ell$-adic Galois
representation (see \cite[Section 3.2]{sutherland2013sato}). The
Serre--Frobenius group can also be defined via $\ell$-adic
representations in an analogous way: it is conjugate to a maximal
compact subgroup of the image of Galois representation
$\rho_{A,\ell} \colon \Gal(\overline{\FF}_q/\FF_q) \to \Aut(V_\ell
A)\otimes \CC$, where $V_{\ell}A$ is the $\ell$-adic Tate vector
space. Therefore it is natural to expect that the Sato--Tate and the
Serre--Frobenius group are related to each other. The following
observations support this claim:
\begin{itemize}[leftmargin=*]
\item Assuming standard conjectures, the connected component of the
  identity of the Sato--Tate group can be recovered from knowing the
  Frobenius polynomial at two suitably chosen primes (\cite[Theorem
  1.6]{zywina-monodromy}).
\item Several abelian Sato--Tate groups (see
  \cite{fite_kedlaya_rotger_sutherland_2012, fite2023satotate}) appear
  as Serre--Frobenius groups of abelian varieties over finite
  fields. The ones with maximal angle rank are as below.
  \begin{itemize}
  \item $\UU(1)$ is the Sato--Tate group of an elliptic curve with
    complex multiplication over any number field that contains the CM
    field (see
    \href{https://www.lmfdb.org/SatoTateGroup/1.2.B.1.1a}{\texttt{1.2.B.1.1a}}). It
    is also the Serre--Frobenius group of any ordinary elliptic curve
    (see \Cref{fig:ordinary-ec}), and the $a_1$-moments coincide.
  \item $\UU(1)^2$ is the Sato--Tate group of weight 1 and degree 4
    (see
    \href{https://www.lmfdb.org/SatoTateGroup/1.4.D.1.1a}{\texttt{1.4.D.1.1a}}). It
    is also the Serre--Frobenius group of an abelian surface with
    maximal angle rank (see \Cref{fig:simple-ord-S}), and the
    $a_1$-moments coincide.
  \item $\UU(1)^3$ is the Sato--Tate group of weight 1 and degree 6
    (see
    \href{https://www.lmfdb.org/SatoTateGroup/1.6.H.1.1a}{\texttt{1.6.H.1.1a}}). It
    is also the Serre-Frobenius group of abelian threefolds with
    maximal angle rank (see \Cref{fig:hist-simple-ord-X}), and the
    $a_1$-moments coincide.
  \end{itemize}
\end{itemize}

\subsection{Outline}
In \Cref{sec:frob-mult-groups}, we give some background on abelian
varieties over finite fields, expand on the definition of the
Serre--Frobenius group, and describe how it controls the distribution
of traces of high powers of Frobenius.  In \Cref{sec:generalities}, we
prove some preliminary results on the geometric isogeny types of
abelian varieties of dimension $g \le 3$ and $g$ odd prime.  We also
recall some results about Weil polynomials of supersingular abelian
varieties, and Zarhin's notion of neatness. In
\Cref{sec:simple-ord-odd-prime}, we discuss the classification in the
case of simple ordinary abelian varieties of odd prime dimension. In
\Cref{sec:elliptic-curves}, \Cref{sec:surfaces}, and
\Cref{sec:threefolds}, we give a complete classification of the
Serre--Frobenius group for dimensions 1, 2, and 3 respectively.  A
list of tables containing different pieces of the classification
follows this section.

\subsection{Notation} \label{subsec:notation} Throughout this paper,
$A$ will denote a $g$-dimensional abelian variety over a finite field
$\FF_q$ of characteristic $p$. The polynomial
$P_A(T) = \sum_{i=0}^{2g} a_iT^{2g - i}$ will denote the
characteristic polynomial of the $q$-Frobenius endomorphism $\pi_A$
acting on the Tate module of $A$, and $h_A(T)$ its minimal
polynomial. The set of roots of $P_A(T)$ is denoted by $R_A$. We
usually write
$\alpha_1,\overline{\alpha}_1 \dots, \alpha_g, \overline{\alpha}_g \in
R_A$ for the Frobenius eigenvalues, where
$\overline{\alpha}_i = q/\alpha_i$. In the case that $P_A(T)$ is a
power of $h_A(T)$, we will denote this power by $e_A$ (See
\ref{subsec:background-av}). The subscript $(\cdot)_{(r)}$ will denote
the base change of any object or map to $\FF_{q^r}$. The group $U_A$
will denote the multiplicative group generated by the normalized
eigenvalues of Frobenius, $\delta_A$ its rank and $m_A$ the order of
its torsion subgroup. The group $\Gamma_A$ will denote the
multiplicative group generated by
$\{ \alpha_1, \alpha_2 \ldots \alpha_{g}, q\}$. In
\Cref{sec:surfaces}, $S$ will be used to denote an abelian surface,
while in \Cref{sec:threefolds}, $X$ will be used to denote a
threefold.

\newpage
\tableofcontents
\listoftables \listoffigures

\newpage
\section{Frobenius multiplicative groups}
\label{sec:frob-mult-groups}

In this section we introduce the Serre--Frobenius group of $A$ and
explain how it is related to Serre's theory of Frobenius tori
\cite{serre_oeuvres}. We do this from the perspective of the theory of
algebraic groups of multiplicative type, as in \cite[Chapter
12]{milne2017algebraic}. We start by recalling some facts about
abelian varieties over finite fields.

\subsection{Background on Abelian varieties over finite fields}
\label{subsec:background-av}

Fix $A$ a $g$ dimensional abelian variety over $\FF_q$. A
\cdef{$q$-Weil number} is an algebraic integer $\alpha$ such that
$|\phi(\alpha)| = \sqrt{q}$ for every embedding
$\phi\colon \QQ(\alpha) \to \CC$. Let $P_A(T)$ denote the
characteristic polynomial of the Frobenius endomorphism acting on the
$\ell$-adic Tate module of $A$. The polynomial $P_A(T)$ is monic of
degree $2g$, and Weil \cite{weil_numberofsols} showed that its roots
are $q$-Weil numbers; we denote the set of roots of $P_A(T)$ by
$R_A \colonequals \{ \alpha_1, \alpha_2 \ldots, \alpha_g,
\alpha_{g+1}, \dots, \alpha_{2g}\}$ with
$\ol{\alpha}_j \colonequals \alpha_{g+j} = q/\alpha_j$ for
$j \in \brk{1, \dots, g}$. The seminal work of Honda
\cite{honda1968isogeny} and Tate \cite{tate1966endomorphisms,
  tate1971classes} classifies the isogeny decomposition type of $A$ in
terms of the factorization of $P_A(T)$. In particular, if $A$ is
simple, we have that $P_A(T) = h_A(T)^{e_A}$ where $h_A(T)$ is the
\cdef{minimal polynomial} of the Frobenius endomorphism and $e_A$ is
the degree, i.e., the square root of the dimension of the central
simple algebra
$\mathrm{End}^0(A) \colonequals \mathrm{End}(A)\otimes\QQ$ over its
center. The Honda--Tate theorem gives a bijective correspondence
between isogeny classes of simple abelian varieties over $\FF_q$ and
conjugacy classes of $q$-Weil numbers, sending the isogeny class
determined by $A$ to the set of roots $R_A$. Further, the isogeny
decomposition $A \sim A_1 \times A_2 \ldots \times A_k$ can be read
from the factorization $P_A(T) = \prod_{i=1}^{k} P_{A_i}(T)$.

Writing $P_A(T) = \sum_{i=0}^{2g} a_i T^{2g-i}$, the \cdef{$q$-Newton
  polygon} of $A$ is the lower convex hull of the set of points
$\{(i, \nu(a_i)) \in \RR^2 : a_i \neq 0\}$ where $\nu$ is the $p$-adic
valuation normalized so that $\nu(q)=1$. The Newton polygon is isogeny
invariant. Define the \cdef{$p$-rank} of $A$ as the number of slope
$0$ segments of the Newton polygon. An abelian variety is called
\cdef{ordinary} if it has maximal $p$-rank, i.e., its $p$-rank is
equal to $g$. It is called \cdef{almost ordinary} if it has \(p\)-rank
\(g-1\); equivalently, the set of slopes of its Newton polygon is
$\brk{0, 1/2, 1}$ and the slope $1/2$ has length $2$.  An abelian
variety is called \cdef{supersingular} if all the slopes of the Newton
polygon are equal to $1/2$. The field
$L = L_A \colonequals \QQ(\alpha_1, \dots, \alpha_g)$ is the splitting
field of the Frobenius polynomial. By definition, the Galois group
$\Gal(L/\QQ)$ acts on the roots $R_A$ by permuting them.

\begin{remark}
  \label{rmk:simple-ord-irreducible}
  When an abelian variety \(A\) over \(\FF_q\) is simple and ordinary,
  then \(P_A(T)\) is irreducible and its endomorphism algebra is a
  field (\cite[Theorem 7.2]{waterhouse1969abelian}).
\end{remark}

\begin{notation}
  Whenever $A$ is fixed or clear from context, we will omit the
  subscript corresponding to it from the notation described above. In
  particular, we will use $P(T), h(T)$ and $e$ instead of
  $P_A(T), h_A(T)$ and $e_A$.
\end{notation}

\subsection{Angle groups}
\label{sec:angle_groups} Denote by $\Gamma \colonequals \Gamma_A$ the
multiplicative subgroup of $\CC\unit$ generated by the set of
Frobenius eigenvalues $R_A$, and let
$\Gamma_{(r)}\colonequals \Gamma_{A_{(r)}}$ for every $r \geq
1$. Since $\alpha \mapsto q/\alpha$ is a permutation of $R_A$, the set
$\brk{\alpha_1, \dots, \alpha_g, q}$ is a set of generators for
$\Gamma$; that is, every $\gamma \in \Gamma$ can be written as
\begin{equation}
  \label{eq:mult-relations}
  \gamma = q^k \prod_{j=1}^g \alpha_j^{k_j}
\end{equation}
for some $(k, k_1, \dots, k_{g}) \in \ZZ^{g+1}$.

Since $\Gamma$ is a subgroup of $\Qbar\unit$, it is naturally a
$\Gal(\overline{\QQ}/\QQ)$-module. However, this perspective is not
necessary for our applications. This group is denoted as $\Phi_A$ in
\cite{zywina-monodromy}.
  
\begin{defn}
  \label{def:angle-group}
  We define the \cdef{angle group} of $A$ to be $U \colonequals U_A$,
  the multiplicative subgroup of $\UU(1)$ generated by the unitarized
  eigenvalues
  $\{ u_j \colonequals \alpha_j/\sqrt{q}: j = 1, \dots, g$\}. When $A$
  is fixed, for every $r \geq 1$ we abbreviate
  $U_{(r)} \colonequals U_{A_{(r)}}$.
\end{defn}

\begin{defn}
  \label{defn:angle-rank}
  The \cdef{angle rank} of an abelian variety $A/\FF_q$ is the rank of
  the finitely generated abelian group $U_A$. It is denoted by
  $\delta_A \colonequals \rank U_A$. The \cdef{angle torsion order}
  $m_A$ is the order of the torsion subgroup of $U_A$, so that
  $U_A \cong \ZZ^{\delta_A}\oplus \ZZ/m_A\ZZ$.
\end{defn}
The angle rank $\delta$ is by definition an integer between $0$ and
$g$. When $\delta = g$, there are no multiplicative relations among
the normalized eigenvalues. In other words, there are no additional
relations among the generators of $\Gamma_A$ apart from the ones
imposed by the Weil conjectures. If $A$ is absolutely simple, the
maximal angle rank condition also implies that the Tate conjecture
holds for all powers of $A$ (see Remark 1.3 in
\cite{dupuy2022angle}). On the other extreme, $\delta = 0$ if and only
if $A$ is supersingular (See Example 5.1 \cite{dupuy2022LMFDB}).

\begin{remark}
  \label{rmk:delta-invariant-base-extension}
  The angle rank is invariant under base extension:
  $\delta(A) = \delta(A_{(r)})$ for every $r \geq 1$. Indeed, any
  multiplicative relation between $\brk{u_1^r, \dots, u_g^r}$ is a
  multiplicative relation between $\brk{u_1, \dots, u_g}$. We have
  that
  $U_A/\mathrm{Tors}(U_A) \cong
  U_{A_{(r)}}/\mathrm{Tors}(U_{A_{(r)}})$ for every positive integer
  $r$. In particular, $U_A/\mathrm{Tors}(U_A) \cong U_{A_{(m)}}$ where
  $m = m_A$ is the angle torsion order of $A$.
\end{remark}

\begin{example}[Extension and restriction of scalars]
  Let $A/\FF_q$ be an abelian variety with Frobenius polynomial
  $P_A(T) = \prod (T-\alpha_i) \in \CC[T]$ and angle group
  $U_A = \langle u_1, \dots, u_g\rangle$. Then, the extension of
  scalars $A_{(r)}$ has Frobenius polynomial
  $P_{(r)}(T) = \prod(T-\alpha_i^r)$ and angle group
  $U_{A_{(r)}} = \langle u_1^r, \dots, u_g^r \rangle \subset U_A$. On
  the other hand, if $B/\FF_{q^r}$ is an abelian variety for some
  $r \geq 1$, and $A/\FF_q$ is the Weil restriction of $B$ to $\FF_q$,
  then $P_A(T) = P_B(T^r)$ and
  $U_A = \langle U_B, \zeta_r\rangle \supset U_B$. See
  \cite{claus_weil-restriction}.
\end{example}

\subsection{The Serre--Frobenius group}
\label{sec:q-Serre--Frobenius-group}
For every locally compact abelian group $G$, denote by $\widehat{G}$
its \cdef{Pontryagin dual}; this is the topological group of
continuous group homomorphisms $G \to \UU(1)$. It is well known that
$G \mapsto \widehat{G}$ gives an anti-equivalence of categories from
the category of locally compact abelian groups to itself. Moreover,
this equivalence preserves exact sequences, and every such $G$ is
canonically isomorphic to its double dual via the evaluation
isomorphism. See \cite{pontrjagin} for the original reference and
\cite{morris} for a gentle introduction.

Recall that we defined the Serre--Frobenius group of $A$ as the
topological group generated by the matrix
$\diag(u_1,\dots,u_g,\ol{u}_1,\dots,\ol{u}_g)$ inside of
$\USp_{2g}(\CC)$ (see \Cref{def:SF-group}).

\begin{remark}
  \label{rmk:relabelling}
  The group $\UU(1)^g$ embeds into $\USp_{2g}(\CC)$ as a maximal torus
  via $\mathbf{z} \mapsto \mathrm{diag}(\mathbf{z}, \ol{\mathbf{z}})$,
  so a different choice of indexing of the Frobenius eigenvalues
  yields a conjugate subgroup $g\inv\SF(A)g$, where $g$ is an element
  of the Weyl group $N_{\USp_{2g}(\CC)}(\UU(1)^g)/\UU(1)^g$. This Weyl
  group is isomorphic to the group $S_g^\pm$ of signed permutation
  matrices; the generic Galois group of a complex multiplication
  polynomial of degree $2g$.
\end{remark}

\begin{notation}
  In light of \Cref{rmk:relabelling}, we identify $\UU(1)^g$ with the
  group
  \begin{equation*}
    \begin{bmatrix}
      \diag(z_1,\dots,z_g)  &          \\
      & \diag(\ol{z}_1,\dots,\ol{z}_g)
    \end{bmatrix}
    \subset \USp_{2g}(\CC),
  \end{equation*}
  and the vector $\mathbf{u} \colonequals (u_1,\cdots,u_g)$ with the
  matrix $\diag(\mathbf{u},\overline{\mathbf{u}})$. The embedding of
  $\SF(A)$ into $\USp_{2g}(\CC)$ is completely determined by the
  topological generator of $(u_1, \dots, u_g)$, a vector of normalized
  Frobenius eigenvalues. We will represent the embedding of $\SF(A)$
  into $\USp_{2g}(\CC)$ (up to conjugation) by giving the topological
  generator of the Serre--Frobenius group.
\end{notation}

The following lemma will help us identify the isomorphism class of
$\SF(A)$ as a compact abelian group.

\begin{lemma}
  \label{lemma:equiv-char-of-SFgroup}
  The Serre--Frobenius group of an abelian variety $A$ has character
  group $U_A$. In particular, $\SF(A) \cong \widehat{U}_A$ canonically
  via the evaluation isomorphism.
\end{lemma}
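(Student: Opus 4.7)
The plan is to present $U_A$ as a quotient of a free abelian group and then apply Pontryagin duality. Consider the surjection $\pi \colon \ZZ^g \twoheadrightarrow U_A$ given by $(k_1, \dots, k_g) \mapsto \prod_{j=1}^g u_j^{k_j}$, and let $K \colonequals \ker \pi$ be the lattice of multiplicative relations among the normalized Frobenius eigenvalues. This yields a short exact sequence $0 \to K \to \ZZ^g \xrightarrow{\pi} U_A \to 0$. Applying Pontryagin duality (which is exact) and using the canonical identification $\widehat{\ZZ^g} = \UU(1)^g$ (under which a vector $(z_1, \dots, z_g)$ corresponds to the character $(k_1, \dots, k_g) \mapsto \prod_j z_j^{k_j}$), I would identify $\widehat{U_A}$ with the annihilator
\[
K^\perp \colonequals \left\{ (z_1, \dots, z_g) \in \UU(1)^g : \prod_{j=1}^g z_j^{k_j} = 1 \text{ for all } (k_1, \dots, k_g) \in K \right\}.
\]
The problem thus reduces to showing that $\SF(A) = K^\perp$ as subgroups of $\UU(1)^g$.

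For the containment $\SF(A) \subseteq K^\perp$, observe that by the defining property of $K$, the vector $\mathbf{u}$ lies in $K^\perp$; since $K^\perp$ is closed in $\UU(1)^g$ (being an intersection of kernels of continuous characters), the topological closure $\SF(A) = \overline{\langle \mathbf{u}\rangle}$ is contained in $K^\perp$. For the reverse containment, I would appeal to the order-reversing bijection between closed subgroups of $\UU(1)^g$ and subgroups of its character group $\ZZ^g$ via annihilators. The annihilator $\SF(A)^\perp \subseteq \ZZ^g$ consists of those $(k_1, \dots, k_g)$ whose associated character vanishes on $\SF(A)$; because $\SF(A)$ is the topological closure of $\langle \mathbf{u}\rangle$ and the characters of $\UU(1)^g$ are continuous, this vanishing is equivalent to $\prod_j u_j^{k_j} = 1$, i.e., to $(k_1, \dots, k_g) \in K$. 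Hence $\SF(A)^\perp = K$, and the Pontryagin duality theorem gives $\SF(A) = K^\perp$.

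Under this identification, a point $(z_1, \dots, z_g) \in \SF(A)$ acts on $\pi(k_1, \dots, k_g) = \prod_j u_j^{k_j} \in U_A$ by $\prod_j z_j^{k_j}$, which is precisely the evaluation pairing, yielding the canonical isomorphism $\SF(A) \cong \widehat{U_A}$ and, by double duality, $U_A \cong \widehat{\SF(A)}$. The argument is essentially formal once duality is invoked; the only subtle point is the equality $\SF(A)^\perp = K$ (as opposed to a mere inclusion), which rests on the continuity of characters together with the definition of $\SF(A)$ as a closure. I expect no further obstacle beyond carefully invoking the standard machinery of duality for locally compact abelian groups.
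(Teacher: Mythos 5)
Your proof is correct, and it is a legitimately different organization of the same underlying Pontryagin duality argument. The paper works on the side of $\widehat{\SF(A)}$: it writes down a candidate injection $U_A \to \widehat{\SF(A)}$, $\gamma \mapsto \phi_\gamma$, and proves surjectivity by observing that the inclusion $\SF(A)\hookrightarrow \UU(1)^g$ dualizes to a surjection $\ZZ^g \twoheadrightarrow \widehat{\SF(A)}$, so every character of $\SF(A)$ restricts from a monomial character of $\UU(1)^g$ and is determined by its value on $\mathbf{u}$, which lies in $U_A$. You instead work on the side of $\widehat{U_A}$: you present $U_A$ as $\ZZ^g/K$ where $K$ is the lattice of multiplicative relations, dualize to identify $\widehat{U_A}$ with the annihilator $K^\perp \subseteq \UU(1)^g$, and then prove $\SF(A) = K^\perp$ by computing $\SF(A)^\perp = K$ and invoking the double annihilator theorem for closed subgroups. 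Both routes use exactly the same ingredients (exactness of duality, $\widehat{\ZZ^g} \cong \UU(1)^g$, continuity of characters to pass from $\langle\mathbf{u}\rangle$ to its closure), and each is a mirror image of the other. What your version buys is that it makes the relation lattice $K$ explicit and exhibits $\SF(A)$ as the closed subgroup of $\UU(1)^g$ cut out by those relations---a pleasant structural description that the paper leaves implicit. What the paper's version buys is a slightly shorter path, avoiding the double annihilator theorem; on the other hand, the paper's claim that $\phi_\gamma$ is well-defined (that the assignment $\mathbf{u}\mapsto\gamma$ extends to a character of $\SF(A)$ independently of the chosen expression $\gamma = \prod u_j^{m_j}$) is left to the reader, whereas your argument handles that point cleanly since $K^\perp$ is exactly the set of $(z_1,\dots,z_g)$ for which the assignment $\prod u_j^{k_j}\mapsto \prod z_j^{k_j}$ is well-defined. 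No gaps.
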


\begin{proof}
  We have an injection $U_A \to \widehat{\SF(A)}$ given by mapping
  $\gamma$ to the character $\phi_\gamma$ that maps the topological
  generator $\mathbf{u}$ to $\gamma$. To see that this map is
  surjective, observe that by the exactness of Pontryagin duality, the
  inclusion $\SF(A) \hookrightarrow \UU(1)^g$ induces a surjection
  $\ZZ^g = \widehat{\UU(1)}^g \to \widehat{\SF(A)}$. Explicitly, this
  tells us that every character of $\SF(A)$ is given by
  $\phi(z_1, \dots, z_g) = z_1^{m_1}\cdots z_g^{m_g}$ for some
  $(m_1, \dots, m_g) \in \ZZ^g$. By continuity, every character $\phi$
  of $\SF(A)$ is completely determined by $\phi(\mathbf{u})$. In
  particular, we have that
  $\phi(\mathbf{u}) = u_1^{m_1} \cdots u_g^{m_g} \in U_A$.
\end{proof}

The following theorem should be compared to \cite[Theorem
3.12]{sutherland2013sato}
\begin{theorem}[\Cref{mainthm:structure-thm}]
  Let $A$ be an abelian variety defined over $\FF_q$. Then
  \[\SF(A) \cong \UU(1)^\delta \times C_m,\]
  where $\delta = \delta_A$ is the angle rank and $m = m_A$ is the
  angle torsion order. Furthermore, the connected component of the
  identity is
  \[\SF(A)^\circ = \SF(A_{(m)}).\]
\end{theorem}

\begin{proof}
  Since every finite subgroup of $\UU(1)$ is cyclic, the torsion part
  of the finitely generated group $U_A$ is generated by some primitive
  $m$-th root of unity $\zeta_m$. The group $U_{(m)}$ is torsion free
  by \Cref{rmk:delta-invariant-base-extension}. We thus have the split
  short exact sequence
  \begin{equation}
    \label{SES:grps}
    \begin{tikzcd}
      1 \arrow[r] & \langle \zeta_m \rangle \arrow[r] & U_A \arrow[r,
      "{u \mapsto u^m}"] & U_{(m)} \arrow[r] & 1.
    \end{tikzcd}
  \end{equation}
  After dualizing, we get:
  \begin{equation}
    \label{SES:st-grps}
    \begin{tikzcd}
      1 \arrow[r] & \SF(A_{(m)}) \arrow[r] & \SF(A) \arrow[r] &
      \langle \zeta_m \rangle \arrow[r] & 1.
    \end{tikzcd}
  \end{equation}
  We conclude that $\SF(A)^\circ = \SF(A_{(m)})$ and
  $\SF(A)/\SF(A)^\circ \cong \langle \zeta_m \rangle$.
\end{proof}

\begin{remark} \label{rmk:frob-torus} By definition, $U_A$ is the
  image of $\Gamma_A$ under the radial projection
  $\psi\colon \CC\unit \to \UU(1), z \mapsto z/|z|$. Thus, we have a
  short exact sequence
  \begin{equation}
    \label{SES:radial-gamma}
    \begin{tikzcd}
      1 \arrow[r] & \Gamma_A\cap \RR_{>0} \arrow[r] & \Gamma_A
      \arrow[r, "\psi|_\Gamma"] & U_A \arrow[r] & 1,
    \end{tikzcd}
  \end{equation}
  which is split by the section $u_j \mapsto \alpha_j$. The kernel
  $\Gamma\cap\RR_{>0}$ is free of rank $1$ and contains the group
  $q^{\ZZ}$. The relation between the Serre--Frobenius group $\SF(A)$
  and Serre's Frobenius Torus (see \cite[Volume IV,
  133.]{serre_oeuvres},
  \cite[Section 3]{Chi1992}) can be understood via their character
  groups.
  \begin{itemize}
  \item The (Pontryagin) character group of $\SF(A)$ is $U_A$.
  \item The (algebraic) character group of the Frobenius torus of $A$
    is the torsion free part of $\Gamma_A$.
  \end{itemize}

\end{remark}

\subsection{Equidistribution results}
\label{sec:equidist} Let $(Y,\mu)$ be a measure space in the sense of
Serre (see Appendix A.1 in \cite{serre1997abelian}). Recall that a
sequence $(y_r)_{r = 1}^\infty \subset Y$ is $\mu$-equidistributed if
for every continuous function $f\colon Y \to \CC$ we have that
\begin{equation}
  \int_Y f \mu = \lim_{n \to \infty} \frac1n\sum_{r=1}^n f(y_r) .
\end{equation}

In our setting, $Y$ will be a compact abelian Lie group with
probability Haar measure $\mu$. We have the following lemma.

\begin{lemma}
  \label{lemma:cyclic-equidist}
  Let $G$ be a compact group, and $h \in G$. Let $H$ be the closure of
  the group generated by $h$. Then, the sequence
  $(h^r)_{r = 1}^\infty$ is equidistributed in $H$ with respect to the
  Haar measure $\mu_H$.
\end{lemma}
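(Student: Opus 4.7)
The plan is to reduce the statement to a Weyl-type equidistribution criterion for characters, and then to verify the criterion by an elementary geometric-series computation. First, observe that $H$, being the closure of the cyclic (hence abelian) subgroup generated by $h$, is a compact abelian topological group. We may therefore apply Pontryagin duality to $H$ and work with its continuous characters $\chi \colon H \to \UU(1)$.

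The reduction step is the following: a sequence $(y_r)_{r=1}^\infty$ in $H$ is $\mu_H$-equidistributed if and only if for every $\chi \in \widehat{H}$,
\[
\lim_{n\to\infty} \frac{1}{n}\sum_{r=1}^n \chi(y_r) \;=\; \int_H \chi \, d\mu_H.
\]
This follows from Stone--Weierstrass: the $\CC$-linear span of the characters of a compact abelian group is a subalgebra of $C(H,\CC)$ which is closed under complex conjugation (since $\overline{\chi} = \chi^{-1} \in \widehat{H}$), contains the constants, and separates points (by Pontryagin duality); hence it is uniformly dense in $C(H,\CC)$. Uniform approximation of an arbitrary continuous function by finite $\CC$-linear combinations of characters then converts equidistribution on characters into equidistribution on all continuous functions.

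It remains to verify the criterion for the sequence $y_r \colonequals h^r$. For the trivial character we have $\frac{1}{n}\sum_{r=1}^n 1 = 1 = \int_H 1\, d\mu_H$. For a non-trivial $\chi \in \widehat{H}$, note that $\chi(h) \neq 1$: otherwise $\chi$ would vanish identically on $\langle h \rangle$, and by continuity on its closure $H$, contradicting non-triviality. Setting $\zeta \colonequals \chi(h) \in \UU(1)\setminus\{1\}$, a standard geometric-series calculation gives
\[
\frac{1}{n}\sum_{r=1}^n \chi(h^r) \;=\; \frac{1}{n}\sum_{r=1}^n \zeta^r \;=\; \frac{\zeta(1-\zeta^n)}{n(1-\zeta)},
\]
which tends to $0$ as $n\to\infty$ since $|1-\zeta^n| \le 2$ and $1-\zeta \neq 0$. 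On the other hand, $\int_H \chi\, d\mu_H = 0$ by the translation invariance of Haar measure applied to any element $g \in H$ with $\chi(g) \neq 1$. This matches, and completes the verification.

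The proof is essentially routine once one invokes Pontryagin duality and Stone--Weierstrass; there is no serious obstacle, but the only place requiring care is the density of characters in $C(H,\CC)$, which is what makes the reduction to characters legitimate in a non-metric compact abelian group setting.
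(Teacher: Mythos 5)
Your proof is correct and follows essentially the same approach as the paper: reduce equidistribution to the Weyl criterion on characters of the compact abelian group $H$, then show the Cesàro averages of $\chi(h^r)$ vanish for nontrivial $\chi$. The only cosmetic difference is that you justify the reduction directly via Stone--Weierstrass and handle the limit uniformly with a geometric series, whereas the paper cites Serre and Peter--Weyl for the reduction and invokes Weyl's equidistribution theorem for the infinite-order case.
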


\begin{proof}
  For a non-trivial character $\phi\colon H \to \CC\unit$, the image
  of the generator $\phi(h) = u \in \UU(1)$ is non-trivial. We see
  that
  \begin{equation*}
    \lim_{n\to\infty} \frac 1n \sum_{r=1}^n \phi(h^r) = \lim_{n\to\infty} \frac 1n \sum_{r=1}^n u^r = 0,
  \end{equation*}
  both when $u$ has finite or infinite order. The latter case follows
  from Weyl's equidistribution theorem in $\UU(1)$. The result follows
  from Lemma 1 in \cite[I-19]{serre1997abelian} and the Peter--Weyl
  theorem.
\end{proof}

\begin{coro}[\Cref{cor:equidist}]
  Let $A$ be a $g$-dimensional abelian variety defined over
  $\FF_q$. Then, the sequence $(x_r)_{r=1}^\infty$ of normalized
  traces of Frobenius is equidistributed in $[-2g,2g]$ with respect to
  the pushforward of the Haar measure on
  $\SF(A) \subseteq \USp_{2g}(\CC)$ via
  \begin{equation*}
    \SF(A) \subseteq \USp_{2g}(\CC) \to [-2g,2g], \quad M \mapsto \Tr(M).
  \end{equation*}
\end{coro}

\begin{proof}
  By \Cref{lemma:cyclic-equidist}, the sequence
  $(\mbf{u}^r)_{r=1}^\infty$ is equidistributed in $\SF(A)$ with
  respect to the Haar measure $\mu_{\SF(A)}$. By definition, the
  sequence $(x_r)_{r=1}^\infty$ is equidistributed with respect to the
  pushforward measure, and it is invariant under relabelling of the
  Frobenius eigenvalues.
\end{proof}

\begin{remark}[Maximal angle rank]
  When $A$ has maximal angle rank $\delta = g$, the Serre--Frobenius
  group is the full torus $\UU(1)^g$, and the sequence of normalized
  traces of Frobenius is equidistributed with respect to the
  pushforward of the measure $\mu_{\UU(1)^g}$; which we denote by
  $\lambda_g(x)$ following the notation\footnote{Beware of the
    different choice of normalization. We chose to use the interval
    $[-2g,2g]$ instead of $[-1,1]$ to be able to compare our
    distributions with the Sato--Tate distributions of abelian
    varieties defined over number fields.} in
  \cite{ahmadi2010shparlinski}.
\end{remark}

\section{Preliminary Results}
\label{sec:generalities}

For this entire section, we let $A$ be an abelian variety over
$\FF_q$, where $q=p^d$ for some prime $p$. Recall from
\Cref{sec:intro} that an abelian variety $A$ splits over a field
extension $\FF_{q^m}$ if $A_{(m)} \sim A_1 \times A_2$ and
$\dim A_1, \dim A_2 < \dim A$, i.e., if $A$ obtains at least one
isogeny factor after extending scalars to $\FF_{q^m}$. We say that $A$
\cdef{splits completely} over $\FF_{q^m}$ if
$A_{(m)} \sim A_1 \times A_2 \times \ldots \times A_k$, where each
$A_i$ is an absolutely simple abelian variety defined over
$\FF_{q^m}$. In other words, $A$ acquires its geometric isogeny
decomposition over $\FF_{q^m}$. We define the \cdef{splitting degree}
of $A$ to be the minimal positive integer $m$ such that $A$ splits
completely over $\FF_{q^m}$.

\subsection{Geometric products of elliptic curves}
\label{sec:prod-SF-groups}

We begin by stating an important lemma, attributed to Bjorn Poonen in
\cite{krajicek-scanlon-2000}.

\begin{lemma}[Poonen]
  \label{lemma:poonen} If $E_1, \dots, E_n$ are $n$ pairwise
  geometrically non-isogenous elliptic curves over $\FF_q$, then their
  eigenvalues of Frobenius $\alpha_1, \dots, \alpha_n$ are
  multiplicatively independent.
\end{lemma}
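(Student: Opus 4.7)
The plan is to derive a contradiction from a nontrivial multiplicative relation $\prod_{i=1}^n \alpha_i^{e_i} = 1$. The core idea is a short Galois manipulation, made possible by the fact that absolutely non-isogenous elliptic curves have distinct CM fields, and that distinct imaginary quadratic fields are automatically linearly disjoint over $\QQ$.

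I would first separate the ordinary and supersingular cases. Since all supersingular elliptic curves over $\overline{\FF}_p$ are isogenous, at most one of the $E_i$ can be supersingular; after relabeling, call it $E_n$ if such a curve is present. For each ordinary $E_i$, the Honda--Tate correspondence together with the CM theory of ordinary elliptic curves identifies $K_i \colonequals \QQ(\alpha_i)$ with $\mathrm{End}^0(E_{i,\overline{\FF}_q})$, so pairwise absolute non-isogeny of the ordinary $E_i$ translates into the $K_i$ being pairwise distinct imaginary quadratic fields in which $p$ splits. The supersingular field $F_n \colonequals \QQ(\alpha_n)$ is either $\QQ$ (when $\alpha_n = \pm\sqrt{q}$) or imaginary quadratic with $p$ ramified, and in either case is distinguished from every ordinary $K_i$ by the splitting behavior of $p$. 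These are all pairwise distinct (trivial or) quadratic extensions of $\QQ$, hence their compositum $L$ is a multiquadratic Galois extension of $\QQ$ whose Galois group is an elementary abelian $2$-group.

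The key step is now the following. For each ordinary index $i$, linear disjointness produces a $\sigma_i \in \Gal(L/\QQ)$ that acts as complex conjugation on $K_i$ and trivially on every other factor. Applying $\sigma_i$ to the relation $\prod_j \alpha_j^{e_j} = 1$ yields $\overline{\alpha}_i^{e_i}\prod_{j \neq i}\alpha_j^{e_j} = 1$. Dividing this into the original relation gives $(\alpha_i/\overline{\alpha}_i)^{e_i} = u_i^{2 e_i} = 1$. Since $E_i$ is ordinary, $u_i$ is not a root of unity, forcing $e_i = 0$. Iterating over all ordinary indices reduces the relation to $\alpha_n^{e_n} = 1$; taking absolute values gives $q^{e_n/2} = 1$, so $e_n = 0$ as well.

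The main obstacle is really only the two classical inputs needed to set up the argument: the identification of the geometric endomorphism algebra with the CM field for ordinary elliptic curves, so that absolute non-isogeny translates into $K_i \neq K_j$; and the fact that all supersingular elliptic curves over $\overline{\FF}_p$ are isogenous. Granted these, the Galois manipulation is a two-line computation and the supersingular remainder is handled by the magnitude $|\alpha_n| = \sqrt{q} > 1$.
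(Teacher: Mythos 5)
The paper does not actually prove this lemma; it states it with an attribution to Poonen via Krajicek--Scanlon. So there is no in-paper argument to compare against, and your proposal stands or falls on its own merits.

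There is a genuine gap. The claim that ``distinct imaginary quadratic fields are automatically linearly disjoint over $\QQ$'' is only true two (or three) at a time, and you need it for the whole family simultaneously. Any two distinct quadratic fields meet in $\QQ$ and are linearly disjoint, and indeed any three distinct \emph{imaginary} quadratic fields are jointly linearly disjoint (the fourth quadratic subfield of a biquadratic compositum of two imaginary quadratics is real). But from four on this can fail, even under the extra constraint that $p$ splits in each. For example, if $p \equiv 1 \pmod{24}$ then $p$ splits in $K_1 = \QQ(i)$, $K_2 = \QQ(\sqrt{-2})$, $K_3 = \QQ(\sqrt{-3})$, and $K_4 = \QQ(\sqrt{-6})$, and by Honda--Tate each is the CM field of some ordinary elliptic curve over $\FF_p$; these four curves are pairwise absolutely non-isogenous. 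But $K_4 \subset K_1K_2K_3 = \QQ(i,\sqrt{2},\sqrt{3})$, since $\sqrt{-6} = i\sqrt{2}\sqrt{3}$. Writing $\chi_i \colon \Gal(L/\QQ) \to \ZZ/2$ for the character cutting out $K_i$, one finds $\chi_4 = \chi_1 + \chi_2 + \chi_3$, so for \emph{no} index $i$ is there a $\sigma_i$ with $\chi_i(\sigma_i)=1$ and $\chi_j(\sigma_i)=0$ for $j\neq i$. Your Galois manipulation therefore never gets started: the automorphism you need may simply not exist.

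The lemma is nevertheless true, and the standard repair replaces the ``isolate one field'' step by a valuation/averaging argument. Extend the $p$-adic valuation to a valuation $v$ on $L$ with $v(q)=2$. For ordinary $i$, $v(\sigma\alpha_i)\in\{0,2\}$ and depends only on $\sigma|_{K_i}$; for the supersingular factor, $v(\sigma\alpha_n)=1$ for all $\sigma$. Applying $v$ to the conjugated relations $\prod_j\sigma(\alpha_j)^{e_j}=1$, subtracting the $\sigma = \mathrm{id}$ instance, and setting $f_i = \pm e_i$ appropriately yields $\sum_i f_i\,\chi_i(\sigma)=0$ for all $\sigma\in\Gal(L/\QQ)$, where the $\chi_i$ are distinct nontrivial $\{0,1\}$-valued characters. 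Since $\tfrac{1}{|G|}\sum_\sigma\chi_i(\sigma)\chi_j(\sigma) = \tfrac14(1+\delta_{ij})$, multiplying by $\chi_j$ and averaging forces every $f_j = 0$, hence every $e_j = 0$. This is in the same spirit as your argument --- conjugate the relation and exploit CM --- but averages over the whole Galois group rather than trying to pick out one embedding at a time, and that is precisely what survives the failure of joint linear disjointness.
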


In fact, for abelian varieties that split completely as products of
elliptic curves, we can explicitly describe the Serre--Frobenius
group.

\begin{lemma}
  \label{lemma:power-ord-ec}
  Let $B/\FF_q$ be an abelian variety that splits over $\FF_{q^m}$ as
  a power of an ordinary elliptic curve, where $m\geq 1$ is the
  splitting degree of $B$. Then, $\SF(B) \cong \UU(1)\times
  C_m$. Furthermore, if $m > 1$, then
  \begin{equation*}
    \SF(B) = \brk{(u, \xi_1^{\nu} u, \xi_2^{\nu}u, \dots, \xi_{g-1}^{\nu}u) : u \in \UU(1), \nu \in \ZZ/m\ZZ} \subset \UU(1)^g,
  \end{equation*}
  with $\xi_1, \dots, \xi_{g-1}$ $m$-th roots of unity whose orders
  have least common multiple $m$.  In particular, when $B$ is simple
  then all the $\xi_j$ are distinct, primitive and
  $g-1 \leq \varphi(m)$.
\end{lemma}

\begin{proof}
  Angle rank is invariant under base change, so
  $\delta_B = \delta_{E^g} = 1$. It remains to show that the angle
  torsion order $m_B$ equals $m$. If $m =1$, then $B = E^g$ and there
  is nothing to show. Assume that $m>1$. Since $B_{(m)} \sim E^g$, we
  have that $P_{B,(m)}(T) = P_E(T)^g$. If we denote by
  $\gamma_1, \overline{\gamma}_1, \ldots \gamma_g, \overline{\gamma}_g
  $ and $\pi_1, \overline{\pi}_1$ the Frobenius eigenvalues of $B$ and
  $E$ respectively, we have that
  $\brk{\gamma_1^m, \overline{\gamma}_1^m, \dots, \gamma_g^m,
    \overline{\gamma}_g^m} = \brk{\pi_1, \overline{\pi}_1}$. Possibly
  after relabelling, we have that $\gamma_{j+1} = \xi_{j}\gamma_1$ for
  $j = 1, \dots, g-1$, where the $\xi_j$'s are $m$-th roots of unity
  and the minimality of $m$ ensures that the lcm of the orders of the
  $\xi_{j}$'s is $m$. This shows that $C_m \subset U_B$, so that
  $m \mid m_B$. On the other hand, we have that
  $\SF(B_{(m)}) = \SF(E^g) \cong \UU(1)$ is connected. This implies
  that $m_B \mid m$ and we conclude that
  $\SF(B) \cong\UU(1)\times C_m$. Assume now $B$ is simple, then
  $P_B(T)$ is irreducible and hence has no repeated roots, and thus
  $\xi_{j} \neq \xi_{i}$ for every $0 < j < i < g$, and every $\xi_i$
  is primitive. This shows that the set
  $\brk{\xi_{j}: j = 1, \dots, g-1}$ has $g-1$ elements, and therefore
  $g-1 \leq \varphi(m)$.
\end{proof}

\begin{lemma}
  \label{lemma:A1xB}
  Let $A = B\times A_1$ be an abelian variety over $\FF_q$ such that
  $A_1$ is supersingular with angle torsion order $m_{A_1} = m_1$ and
  $B$ splits over $\FF_{q^m}$ as the power of an ordinary elliptic
  curve, where $m\geq 1$ is the splitting degree of $B$. Then,
  $\SF(A)^\circ \cong \UU(1)$ and $m_A = \lcm(m_1, m)$. Furthermore,
  \begin{equation*}
    \SF(A) = \diag(\SF(B), \SF(A_1), \overline{\SF(B)}, \overline{\SF(A_1)}) \subset \USp_{2g}(\CC),
  \end{equation*}
  where $\overline{\SF(B)}$ denotes the (pointwise) complex conjugate
  of $\SF(B)$, and similarly for \(A_1\).
\end{lemma}
\begin{proof}
  From \Cref{lemma:power-ord-ec}, we see that
  $U_A = \langle \zeta_{m_1}, \zeta_m, v_1 \rangle$, where $v_1 =$
  $\gamma_1/\sqrt{q}$ is a normalized Frobenius eigenvalue of $ B$ and
  all the other roots $\gamma_j$ can be written as
  $\zeta_m^{\nu_j}\gamma_1$ with $\lcm_j(\ord(\zeta_m^{\nu_j}))=m.$ It
  follows that $U_A = C_{\lcm(m_1,m)} \oplus \langle v_1 \rangle$ so
  that $\delta_A =1$ and $m_A = \lcm(m_1,m)$. Furthermore, $\SF(A)$ is
  generated by
  $(v_1, \xi_1v_1, \dots, \xi_{\dim B-1}v_1, \eta_1, \dots,
  \eta_{g_1})$ for some $\xi_j\in \mu_m$ with $\lcm_j(\ord(\xi_j))=m$
  and $\eta_i \in \mu_{m_1}$.
\end{proof}

\begin{lemma}
  \label{lemma:B-geom-EC}
  Let $B$ be an ordinary abelian variety defined over $\FF_q$ such
  that $B$ is geometrically isogenous to a product of elliptic
  curves. Let $m$ be the splitting degree of $B$, and write
  $B_{(m)} \sim E_1^{g_1}\times \cdots \times E_n^{g_n}$ with $E_j$
  not geometrically isogenous to $E_i$ for $j \neq i$. Then
  $\SF(B) \cong \UU(1)^n\times C_m$. Moreover, we can describe the
  embedding of $\SF(B) \hookrightarrow \UU(1)^g$ as follows:
  \begin{enumerate}
  \item Let $r\geq 1$ be the smallest positive integer such that
    $B_{(r)} \sim B_1\times \cdots \times B_n$ decomposes into
    pairwise non-geometrically isogenous factors.
  \item Let $m_j$ be the splitting degree of $B_j$, so that
    $(B_j)_{(m_j)} \sim E_j^{g_j}$.
  \end{enumerate}
  Then, $m = r\lcm(m_1, \dots, m_n)$ and
  \begin{equation*}
    \SF(B_{(r)}) = \diag(\SF(B_1), \dots, \SF(B_n), \overline{\SF(B_1)}, \dots, \overline{\SF(B_n)}) \subset \USp_{2g}(\CC),
  \end{equation*}
  where each $\SF(B_j)$ is as in \Cref{lemma:power-ord-ec}, and
  $\overline{\SF(B_i)}$ denotes the (pointwise) complex conjugate of
  $\SF(B_i)$.
\end{lemma}
\begin{proof}
  This follows from combining \Cref{lemma:poonen} with the fact that
  the Serre--Frobenius group of $B$ is connected over an extension of
  degree $m$. The proof then proceeds as in \Cref{lemma:power-ord-ec}.
\end{proof}

\subsection{Splitting of simple ordinary abelian varieties of odd
  prime dimension}
\label{sec:simple-ord-odd-prime}

In this section, we analyze the splitting behavior of simple ordinary
abelian varieties of \emph{prime dimension} $g > 2$. Our first result
is analogous to \cite[Theorem 6]{howe2002existence} for odd primes.

\begin{theorem}[\Cref{mainthm:odd-prime}]
  \label{thm:simple-ordinary-prime-splitting}
  Let $A$ be a simple ordinary abelian variety defined over $\FF_q$ of
  \cdef{prime} dimension $g > 2$. Then, exactly one of the following
  conditions holds.
  \begin{enumerate}
  \item $A$ is absolutely simple.
  \item $A$ splits over a degree $g$ extension of $\FF_q$ as a power
    of an elliptic curve, and $\SF(A) \cong \UU(1)\times C_g$.
  \item $A$ splits over a degree $2g + 1$ extension of $\FF_q$ as a
    power of an elliptic curve, and
    $\SF(A) \cong \UU(1)\times C_{2g+1}$. This case only occurs if
    \(2g+1\) is also a prime, i.e., if \(g\) is a Sophie Germain
    prime.
  \end{enumerate}
  Furthermore, in (2) and (3), we have that
  \begin{equation*}
    \SF(A) = \brk{(u, \xi_1^{\nu} u, \xi_2^{\nu}u, \dots, \xi_{g-1}^{\nu}u) : u \in \UU(1), \nu \in \ZZ/m\ZZ},
  \end{equation*}
  with $\xi_1, \dots, \xi_{g-1}$ distinct primitive $m$-th roots of
  unity, for $m = g$ and $m = 2g+1$ respectively.
\end{theorem}

\begin{proof}
  Let $\alpha = \alpha_1$ be a Frobenius eigenvalue of $A$, and denote
  by $K = \QQ(\alpha)\cong \QQ[T]/P(T)$ the number field generated by
  $\alpha$. Since $A$ is ordinary, $\QQ(\alpha^n) \neq \QQ$ is a
  CM-field over $\QQ$ for every positive integer $n$, and $P(T)$ is
  irreducible and therefore $[\QQ(\alpha):\QQ] = 2g$. Suppose that $A$
  is not absolutely simple, and let $m$ be the smallest positive
  integer such that $A_{(m)}$ splits; by \cite[Lemma
  4]{howe2002existence} this is also the smallest $m$ such that
  $\QQ(\alpha^m) \subsetneq \QQ(\alpha)$. Since $\QQ(\alpha^m)$ is
  also a CM field, it is necessarily an imaginary quadratic number
  field.

  Observe first that $m$ must be odd. Indeed, if $m$ was even, then
  $\QQ(\alpha^{m/2}) = \QQ(\alpha)$ and
  $[\QQ(\alpha^{m/2}):\QQ(\alpha^m)] = 2$. This contradicts the fact
  that $[\QQ(\alpha):\QQ] = 2 g$, since $g$ is an odd prime. By
  \cite[Lemma 5]{howe2002existence}, there are two possibilities:
  \begin{enumerate}[label=(\roman*)]
  \item \label{poss:2} $P(T) \in \QQ[T^m]$,
  \item \label{poss:1} $K = \QQ(\alpha^m, \zeta_m)$.
  \end{enumerate}
  
  If \ref{poss:2} holds and $P(T) = T^{2m} + bT^m + q^{g}$, we
  conclude that $m = g$ and $b=a_g$. In this case, the minimal
  polynomial of $\alpha^{g}$ has degree 2 and is of the form
  $h_{(g)}(T) = (T-\alpha^g)(T-\overline{\alpha}^g)$. Note that
  $\alpha^{g}$ and $\overline{\alpha}^{g}$ are distinct, since $A$ is
  ordinary. Thus, $P_{(g)}(T) = h_{(g)}(T)^{g}$ and $A$ must split
  over a degree $g$ extension as the power of an ordinary elliptic
  curve.

  If \ref{poss:1} holds, we have that $\varphi(m) \mid 2g$. Since
  $m>1$ is odd and $\varphi(m)$ takes even values, we have two
  possible options: either $\varphi(m) = 2$ or $\varphi(m) = 2g$. If
  $\varphi(m) = 2$, then $[K:\QQ(\alpha^m)] \leq 2$ which contradicts
  the fact that $K = \QQ(\alpha)$ is a degree $2g$ extension of
  $\QQ$. Therefore, necessarily, $\varphi(m) = 2g$, and
  $\QQ(\alpha) = \QQ(\zeta_m).$ Recall from elementary number theory
  that the solutions to this equation are $(m, g) = (9,3)$ or
  $(m, g) = (2g + 1,g)$ for $g$ a Sophie Germain prime.
  \begin{itemize}
  \item ($g > 3$) In this case, \ref{poss:1} only occurs when $2g+1$
    is prime.
  \item ($g = 3$) In this case, either $m = 7$ or $m = 9$. To conclude
    the proof, we show that $m = 9$ does not occur. More precisely, we
    will show that if $A$ splits over a degree $9$ extension, it
    splits over a degree $3$ extension as well. In fact, suppose that
    $K = \QQ(\zeta) = \QQ(\alpha)$ for $\zeta$ a primitive $9$th root
    of unity. The subfield $F = \QQ(\zeta^3)$ is the only imaginary
    quadratic subfield of $K$, so if a power of $\alpha$ does not
    generate $K$, it must lie in $F$. Suppose $\alpha^9$ lies in
    $F$. Let $\sigma$ be the generator of $\Gal(K/F)$ sending $\zeta$
    to $\zeta^4$. The minimal polynomial of $\alpha$ over $F$ divides
    $T^9 - \alpha^9$, so $\sigma(\alpha) = \alpha\cdot\zeta^j$ for
    some $j$, and $\sigma^2(\alpha) = \alpha\zeta^{5j}$. Since the
    product of the three conjugates of $\alpha$ over $F$ must lie in
    $F$, we have that
    $\alpha^3\cdot\zeta^{6j} =
    (\alpha)(\alpha\cdot\zeta^j)(\alpha\cdot\zeta^{5j}) \in F$, which
    implies that $\alpha^3 \in F$ and we conclude that $A$ splits over
    a degree-$3$ extension of the base field.
  \end{itemize}
  The statement about the structure of the Serre--Frobenius group
  follows from \Cref{lemma:power-ord-ec}.
\end{proof}
We thank Everett Howe for explaining to us why the case $m=9$ above
does not occur.

\subsection{Zarhin's notion of neatness}
\label{sec:zarhin-neatness}
In this section we discuss Zarhin's notion of \emph{neatness}, a
useful technical definition closely related to the angle rank.  Define
\begin{equation}
  \label{eq:R'}
  R_A' \colonequals \brk{u_j^2 : \alpha_j \in R_A}.
\end{equation}
Note that according to our numbering convention, we have that
$u_j\inv = \overline{u}_j = u_{j+g}$ for every
$j \in \brk{1,\dots, g}$.

\begin{defn}[Zarhin]
  \label{def:neat}
  Let $A$ be an abelian variety defined over $\FF_q$. We say that $A$
  is \cdef{neat} if it satisfies the following conditions:
  \begin{enumerate}[label=(N\alph*)]
  \item \label{neat-a} $\Gamma_A$ is torsion free.
  \item \label{neat-b} For every function $e\colon R_A'\to \ZZ$
    satisfying
    \begin{equation*}
      \prod_{\beta \in R_A'}\beta^{e(\beta)}= 1,
    \end{equation*}
    then $e(\beta) = e(\beta\inv)$ for every $\beta \in R_A'$.
  \end{enumerate}
\end{defn}

\begin{remarks} \hfill
  \begin{enumerate}[leftmargin=*, label=(\thecounter.\alph*)]
  \item \label{rmk:ss->neat} If $A$ is supersingular and $\Gamma_A$ is
    torsion free, then $A$ is neat. Indeed, in this case we have that
    $R_A' = \brk{1}$ and condition \ref{neat-b} is trivially
    satisfied.
  \item \label{rmk:neat2} Suppose that the Frobenius eigenvalues of
    $A$ are distinct and not supersingular. Some base extension of $A$
    is neat if and only if $A$ has maximal angle rank.
  \item \label{rmk:neat3} In general, maximal angle rank always
    implies neatness.
  \end{enumerate}
\end{remarks}

\subsection{Supersingular Serre--Frobenius groups}
\label{sec:ss-qST-groups}

Recall that a $q$-Weil number $\alpha$ is called \cdef{supersingular}
if $\alpha/\sqrt{q}$ is a root of unity. In \cite[Proposition
3.1]{zhu2001supersingular}, Zhu classified the minimal polynomials
$h(T)$ of supersingular $q$-Weil numbers. Let $\Phi_r(T)$ denote the
$r$-th cyclotomic polynomial, $\varphi(r) \colonequals \deg \Phi_r(T)$
the Euler totient function, and $\paren{\tfrac{a}{b}}$ the Jacobi
symbol. Then the possibilities for the minimal polynomials of
supersingular $q$-Weil numbers are given in \Cref{table:zhu}.

\begin{table}[ht]
  \setlength{\arrayrulewidth}{0.3mm} \setlength{\tabcolsep}{5pt}
  \renewcommand{\arraystretch}{2}
  \caption{Minimal polynomial of a supersingular $q$-Weil number
    $\alpha$.}
  \label{table:zhu}
  \begin{longtable}{|c|c|c|c|c|}
    \hline
    \rowcolor{header_color} 
    Type & $d$  &                                  & $h(T)$                                 & Roots      \\ \hline
    Z-1 & Even & -                                & $\Phi_{m}^{[\sqrt{q}]}(T) \colonequals \sqrt{q}^{\varphi(m)}\Phi_m(T/\sqrt{q})$ & $\zeta_m^j\sqrt{q} \mfor j \in (\ZZ/m\ZZ)\unit$    \\ \hline
    Z-2 & Odd  & $\QQ(\alpha) \neq \QQ(\alpha^2)$ & $\Phi_n^{[q]}(T^2) \colonequals q^{\varphi(n)} \Phi_n(T^2/q)$            & $\pm \zeta_{2n}^j\sqrt{q} \mfor j \in (\ZZ/n\ZZ)\unit$   \\ \hline
    Z-3 & Odd  & $\QQ(\alpha) = \QQ(\alpha^2)$    & $\displaystyle \prod_{\substack{1 \leq j \leq n \\ \gcd(j,n)=1}}\paren{T - \paren{\dfrac{q}{j}}\zeta_m^{\nu j}\sqrt{q}}$  & $\paren{\dfrac{q}{j}}{\zeta_m^{j}} \sqrt{q} \mfor  j \in (\ZZ/n\ZZ)\unit $ \\ \hline
  \end{longtable}
  \addtocounter{table}{-1}
\end{table}

\begin{notation}[\Cref{table:zhu}]
  In case (Z-1), $m$ is any positive integer. In cases (Z-2) and
  (Z-3), $m$ additionally satisfies $m \not\equiv 2 \md 4$, and
  $n \colonequals m/\gcd(2,m)$.  The symbol $\zeta_m$ denotes the a
  primitive $m$-th root of unity.  Note that in this case,
  $\varphi(n) = \varphi(m)/\gcd(2,m)$. Following the notation in
  \cite{Singh&McGuire&Zaytsev2014}, given a polynomial $f(T)\in K[T]$
  for some field $K$, and a constant $a \in K\unit$, let
  \begin{equation*} \label{eq:expand-roots} f^{[a]}(T) \colonequals
    a^{\deg f}f(T/a).
  \end{equation*}
\end{notation}

Given any supersingular abelian variety $A$ defined over $\FF_q$, the
Frobenius polynomial $P_A(T)$ is a power of the minimal polynomial
$h_A(T)$, and this minimal polynomial is of type (Z-1), (Z-2), or
(Z-3) as above. We say that $A$ is of \cdef{type Z-i} if the minimal
polynomial $h_A(T)$ is of type (Z-i) for $i = 1,2,3$.

Since $U_A$ is finite in the supersingular case, we have that
$\SF(A) \cong U_A \cong C_{m_A}$. Furthermore, we have that
\begin{equation*}
  \SF(A) = \brk{(\xi_1^\nu, \xi_2^\nu,\dots, \xi_g^\nu) : \nu \in \ZZ/m_A\ZZ} \subset \UU(1)^g,
\end{equation*}
with $\xi_i$'s being $m_A$-th roots of unity, whose orders have least
common multiple $m_A$.  In particular, we can read off the character
group $U_A$ from the fourth column in \Cref{table:zhu}. For instance,
if $m=3$ and $d$ is even, then we have a polynomial of type Z-1, and
the Serre--Frobenius group is isomorphic to $C_3$. On the other hand,
if $m=3$ and we have a polynomial of type Z-2, then the
Serre--Frobenius group is isomorphic to $C_6$. Given a $q$-Weil
polynomial $f(T) \in \QQ[T]$ with roots
$\alpha_1, \cdots, \alpha_{2n}$, the associated \cdef{normalized
  polynomial} $\tilde{f}(T) \in \RR[T]$ is the monic polynomial with
roots $u_1 = \alpha_1/\sqrt{q}, \dots, u_{2n} =
\alpha_{2n}/\sqrt{q}$. \Cref{table:zhu} allows us to go back and forth
between $q$-Weil polynomials $f(T)$ and the normalized polynomials
$\tilde{f}(T)$.

\begin{itemize}[leftmargin=*]
\item If $h(T)$ is the minimal polynomial of a supersingular $q$-Weil
  number of type Z-1, the normalized polynomial $\tilde{h}(T)$ is the
  cyclotomic polynomial $\Phi_m(T)$. Conversely, we have that
  $h(T) = \tilde{h}^{[\sqrt{q}]}(T)$.  \item If $h(T)$ is the minimal
  polynomial of a supersingular $q$-Weil number of type Z-2, the
  normalized polynomial $\tilde{h}(T)$ is the polynomial
  $\Phi_n(T^2)$. Conversely, $h(T) = \tilde{h}^{[q]}(T)$.
\end{itemize}

\section{Elliptic Curves}
\label{sec:elliptic-curves}

The goal of this section is to prove
\Cref{mainthm:elliptic-curves}. Furthermore, we give a thorough
description of the set of possible orders $m$ for the supersingular
Serre--Frobenius groups $\SF(E) = C_m$ in terms of $p$ and $q = p^d$.

The isogeny classes of elliptic curves over $\FF_q$ were classified by
Deuring \cite{deuring1941typen} and Waterhouse \cite[Theorem
4.1]{waterhouse1969abelian}. Writing the characteristic polynomial of
Frobenius as $P(T) = T^2 + a_1T + q$, the Weil bounds give
$|a_1| \leq 2\sqrt{q}$. Conversely, the integers $a$ satisfying
$|a| \leq 2\sqrt{q}$ that correspond to the isogeny class of an
elliptic curve are the following.

\begin{theorem}[{\cite[Theorem 2.6.1]{serre2020rational}}]
  \label{thm:waterhouse}
  Let $p$ be a prime and $q = p^d$. Let $a \in \ZZ$ satisfy
  $|a|\leq 2\sqrt{q}$.
  \begin{enumerate}[labelindent=0pt]
  \item \label{case:ordinary} If $p \nmid a$, then $a$ is the trace of
    Frobenius of an elliptic curve over $\FF_q$. This is the ordinary
    case.
  \item If $p \mid a$, then $a$ is the trace of Frobenius of an
    elliptic curve over $\FF_q$ if and only if one of the following
    holds:
    \begin{enumerate}[label=(\roman*)]
    \item \label{case:SS2p} $d$ is even and $a = \pm 2\sqrt{q}$,
    \item \label{case:SSp} $d$ is even and $a = \sqrt{q}$ with
      $p \not\equiv 1 \md 3$,
    \item \label{case:SS-p} $d$ is even and $a = -\sqrt{q}$ with
      $p \not\equiv 1 \md 3$,
    \item \label{case:SS0-even} $d$ is even and $a = 0$ with
      $p \not\equiv 1 \md 4$,
    \item \label{case:SS0-odd}$d$ is odd and $a=0$,
    \item \label{case:SSp=2} $d$ is odd, $a = \pm \sqrt{2q}$ with
      $p = 2$.
    \item \label{case:SSp=3} $d$ is odd, $a = \pm \sqrt{3q}$ with
      $p = 3$.
    \end{enumerate}
    This is the supersingular case.
  \end{enumerate}
\end{theorem}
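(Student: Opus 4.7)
\bigskip\noindent\textbf{Proof proposal.}
The plan is to apply the Honda--Tate theorem to the quadratic polynomial $P(T) = T^2 - aT + q$ and, in each case, verify that the corresponding isogeny class has dimension exactly one. Honda--Tate assigns to each conjugacy class of $q$-Weil numbers $\pi$ a simple abelian variety $A/\FF_q$ whose dimension is determined by the formula $2\dim A = e \cdot [\QQ(\pi) : \QQ]$, where $e$ is the Schur index of the division algebra $\mathrm{End}^0(A)$ over its center $\QQ(\pi)$. So the task reduces to computing $[\QQ(\pi):\QQ]$ and the local Brauer invariants $\mathrm{inv}_v \mathrm{End}^0(A)$ at the places of $\QQ(\pi)$ above $p$ and $\infty$.

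First I would handle the ordinary case \ref{case:ordinary}. If $p \nmid a$ and $|a| \le 2\sqrt{q}$, then the discriminant $a^2-4q < 0$, so $P(T)$ is irreducible and $\pi$ generates an imaginary quadratic field $K=\QQ(\pi)$. Since $\pi + \bar\pi = a$ is coprime to $p$, the ideals $(\pi)$ and $(\bar\pi)$ are coprime in $\OO_K$, and factoring $(q) = (\pi)(\bar\pi)$ shows that exactly one of them carries all the $p$-adic valuation at each prime above $p$. Consequently the local invariants at primes above $p$ are integral (hence $0$ in $\QQ/\ZZ$), the invariant at the complex place $\infty$ is $0$, and $e=1$. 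This gives $2\dim A = 2$, so every such $a$ is realized, proving part \ref{case:ordinary}.

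For the supersingular case I would invoke Zhu's classification of supersingular minimal polynomials recorded in Table~\ref{table:zhu}. Since $P$ has degree two, either $\deg h = 2$ (and $P=h$ with $e=1$) or $\deg h = 1$ (and $P=h^2$, forcing $e=2$ and $\pi = \pm\sqrt{q}$, which requires $d$ even). The case $\pi = \pm\sqrt{q}$ yields $a = \pm 2\sqrt{q}$ and produces an elliptic curve iff there exists a quaternion algebra over $\QQ$ ramified exactly at $\{p,\infty\}$, which always does; this gives case \ref{case:SS2p}. For $\deg h = 2$ of type Z-1, the admissible $m$ with $\varphi(m)=2$ are $m \in \{3,4,6\}$, producing $a = -\sqrt{q}, 0, \sqrt{q}$ respectively (all with $d$ even); for type Z-2 one has $\varphi(n)=1$ and only $n=2$ (i.e.\ $m=4$) satisfies the functional equation $\pi\bar\pi = q$, producing $a=0$ with $d$ odd; the Z-3 polynomials of degree two occur only for $m \in \{8,12\}$, which forces $p=2$ or $p=3$ with $d$ odd, producing $a = \pm\sqrt{2q}$ and $a = \pm\sqrt{3q}$ respectively.

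The main obstacle, and the source of the auxiliary conditions ``$p\not\equiv 1\pmod 3$'' in \ref{case:SSp}, \ref{case:SS-p} and ``$p\not\equiv 1\pmod 4$'' in \ref{case:SS0-even}, is verifying that $e=1$ (and not $e=2$) for the Z-1 candidates in cases $m=3,6$ and $m=4$. When $p$ splits in $K=\QQ(\pi)$, one computes that $(\pi)=(\mathfrak p\bar{\mathfrak p})^{d/2}$ and both primes above $p$ contribute invariant $1/2$; combined with the invariant $1/2$ at the unique archimedean place, the invariants sum to zero, but $e=2$, so the Honda--Tate variety is a surface rather than an elliptic curve. Therefore the case survives precisely when $p$ does \emph{not} split in $K$, i.e.\ when $-3$ (resp.\ $-1$) is not a square mod $p$, which translates into the stated congruence conditions. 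In the remaining inert/ramified cases the unique prime above $p$ carries invariant $0 \in \QQ/\ZZ$, forcing $e=1$ and producing an elliptic curve, which completes the verification of cases \ref{case:SSp}--\ref{case:SSp=3}.
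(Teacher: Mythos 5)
The paper does not prove Theorem~\ref{thm:waterhouse}; it simply cites it from Serre's notes (following Deuring and Waterhouse), so there is no argument of the paper to compare with. Your Honda--Tate sketch is the standard way to prove it and the overall structure is sound, but there is a genuine error in the supersingular computation that you should correct.

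You write that in the $p$-split Z-1 cases ``both primes above $p$ contribute invariant $1/2$; combined with the invariant $1/2$ at the unique archimedean place, the invariants sum to zero, but $e=2$.'' This is internally inconsistent: $1/2 + 1/2 + 1/2 = 3/2 \not\equiv 0 \pmod{1}$. The actual reason the invariants sum to zero is that $K = \QQ(\pi)$ is imaginary quadratic, so its unique archimedean place is \emph{complex} and the Brauer invariant there is $0$, not $1/2$ (the invariant $1/2$ is only forced at \emph{real} places). The sum is then $1/2 + 1/2 + 0 \equiv 0$, the period of the Brauer class is $2$, so $e = 2$ by Brauer--Hasse--Noether, and the Honda--Tate variety has dimension $e\cdot[K:\QQ]/2 = 2$ rather than $1$. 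You reach the correct conclusion, but the stated invariant at $\infty$ would mislead a reader and should be fixed. It is also worth spelling out that the non-split condition translates to the congruences as follows: $p$ splits in $\QQ(\zeta_3)$ iff $p\equiv 1\pmod 3$ and in $\QQ(i)$ iff $p\equiv 1\pmod 4$, and in the inert or ramified cases the unique prime $v\mid p$ has $[K_v:\QQ_p]=2$ so that $\operatorname{inv}_v = (d/2)/d\cdot 2 \equiv 0$ (inert) or $(d)/(2d)\cdot 2 \equiv 0$ (ramified), giving $e = 1$.

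A smaller point: for the Z-2, $n=1$ candidate you say it ``fails the functional equation $\pi\bar\pi = q$.'' The Weil number $\pi = \sqrt{q}$ (with $d$ odd) actually does satisfy $\pi\bar\pi = q$ for $\bar\pi$ the complex conjugate; the reason to discard it is that its field is $\QQ(\sqrt{p})$, which is totally real, so the two real places each contribute invariant $1/2$, forcing $e=2$ and a two-dimensional Honda--Tate variety (equivalently, $T^2-q$ has constant term $-q$ rather than $+q$, so it cannot be the Frobenius polynomial of an abelian variety of dimension~$1$). Phrasing it via the dimension formula rather than ``functional equation'' would be cleaner and consistent with the rest of the argument.

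Finally, the assertion that the degree-$2$ Z-3 polynomials occur only for $m\in\{8,12\}$, forcing $p\in\{2,3\}$ with $d$ odd, is used without verification; it is true but relies on the Jacobi-symbol constraints in Zhu's classification and deserves at least a reference or a line of justification.
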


In the ordinary case, the normalized Frobenius eigenvalue $u_1$ is not
a root of unity, and thus $\SF(E) = \UU(1)$. In the supersingular
case, the normalized Frobenius eigenvalue $u_1$ is a root of unity,
and thus $\SF(E) = C_m$ is cyclic, with $m$ equal to the order of
$u_1$. For each value of $q$ and $a$ in \Cref{thm:waterhouse} part
(2), we get a right triangle of hypotenuse of length $\sqrt{q}$ and
base $a/2$, from which we can deduce the angle $\vartheta_1$ and thus
the order $m$ of the corresponding root of unity $u_1$. We thus obtain
\Cref{mainthm:elliptic-curves} as a restatement of
\Cref{thm:waterhouse}.

\medskip There are eight Serre--Frobenius groups for elliptic curves, sumarized
in \Cref{table:elliptic-curves}, and they correspond to eight possible
Frobenius distributions of elliptic curves over finite fields. For ordinary
elliptic curves (as explained in Section \ref{sec:intro}), the sequence of
normalized traces $(x_r)_{r=1}^\infty$ is equidistributed in the interval
$[-2,2]$ with respect to the measure $\lambda_1(x)$ (\Cref{eq:lambda1})
obtained as the pushforward of the Haar measure $\mu_{\UU(1)}$ under
$z \mapsto z + \overline{z}$. See \Cref{fig:ordinary-ec}.

\medskip The remaining seven Serre--Frobenius groups are finite and
cyclic; they correspond to supersingular elliptic curves. For a given
$C_m = \langle\zeta_m\rangle \subset \UU(1)$, denote by $\delta_m$ the
measure obtained by pushforward along $z \mapsto z + \overline{z}$ of
the normalized counting measure,
\begin{equation*}
  \mu_{C_m}(f)  \colonequals \int f\, \mu_{C_m} \colonequals \frac{1}{m}\sum_{j=1}^m f(\zeta_m^j).  
\end{equation*}

\begin{table}[ht]
  \setlength{\arrayrulewidth}{0.3mm} \setlength{\tabcolsep}{5pt}
  \renewcommand{\arraystretch}{1.3}
  \caption{Serre--Frobenius groups of elliptic curves.}
  \begin{longtable}{|c|c|c|c|c|c|c|c|}
    \hline
    \rowcolor{header_color}
    \Cref{thm:waterhouse} & $p$ & $d$ & $a$ & $\SF(E)$ & Generator  & Example & \Cref{fig:elliptic-distributions} \\ \hline
    (\ref{case:ordinary}) & - & - & $p \nmid a$ & $\UU(1)$ & $u_1$ & \href{https://www.lmfdb.org/Variety/Abelian/Fq/1/2/ab}{\texttt{1.2.ab}} & \ref{fig:ordinary-ec} \\ \hline
    2-\ref{case:SS2p} & - & Even & $2\sqrt{q}$   & $C_1$ & $1$ &  \href{https://www.lmfdb.org/Variety/Abelian/Fq/1/4/ae}{\texttt{1.4.ae}} & \ref{fig:C1} \\ \hline
    2-\ref{case:SS2p} & - & Even & $- 2\sqrt{q}$   & $C_2$ & -$1$ & \href{https://www.lmfdb.org/Variety/Abelian/Fq/1/4/e}{\texttt{1.4.e}} & \ref{fig:C2} \\ \hline
    2-\ref{case:SS-p} & $p\not\equiv 1 \md 3$  & Even & $-\sqrt{q}$ & $C_3$ & $\zeta_3$ & \href{https://www.lmfdb.org/Variety/Abelian/Fq/1/4/c}{\texttt{1.4.c}} & \ref{fig:C3} \\ \hline
    2-\ref{case:SS0-even} & $p\not\equiv 1 \md 4$ & Even & $0$ & $C_4$ & $\zeta_4$  & \href{https://www.lmfdb.org/Variety/Abelian/Fq/1/4/a}{\texttt{1.4.a}} & \ref{fig:C4} \\ \hline
    2-\ref{case:SS0-odd} & - & Odd & $0$ & $C_4$ & $\zeta_4$ &
                                                               \href{https://www.lmfdb.org/Variety/Abelian/Fq/1/2/a}{\texttt{1.2.a}}& \ref{fig:C4} \\ \hline  
    2-\ref{case:SSp} & $p\not\equiv 1 \md 3$  & Even & $\sqrt{q}$ & $C_6$ & $\zeta_6$ & \href{https://www.lmfdb.org/Variety/Abelian/Fq/1/4/ac}{\texttt{1.4.ac}} & \ref{fig:C6} \\ \hline
    2-\ref{case:SSp=2} & 2 & Odd  & $\pm\sqrt{2q}$ & $C_8$ & $\zeta_8$ & \href{https://www.lmfdb.org/Variety/Abelian/Fq/1/2/ac}{\texttt{1.2.ac}} & \ref{fig:C8} \\ \hline
    2-\ref{case:SSp=3} & 3 & Odd & $\pm\sqrt{3q}$ & $C_{12}$ & $\zeta_{12}$ & \href{https://www.lmfdb.org/Variety/Abelian/Fq/1/3/ad}{\texttt{1.3.ad}} & \ref{fig:C12} \\ \hline
  \end{longtable}
  \addtocounter{table}{-1}
  \label{table:elliptic-curves}
\end{table}

\vspace{2cm}
\begin{figure}
  \centering
  \begin{subfigure}{0.24\textwidth}
    \centering
    \includegraphics[width=1.07\textwidth]{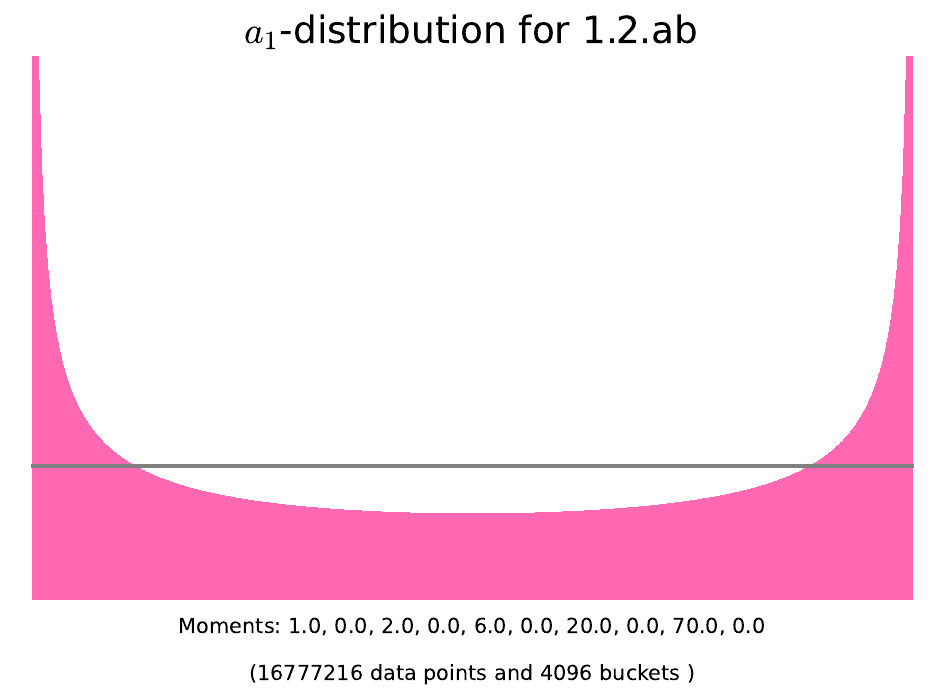}
    \caption{$\SF(E) = \UU(1)$}
    \label{fig:ordinary-ec}
  \end{subfigure}
  \hfill
  \begin{subfigure}{0.24\textwidth}
    \centering \includegraphics[width=0.95\textwidth]{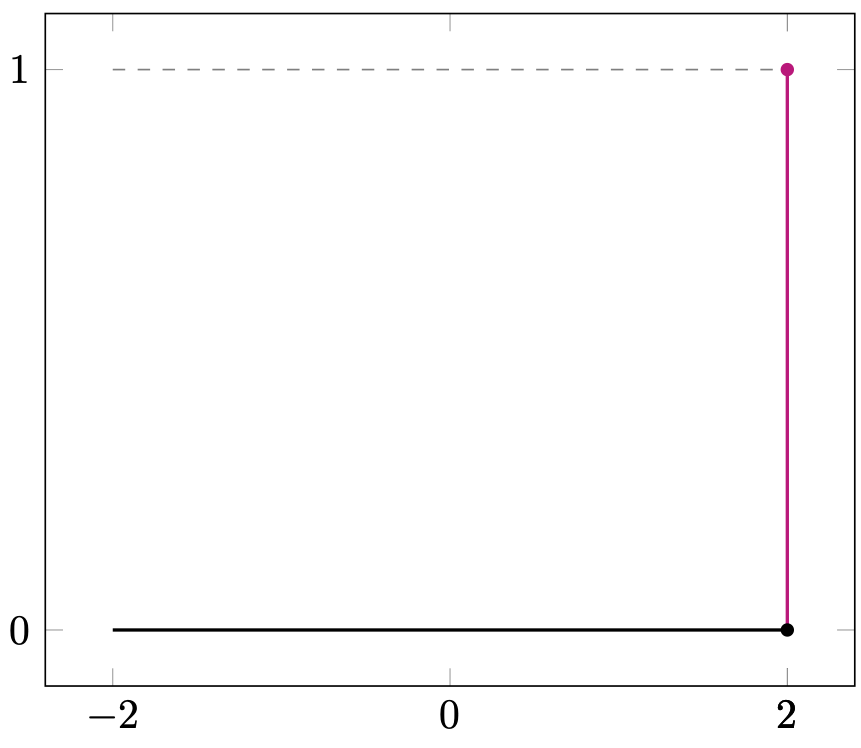}
    \caption{$\SF(E) = C_1$}
    \label{fig:C1}
  \end{subfigure}
  \hfill \hfill
  \begin{subfigure}{0.24\textwidth}
    \centering \includegraphics[width=\textwidth]{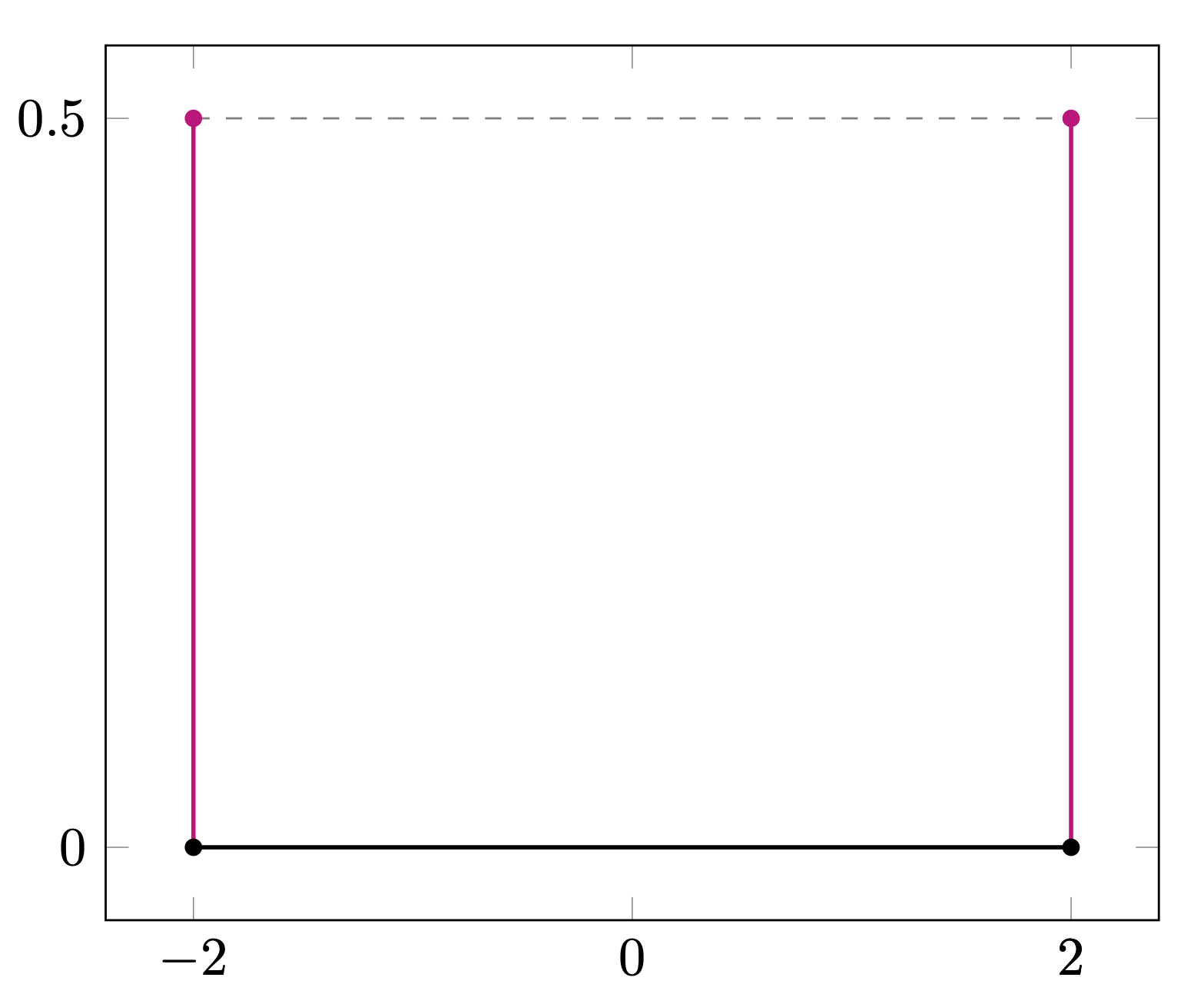}
    \caption{$\SF(E) = C_2$}
    \label{fig:C2}
  \end{subfigure}
  \begin{subfigure}{0.24\textwidth}
    \centering \includegraphics[width=\textwidth]{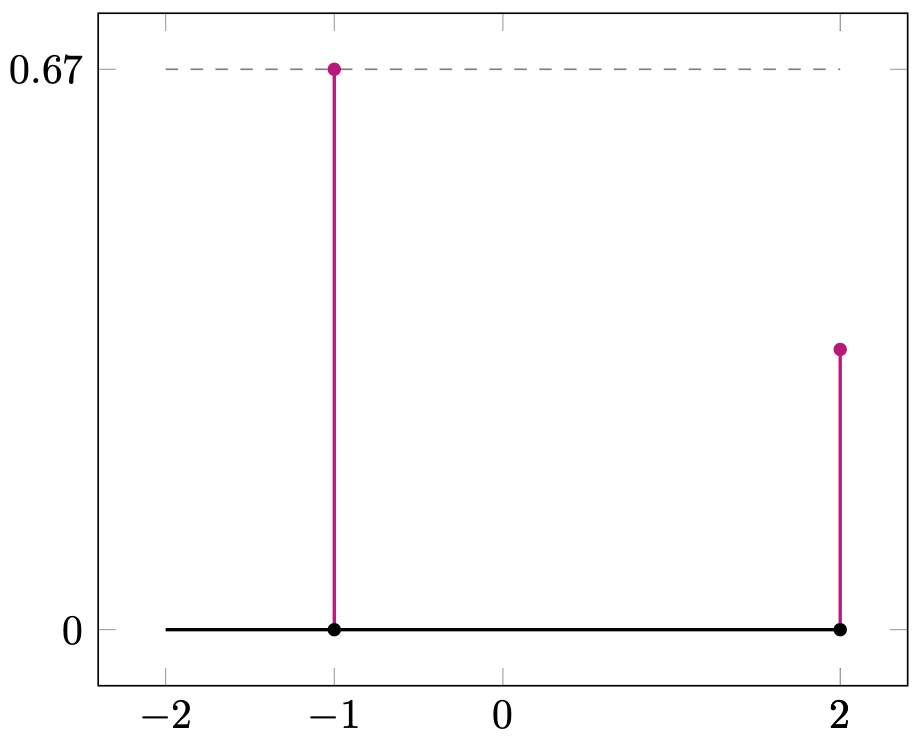}
    \caption{$\SF(E) = C_3$}
    \label{fig:C3}
  \end{subfigure}
  \hfill
  \begin{subfigure}{0.24\textwidth}
    \centering \includegraphics[width=\textwidth]{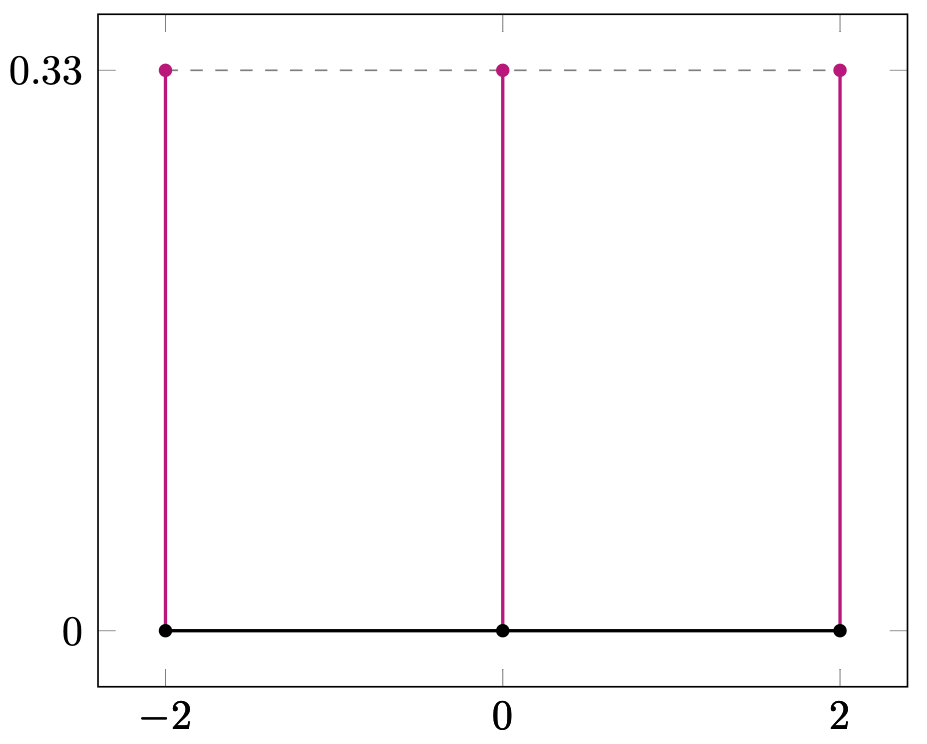}
    \caption{$\SF(E) = C_4$}
    \label{fig:C4}
  \end{subfigure}
  \hfill
  \begin{subfigure}{0.24\textwidth}
    \centering \includegraphics[width=\textwidth]{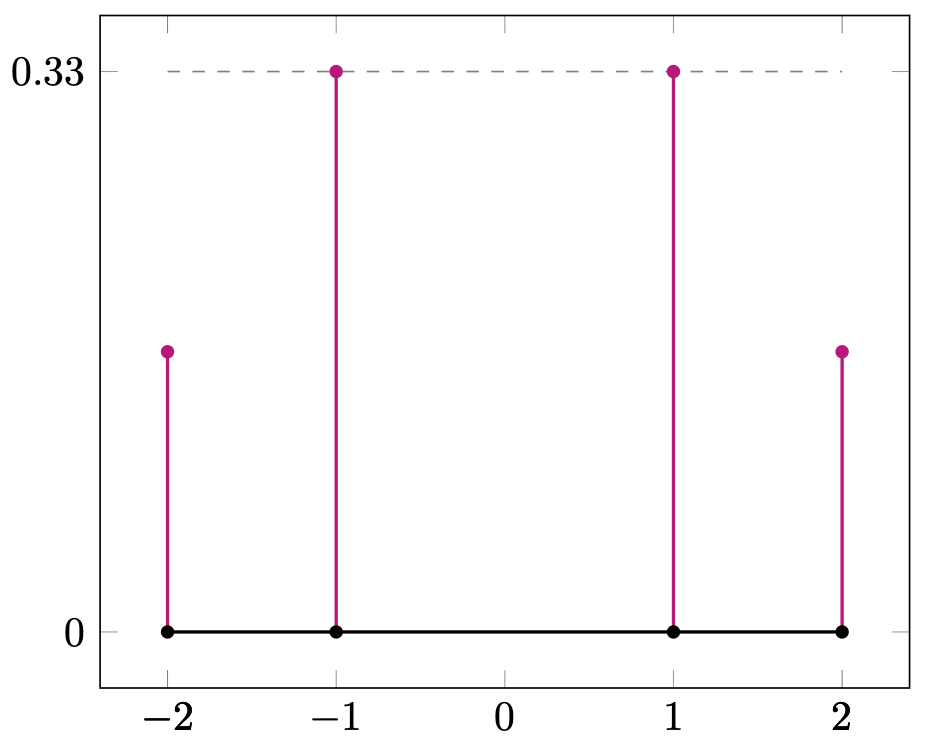}
    \caption{$\SF(E) = C_6$}
    \label{fig:C6}
  \end{subfigure}
  \hfill
  \begin{subfigure}{0.24\textwidth}
    \centering \includegraphics[width=\textwidth]{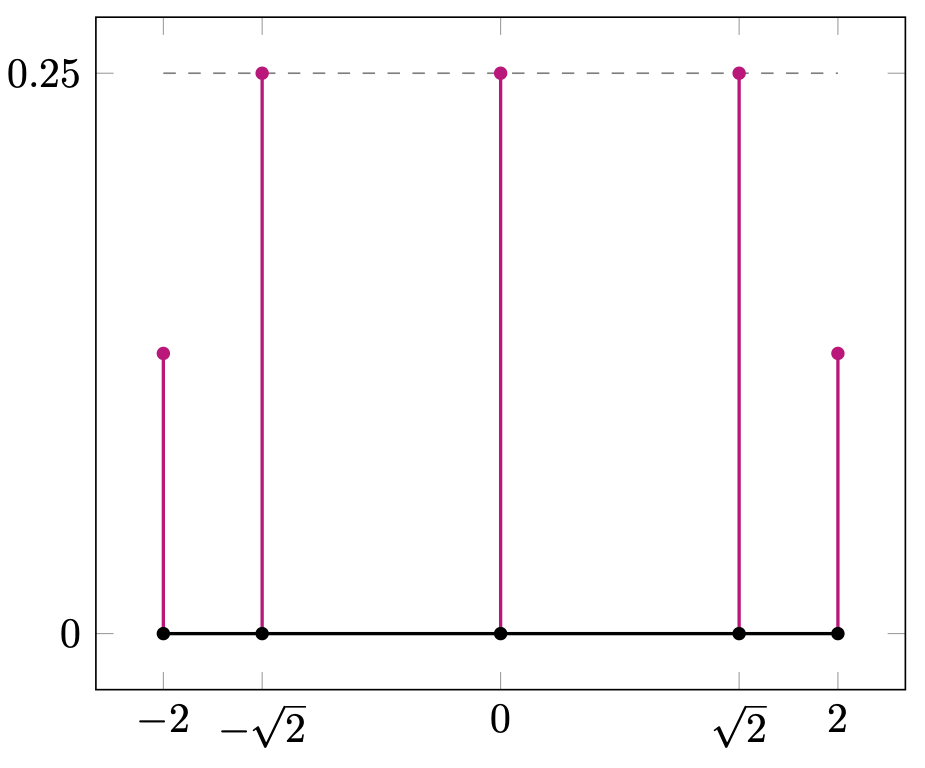}
    \caption{$\SF(E) = C_8$}
    \label{fig:C8}
  \end{subfigure}
  \hfill
  \begin{subfigure}{0.24\textwidth}
    \centering \includegraphics[width=\textwidth]{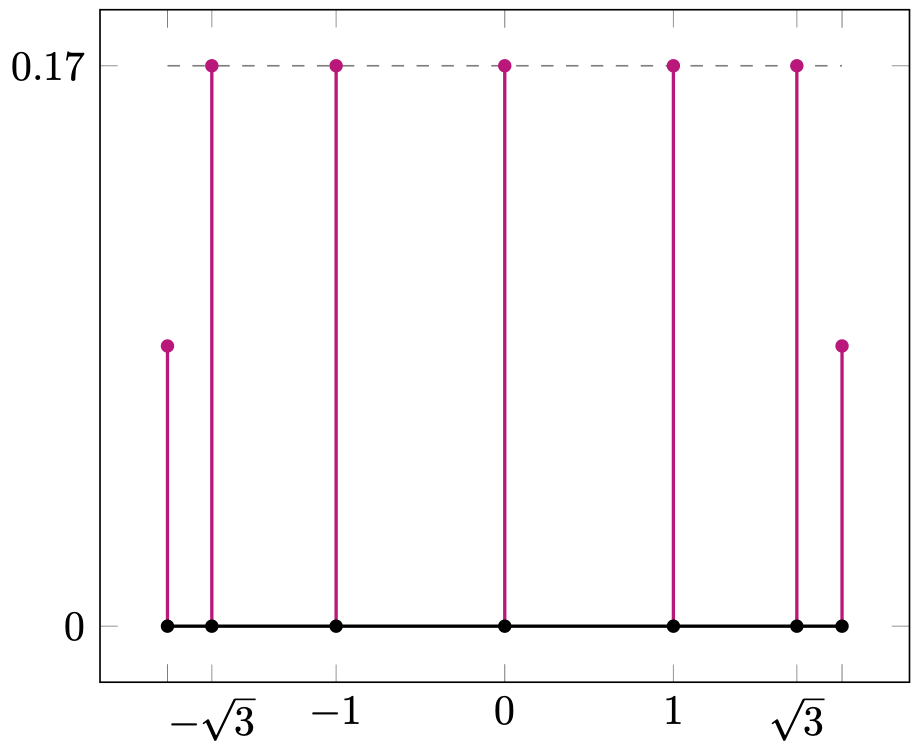}
    \caption{$\SF(E) = C_{12}$}
    \label{fig:C12}
  \end{subfigure}
  \caption{$a_1$-histograms of elliptic curves.}
  \label{fig:elliptic-distributions}
\end{figure}

\section{Abelian Surfaces}
\label{sec:surfaces}
The goal of this section is to classify the possible Serre--Frobenius
groups of abelian surfaces (\Cref{mainthm:surfaces}). The proof is a
careful case-by-case analysis, described by Flowchart
\ref{fig:proof-2}.

\begin{figure}[h]
  \centering \includegraphics[width=0.8\textwidth]{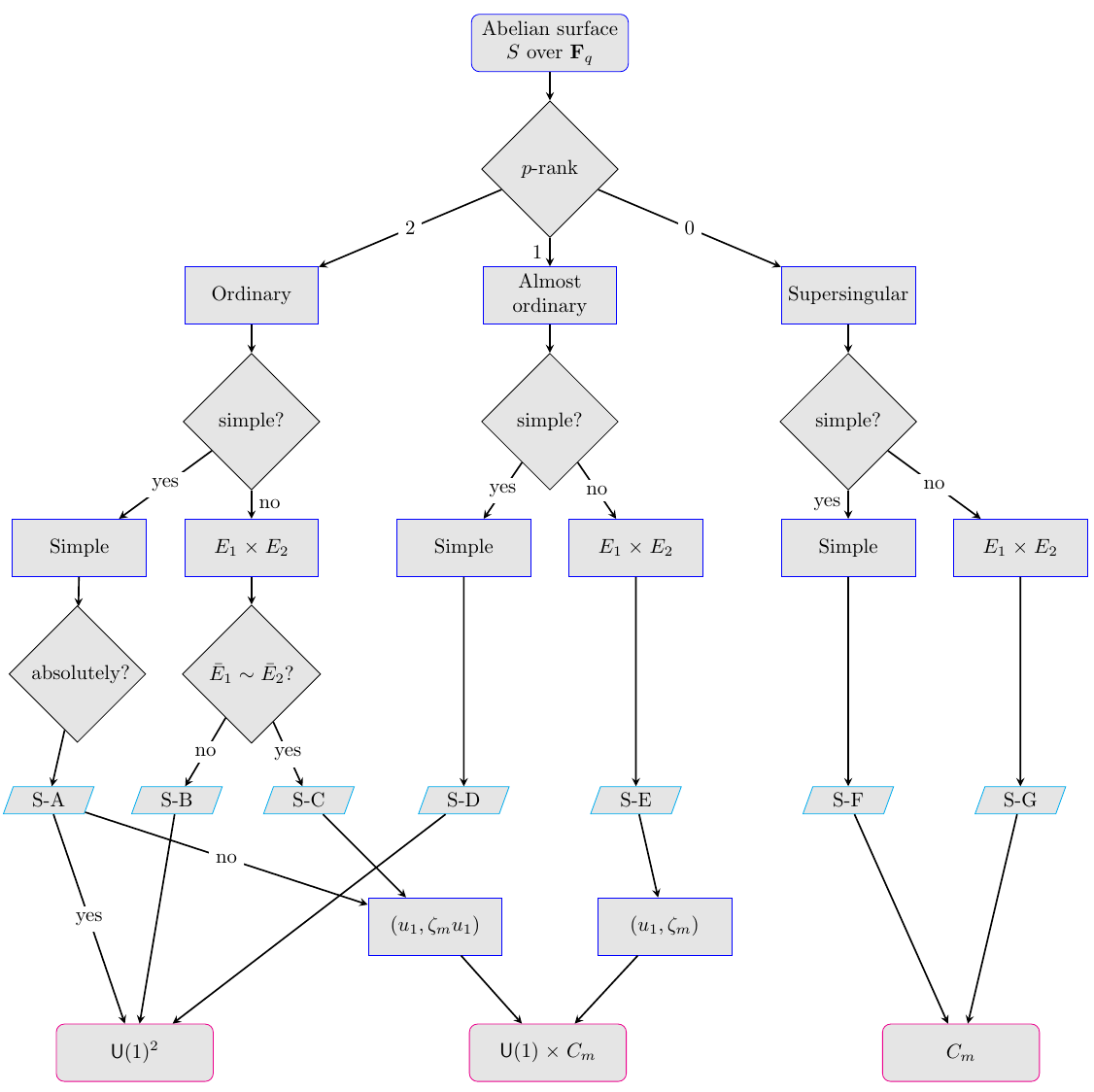}
  \caption{\Cref{mainthm:surfaces}: Classification in dimension 2.}
  \label{fig:proof-2}
\end{figure}

We separate our cases first according to $p$-rank, and then according
to simplicity. In the supersingular and almost ordinary cases this
stratification is enough. In the ordinary case, we have to further
consider the geometric isogeny type of the surface. When the angle
rank is $2$, the Serre--Frobenius group is the full torus
$\UU(1)^2$. When the angle rank is $1$, the Serre--Frobenius group is
isomorphic to $\UU(1)\times C_m$ but there are two non-conjugate ways
to embed $\SF(S)$ into $\UU(1)^2$; these are determined by the
topological generator of the group, which can be either
$(u_1,\zeta_m u_1)$ or $(u_1, \zeta_m)$.

\subsection{Simple ordinary surfaces}
\label{sec:simple-ordinary-surfaces}

We restate a theorem of Howe and Zhu in our
notation.  \begin{theorem}[{\cite[Theorem
    6]{howe2002existence}}] \label{thm:HZ6} Suppose that
  $P(T) = T^4 + a_1T^3 + a_2T^2 + qa_1T + q^2$ is the Frobenius
  polynomial of a simple ordinary abelian surface $S$ defined over
  $\FF_q$. Then, exactly one of the following conditions holds.
  \begin{enumerate}[label=(\alph*)] \item
    \label{case:ord-abs-simple-surface} $S$ is absolutely
    simple.  \item \label{case:ord-simple-2-split} $a_1 = 0$ and $S$
    splits over a quadratic extension.  \item
    \label{case:ord-simple-3-split} $a_1^2 = q + a_2$ and $S$ splits
    over a cubic extension.  \item \label{case:ord-simple-4-split}
    $a_1^2 = 2a_2$ and $S$ splits over a quartic extension.  \item
    \label{case:ord-simple-6-split} $a_1^2 = 3a_2 -3q$ and $S$ splits
    over a sextic extension.  \end{enumerate} \end{theorem}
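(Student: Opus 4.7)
My plan is to adapt the strategy from the proof of Theorem \ref{thm:simple-ordinary-prime-splitting} to the case $g = 2$. Let $\alpha$ be a Frobenius eigenvalue of $S$. Since $S$ is simple and ordinary, $P(T)$ is irreducible and $K \colonequals \QQ(\alpha)$ is a CM field of degree $4$. Suppose $S$ is not absolutely simple, and let $m > 1$ be minimal such that $S_{(m)}$ splits. By \cite[Lemma 4]{howe2002existence}, $m$ is the smallest positive integer with $\QQ(\alpha^m) \subsetneq K$, so $F \colonequals \QQ(\alpha^m)$ is a proper CM subfield of $K$; since $K$ has degree $4$ over $\QQ$, this forces $F$ to be imaginary quadratic. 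By \cite[Lemma 5]{howe2002existence}, one of the following holds: (i) $P(T) \in \QQ[T^m]$, or (ii) $K = \QQ(\alpha^m, \zeta_m)$.

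First I would handle case (i). Since $P(T) = T^4 + a_1 T^3 + a_2 T^2 + q a_1 T + q^2$ has nonzero leading and constant coefficients, the exponent $m$ must divide $4$, so $m \in \brk{2, 4}$. The choice $m = 2$ forces $a_1 = 0$ and produces Case \ref{case:ord-simple-2-split}, while $m = 4$ forces $a_1 = a_2 = 0$ and gives $P(T) = T^4 + q^2$; the roots of this polynomial are $\sqrt{q}$ times primitive $8$th roots of unity, contradicting ordinarity. In case (ii), the constraint $\varphi(m) \mid 4$ together with $m > 1$ leaves $m \in \brk{3, 4, 5, 6, 8, 10, 12}$. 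I would exclude $m \in \brk{5, 10}$ because $\QQ(\zeta_5) = \QQ(\zeta_{10})$ has cyclic Galois group of order $4$, whose unique quadratic subfield $\QQ(\sqrt{5})$ is real, so $K$ contains no imaginary quadratic subfield in which $\alpha^m$ could sit. For $m \in \brk{8, 12}$, I would argue as in the elimination of $m = 9$ within the proof of Theorem \ref{thm:simple-ordinary-prime-splitting}: pick a generator $\sigma$ of $\Gal(K/F)$, write $\sigma(\alpha) = \alpha \zeta$ for a suitable root of unity $\zeta \in K$, and take a product of appropriately many $F$-conjugates of $\alpha$ to produce an element of $F$ that is a smaller-than-$m$-th power of $\alpha$, contradicting the minimality of $m$.

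This leaves $m \in \brk{3, 4, 6}$, corresponding to Cases \ref{case:ord-simple-3-split}, \ref{case:ord-simple-4-split}, and \ref{case:ord-simple-6-split}. For each value of $m$, I would compute the minimal polynomial of $\alpha$ over $F$ using the relation $\alpha^m \in F$ and $[K : F] = 2$, then match its symmetric functions $\Tr_{K/F}(\alpha)$ and $\nrm_{K/F}(\alpha)$ against the coefficients of $P(T)$ by applying Newton's identities to the power sums $\sum \alpha_i^k$ for $k \leq m$. This should yield the explicit polynomial identities $a_1^2 = q + a_2$, $a_1^2 = 2a_2$, and $a_1^2 = 3a_2 - 3q$, respectively. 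The main obstacle will be the elimination of $m \in \brk{8, 12}$, where the minimality of $m$ must be combined with careful bookkeeping of the $\Gal(K/F)$-action on $\alpha$ inside the cyclotomic field $\QQ(\zeta_m)$; once the admissible set of $m$ has been pruned, the remaining case-by-case computations are routine symmetric-function manipulations.
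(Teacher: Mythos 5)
The paper does not prove this theorem---it cites \cite[Theorem 6]{howe2002existence} directly---so your reconstruction constitutes an independent route. You adapt the paper's own argument for Theorem \ref{thm:simple-ordinary-prime-splitting} to $g=2$, and the strategy is sound: Howe--Zhu's Lemmas 4 and 5 reduce to the dichotomy $P(T) \in \QQ[T^m]$ or $K = \QQ(\alpha^m,\zeta_m)$, and the constraint $\varphi(m)\mid 4$ with $m>1$ leaves the finite list you enumerate. Two points need care before the sketch becomes a proof. The exclusion of $m\in\{8,12\}$ is more delicate than the $m=9$ case you invoke, because $\QQ(\zeta_8)$ and $\QQ(\zeta_{12})$ each contain \emph{two} imaginary quadratic subfields (whereas $\QQ(\zeta_9)$ has only one), so both candidates for $F = \QQ(\alpha^m)$ must be examined; in every case the norm $\alpha\sigma(\alpha) = \alpha^2\zeta_m^j \in F$ does force $\alpha^4$ or $\alpha^6$ into $F$ after raising to an appropriate power (for example $\zeta_8^j = \pm i$ squares into $F$), contradicting minimality of $m$, but this is a genuine case check rather than a direct transplant of the $m=9$ computation. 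Similarly, the concluding identities need to be derived: writing $s = \alpha_1+\overline{\alpha}_1$, $t = \alpha_2+\overline{\alpha}_2$ so that $a_1 = -(s+t)$ and $a_2 = 2q+st$, the splitting condition $\alpha_1^m + \overline{\alpha}_1^m = \alpha_2^m + \overline{\alpha}_2^m$ for $m=3,4,6$ factors, after discarding the factors corresponding to smaller splitting degrees, into exactly $a_1^2 = q+a_2$, $a_1^2 = 2a_2$, and $a_1^2 = 3a_2-3q$. Both fill-ins are routine, so the proposal is correct modulo these verifications; what it buys over the paper's citation is a self-contained argument, while the paper's choice keeps the exposition compact and defers to a reference where the bookkeeping is already done.
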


\begin{lemma}[Node S-A in \Cref{fig:proof-2}]
  \label{lemma:SA}
  Let $S$ be a simple ordinary abelian surface over $\FF_q$. Then,
  exactly one of the following conditions holds.
  \begin{enumerate}[label=(\alph*)]
  \item \label{entry:ord-abs-simple-surface} $S$ is absolutely simple
    and $\SF(S) = \UU(1)^2$.
  \item \label{entry:ord-simple-2-split} $S$ splits over a quadratic
    extension and $\SF(S) \cong \UU(1)\times C_2$.
  \item \label{entry:ord-simple-3-split} $S$ splits over a cubic
    extension and $\SF(S) \cong \UU(1)\times C_3$.
  \item \label{entry:ord-simple-4-split} $S$ splits over a quartic
    extension and $\SF(S) \cong \UU(1)\times C_4$.
  \item \label{entry:ord-simple-6-split}$S$ splits over a sextic
    extension and $\SF(S) \cong \UU(1)\times C_6$.
  \end{enumerate}
  In cases
  \ref{entry:ord-simple-2-split}-\ref{entry:ord-simple-6-split}, we
  have that
  $\SF(A) = \brk{(u, \zeta_m^\nu u) : u \in \UU(1), \, \nu \in
    (\ZZ/m\ZZ)}$, for some primitive $m$-th root of unity $\zeta_m$.
\end{lemma}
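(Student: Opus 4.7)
The plan is to apply Theorem~\ref{thm:HZ6} (Howe--Zhu) and handle its five cases separately, using the Galois structure of base change for the non-absolutely-simple cases and an angle-rank argument for the absolutely simple case.

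For cases~(b)--(e), Howe--Zhu tells us $S$ splits over an extension of degree $r \in \{2,3,4,6\}$ respectively. I would first show that in fact $S_{(r)} \sim E^2$ for some ordinary elliptic curve $E/\FF_{q^r}$, using the following Galois/minimality argument. Writing $S_{(r)} \sim E_1 \times E_2$, the cyclic group $\Gal(\FF_{q^r}/\FF_q) = \langle \sigma\rangle$ permutes $\{E_1,E_2\}$. If $\sigma$ fixes both, the factors descend to $\FF_q$ and $S$ splits already over $\FF_q$, contradicting its simplicity. If $\sigma$ swaps them, the stabilizer of $E_1$ has index $2$ in $\langle\sigma\rangle$, which is impossible for $r=3$ (odd order) and for $r \in \{2,4,6\}$ forces descent of $E_1,E_2$ to $\FF_{q^{r/2}}$, violating the minimality of $r$ (or simplicity, when $r=2$). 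Hence $E_1 \sim E_2$ over $\FF_{q^r}$ and $S_{(r)} \sim E^2$. Applying Lemma~\ref{lemma:power-ord-ec} with $m = r$ immediately yields $\SF(S) \cong \UU(1)\times C_r$, handling these four cases uniformly.

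For case~(a), $S$ is absolutely simple and I must prove $\SF(S) \cong \UU(1)^2$. By Theorem~\ref{mainthm:structure-thm}, it is enough to show $\delta_S = 2$, since any rank-$2$ subgroup of $\UU(1)$ generated by two elements is automatically torsion-free (a surjection $\ZZ^2 \twoheadrightarrow U_S$ between groups of the same rank has trivial kernel), so $m_S = 1$ follows for free. Ordinariness gives $\delta_S \geq 1$. Suppose for contradiction that $\delta_S = 1$; then the normalized eigenvalues $u_1,u_2$ satisfy a nontrivial multiplicative relation modulo torsion. After replacing $S$ by its base change to $\FF_{q^{m_S}}$, we may assume $u_1^m u_2^n = 1$ for some $(m,n) \neq (0,0)$. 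The plan is then to unpack this relation symmetrically to exhibit one of the Howe--Zhu identities $a_1 = 0$, $a_1^2 = q+a_2$, $a_1^2 = 2a_2$, or $a_1^2 = 3a_2 - 3q$ for some base change $S_{(k)}$, forcing $S_{(k)}$ to be non-simple by Theorem~\ref{thm:HZ6} and contradicting absolute simplicity.

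The routine content is the Galois/minimality argument for (b)--(e) together with the direct application of Lemma~\ref{lemma:power-ord-ec}. The main obstacle is the last step above: bridging ``nonmaximal angle rank'' to ``one of the HZ identities holds'' for ordinary simple surfaces. The cleanest path I see is to convert a relation $u_1^m u_2^n = 1$ into a polynomial identity among the coefficients $a_1,a_2$ via the power-sum Newton identities, and then match against the four HZ conditions in turn; an alternative is to appeal to a Zarhin-type theorem translating multiplicative relations among normalized Frobenius eigenvalues into geometric isogeny factors, which then meets the Howe--Zhu dichotomy from the opposite direction to produce the required contradiction.
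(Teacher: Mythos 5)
Your overall decomposition — Howe--Zhu (Theorem~\ref{thm:HZ6}) for the splitting degree, then an angle-rank analysis — matches the paper's, but both halves of your proposal have genuine gaps.

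For cases~(b)--(e), your route ``$S_{(r)} \sim E^2$ then apply Lemma~\ref{lemma:power-ord-ec}'' is sound in outline, but the Galois argument you give to establish $E_1 \sim E_2$ over $\FF_{q^r}$ does not work. For an abelian variety $E_1$ over $\FF_{q^r}$ and $\sigma \in \Gal(\FF_{q^r}/\FF_q)$, the conjugate $E_1^\sigma$ always has the same zeta function as $E_1$ (a point $P \in E_1(\FF_{q^{rn}})$ maps to $\sigma(P) \in E_1^\sigma(\FF_{q^{rn}})$), hence by Honda--Tate $E_1^\sigma \sim E_1$ unconditionally. So the action of $\Gal(\FF_{q^r}/\FF_q)$ on isogeny classes over $\FF_{q^r}$ is \emph{trivial}, ``$\sigma$ fixes both factors'' carries no information, and in particular does not imply that either factor descends to a subfield up to isogeny (that would require the Weil number of $E_1$ to be an $r$-th power of a $q$-Weil number, which is an arithmetic condition, not a Galois-fixedness condition). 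The dichotomy you set up therefore never produces a contradiction. The paper gets $P_{(m)}(T) = h_{(m)}(T)^2$ a different way: since $S$ is simple ordinary, $P(T)$ is irreducible, so all $\alpha_i^m$ are $\Gal(\overline{\QQ}/\QQ)$-conjugate and hence roots of the single irreducible polynomial $h_{(m)}$, which has degree $2$ by \cite[Lemma 4]{howe2002existence}; degree counting forces $P_{(m)} = h_{(m)}^2$. (This is the Galois group of the CM field $\QQ(\alpha)/\QQ$, not of $\FF_{q^r}/\FF_q$.) With that correction in place, invoking Lemma~\ref{lemma:power-ord-ec} is a legitimate and slightly cleaner alternative to the paper's explicit computation of $\SF(S)$.

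For case~(a), your reduction ``$\delta_S = 2 \Rightarrow m_S = 1 \Rightarrow \SF(S) = \UU(1)^2$'' is correct (a finitely generated group of rank $2$ with a surjection from $\ZZ^2$ is free). But your argument for $\delta_S = 2$ is left as a plan: you propose to turn a relation $u_1^m u_2^n = 1$ into one of the Howe--Zhu coefficient identities via Newton's identities, and acknowledge this as ``the main obstacle.'' This is a real gap, and it is not obviously easy to close — a relation $u_1^m u_2^n = 1$ for large $|m|, |n|$ doesn't transparently produce a low-degree polynomial relation among $a_1, a_2$, and even if it did, a Howe--Zhu identity for $S_{(k)}$ only says $S_{(k)}$ splits over a \emph{further} extension, not that $S_{(k)}$ is itself non-simple. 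The paper instead cites \cite[Theorem 1.1]{Zarhin2015EigenFrob} to conclude that a base extension of an absolutely simple ordinary surface is \emph{neat}, then uses Remark~\ref{rmk:neat3} to conclude maximal angle rank — which is exactly the ``Zarhin-type theorem'' you flagged as your fallback. That fallback is in fact the intended argument and should be the primary one.
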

\begin{proof}
  \hfill
  \begin{enumerate}[wide]
  \item[(a)] From \cite[Theorem 1.1]{Zarhin2015EigenFrob}, we conclude
    that some finite base extension of an absolutely simple abelian
    surface is neat and therefore has maximal angle rank by Remark
    \ref{rmk:neat3}. Alternatively, this also follows from the proof
    of \cite[Theorem~2]{ahmadi2010shparlinski} for Jacobians of genus
    2 curves, which generalizes to any abelian
    surface. \Cref{mainthm:structure-thm} then implies that
    $\SF(S) = \UU(1)^2$.
    
  \item[(b,c,d,e)] Denote by $m$ the splitting degree of $S$. By
    \Cref{thm:HZ6} we know that $m \in \brk{2,3,4,6}$. Let
    $\alpha \in \brk{\alpha_1, \overline{\alpha}_1, \alpha_2,
      \overline{\alpha}_2}$ be a Frobenius eigenvalue of $S$. From
    \cite[Lemma 4]{howe2002existence} and since $S$ is ordinary, we
    have that $[\QQ(\alpha):\QQ(\alpha^m)] = [\QQ(\alpha^m):\QQ] =
    2$. In particular, the minimal polynomial $h_{(m)}(T)$ of
    $\alpha^m$ is quadratic, and $P_{(m)}(T) = h_{(m)}(T)^2$. This
    implies that
    $\brk{\alpha_1^m, \overline{\alpha}_1^m} = \brk{\alpha_2^m,
      \overline{\alpha}_2^m}$, so that (up to relabelling) there is a
    primitive\footnote{Note that $\zeta_m$ must be primitive, since
      otherwise, $P_{(n)}(T)$ would split for some $n \leq m$,
      contradicting the minimality of $m$.} $m$-th root of unity
    $\zeta_m$ such that $\alpha_2 = \zeta_m \alpha_1$.  It follows
    that
    \begin{equation*}
      \SF(S) = \overline{\langle (u_1, \zeta_m u_1) \rangle} = \brk{(u, \zeta_m^\nu u) : u \in \UU(1), \, \nu \in \ZZ/m\ZZ} \cong \UU(1)\times C_m
    \end{equation*}
    and $\SF(S)^\circ$ embeds diagonally in $\UU(1)^2$.
  \end{enumerate}
\end{proof}

\begin{figure}[H]
  \centering
  \begin{subfigure}{0.32\textwidth}
    \centering
    \includegraphics[width=\textwidth]{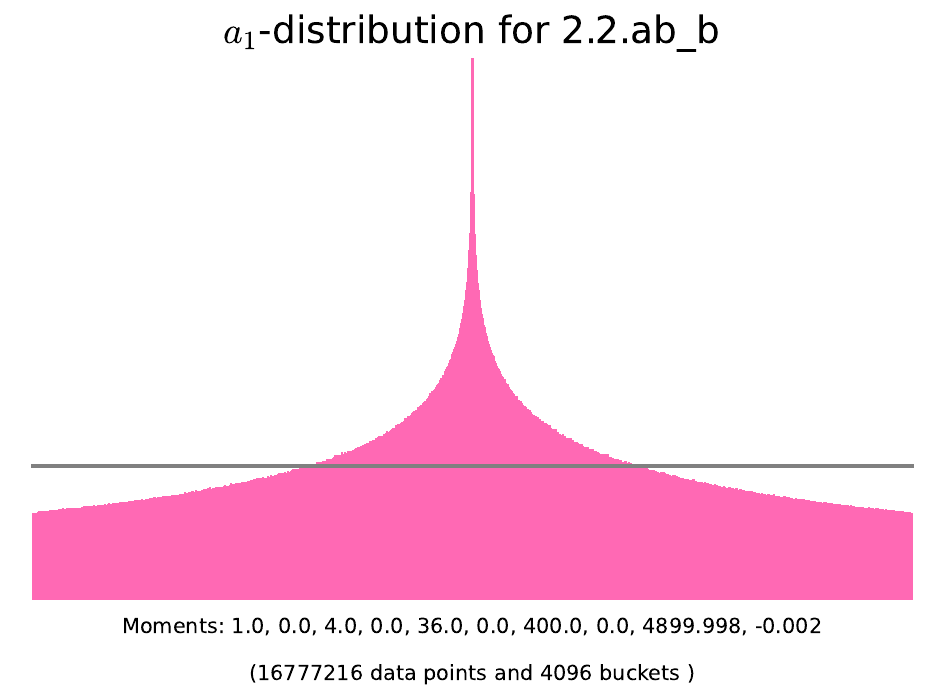}
    \caption{\href{https://www.lmfdb.org/Variety/Abelian/Fq/2/2/ab_b}{\texttt{2.2.ab\_b}}}
    \label{fig:simple-ord-S}
  \end{subfigure}
  \hfill
  \begin{subfigure}{0.32\textwidth}
    \centering
    \includegraphics[width=\textwidth]{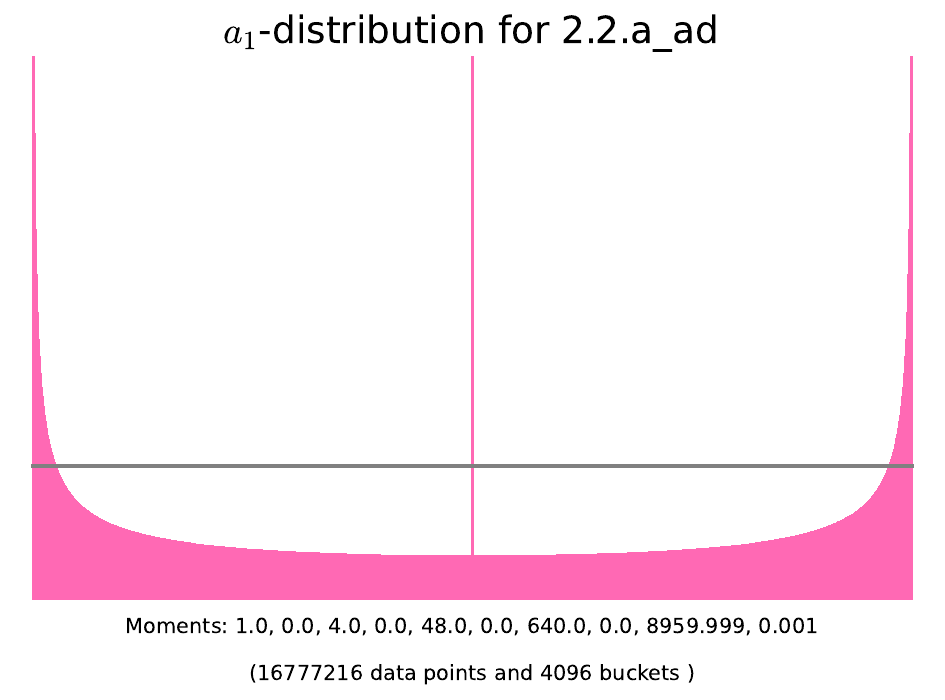}
    \caption{\href{https://www.lmfdb.org/Variety/Abelian/Fq/2/2/a_ad}{\texttt{2.2.a\_ad}}}
    \label{fig:S-quad-split}
  \end{subfigure}
  \hfill
  \begin{subfigure}{0.32\textwidth}
    \centering
    \includegraphics[width=\textwidth]{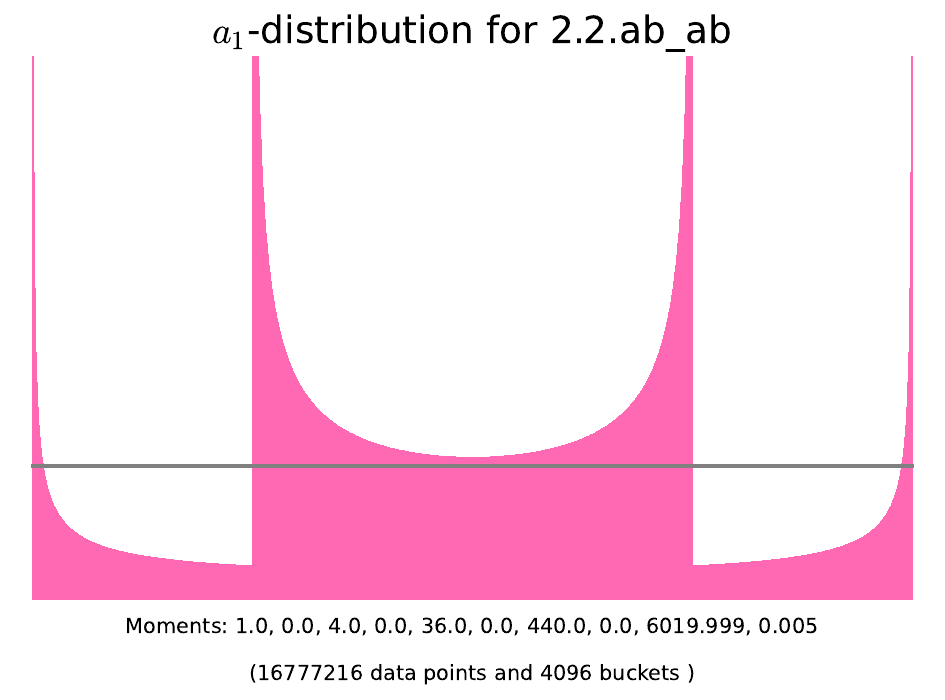}
    \caption{\href{https://www.lmfdb.org/Variety/Abelian/Fq/2/2/ab_ab}{\texttt{2.2.ab\_ab}}}
    \label{fig:S-cubic-split}
  \end{subfigure}
  \hfill \vspace{5mm}
  \begin{subfigure}{0.32\textwidth}
    \centering
    \includegraphics[width=\textwidth]{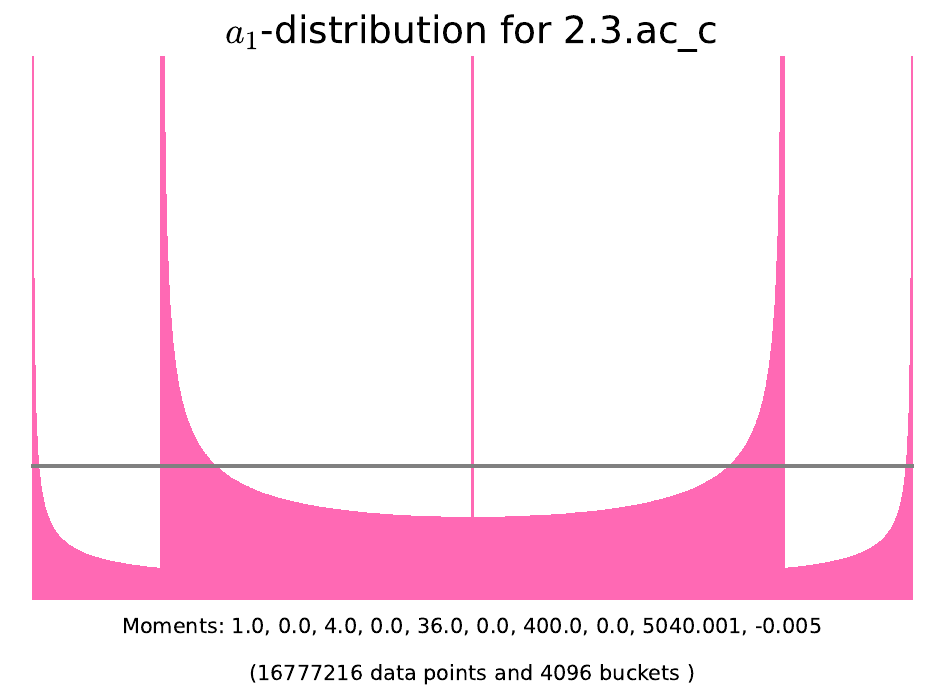}
    \caption{\href{https://www.lmfdb.org/Variety/Abelian/Fq/2/3/ac_c}{\texttt{2.3.ac\_c}}}
    \label{fig:S-quartic-split}
  \end{subfigure}
  \hspace{2cm}
  \begin{subfigure}{0.32\textwidth}
    \centering
    \includegraphics[width=\textwidth]{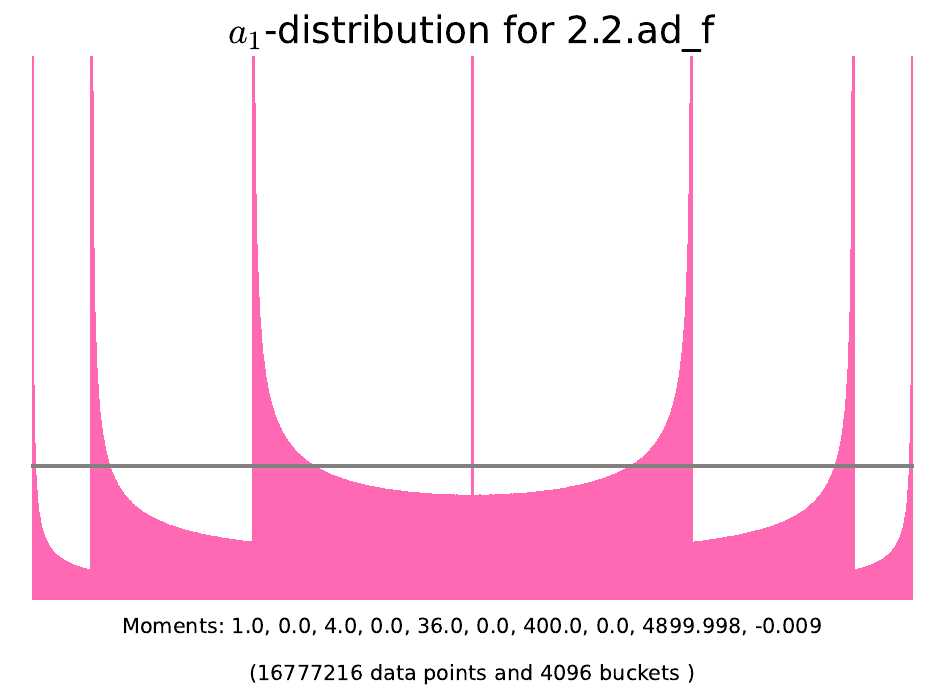} \caption{\href{https://www.lmfdb.org/Variety/Abelian/Fq/2/2/ad_f}{\texttt{2.2.ad\_f}}}
    \label{fig:S-sextic-split}
  \end{subfigure}
  \caption{$a_1$-histograms for simple ordinary abelian surfaces.}
  \label{fig:simple-ord-surfaces}
\end{figure}

\begin{notation}
  In \Cref{table:simple-ordinary-surfaces}, the \emph{Splitting type}
  title refers to that of the abelian variety $S$, the \emph{$\cong$
    class} title refers to the Serre--Frobenius group $\SF(S)$, and
  the \emph{Generator} title refers to the topological generator of
  $\SF(S)$; which precisely and succinctly captures the data of the
  embedding of the Serre--Frobenius group into the ambient unitary
  symplectic group. We will follow the same conventions in the
  following tables.
\end{notation}

\begin{table}[H]
  \setlength{\arrayrulewidth}{0.3mm} \setlength{\tabcolsep}{5pt}
  \renewcommand{\arraystretch}{1.3}
  \caption{Serre--Frobenius groups of simple ordinary surfaces.}
  \begin{longtable}{|c|c|c|c|c|}
    \hline
    \rowcolor{header_color} 
    Splitting type& $\cong$ class& Generator& Example & \Cref{fig:simple-ord-surfaces} \\ \hline
    Absolutely simple & $\UU(1)^2$ & $(u_1,u_2)$ & \href{https://www.lmfdb.org/Variety/Abelian/Fq/2/2/ab_b}{\texttt{2.2.ab\_b}} & \ref{fig:simple-ord-S} \\ \hline
    $S_{(2)} \sim E^2$ & $\UU(1)\times C_2$  & $(u_1, -u_1)$ & \href{https://www.lmfdb.org/Variety/Abelian/Fq/2/2/a_ad}{\texttt{2.2.a\_ad}} & \ref{fig:S-quad-split} \\ \hline
    $S_{(3)} \sim E^2$ & $\UU(1)\times C_3$ & $(u_1, \zeta_3 u_1)$ & \href{https://www.lmfdb.org/Variety/Abelian/Fq/2/2/ab_ab}{\texttt{2.2.ab\_ab}} & \ref{fig:S-cubic-split} \\ \hline
    $S_{(4)} \sim E^2$ & $\UU(1)\times C_4 $  & $(u_1, \zeta_4u_1)$ & \href{https://www.lmfdb.org/Variety/Abelian/Fq/2/3/ac_c}{\texttt{2.3.ac\_c}} & \ref{fig:S-quartic-split} \\ \hline
    $S_{(6)} \sim E^2$  & $\UU(1)\times C_6 $  & $(u_1, \zeta_6 u_1)$ & \href{https://www.lmfdb.org/Variety/Abelian/Fq/2/2/ad_f}{\texttt{2.2.ad\_f}} & \ref{fig:S-sextic-split} \\ \hline
  \end{longtable}
  \addtocounter{table}{-1}
  \label{table:simple-ordinary-surfaces}
\end{table}

\subsection{Non-simple ordinary surfaces}
\label{sec:non-simple-ordinary-surfaces}

Let $S$ be a non-simple ordinary abelian surface defined over
$\FF_q$. Then $S$ is isogenous to a product of two ordinary elliptic
curves $E_1\times E_2$. As depicted in \Cref{fig:proof-2}, we consider
two cases: \begin{enumerate}[wide] \item[(S-B)]
  \label{case:SB} $E_1$ and $E_2$ are not isogenous over
  $\overline{\FF}_q$.  \item[(S-C)] \label{case:SC} $E_1$ and $E_2$
  become isogenous over some base extension
  $\FF_{q^{m_1}} \supseteq \FF_q$, for $m_1\geq 1$.  \end{enumerate}
The Serre-Frobenius groups corresponding to these isogeny decomposition types are sumarized in \Cref{table:non-simple-ord-surfaces}. The proof of the following lemma is a straightforward application of \Cref{lemma:poonen}.

\begin{lemma}[Node S-B in \Cref{fig:proof-2}]
  \label{lemma:SB} Let $S$ be an abelian surface defined over $\FF_q$
  such that $S$ is isogenous to $E_1 \times E_2$, for $E_1$ and $E_2$
  geometrically non-isogenous ordinary elliptic curves. Then $S$ has
  maximal angle rank $\delta = 2$ and $\SF(S) = \UU(1)^2$.
\end{lemma}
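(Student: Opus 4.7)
The strategy is to recognize this statement as the special case of Lemma \ref{lemma:generalized-poonen} in which $S$ already decomposes over $\FF_q$ itself (so $r = 1$) as a product of two elliptic curves (so $s = 2$), with each factor being an ordinary elliptic curve over the base (so $m_1 = m_2 = 1$ and $B_j = E_j$). Hypothesis (i) of that lemma holds trivially, and hypothesis (ii) is precisely the assumption that $E_1$ and $E_2$ are absolutely non-isogenous. The conclusion gives $\SF(S) \cong \UU(1)^2 \times C_{\lcm(1,1)} = \UU(1)^2$, whence $\delta_S = 2$ and $m_S = 1$ by Theorem \ref{mainthm:structure-thm}.

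For a self-contained argument independent of Lemma \ref{lemma:generalized-poonen}, the skeleton is as follows. The angle group $U_S$ is generated by the normalized eigenvalues $u_i = \alpha_i/\sqrt{q}$, for $i = 1, 2$. Suppose a multiplicative relation $u_1^a u_2^b = \zeta$ with $\zeta$ of finite order $N$; unwinding yields $\alpha_1^{aN}\alpha_2^{bN} = q^{(a+b)N/2}$, and squaring and substituting $q = \alpha_1\overline{\alpha}_1$ rewrites this as
\[
\alpha_1^{(a-b)N}\,\overline{\alpha}_1^{-(a+b)N}\,\alpha_2^{2bN} = 1.
\]
Poonen's lemma (Lemma \ref{lemma:poonen}) gives multiplicative independence of $\alpha_1$ and $\alpha_2$. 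Combining this with complex conjugation on $\QQ(\alpha_1)$ and the ordinary condition (so that $u_1$ has infinite order) upgrades it to multiplicative independence of $\alpha_1$, $\overline{\alpha}_1$, $\alpha_2$, forcing each of $(a-b)N$, $-(a+b)N$, and $2bN$ to vanish, and hence $a = b = 0$. Together with the ordinary condition this shows that $U_S$ is torsion-free of rank $2$.

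The main obstacle in a from-scratch proof is precisely this upgrade step: Poonen's lemma addresses the eigenvalues $\alpha_i$ directly, but we need independence of the normalized $u_i$ modulo torsion, which requires absorbing the factor of $q$ via the relations $\alpha_i \overline{\alpha}_i = q$ and exploiting the ordinary condition to rule out cyclotomic relations between $u_1$ and $u_2$. Since Lemma \ref{lemma:generalized-poonen} already packages this upgrade in greater generality, citing it is the most economical route.
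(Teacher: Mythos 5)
Your primary route --- recognizing the statement as the $r = 1$, $s = 2$, $m_1 = m_2 = 1$ instance of Lemma \ref{lemma:generalized-poonen} --- is correct and essentially matches the paper, which dispatches this lemma with a single line citing Lemma \ref{lemma:poonen} (Poonen), the same lemma on which Lemma \ref{lemma:generalized-poonen} itself rests. Your remark that the paper's one-liner glosses over a genuine subtlety (passing from multiplicative independence of $\alpha_1, \alpha_2$ to that of the normalized $u_1, u_2$ requires absorbing the $q$-factor, ultimately by exploiting that the CM fields $\QQ(\alpha_1) \neq \QQ(\alpha_2)$ when the ordinary curves are geometrically non-isogenous) is accurate, and your secondary self-contained sketch is honest about being incomplete precisely at this step.
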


\begin{lemma}[Node S-C in \Cref{fig:proof-2}]
  Let $S$ be an abelian surface defined over $\FF_q$ such that $S$ is
  isogenous to $E_1 \times E_2$, for $E_1$ and $E_2$ geometrically
  isogenous ordinary elliptic curves. Then $S$ has angle rank
  $\delta = 1$ and $\SF(S) = \UU(1)\times C_m$ for
  $m \in \brk{1,2,3,4,6}$. Furthermore, $m$ is precisely the degree of
  the extension of $\FF_q$ over which $E_1$ and $E_2$ become
  isogenous.
\end{lemma}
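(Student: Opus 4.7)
The plan is to first pin down the multiplicative relations between the normalized Frobenius eigenvalues of $E_1$ and $E_2$, then invoke Theorem \ref{mainthm:structure-thm} to identify $\SF(S)$, and finally use the arithmetic of imaginary quadratic fields to restrict the possible values of $m$.

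Denote by $\alpha_i$ the Frobenius eigenvalue of $E_i$ and set $u_i = \alpha_i/\sqrt{q}$. Let $m$ be the smallest positive integer for which $E_1$ and $E_2$ become isogenous over $\FF_{q^m}$. By Honda--Tate, this is equivalent to $P_{E_1,(m)}(T) = P_{E_2,(m)}(T)$, i.e.,  $\{\alpha_1^m, \overline{\alpha}_1^m\} = \{\alpha_2^m, \overline{\alpha}_2^m\}$. Hence either $u_2 = \zeta u_1$ or $u_2 = \zeta \overline{u}_1$ for some $m$-th root of unity $\zeta$, and minimality of $m$ forces $\zeta$ to be primitive (otherwise one of the two equalities would already hold at a lower power, contradicting minimality).

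Since $\overline{u}_1 = u_1^{-1}$, in either case the angle group is $U_S = \langle u_1, u_2\rangle = \langle u_1, \zeta\rangle$. Because $E_1$ is ordinary, $u_1$ has infinite order, so $U_S \cong \ZZ \oplus \ZZ/m\ZZ$. Therefore $\delta_S = 1$ and $m_S = m$, and Theorem \ref{mainthm:structure-thm} yields $\SF(S) \cong \UU(1) \times C_m$.

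It remains to show $m \in \{1,2,3,4,6\}$. Since $E_1$ and $E_2$ are ordinary, one has $\End^0(E_i) = \QQ(\alpha_i)$, and, by Remark \ref{rmk:delta-invariant-base-extension}, $\QQ(\alpha_i^n) = \QQ(\alpha_i)$ for every $n \geq 1$. Their becoming isogenous over $\FF_{q^m}$ forces $\QQ(\alpha_1^m) = \QQ(\alpha_2^m)$, and so $\QQ(\alpha_1) = \QQ(\alpha_2) =: K$, an imaginary quadratic field. Then $\zeta = u_2/u_1 = \alpha_2/\alpha_1$ (case 1) or $\zeta = u_1 u_2 = \alpha_1\alpha_2/q$ (case 2) lies in $K$. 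The inclusion $\QQ(\zeta_m) \subseteq K$ forces $\varphi(m) \leq 2$, leaving precisely $m \in \{1,2,3,4,6\}$.

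The main obstacle I anticipate is the primitivity step: one must check carefully that no combination of the two possible equalities $\alpha_1^{m'} = \alpha_2^{m'}$ and $\alpha_1^{m'} = \overline{\alpha}_2^{m'}$ can be realized for $m' < m$, but this is automatic since each equality alone is equivalent to isogeny over $\FF_{q^{m'}}$. The second subtle point is justifying $\QQ(\alpha_1) = \QQ(\alpha_2)$, which relies crucially on ordinarity (so that the Frobenius generates the full CM field, regardless of base change).
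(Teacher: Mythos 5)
Your proof is correct and takes essentially the same route as the paper's: extract the multiplicative relation $u_2 = \zeta u_1$ or $u_2 = \zeta\overline{u}_1$ with $\zeta$ a primitive $m$-th root of unity, identify $U_S \cong \ZZ \oplus \ZZ/m\ZZ$, and use the imaginary-quadratic constraint to force $\varphi(m)\le 2$. The one small slip is the citation of Remark \ref{rmk:delta-invariant-base-extension} to justify $\QQ(\alpha_i^n) = \QQ(\alpha_i)$; that remark concerns invariance of angle rank under base change, not generation of the CM field by a power of Frobenius. What you actually need is elementary and direct: for an ordinary elliptic curve, $u_1$ is not a root of unity, so $\alpha^n$ is never real, and hence $\QQ(\alpha^n)$ is a nontrivial subfield of the quadratic field $\QQ(\alpha)$ and must equal it. Beyond that, the paper organizes the $\SF(S)\cong\UU(1)\times C_m$ step by citing Proposition \ref{prop:SF-in-products}, whereas you recompute $U_S$ directly and invoke Theorem \ref{mainthm:structure-thm}; the two routes are mathematically equivalent, since the proposition's proof (via Lemma \ref{lemma:power-ord-ec}) carries out the same computation.
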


\begin{proof}
  Let $\alpha_1,\overline{\alpha}_1$ and
  $ \alpha_2,\overline{\alpha}_2$ denote the Frobenius eigenvalues of
  $E_1$ and $E_2$ respectively. Let $m_1$ be the smallest positive
  integer such that $(E_1)_{(m_1)} \sim (E_2)_{(m_1)}$. From
  \Cref{lemma:power-ord-ec}, we immediately have that
  $\SF(S) \cong \UU(1) \times C_m$, where $m=m_1$.  In order to find
  the value of $m$, observe that
  $\brk{\alpha_1^m,\overline{\alpha}_1^m} =
  \brk{\alpha_2^m,\overline{\alpha}_2^m}$, from which we may assume,
  possibly after relabelling, that $\alpha_2 = \zeta_m \alpha_1$ for
  some primitive $m$-th root of unity $\zeta_m$. Since the curves
  $E_1$ and $E_2$ are ordinary, the number fields $\QQ(\alpha_1)$ and
  $\QQ(\alpha_2)$ are imaginary quadratic and
  $\QQ(\alpha_1) = \QQ(\alpha_1^m) = \QQ(\alpha_2^m) =
  \QQ(\alpha_2)$. Hence, $\zeta_m \in \QQ(\alpha_1)$ and thus
  $\varphi(m) = [\QQ(\zeta_m):\QQ] \in \brk{1,2}$; therefore
  $m \in \brk{1,2,3,4,6}$. Finally, we have by definition that
  $\SF(S) = \brk{(u,\zeta_m^\nu u) : u \in \UU(1), \, \nu \in
    \ZZ/m\ZZ} \cong \UU(1)\times C_m$.
\end{proof}

\begin{figure}[H]
  \centering
  \begin{subfigure}{0.32\textwidth}
    \centering
    \includegraphics[width=0.8\textwidth]{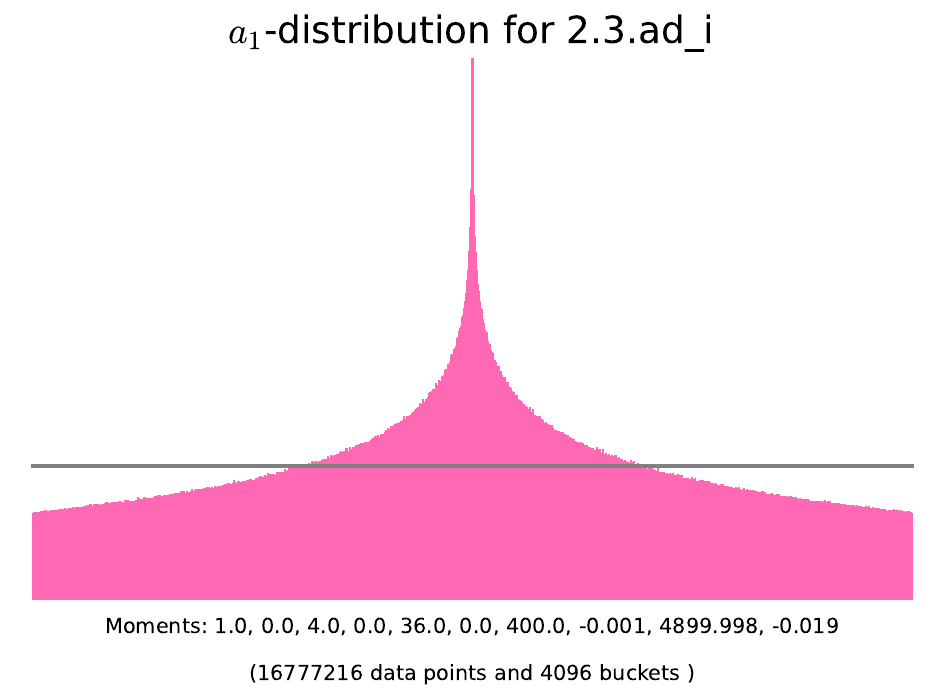}
    \caption{\href{https://www.lmfdb.org/Variety/Abelian/Fq/2/2/ac_f}{\texttt{2.2.ac\_f}}}
    \label{fig:non-isog-ord}
  \end{subfigure}
  \hfill
  \begin{subfigure}{0.32\textwidth}
    \centering
    \includegraphics[width=0.8\textwidth]{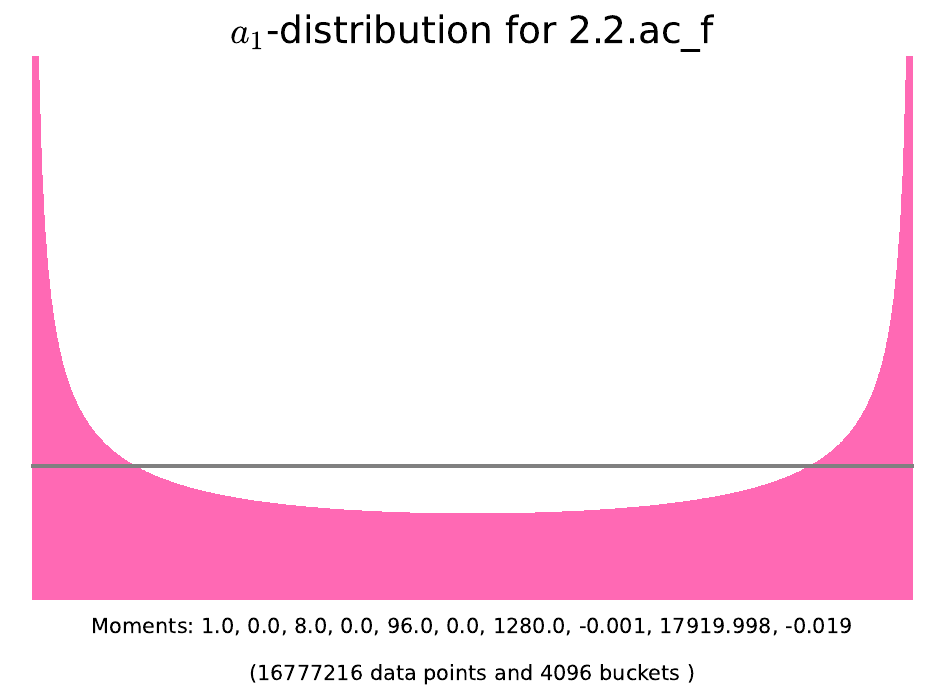}
    \caption{\href{https://www.lmfdb.org/Variety/Abelian/Fq/2/2/ac_f}{\texttt{2.2.ac\_f}}}
    \label{fig:isog-ord}
  \end{subfigure}
  \hfill
  \begin{subfigure}{0.32\textwidth}
    \centering
    \includegraphics[width=0.8\textwidth]{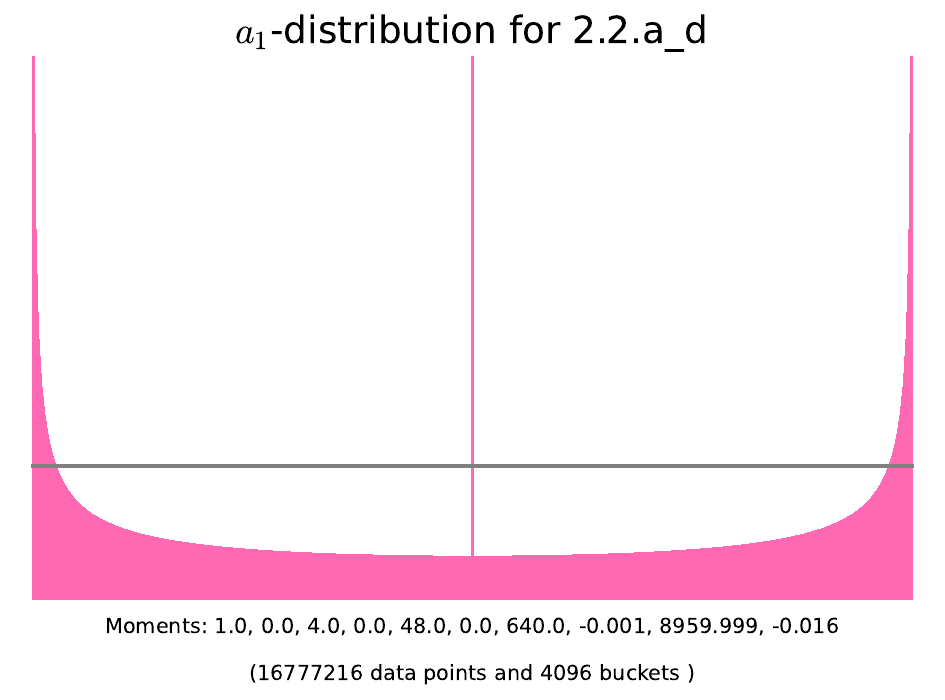}
    \caption{\href{https://www.lmfdb.org/Variety/Abelian/Fq/2/2/a_d}{\texttt{2.2.a\_d}}}
    \label{fig:isog-2-ord}
  \end{subfigure}
  \begin{subfigure}{0.32\textwidth}
    \centering
    \includegraphics[width=0.8\textwidth]{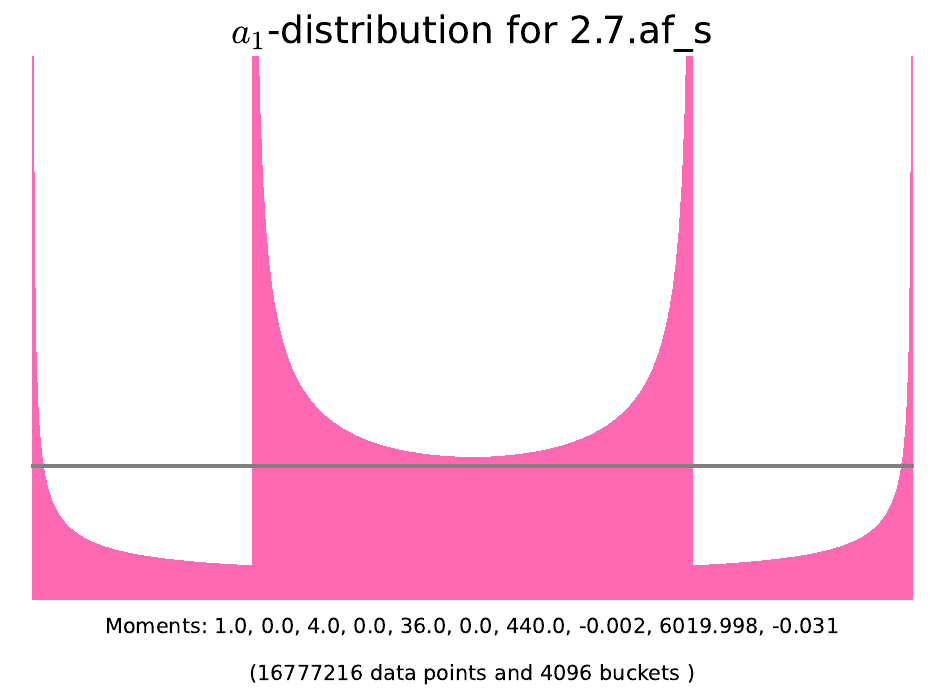}
    \caption{\href{https://www.lmfdb.org/Variety/Abelian/Fq/2/7/af_s}{\texttt{2.7.af\_s}}}
    \label{fig:isog-3-ord}
  \end{subfigure}
  \hfill
  \begin{subfigure}{0.32\textwidth}
    \centering
    \includegraphics[width=0.8\textwidth]{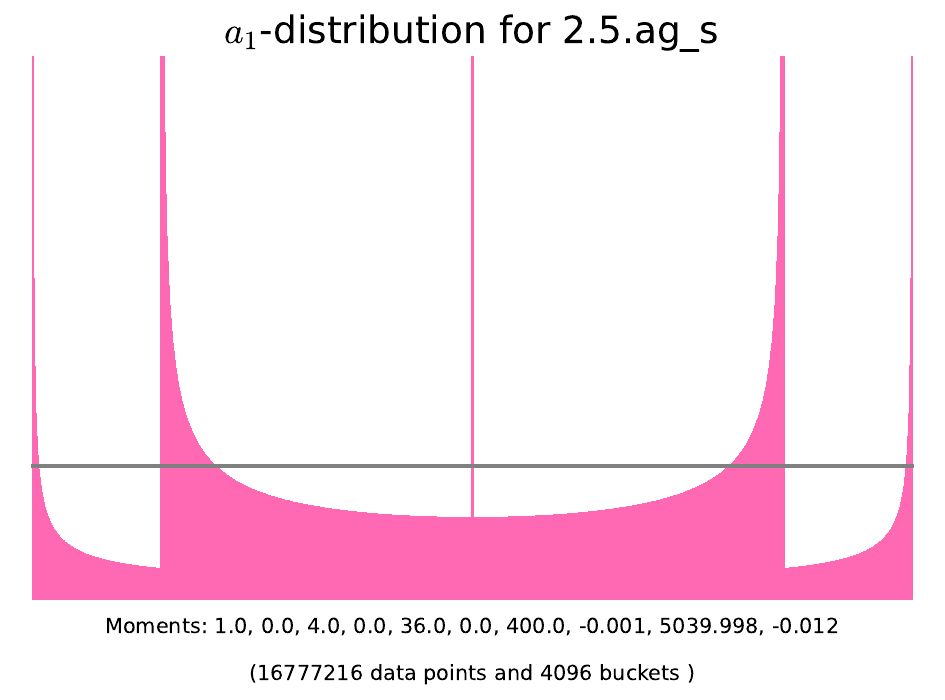}
    \caption{\href{https://www.lmfdb.org/Variety/Abelian/Fq/2/5/ag_s}{\texttt{2.5.ag\_s}}}
    \label{fig:isog-4-ord}
  \end{subfigure}
  \hfill
  \begin{subfigure}{0.32\textwidth}
    \centering
    \includegraphics[width=0.8\textwidth]{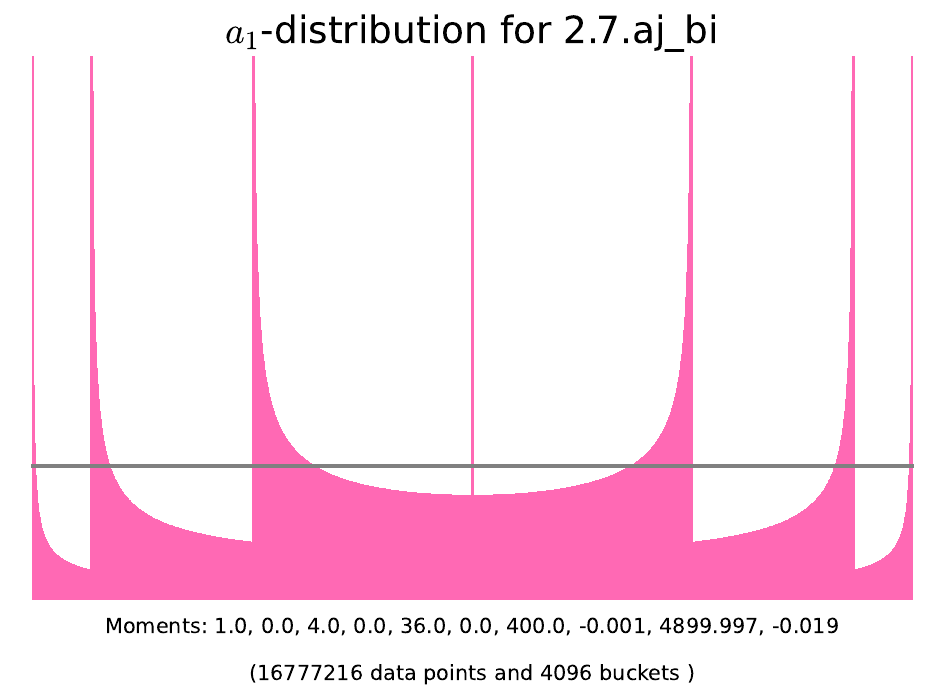}
    \caption{\href{https://www.lmfdb.org/Variety/Abelian/Fq/2/7/aj_bi}{\texttt{2.7.aj\_bi}}}
    \label{fig:isog-6-ord}
  \end{subfigure}
  \caption{$a_1$-histograms of non-simple ordinary abelian surfaces.}
  \label{fig:non-simple-ord-surfaces}
\end{figure}

\begin{table}[H]
  \setlength{\arrayrulewidth}{0.3mm} \setlength{\tabcolsep}{5pt}
  \renewcommand{\arraystretch}{1.3}
  \caption{Serre--Frobenius groups of non-simple ordinary surfaces
    $S = E_1\times E_2$.}
  \begin{longtable}{|c|c|c|c|c|}
    \hline
    \rowcolor{header_color} 
    Splitting type & $\cong$ class& Generator& Example & \Cref{fig:non-simple-ord-surfaces} \\ \hline
    $(E_1)_{\overline{\FF}_p} \not \sim (E_2)_{\overline{\FF}_p}$ & $\UU(1)^2$ & $(u_1,u_2)$ & \href{https://www.lmfdb.org/Variety/Abelian/Fq/2/3/ad_i}{\texttt{2.3.ad\_i}}  & \ref{fig:non-isog-ord} \\ \hline
    $E_1 \sim E_2$ & $\UU(1)$ & $(u_1, u_1)$ & \href{https://www.lmfdb.org/Variety/Abelian/Fq/2/2/ac_f}{\texttt{2.2.ac\_f}}  & \ref{fig:isog-ord} \\ \hline
    $(E_1)_{(2)} \sim (E_2)_{(2)}$ & $\UU(1)\times C_2 $ & $(u_1, -u_1)$ & \href{https://www.lmfdb.org/Variety/Abelian/Fq/2/2/a_d}{\texttt{2.2.a\_d}} & \ref{fig:isog-2-ord} \\ \hline
    $(E_1)_{(3)} \sim (E_2)_{(3)}$ & $\UU(1)\times C_3$ & $(u_1, \zeta_3u_1)$ & \href{https://www.lmfdb.org/Variety/Abelian/Fq/2/7/af_s}{\texttt{2.7.af\_s}}  & \ref{fig:isog-3-ord} \\ \hline
    $(E_1)_{(4)} \sim (E_2)_{(4)}$ & $\UU(1)\times C_4 $ & $(u_1, \zeta_4u_1)$ & \href{https://www.lmfdb.org/Variety/Abelian/Fq/2/5/ag_s}{\texttt{2.5.ag\_s}} & \ref{fig:isog-4-ord} \\ \hline
    $(E_1)_{(6)} \sim (E_2)_{(6)}$ & $\UU(1)\times C_6$ & $(u_1, \zeta_6u_1)$ & \href{https://www.lmfdb.org/Variety/Abelian/Fq/2/7/aj_bi}{\texttt{2.7.aj\_bi}}  & \ref{fig:isog-6-ord} \\ \hline
  \end{longtable}
  \addtocounter{table}{-1}
  \label{table:non-simple-ord-surfaces}
\end{table}

\subsection{Simple almost ordinary surfaces}
\label{sec:simple-ao-surf}

In \cite{lenstra1993} Lenstra and Zarhin carried out a careful study
of the multiplicative relations of Frobenius eigenvalues of simple
almost ordinary varieties (see \cref{subsec:background-av} for the
definition), which was later generalized in \cite{dupuy2022angle}. In
particular, they prove that even dimensional simple almost ordinary
abelian varieties have maximal angle rank (\cite[Theorem
5.8]{lenstra1993}). Since every abelian surface of $p$-rank $1$ is
almost ordinary, their result allows us to deduce the following.

\begin{lemma}[Node S-D in \Cref{fig:proof-2}]
  \label{lemma:simple-ao-surfaces}
  Let $S$ be a simple and almost ordinary abelian surface defined over
  $\FF_q$. Then $S$ has maximal angle rank $\delta = 2$ and
  $\SF(S) = \UU(1)^2$.
\end{lemma}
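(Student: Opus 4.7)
The plan is to reduce the statement to the cited Lenstra--Zarhin result on multiplicative relations and then invoke the structure theorem already proven. First I would verify that the hypothesis ``simple and almost ordinary'' puts us inside the range of Lenstra--Zarhin's Theorem 5.8. A $p$-rank $1$ abelian surface automatically has Newton polygon slopes $\{0, 1/2, 1\}$ with the slope $1/2$ appearing with multiplicity $2$, so $S$ is indeed almost ordinary in the precise sense used by Lenstra--Zarhin. Their theorem applies to \emph{even}-dimensional simple almost ordinary abelian varieties, and $g = 2$ is even, so we may conclude directly that $\delta_S = g = 2$, i.e.\ $S$ has maximal angle rank.

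Once maximal angle rank is in hand, the identification $\SF(S) = \UU(1)^2$ is essentially automatic from Theorem \ref{mainthm:structure-thm}. That theorem gives $\SF(S) \cong \UU(1)^{\delta_S} \times C_{m_S}$ as abstract topological groups, so $\SF(S)^\circ \cong \UU(1)^2$. But $\SF(S)$ sits inside the ambient $2$-torus $\UU(1)^2$ by Definition \ref{def:SF-group}, and any closed subgroup of $\UU(1)^2$ whose identity component has real dimension $2$ must coincide with $\UU(1)^2$ itself. In particular the component group $C_{m_S}$ is trivial and $\SF(S) = \UU(1)^2$ as a subgroup of $\UU(1)^2$.

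There is no real obstacle here beyond invoking the external input: once we have Lenstra--Zarhin's maximal angle rank result for even-dimensional simple almost ordinary abelian varieties, the rest is a dimension-count inside $\UU(1)^2$ via the structure theorem. It is worth noting that the analogous statement would fail in odd dimension, which is exactly why one must separately treat simple almost ordinary threefolds in the next section; this is where the even-dimensional hypothesis of Lenstra--Zarhin is essential, and it is the only nontrivial ingredient specific to this lemma.
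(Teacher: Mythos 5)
Your proof is correct and follows the same route as the paper: Lemma \ref{lemma:simple-ao-surfaces} is stated in the paper immediately after invoking \cite[Theorem 5.8]{lenstra1993}, with the identification $\SF(S) = \UU(1)^2$ following from Theorem \ref{mainthm:structure-thm} exactly as you say. Your dimension-count argument inside the ambient $\UU(1)^2$ to rule out a nontrivial component group is a reasonable way of making the final step explicit; one could equivalently observe that maximal angle rank forces $U_S$ to be free of rank $2$, hence torsion-free, so $m_S = 1$.
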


\subsection{Non-simple almost ordinary surfaces}
\label{sec:non-simple-ao-surf}
If $S$ is almost ordinary and not simple, then $S$ is isogenous to the product
of an ordinary elliptic curve $E_1$ and a supersingular elliptic curve $E_2$.
The corresponding Serre-Frobenius groups are sumarized in
\Cref{table:non-simple-ao-surfaces}.

\begin{lemma}[Node S-E in \Cref{fig:proof-2}]
  Let $S$ be a non-simple almost ordinary abelian surface defined over
  $\FF_q$. Then $S$ has angle rank $\delta =1$ and angle torsion order
  $m \in \brk{1, 2,3,4,6,8,12}$. Furthermore,
  $\SF(S) = \brk{(u,\zeta_m^\nu) : u\in \UU(1), \, \nu \in \ZZ/m\ZZ}
  \cong \UU(1)\times C_m$.
\end{lemma}

\begin{proof}
  Let $E_1$ be an ordinary elliptic curve and $E_2$ a supersingular
  elliptic curve such that $S \sim E_1\times E_2$. By
  \Cref{lemma:A1xB},
  $\SF(S) = \SF(E_1)\times\SF(E_2) \cong \UU(1)\times C_m$ with $m$ in
  the list of possible orders of Serre--Frobenius groups of
  supersingular elliptic curves.
\end{proof}

\begin{figure}[H]
  \centering
  \begin{subfigure}{0.24\textwidth}
    \centering
    \includegraphics[width=\textwidth]{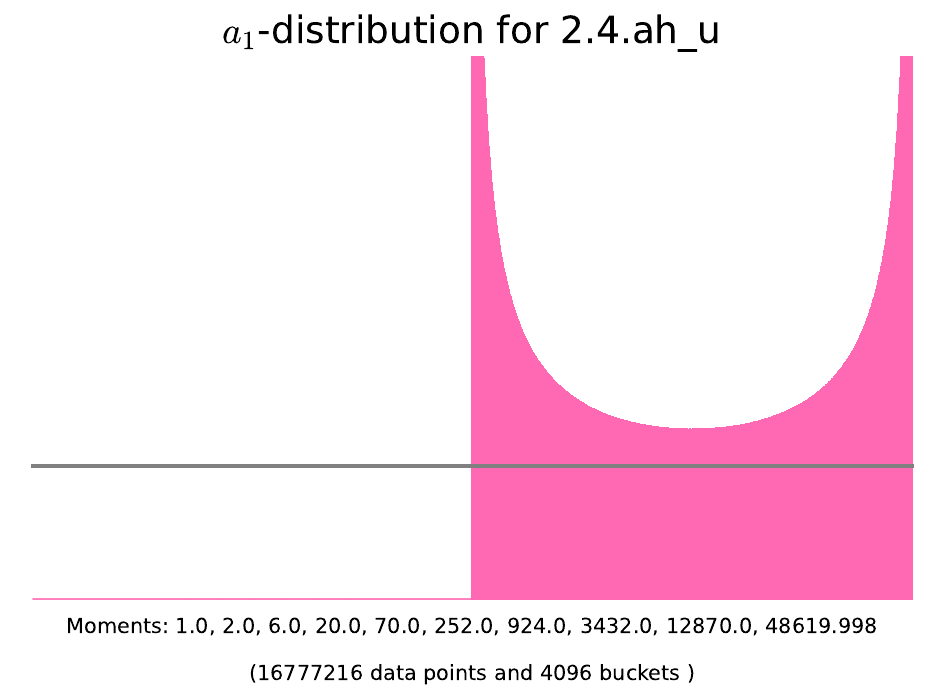}
    \caption{\href{https://www.lmfdb.org/Variety/Abelian/Fq/2/4/ah_u}{\texttt{2.4.ah\_u}}}
    \label{fig:Ux1}
  \end{subfigure}
  \hfill
  \begin{subfigure}{0.24\textwidth}
    \centering
    \includegraphics[width=\textwidth]{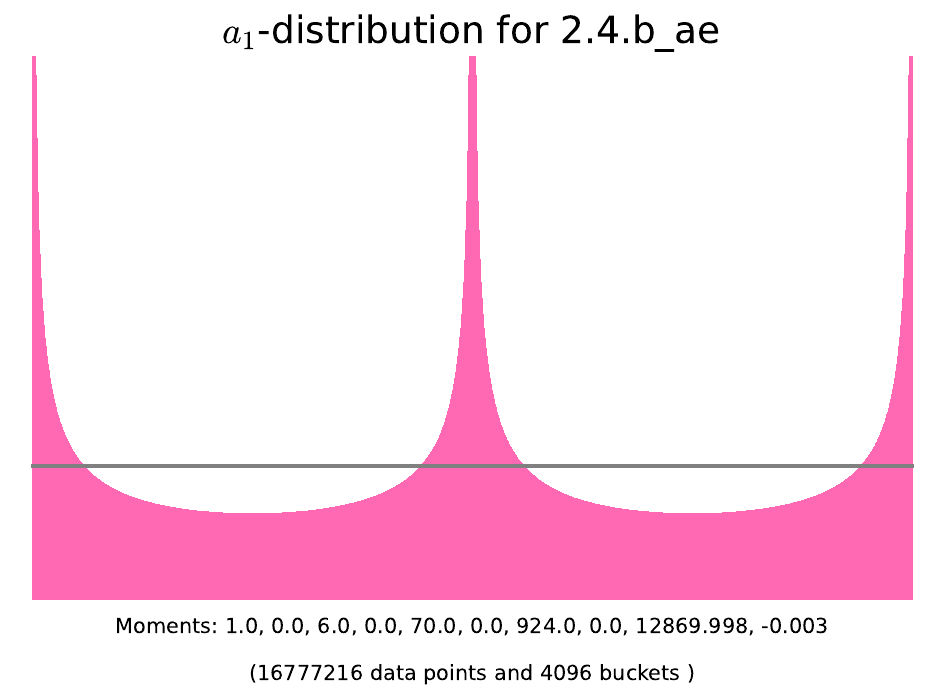}
    \caption{\href{https://www.lmfdb.org/Variety/Abelian/Fq/2/4/b_ae}{\texttt{2.4.b\_ae}}}
    \label{fig:Ux2}
  \end{subfigure}
  \hfill
  \begin{subfigure}{0.24\textwidth}
    \centering
    \includegraphics[width=\textwidth]{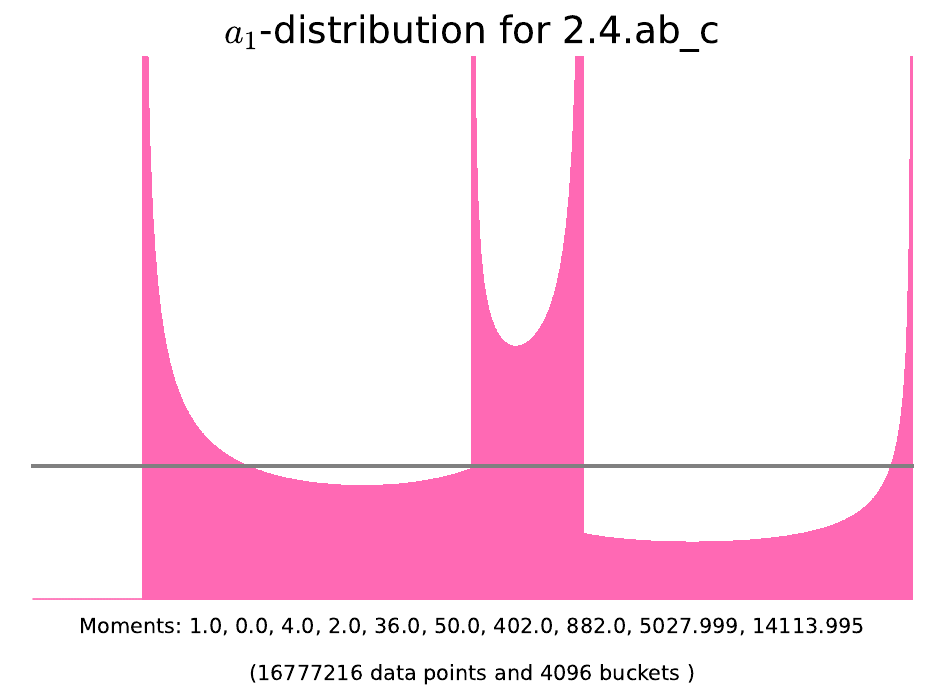}
    \caption{\href{https://www.lmfdb.org/Variety/Abelian/Fq/2/4/ab_c}{\texttt{2.4.ab\_c}}}
    \label{fig:Ux3}
  \end{subfigure}
  \hfill
  \begin{subfigure}{0.24\textwidth}
    \centering
    \includegraphics[width=\textwidth]{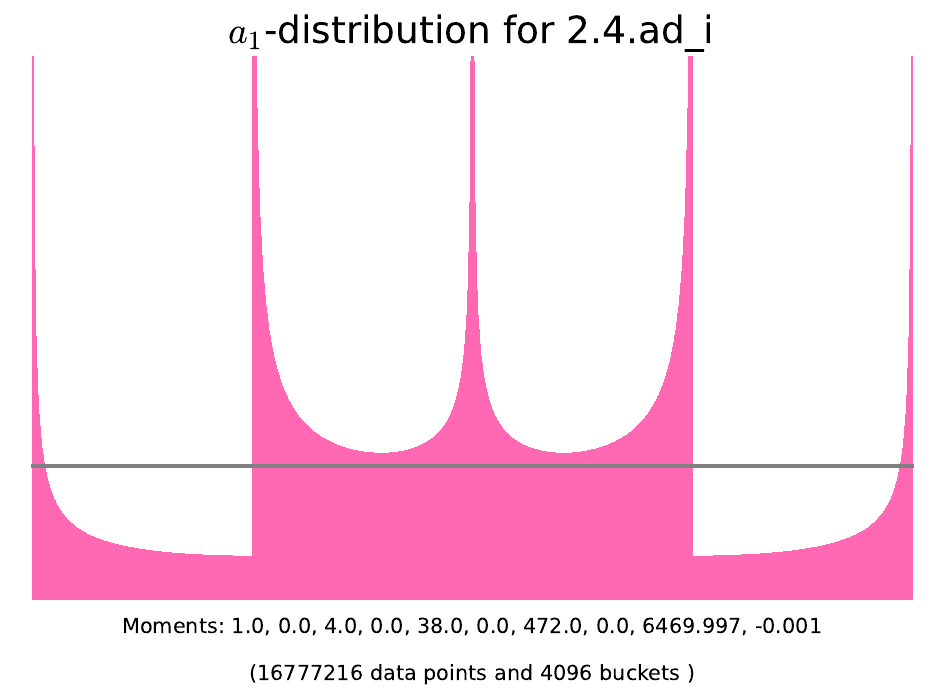}
    \caption{
      \href{https://www.lmfdb.org/Variety/Abelian/Fq/2/4/ad_i}{\texttt{2.4.ad\_i}}}
    \label{fig:Ux4}
  \end{subfigure}
  \hfill
  \begin{subfigure}{0.32\textwidth}
    \centering
    \includegraphics[width=0.8\textwidth]{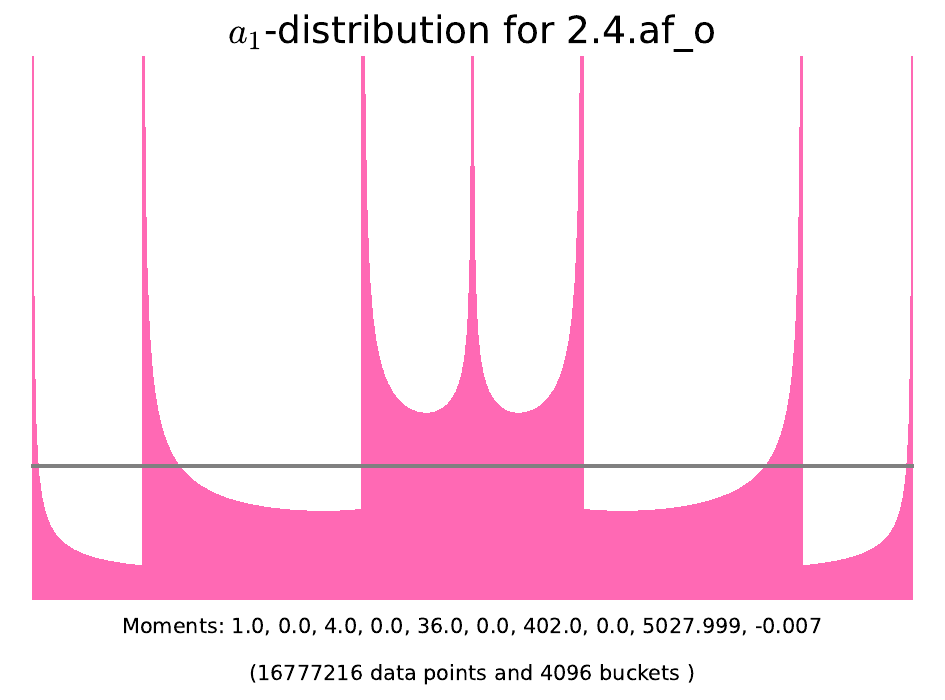}
    \caption{\href{https://www.lmfdb.org/Variety/Abelian/Fq/2/4/af_o}{\texttt{2.4.af\_o}}}
    \label{fig:Ux6}
  \end{subfigure}
  \hfill
  \begin{subfigure}{0.32\textwidth}
    \centering
    \includegraphics[width=0.8\textwidth]{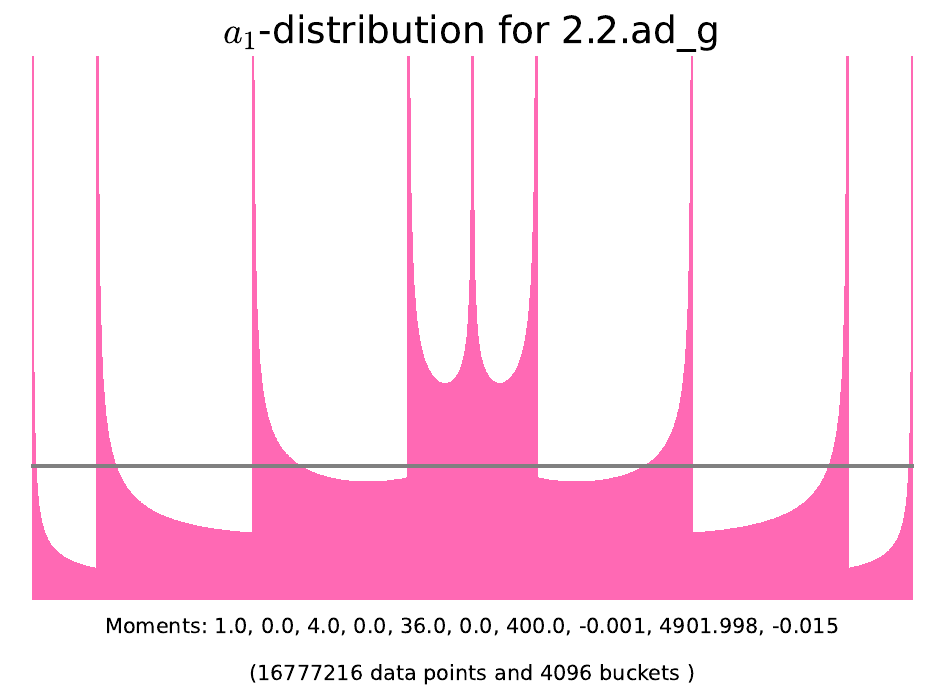}
    \caption{\href{https://www.lmfdb.org/Variety/Abelian/Fq/2/2/ad_g}{\texttt{2.2.ad\_g}}}
    \label{fig:Ux8}
  \end{subfigure}
  \hfill
  \begin{subfigure}{0.32\textwidth}
    \centering
    \includegraphics[width=0.8\textwidth]{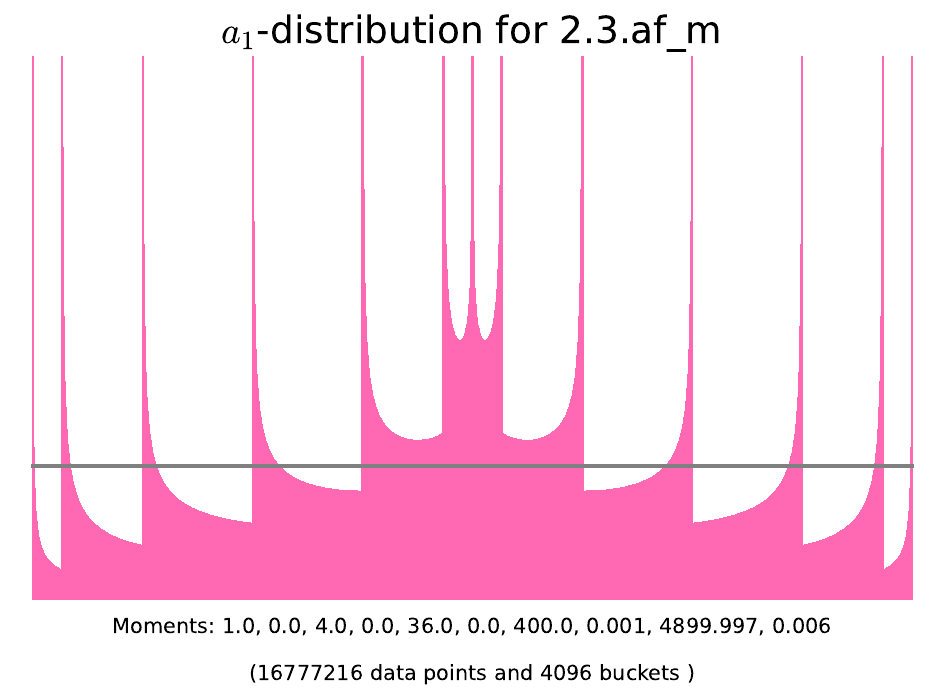}
    \caption{\href{https://www.lmfdb.org/Variety/Abelian/Fq/2/3/af_m}{\texttt{2.3.af\_m}}}
    \label{fig:Ux12}
  \end{subfigure}
  \caption{$a_1$-distributions of non-simple almost ordinary abelian
    surfaces.}
  \label{fig:non-simple-almost-ord-S}
\end{figure}

\begin{table}[H]
  \setlength{\arrayrulewidth}{0.3mm} \setlength{\tabcolsep}{5pt}
  \renewcommand{\arraystretch}{1.3}
  \caption{Serre--Frobenius groups of non-simple almost ordinary
    surfaces.}
  \begin{longtable}{|c|c|c|c|}
    \hline
    \rowcolor{header_color} 
    $\cong$ class& Generator& Example & \Cref{fig:non-simple-almost-ord-S} \\ \hline
    $\UU(1)$ & $(u_1,1)$ & \href{https://www.lmfdb.org/Variety/Abelian/Fq/2/4/ah_u}{\texttt{2.4.ah\_u}}  & \ref{fig:Ux1} \\ \hline
    $\UU(1)\times C_2$ & $(u_1, -1)$ & \href{https://www.lmfdb.org/Variety/Abelian/Fq/2/4/b_ae}{\texttt{2.4.b\_ae}}  & \ref{fig:Ux2} \\ \hline
    $\UU(1)\times C_3 $ & $(u_1, \zeta_3)$ & \href{https://www.lmfdb.org/Variety/Abelian/Fq/2/4/ab_c}{\texttt{2.4.ab\_c}} & \ref{fig:Ux3} \\ \hline
    $\UU(1)\times C_4$ & $(u_1, \zeta_4)$ & \href{https://www.lmfdb.org/Variety/Abelian/Fq/2/4/ad_i}{\texttt{2.4.ad\_i}}  & \ref{fig:Ux4} \\ \hline
    $\UU(1)\times C_6 $ & $(u_1, \zeta_6)$ & \href{https://www.lmfdb.org/Variety/Abelian/Fq/2/4/af_o}{\texttt{2.4.af\_o}} & \ref{fig:Ux6} \\ \hline
    $\UU(1)\times C_8$ & $(u_1, \zeta_8)$ & \href{https://www.lmfdb.org/Variety/Abelian/Fq/2/2/ad_g}{\texttt{2.2.ad\_g}}  & \ref{fig:Ux8} \\ \hline
    $\UU(1)\times C_{12}$ & $(u_1, \zeta_{12})$ & \href{https://www.lmfdb.org/Variety/Abelian/Fq/2/3/af_m}{\texttt{2.3.af\_m}}  & \ref{fig:Ux12} \\ \hline
  \end{longtable}
  \addtocounter{table}{-1}
  \label{table:non-simple-ao-surfaces}
\end{table}

\subsection{Simple supersingular surfaces}
\label{sec:simple-ss-surf}
Since every supersingular abelian variety is geometrically isogenous
to a power of an elliptic curve, the Serre--Frobenius group only
depends on the splitting degree. We separate our analysis into the
simple and non-simple cases.

The classification of Frobenius polynomials of supersingular abelian
surfaces over finite fields was completed by Maisner and Nart
\cite[Theorem 2.9]{maisner&nart2002} building on work of Xing
\cite{xing96} and R\"{u}ck \cite{ruck1990abelian}. Denoting by
$(a_1, a_2)$ the isogeny class of abelian surfaces over $\FF_q$ with
Frobenius polynomial $P_S(T) = T^4 + a_1T^3+ a_2T^2 + qa_1T + q^2$,
the following lemma gives the classification of Serre--Frobenius
groups of simple supersingular surfaces.

\begin{lemma}[Node S-F in \Cref{fig:proof-2}]
  \label{lemma:simple-ss-surfaces}
  Let $S$ be a simple supersingular abelian surface defined over
  $\FF_q$. The Serre--Frobenius group of $S$ is classified according
  to \Cref{table:ss-simple-surfaces}.
\end{lemma}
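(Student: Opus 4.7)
The plan is to combine the Maisner--Nart classification of Frobenius polynomials of simple supersingular abelian surfaces, cited just above the lemma, with Zhu's classification of supersingular minimal polynomials recalled in Table \ref{table:zhu}, and then to compute $\SF(S)$ directly in each case of the resulting list.

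As a first step I would note that since $S$ is supersingular, its angle rank satisfies $\delta_S = 0$, so Theorem \ref{mainthm:structure-thm} yields $\SF(S) \cong C_{m_S}$; concretely, $\SF(S)$ is the finite subgroup of $\UU(1)$ generated by the normalized Frobenius eigenvalues, each of which is a root of unity. Since $S$ is simple of dimension $2$, we have $P_S(T) = h_S(T)^{e_S}$ with $e_S \cdot \deg h_S = 4$, so $(\deg h_S, e_S) \in \{(4,1), (2,2)\}$; the minimal polynomial $h_S(T)$ therefore belongs to one of the three Zhu types (Z-1), (Z-2), (Z-3).

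The core of the argument is then a case-by-case enumeration: for each simple isogeny class $(a_1, a_2)$ on the Maisner--Nart list, I would extract $h_S(T)$, match it to a row of Table \ref{table:zhu}, read off the cyclotomic parameter $m$, and compute the cyclic subgroup of $\UU(1)$ generated by the displayed normalized roots in the last column of that table. For type (Z-1) this subgroup is literally $\langle\zeta_m\rangle = C_m$. For type (Z-2), the normalized roots $\pm \zeta_{2n}^j$ with $j$ coprime to $n$ generate $\langle \zeta_{2n}, -1\rangle$, whose order depends only on the parity of $n$ (subject to the constraint $m \not\equiv 2 \md 4$). Type (Z-3) is analogous, since the Jacobi-symbol twist only permutes the explicit representatives without changing the cyclic subgroup they generate. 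Each row of Table \ref{table:ss-simple-surfaces} should then emerge as one case of this analysis, and realizability over an appropriate $\FF_q$ is inherited from the corresponding existence statement in Maisner--Nart.

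The main obstacle will be the bookkeeping: aligning the Maisner--Nart parameters $(a_1, a_2)$ with Zhu's parameter $m$ through the factorization $P_S = h_S^{e_S}$, and checking in each case the conditions on $p$ and $d$ under which the isogeny class is nonempty. Once this dictionary is in place, the identification $\SF(S) \cong C_{m_S}$ is immediate from Theorem \ref{mainthm:structure-thm} together with the explicit descriptions of the normalized roots recorded in Table \ref{table:zhu}.
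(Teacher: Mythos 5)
Your proposal is correct and follows essentially the same route as the paper: combine the Maisner--Nart classification of Weil polynomials of simple supersingular surfaces with Zhu's normal forms (Table \ref{table:zhu}), compute the normalized minimal polynomial $\tilde{h}(T)$ in each case, and read off the cyclic group generated by its roots. The paper carries out one representative row explicitly and leaves the remaining cases to the same computation, exactly as you describe.
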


\begin{table}[H]
  \setlength{\arrayrulewidth}{0.3mm} \setlength{\tabcolsep}{5pt}
  \renewcommand{\arraystretch}{1.3}
  \caption{Serre--Frobenius groups of simple supersingular surfaces.}
  \begin{longtable}{|c|c|c|c|c|c|c|c|}
    \hline
    \rowcolor{header_color} 
    $(a_1, a_2)$       & $p$                    & $d$  & $e$ & Type & $\tilde{h}(T)$               & $\SF(S)$ & Example \\ \hline
    $(0,0)$            & $\not\equiv 1 \md 8$   & even & $1$ & Z-1  & $\Phi_8(T)$                  & $C_8$  &  \href{https://www.lmfdb.org/Variety/Abelian/Fq/2/4/a_a}{\texttt{2.4.a\_a}}\\ \hline
    $(0,0)$            & $\neq 2$               & {odd}  & $1$ & Z-2  & $\Phi_8(T)$                  & $C_8$ &  \href{https://www.lmfdb.org/Variety/Abelian/Fq/2/3/a_a}{\texttt{2.3.a\_a}} \\ \hline
    $(0,q)$            & -                      & odd  & $1$ & Z-2  & $\Phi_3(T^2)$                & $C_6$  & \href{https://www.lmfdb.org/Variety/Abelian/Fq/2/2/a_c}{\texttt{2.2.a\_c}} \\ \hline
    $(0,-q)$           & $\not \equiv 1 \md 12$ & even & $1$ & Z-1  & $\Phi_{12}(T)$               & $C_{12}$ &\href{https://www.lmfdb.org/Variety/Abelian/Fq/2/4/a_ae}{\texttt{2.4.a\_ae}}
    \\ \hline
    $(0,-q)$           & $\neq 3$               & odd  & $1$ & Z-2  & $\Phi_6(T^2) = \Phi_{12}(T)$ & $C_{12}$ & \href{https://www.lmfdb.org/Variety/Abelian/Fq/2/2/a_ac}{\texttt{2.2.a\_ac}}\\ \hline
    $(\sqrt{q},q)$     & $\not\equiv 1 \md 5$   & even & $1$ & Z-1  & $\Phi_5(T)$                  & $C_5$  & \href{https://www.lmfdb.org/Variety/Abelian/Fq/2/4/c_e}{\texttt{2.4.c\_e}} \\ \hline
    $(-\sqrt{q},q)$    & $\not\equiv 1 \md 5$   & even & $1$ & Z-1  & $\Phi_{10}(T) = \Phi_5(-T)$  & $C_{10}$ & \href{https://www.lmfdb.org/Variety/Abelian/Fq/2/4/ac_e}{\texttt{2.4.ac\_e}}\\ \hline
    $(\sqrt{5q}, 3q)$  & $=5$   & odd  & $1$ & Z-3  & $\Psi_{5,1}(T) $             & $C_{10}$ &  \href{https://www.lmfdb.org/Variety/Abelian/Fq/2/5/f_p}{\texttt{2.5.f\_p}}\\ \hline
    $(-\sqrt{5q}, 3q)$ & $=5$                   & odd  & $1$ & Z-3  & $\Psi_{5,1}(-T) $            & $C_{10}$ & \href{https://www.lmfdb.org/Variety/Abelian/Fq/2/5/af_p}{\texttt{2.5.af\_p}}\\ \hline
    $(\sqrt{2q},q)$    & $=2$     & odd  & $1$ & Z-3  & $\Psi_{2,3}(T)$              & $C_{24}$ & \href{https://www.lmfdb.org/Variety/Abelian/Fq/2/2/c_c}{\texttt{2.2.c\_c}} \\ \hline
    $(-\sqrt{2q},q)$   & $=2$                   & odd  & $1$ & Z-3  & $\Psi_{2,3}(-T)$             & $C_{24}$ &\href{https://www.lmfdb.org/Variety/Abelian/Fq/2/2/ac_c}{\texttt{2.2.ac\_c}}\\ \hline
    $(0,-2q)$          & -                      & odd  & $2$ & Z-2  & $\Phi_1(T^2)$                  & $C_2$  & \href{https://www.lmfdb.org/Variety/Abelian/Fq/2/2/a_ae}{\texttt{2.2.a\_ae}} \\ \hline
    $(0, 2q)$          & $\equiv 1\md 4$        & even & $2$ & Z-1  & $\Phi_4(T)$                  & $C_4$   & \href{https://www.lmfdb.org/Variety/Abelian/Fq/2/25/a_by}{\texttt{2.25.a\_by}}\\ \hline
    $(2\sqrt{q}, 3q)$  & $\equiv 1 \md 3$       & even & $2$ & Z-1  & $\Phi_3(T)$                  & $C_3$   &\href{https://www.lmfdb.org/Variety/Abelian/Fq/2/49/o_fr}{\texttt{2.49.o\_fr}} \\ \hline
    $(-2\sqrt{q}, 3q)$ & $\equiv 1 \md 3$       & even & $2$ & Z-1  & $\Phi_6(T) = \Phi_3(-T)$     & $C_6$   &\href{https://www.lmfdb.org/Variety/Abelian/Fq/2/49/ao_fr}{\texttt{2.49.ao\_fr}} \\ \hline
  \end{longtable}
  \addtocounter{table}{-1}
  \label{table:ss-simple-surfaces}
\end{table}
The notation for polynomials of type Z-3 is taken from
\cite{Singh&McGuire&Zaytsev2014}, where the authors classify simple
supersingular Frobenius polynomials for $g\leq 7$. We
have \begin{align}
       \label{eq:psi_5,1} \Psi_{5,1}(T) &\colonequals
                                          \prod_{a\in(\ZZ/5)\unit}\paren{T-\paren{\tfrac{a}{5}}\zeta_5^a} =
                                          T^4 + \sqrt{5}T^3 + 3T^2 + \sqrt{5}T + 1, \\
       \label{eq:psi_2,3} \Psi_{2,3}(T) &\colonequals
                                          \prod_{a\in(\ZZ/3)\unit}\paren{T-\zeta_8\zeta_3^a}\paren{T-\overline{\zeta}_8\zeta_3^a}=
                                          T^4 + \sqrt{2}T^3 + T^2 + \sqrt{2}T + 1.  \end{align}

We exhibit the proof of the second line in \Cref{table:ss-simple-surfaces}
for exposition. The remaining cases can be checked
similarly. If $(a_1,a_2) = (0,0)$, $p \neq 2$ and $q$ is an odd
power of $p$: then, $P(T) = T^4 + q^2 =
\sqrt{q}^{4}\Phi_8(T/\sqrt{q})
= q^2\Phi_4(T^2/q)$ and $\tilde{h}(T) = \Phi_8(T)$. Thus $U_S$ is
generated by a primitive 8th root of unity.

\subsection{Non-simple supersingular surfaces}
\label{sec:non-simple-ss-surf}
If $S$ is a non-simple supersingular surface, then
$S$ is isogenous to a product of two supersingular elliptic
curves $E_1$ and $E_2$. If $m_{E_1}$ and $m_{E_2}$ denote
the torsion orders of $E_1$ and $E_2$ respectively, then
the extension over which $E_1$ and $E_2$ become isogenous is
precisely $\lcm(m_{E_1}, m_{E_2})$. Thus, we have the
following result, depending on the values of $q=p^d$ as in
\Cref{table:elliptic-curves}.

\begin{lemma}[Node S-G in
  \Cref{fig:proof-2}]
  \label{lemma:non-simple-SS-surfaces}
  Let $S$ be a non-simple supersingular abelian
  surface defined over $\FF_q$. Then $S$ has angle
  rank $\delta = 0$ and $\SF(S) = C_m$ for $m$ in
  the set $M = M(p, d)$ described in
  \Cref{table:non-simple-ss-S-torsion}.
\end{lemma}

\begin{table}[H]
  \setlength{\arrayrulewidth}{0.3mm} \setlength{\tabcolsep}{5pt}
  \renewcommand{\arraystretch}{1.3}
  \caption{Angle torsion set for non-simple supersingular surfaces
    defined over $\FF_q$, with $q = p^d$.}
  \label{table:non-simple-ss-S-torsion}
  \begin{longtable}{|c|c|c|}
    \hline
    \rowcolor{header_color} 
    $d$  & $p$                    & $M(p,d)$      \\ \hline
    Even & -                      & $\{1,2\}$     \\ \hline
    Even & $p \not\equiv 1 \md 3$ & $\{1,2,3,6\}$ \\ \hline
    Even & $p \not\equiv 1 \md 4$ & $\{1,2,4\}$   \\ \hline
    Odd  & -                      & $\{4\}$       \\ \hline
    Odd  & $p=2$                  & $\{4,8\}$     \\ \hline
    Odd  & $p=3$                  & $\{4,12\}$    \\ \hline
  \end{longtable}
  \addtocounter{table}{-1}
\end{table}

\section{Abelian Threefolds}
\label{sec:threefolds}

In this section, we classify the Serre--Frobenius groups of abelian
threefolds (see \Cref{fig:proof-3}). Let $X$ be an abelian variety of
dimension $3$ defined over $\FF_q$. For our analysis, we will first
stratify the cases by $p$-rank and then by simplicity. Before we
proceed, we make some observations about simple threefolds that will
be useful later.

\begin{figure}[h]
  \centering \includegraphics[width=1.02\textwidth]{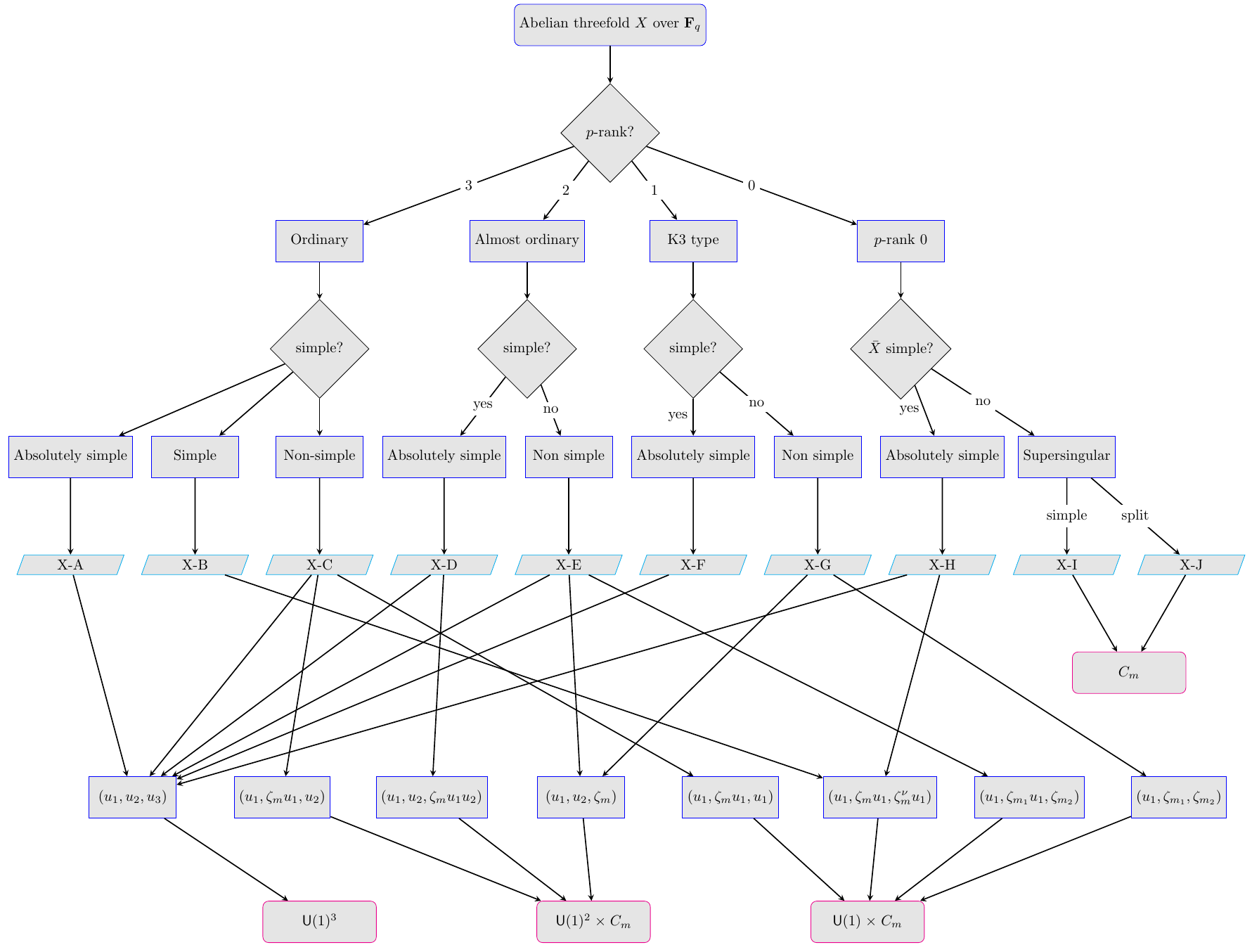}
  \caption{\Cref{mainthm:threefolds}: Classification in dimension 3.}
  \label{fig:proof-3}
\end{figure}

\subsection{Simple abelian threefolds}
\label{subsec:simple-threefolds}

If $X$ is a simple abelian threefold, there are only two possibilities
for the Frobenius polynomial $P_X(T) = h_X(T)^e$:
\begin{align}
  P_X(T) =& \, h_X(T) \label{case:e=1}\\
  P_X(T) =& \, h_X(T)^3. \label{case:e=3}
\end{align}
Indeed, if $h_X(T)$ were a linear or cubic polynomial, it would have a
real root $\pm \sqrt{q}$. By an argument of Waterhouse \cite[Chapter
2]{waterhouse1969abelian}, the $q$-Weil numbers $\pm \sqrt{q}$ must
come from simple abelian varieties of dimension 1 or 2.  Further, Xing
\cite{xing94} showed that \Cref{case:e=3} can only occur in very
special cases.

\begin{theorem}[\cite{xing94}, {\cite[Proposition 1.2]{haloui}}]
  \label{thm:xing}
  Let $X$ be a simple abelian threefold over $\FF_q$. Then,
  $P_X(T) = h_X(T)^3$ if and only if $3$ divides $d = \log_p(q)$ and
  $h_X(T) = T^2 + aq^{1/3}T+q$ with $\gcd(a,p) = 1$.
\end{theorem}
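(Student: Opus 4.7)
The plan is to use Honda--Tate theory to analyze when a simple threefold has a quadratic minimal polynomial of Frobenius. The hypothesis $P_X(T) = h_X(T)^3$ forces $\deg h_X = 2$, and since $X$ is simple and not isogenous to an abelian variety coming from the real Weil numbers $\pm\sqrt{q}$ (as noted in Section \ref{subsec:simple-threefolds}), the roots of $h_X$ form a complex conjugate pair $\alpha, \overline{\alpha}$ of $q$-Weil numbers with $\alpha\overline{\alpha} = q$. Thus $h_X(T) = T^2 + bT + q$ for some $b \in \ZZ$, and the Honda--Tate dimension formula $2\dim X = [\QQ(\alpha):\QQ]\cdot e$, where $e^2 = [\mathrm{End}^0(X) : \QQ(\alpha)]$, yields $e = 3$. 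Recall that $e$ equals the least common multiple of the denominators of the local invariants of $\mathrm{End}^0(X)$ at places of $F \colonequals \QQ(\alpha)$ above $p$.

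Next, I would enumerate the splitting types of $p$ in the imaginary quadratic field $F$. If $p$ is inert or ramified, the unique place $v \mid p$ is fixed by complex conjugation, so $\alpha\overline{\alpha} = q$ gives $v(\alpha) = v(q)/2$, and the formula $\mathrm{inv}_v(\mathrm{End}^0(X)) = [F_v:\Qp] \cdot v(\alpha)/v(q)$ yields an integer invariant, contradicting $e = 3$. Therefore $p$ splits as $\mathfrak{p}\bar{\mathfrak{p}}$. Setting $m \colonequals v_\mathfrak{p}(\alpha)$, the relation $\alpha\overline{\alpha} = q$ forces $v_{\bar{\mathfrak{p}}}(\alpha) = d - m$, so the two local invariants are $m/d$ and $(d-m)/d$. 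Requiring their lcm of denominators to equal $3$ forces $3 \mid d$ and $m \in \brk{d/3, 2d/3}$, giving the condition $3 \mid \log_p q$. Since $v_\mathfrak{p}(\alpha)$ and $v_\mathfrak{p}(\overline{\alpha})$ then differ, no cancellation occurs in $b = -(\alpha + \overline{\alpha})$, so $v_p(b) = d/3$; writing $b = a\, q^{1/3}$ with $a \in \ZZ$ and $\gcd(a,p) = 1$ gives the claimed form of $h_X$.

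For the converse, I would start from $h(T) = T^2 + a q^{1/3}T + q$ with $3 \mid d$, $\gcd(a,p)=1$, and satisfying the Weil bound. The discriminant $a^2 q^{2/3} - 4q$ reduces modulo $p$ to the nonzero square $a^2$, so $p$ splits in $F$; computing the roots of $h$ in $\Qp$ shows that the valuations of $\alpha$ at the two places above $p$ are $d/3$ and $2d/3$. The resulting local invariants $1/3$ and $2/3$ have lcm of denominators $3$, so Honda--Tate produces a simple abelian variety of dimension $e = 3$ with $P_X = h^3$. The main obstacle will be the local invariant bookkeeping, especially cleanly ruling out the inert and ramified cases and ensuring there are no hidden cancellations in $v_p(\alpha + \overline{\alpha})$; extra care is needed at $p = 2$, where the factor of $2$ in the quadratic formula interacts with the $p$-adic valuations.
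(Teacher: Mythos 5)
The paper cites this theorem from Xing and Haloui without reproducing a proof, so there is no in-text argument to compare against; I can only assess your proposal on its own merits.

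Your proposal is correct, and it is the standard Honda--Tate local-invariant argument that underlies the cited results. The key steps all check out: (i) the degree count $6 = e\cdot\deg h_X$ forces $\deg h_X = 2$ and $e = 3$, and the remark in Section~\ref{subsec:simple-threefolds} rules out real roots, so $h_X(T) = T^2 + bT + q$ with $F = \QQ(\alpha)$ imaginary quadratic; (ii) the invariant $\mathrm{inv}_v = \frac{v(\alpha)}{v(q)}\cdot[F_v:\QQ_p]\bmod 1$ is an integer when $p$ is inert or ramified (since $v(\alpha)/v(q)=1/2$ and $[F_v:\QQ_p]=2$), which contradicts $e=3$ and forces $p$ split; (iii) in the split case the invariants are $m/d$ and $(d-m)/d$, and the condition that their common denominator in lowest terms equals $3$ is $\gcd(m,d)=d/3$, equivalent to $3\mid d$ with $m\in\brk{d/3,2d/3}$; (iv) the distinct $\mathfrak{p}$-valuations of $\alpha$ and $\overline\alpha$ give $v_p(b)=d/3$ with no cancellation, yielding $b = aq^{1/3}$, $\gcd(a,p)=1$. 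The converse via the Newton polygon of $h$ over $\Qp$ (slopes $-2d/3$ and $-d/3$) is also correct, and your flag about $p=2$ resolves favorably: since $3\mid d$ forces $d\ge 3$, the term $4\cdot 2^{d/3}$ is divisible by $8$, so the discriminant's prime-to-$2$ part is $\equiv a^2\equiv 1\pmod 8$ and $2$ splits. Two cosmetic remarks: the phrase ``Honda--Tate produces a simple abelian variety of dimension $e=3$'' conflates $e$ with $\dim X$ --- the dimension is $e\cdot[F:\QQ]/2 = 3$, which happens to equal $e$ here; and strictly speaking, once $X$ is assumed to be a threefold with $\deg h_X=2$, the identity $P_X=h_X^3$ is forced by the degree count alone, so the converse direction of the ``iff'' is essentially immediate --- your Honda--Tate existence computation is proving the stronger (and more useful) statement that every such $h$ does arise from a simple threefold.
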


When \(P_X(T)\) is a cube as above, since $\gcd(a,p)=1$, the
\(q\)-adic valuation of its middle coefficient is the same as that of
$aq$, which in turn is \(1\). Thus, $X$ is non-supersingular of
$p$-rank $0$ and its Newton Polygon has slopes
$(\tfrac13, \tfrac13, \tfrac13, \tfrac23, \tfrac23,
\tfrac23)$. Furthermore, every simple abelian threefold is either
absolutely simple or geometrically isogenous to the cube of an
elliptic curve. Thus, we have the following.

\begin{lemma}
  If $X$ is an abelian threefold defined over $\FF_q$ that is not
  ordinary or supersingular, then $X$ is simple if and only if it is
  absolutely simple.
\end{lemma}

\begin{proof}
  Assume $X$ is a simple abelian threefold that is not ordinary or
  supersingular. Assume also that $X$ is not absolutely simple. Let
  $r>1$ be the splitting degree of $X$. Recall that since $X$ is
  simple, one either has $P_X(T) = h_X(T)$ or $P_X(T) = h_X(T)^3$,
  where $h_X(T)$ is irreducible of even degree. We will show that in
  each case $X_{(r)} \sim E^3$, contradicting the assumption that $X$
  is not ordinary or supersingular.

  Assume $P_X(T) = h_X(T)^3$, then $P_{X_{(r)}}(T) =
  h_{X,(r)}(T)^3$. Observe that necessarily $X_{(r)}$ has an elliptic
  curve $E/\FF_{q^r}$ as an isogeny factor. Then $P_E(T)$, a quadratic
  polynomial, must divide $P_{X_{(r)}}(T)$, and we conclude
  $P_E(T) = h_{X,(r)}(T)$. Thus $X_{(r)} \sim E^3$.

  Assume instead that $P_X(T) = h_X(T)$, that is, $P_X(T)$ is
  irreducible. Therefore $\QQ(\pi_X)$ is a degree $6$ extension and
  $\QQ(\pi_X) \hookrightarrow \mathrm{End}^0(X) \hookrightarrow
  \mathrm{End}^0(X_{(r)})$. Then by
  \cite[Theorem~1.3.1.1]{Chai-conrad-oort} $X_{(r)}$ is isotypic, so
  that $X_{(r)} \sim E^3$.
\end{proof}

In each case of the classification that follows, we will denote by
\(M\), the set of possible angle torsion orders that occur for that
case. When we want to emphasize the dependence on the prime \(p\) and
the power \(d\), we will denote this by \(M(p,d)\).

\subsection{Simple ordinary threefolds}
\label{sec:simple-ord-threefolds}
In this section, $X$ will denote a simple ordinary abelian threefold defined over $\FF_q$. The corresponding Serre-Frobenius groups are sumarized in \Cref{table:simple-ord-threefolds}.

As a corollary to
\Cref{thm:simple-ordinary-prime-splitting}, we have the following.

\begin{prop}
  Let $X$ be a simple ordinary abelian threefold defined over
  $\FF_q$. Then, exactly one of the following conditions is satisfied.
  \begin{enumerate}
  \item $X$ is absolutely simple.
  \item $X$ splits over a degree $3$ extension and
    $P_X(T) = T^{6}+a_3T^3 + q^3$.
  \item $X$ splits over a degree $7$ extension and the number field of
    $P_X(T)$ is $ \QQ(\zeta_7)$.
  \end{enumerate}
\end{prop}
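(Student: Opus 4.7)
The plan is to simply specialize Theorem~\ref{thm:simple-ordinary-prime-splitting} to the case $g = 3$ and extract the additional structural information in each case. Since $g = 3$ is an odd prime, the trichotomy in that theorem applies, and since $2g+1 = 7$ is also prime, the third case (the Sophie Germain case) is realized at $m = 7$. Thus Case~(1) of the theorem gives absolute simplicity, Case~(2) gives splitting over a degree~$3$ extension, and Case~(3) gives splitting over a degree~$7$ extension. It remains to pin down the shape of $P_X(T)$ in (2) and the number field $\QQ(\alpha)$ in (3).

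For Case~(2), recall from the proof of Theorem~\ref{thm:simple-ordinary-prime-splitting} that splitting over the prime degree extension $\FF_{q^g}$ falls under possibility~(i) of \cite[Lemma~5]{howe2002existence}: $P(T) \in \QQ[T^m]$ with $m = g$. For $g = 3$ this forces
\[
P_X(T) = T^{6} + b\, T^{3} + q^{3}
\]
for some $b \in \QQ$, and comparing with the convention $P_X(T) = \sum_{i=0}^{2g} a_i T^{2g-i}$ (with $a_0 = 1$) identifies $b = a_3$, yielding the stated form.

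For Case~(3), we are in possibility~(ii) of \cite[Lemma~5]{howe2002existence}: $K = \QQ(\alpha) = \QQ(\alpha^m, \zeta_m)$ for the minimal $m$ witnessing non-absolute-simplicity. The analysis in the proof of Theorem~\ref{thm:simple-ordinary-prime-splitting} shows that $\varphi(m) = 2g = 6$, so $m \in \{7, 9\}$. The $m = 9$ case is explicitly ruled out there by the argument using $\Gal(\QQ(\zeta_9)/\QQ(\zeta_3))$ showing that if $X$ split over a degree~$9$ extension, then it would already split over the degree~$3$ extension, contradicting the minimality of $m$. Therefore $m = 7$ and $\QQ(\alpha) = \QQ(\zeta_7)$.

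The main (and only) substantive input is Theorem~\ref{thm:simple-ordinary-prime-splitting} itself; the present proposition is essentially a book-keeping step translating its conclusions into information about $P_X(T)$. The only subtlety worth highlighting is the exclusion of $m = 9$, which is the non-obvious part of the prime dimension theorem and has already been carried out there, so nothing new is needed here.
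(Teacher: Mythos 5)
Your proposal is correct and takes exactly the route the paper itself intends: the paper states this proposition as a direct corollary of Theorem~\ref{thm:simple-ordinary-prime-splitting} without a separate proof, and your specialization to $g=3$ (reading off $P_X(T)\in\QQ[T^3]$ in case~(2) and $\QQ(\alpha)=\QQ(\zeta_7)$ in case~(3), with the exclusion of $m=9$ already done in the theorem's proof) is precisely the bookkeeping that is implicit there.
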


\begin{lemma}[Node X-A in \Cref{fig:proof-3}]
  \label{lemma:abs-simple-ordinary-threefolds}
  Let $X$ be an absolutely simple abelian threefold defined over
  $\FF_q$. Then $X$ has maximal angle rank $\delta = 3$ and
  $\SF(X) = \UU(1)^3$.
\end{lemma}
\begin{proof}
  Let $m = m_X$ be the order of the torsion subgroup of $\Gamma_X$. By
  \cite[Theorem 1.1]{Zarhin2015EigenFrob}, we have that $X_{(m)}$ is
  neat. Since $X_{(m)}$ is ordinary and simple, its Frobenius
  eigenvalues are distinct and non-real. Remark \ref{rmk:neat2}
  implies that $X_{(m)}$ has maximal angle rank. Since angle rank is
  invariant under base extension
  (\Cref{rmk:delta-invariant-base-extension}) we have that
  $\delta(X) = \delta(X_{(m)}) = 3$ as we wanted to show.
\end{proof}

\begin{lemma}[Node X-B in \Cref{fig:proof-3}]
  \label{lemma:ordinary-simple-threefolds}
  Let $X$ be a simple ordinary abelian threefold over $\FF_q$ that is
  not absolutely simple. Then $X$ has angle rank $1$ and
  \begin{enumerate}[label=(\alph*)]
  \item $\SF(X) \cong \UU(1)\times C_3$ if $X$ splits over a degree
    $3$ extension, or
  \item $\SF(X) \cong \UU(1)\times C_7$ if $X$ splits over a degree
    $7$ extension.
  \end{enumerate}
  Furthermore, in (a) and (b), we have that
  \begin{equation*}
    \SF(X) = \brk{(u, \xi_1^{\nu} u, \xi_2^{\nu}u) : u \in \UU(1), \nu \in \ZZ/m\ZZ},
  \end{equation*}
  with $\xi_1, \xi_{2}$ distinct primitive $m$-th roots of unity, for
  $m = 3$ and $m = 7$ respectively.
\end{lemma}

\begin{proof}
  From the proof of \Cref{thm:simple-ordinary-prime-splitting}, we
  have that the torsion free part of $U_X$ is generated by a fixed
  normalized root $u_1 = \alpha_1/\sqrt{q}$, and all other roots $u_j$
  for $1 < j \le g$ are related to $u_1$ by a primitive root of unity
  of order $3$ or $7$ respectively; $u_2 = \xi_1 u_1$ and
  $u_3 = \xi_2 u_1$ with $\xi_1 \neq \xi_2$.
\end{proof}

\begin{table}[H]
  \setlength{\arrayrulewidth}{0.3mm} \setlength{\tabcolsep}{5pt}
  \renewcommand{\arraystretch}{1.3}
  \caption{Serre--Frobenius groups of simple ordinary threefolds $X$.}
  \begin{longtable}{|c|c|c|c|c|}
    \hline
    \rowcolor{header_color} 
    Splitting type& $\cong$ class& Generator& Example & \Cref{fig:hist-simple-ord-X}\\ \hline
    Absolutely simple & $\UU(1)^3$ & $(u_1,u_2,u_3)$ & 
                                                       \href{https://www.lmfdb.org/Variety/Abelian/Fq/3/2/ad_f_ah}{ \texttt{3.2.ad\_f\_ah}}
                                                      & \ref{fig:headless-number-theorist} \\ \hline
    $X_{(3)} \sim E^3$ & $\UU(1) \times C_3$ & $(u_1, \zeta_3 u_1, \zeta_3^\nu u_1)$ & \href{https://www.lmfdb.org/Variety/Abelian/Fq/3/2/a_a_ad}{\texttt{3.2.a\_a\_ad}} & \ref{fig:cubic-split} \\ \hline
    $X_{(7)} \sim E^3$ & $\UU(1) \times C_7$ & $(u_1, \zeta_7 u_1, \zeta_7^\nu u_1)$ & \href{https://www.lmfdb.org/Variety/Abelian/Fq/3/2/ae_j_ap}{\texttt{3.2.ae\_j\_ap}} & \ref{fig:7-split} \\ \hline
  \end{longtable}
  \addtocounter{table}{-1}
  \label{table:simple-ord-threefolds}
\end{table}

\begin{figure}[H]
  \centering
  \begin{subfigure}{0.3\textwidth}
    \centering
    \includegraphics[width=0.8\textwidth]{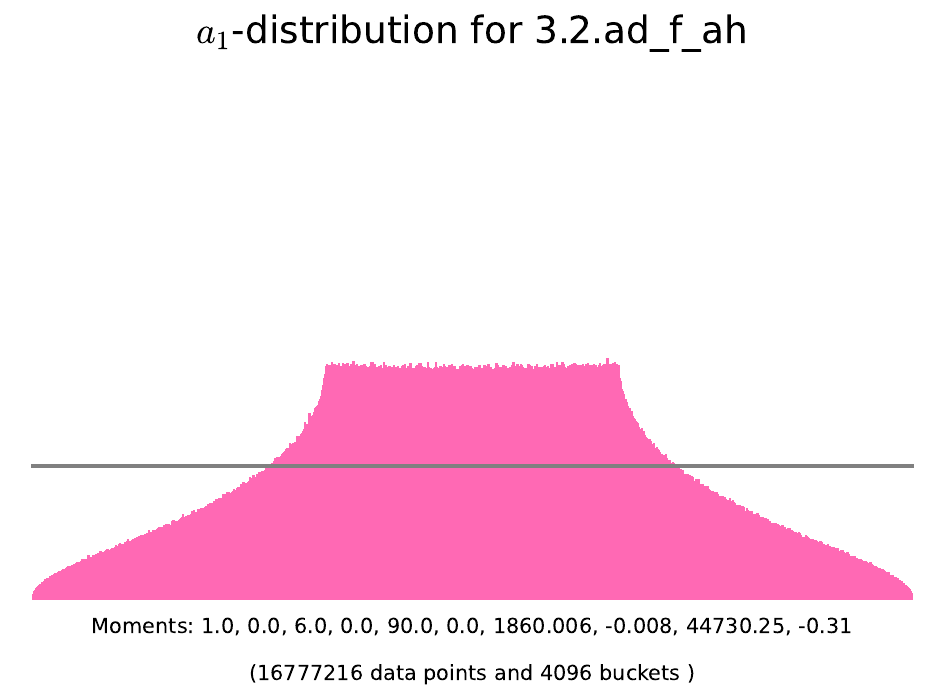}
    \caption{\href{https://www.lmfdb.org/Variety/Abelian/Fq/3/2/ad_f_ah}{\texttt{3.2.ad\_f\_ah}}}
    \label{fig:headless-number-theorist}
  \end{subfigure}
  \hfill
  \begin{subfigure}{0.3\textwidth}
    \centering
    \includegraphics[width=0.8\textwidth]{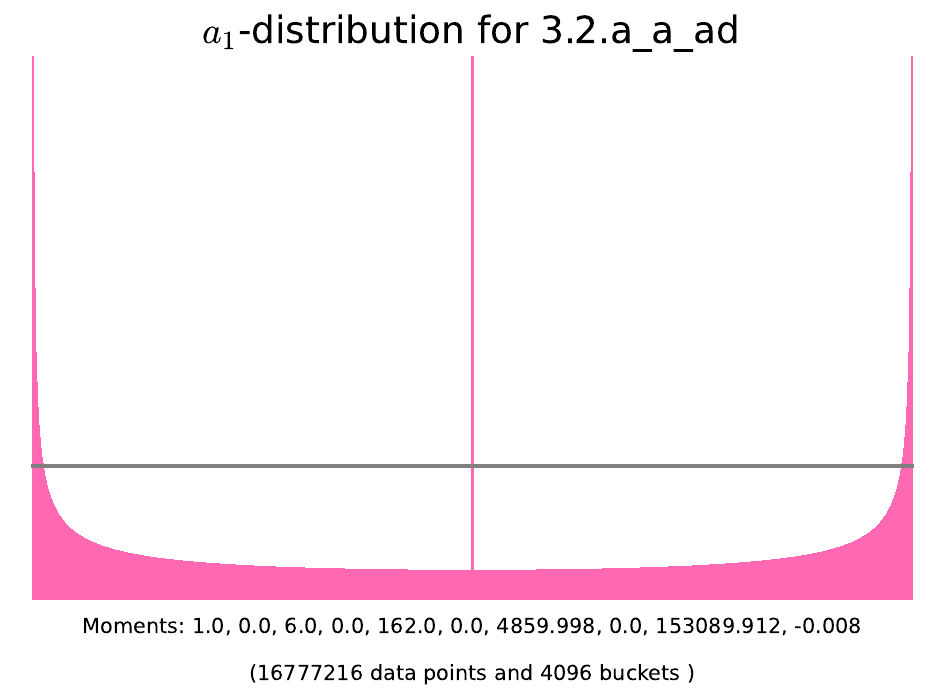}
    \caption{\href{https://www.lmfdb.org/Variety/Abelian/Fq/3/2/a_a_ad}{\texttt{3.2.a\_a\_ad}}}
    \label{fig:cubic-split}
  \end{subfigure}
  \hfill
  \begin{subfigure}{0.3\textwidth}
    \includegraphics[width=0.8\textwidth]{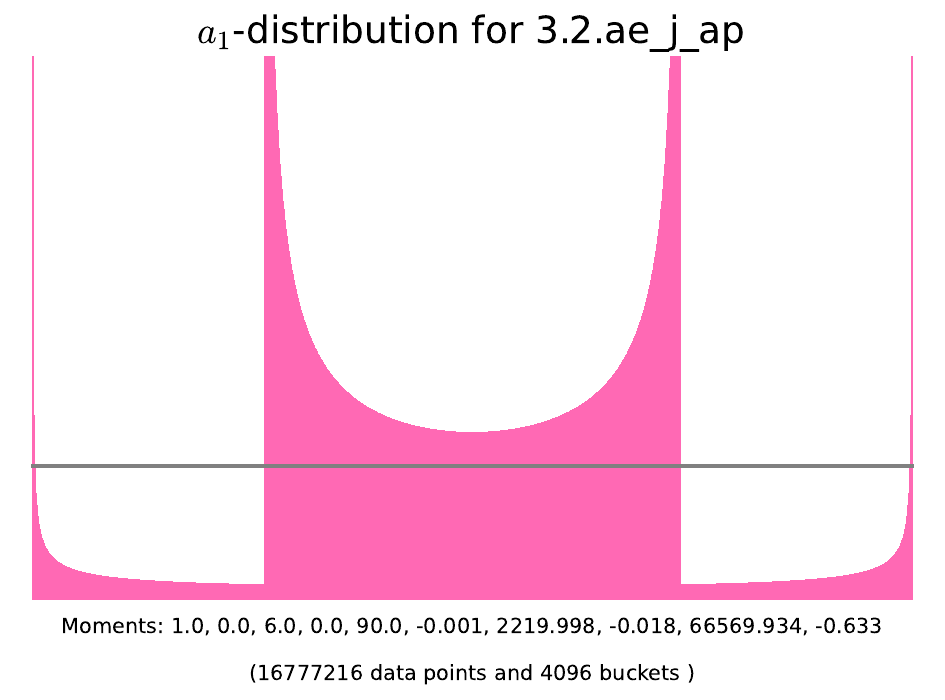}
    \caption{\href{https://www.lmfdb.org/Variety/Abelian/Fq/3/2/ae_j_ap}{\texttt{3.2.ae\_j\_ap}}}
    \label{fig:7-split}
  \end{subfigure}
  \caption{$a_1$-distributions for simple ordinary threefolds.}
  \label{fig:hist-simple-ord-X}
\end{figure}

\subsection{Non-simple ordinary threefolds}
\label{sec:non-sim-ord-threefolds}

Let $X$ be a non-simple ordinary abelian threefold defined over
$\FF_q$. Then $X$ is isogenous to a product $S\times E$, for some
ordinary surface $S$ and some ordinary elliptic curve $E$.

The Frobenius polynomial of $X$ is the product of those of $S$ and
$E$. Further, exactly one of the following is true for $S$: either it
is absolutely simple, or it is simple and geometrically isogenous to
the power of a single elliptic curve, or it is not simple (see
observation after \Cref{lemma:SA}). The Serre--Frobenius group of $X$
depends on its geometric isogeny decomposition, of which there are the
following four possibilities.
\begin{enumerate}[label=(\thesubsection-\alph*), leftmargin = 1.5cm]
\item \label{item:E^3} $X$ is geometrically isogenous to $E^3$.
\item \label{item:E1^2xE} $X$ is geometrically isogenous to
  $E_1^2\times E$, for some ordinary elliptic curve $E_1$, with
  $(E_1)_{\ol{\FF}_q} \not\sim (E)_{\ol{\FF}_q}$.
\item \label{item:E1xE2xE} $X$ is geometrically isogenous to
  $E_1 \times E_2 \times E$, for ordinary and pairwise geometrically
  non-isogenous elliptic curves $E_1, E_2$ and $E$.
\item \label{item:SxE} $X$ is geometrically isogenous to $S\times E$
  for an absolutely simple ordinary surface $S$ and an ordinary
  elliptic curve $E$.
\end{enumerate}

\begin{lemma}[Node X-C in \Cref{fig:proof-3}]
  Let $X$ be a non-simple ordinary abelian threefold over $\FF_q$. The
  possible Serre--Frobenius groups of $X$ are given in
  \Cref{table:non-simple-ord-threefolds}.
\end{lemma}

\begin{table}[H]
  \setlength{\arrayrulewidth}{0.3mm} \setlength{\tabcolsep}{5pt}
  \renewcommand{\arraystretch}{1.2}
  \caption{Serre--Frobenius groups of non-simple ordinary threefolds
    $X = S \times E$.}
  \label{table:non-simple-ord-threefolds}
  \begin{longtable}{|c|c|c|c|c|}
    \hline
    \rowcolor{header_color} 
    Splitting type& $\cong$ class $\SF(X)$ & Generator& $m \in M$ & Examples \\ \hline
    \ref{item:E^3} & $\UU(1)\times C_m$ & $(u_1, \zeta_m u_1, u_1)$ & $\brk{1,2,3,4,6}$ & \Cref{example:ns-ord-X-6.3a}\\ \hline
    \ref{item:E1^2xE} & $\UU(1)^2\times C_m$ & $(u_1, \zeta_m u_1, u_2)$ & $\brk{1,2,3,4,6}$ & \Cref{example:ns-ord-X-6.3b} \\ \hline
    \ref{item:E1xE2xE} & $\UU(1)^3$ & $(u_1,u_2,u_3)$ & $\brk{1}$ & \href{https://www.lmfdb.org/Variety/Abelian/Fq/3/5/ai_bi_ado}{\texttt{3.5.ai\_bi\_ado}} \\ \hline
    \ref{item:SxE} & $\UU(1)^3$ & $(u_1,u_2,u_3)$ & $\brk{1}$ & \href{https://www.lmfdb.org/Variety/Abelian/Fq/3/2/ad_h_al}{\texttt{3.2.ad\_h\_al}}
    \\ \hline
  \end{longtable}
  \addtocounter{table}{-1}
\end{table}

\begin{proof} Recall that $X \sim S\times E$ over $\FF_q$. \\
  \ref{item:E^3} If $X$ is geometrically isogenous to $E^3$, then $S$ is
  geometrically isogenous to $E^2$. By \Cref{lemma:B-geom-EC}
  $\SF(X) \cong \UU(1) \times C_m$, where $m$ is the splitting degree of
  $S$, and so $S_{(m)} \sim E^2$. By \cite[Theorem 6]{howe2002existence},
  we have that $m \in \{1, 2, 3, 4,6\}$. \\

  \ref{item:E1^2xE} In this case, by \Cref{lemma:B-geom-EC},
  $\SF(X) \cong \UU(1)^2 \times C_m$, where $m$ is the splitting
  degree of $S$ and $S_{(m)} \sim E_1^2$. As in the previous case,
  $m \in \{1,2,3,4,6\}$.\\

  \ref{item:E1xE2xE} In this case $S \sim E_1\times E_2$ over the base field.
  \Cref{lemma:poonen} implies $\delta_X = 3$. \\

  \ref{item:SxE} In this case, $X \sim S \times E$ with $S$ absolutely simple.
  By \cite[Theorem 1.1]{Zarhin2015EigenFrob}, we know that $X$ is neat. Since
  $X$ is ordinary and $S$ is simple, all Frobenius eigenvalues are distinct and
  not supersingular. By Remark \ref{rmk:neat2}, we conclude that
  $\delta_X = 3$.
\end{proof}

\begin{example}[Non-simple ordinary threefolds of splitting type
  \ref{item:E^3}]
  \label{example:ns-ord-X-6.3a}
  \hfill
  \begin{figure}[H]
    \centering
    \begin{subfigure}{0.32\textwidth}
      \centering
      \includegraphics[width=0.8\textwidth]{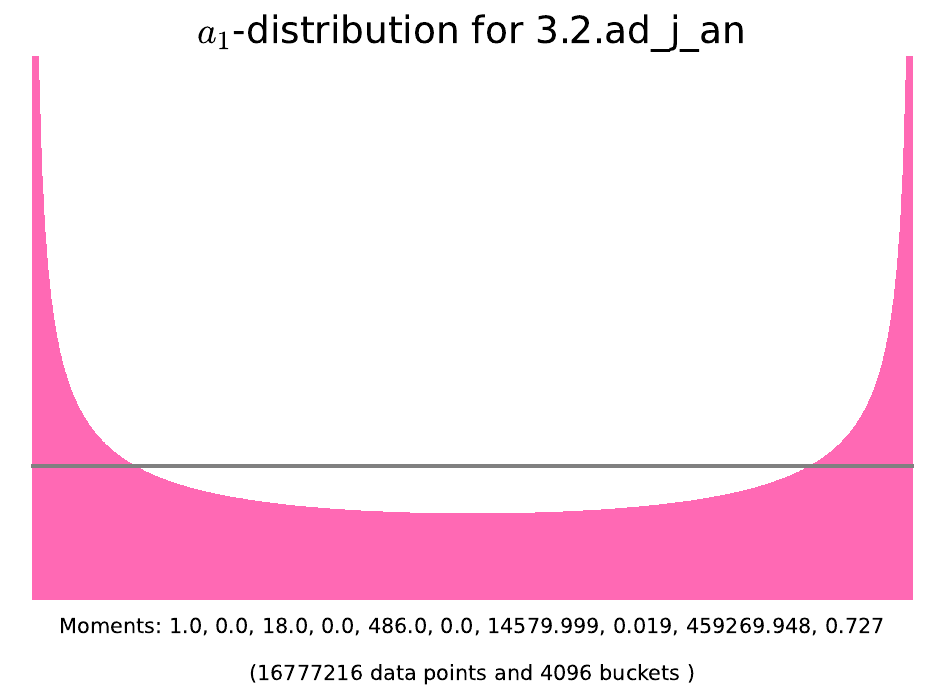}
      \caption{
        \href{https://www.lmfdb.org/Variety/Abelian/Fq/3/2/ad_j_an}{\texttt{3.2.ad\_j\_an}}}
    \end{subfigure}
    \hfill
    \begin{subfigure}{0.32\textwidth}
      \centering
      \includegraphics[width=0.8\textwidth]{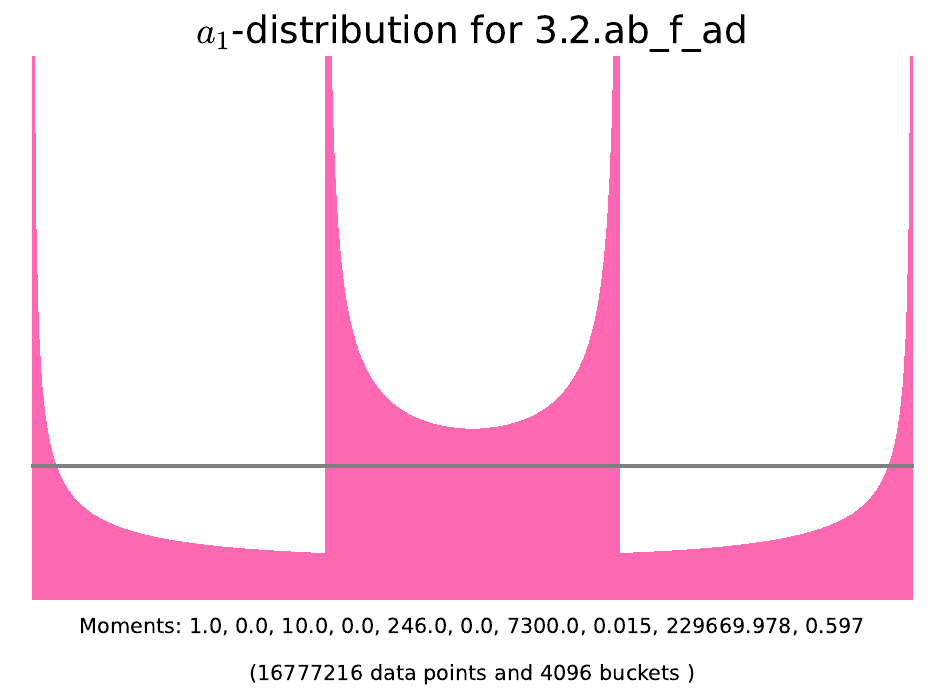}
      \caption{
        \href{https://www.lmfdb.org/Variety/Abelian/Fq/3/2/ab_f_ad}{\texttt{3.2.ab\_f\_ad}}}
    \end{subfigure}
    \hfill
    \begin{subfigure}{0.32\textwidth}
      \centering
      \includegraphics[width=0.8\textwidth]{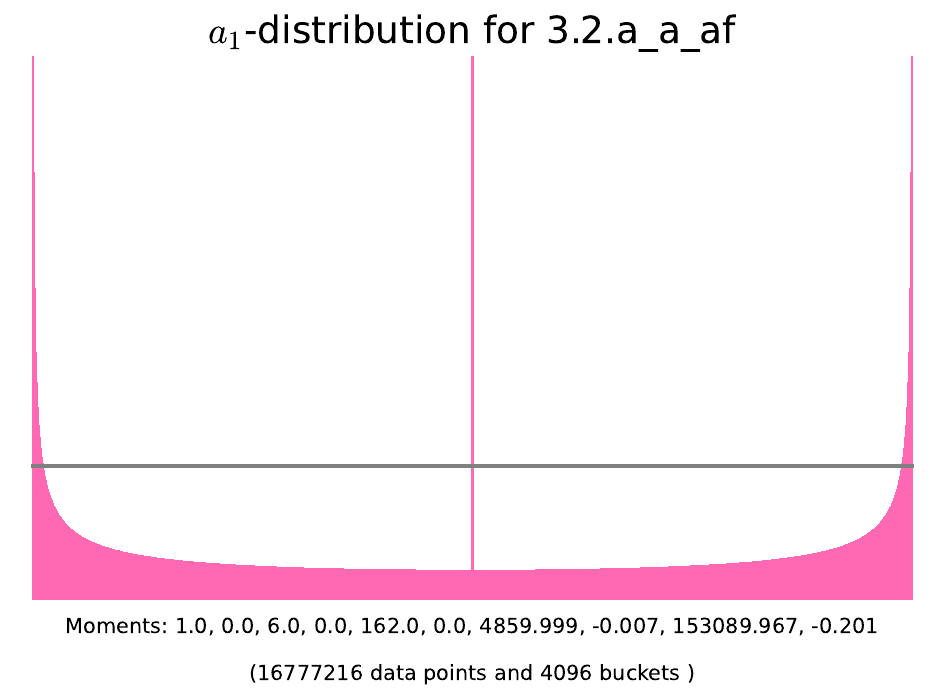}
      \caption{
        \href{https://www.lmfdb.org/Variety/Abelian/Fq/3/2/a_a_af}{\texttt{3.2.a\_a\_af}}}
    \end{subfigure}
    \hfill
    \begin{subfigure}{0.32\textwidth}
      \centering
      \includegraphics[width=0.8\textwidth]{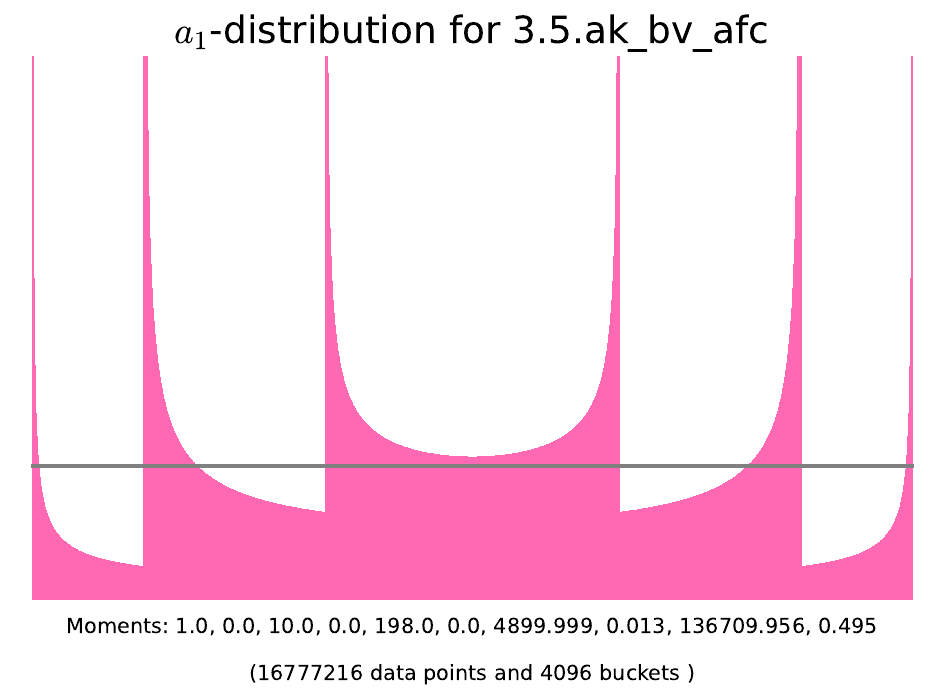}
      \caption{
        \href{https://www.lmfdb.org/Variety/Abelian/Fq/3/5/ak_bv_afc}{\texttt{3.5.ak\_bv\_afc}}}
    \end{subfigure}
    \hspace{2cm}
    \begin{subfigure}{0.32\textwidth}
      \centering
      \includegraphics[width=0.8\textwidth]{images/a1_2.7.aj_bi_16_6.pdf}
      \caption{
        \href{https://www.lmfdb.org/Variety/Abelian/Fq/3/7/ao_di_alk}{\texttt{3.7.ao\_di\_alk}}}
    \end{subfigure}
    \caption{$a_1$-distributions for non-simple ordinary abelian
      threefolds of splitting type \ref{item:E^3}.}
    \label{fig:non-simple-ord-threefolds-i}
  \end{figure}
\end{example}
  
\begin{example}[Non-simple ordinary threefolds of splitting type
  \ref{item:E1^2xE}] \hfill
  \label{example:ns-ord-X-6.3b}
  \hfill

\begin{figure}[H]
  \centering
  \begin{subfigure}{0.32\textwidth}
    \centering
    \includegraphics[width=0.8\textwidth]{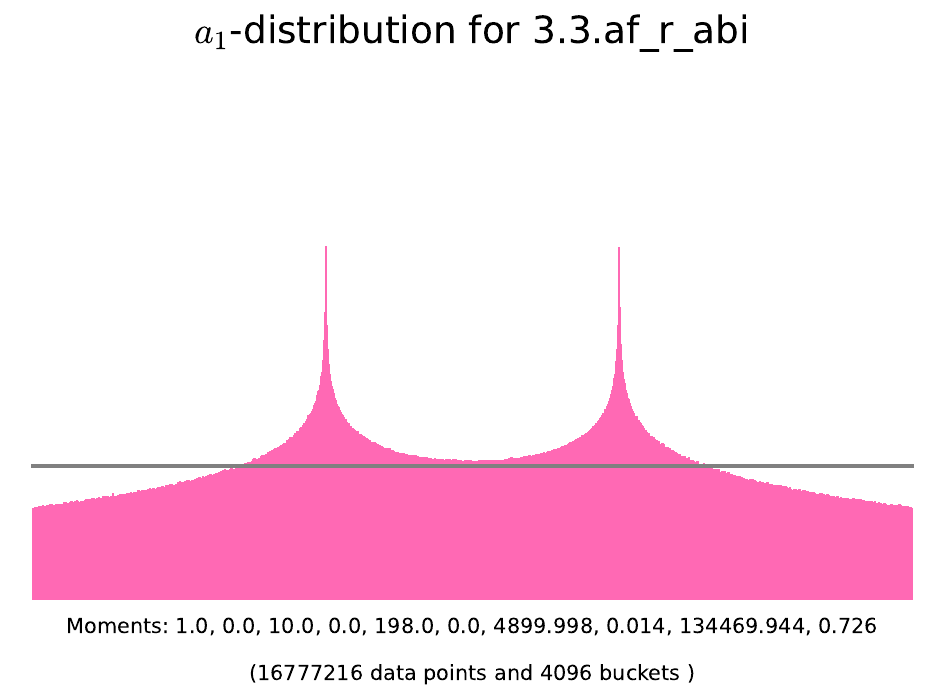}
    \caption{
      \href{https://www.lmfdb.org/Variety/Abelian/Fq/3/3/af_r_abi}{\texttt{3.3.af\_r\_abi}}}
  \end{subfigure}
  \hfill
  \begin{subfigure}{0.32\textwidth}
    \centering
    \includegraphics[width=0.8\textwidth]{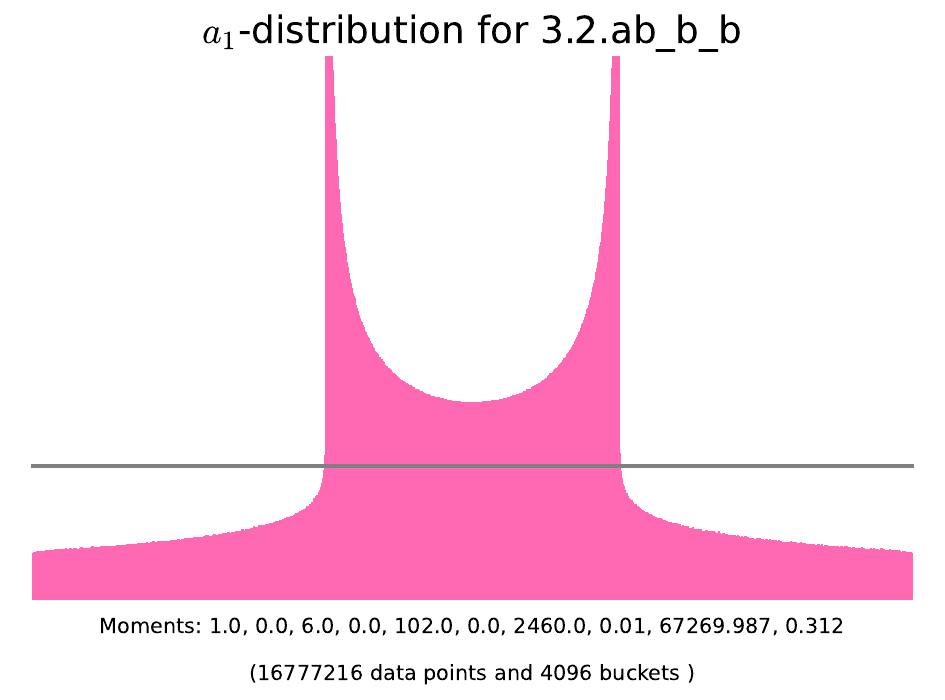}
    \caption{\href{https://www.lmfdb.org/Variety/Abelian/Fq/3/2/ab_b_b}{\texttt{3.2.ab\_b\_b}}}
  \end{subfigure}
  \hfill
  \begin{subfigure}{0.32\textwidth}
    \centering
    \includegraphics[width=0.8\textwidth]{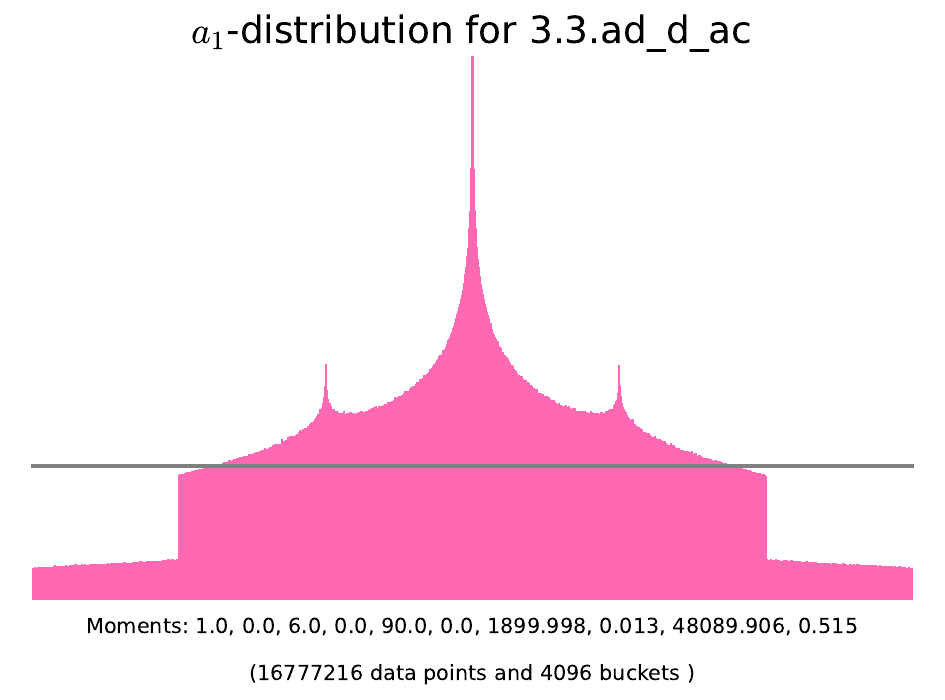}
    \caption{\href{https://www.lmfdb.org/Variety/Abelian/Fq/3/3/ad_d_ac}{\texttt{3.3.ad\_d\_ac}}}
  \end{subfigure}
  \hfill
  \begin{subfigure}{0.32\textwidth}
    \centering
    \includegraphics[width=0.8\textwidth]{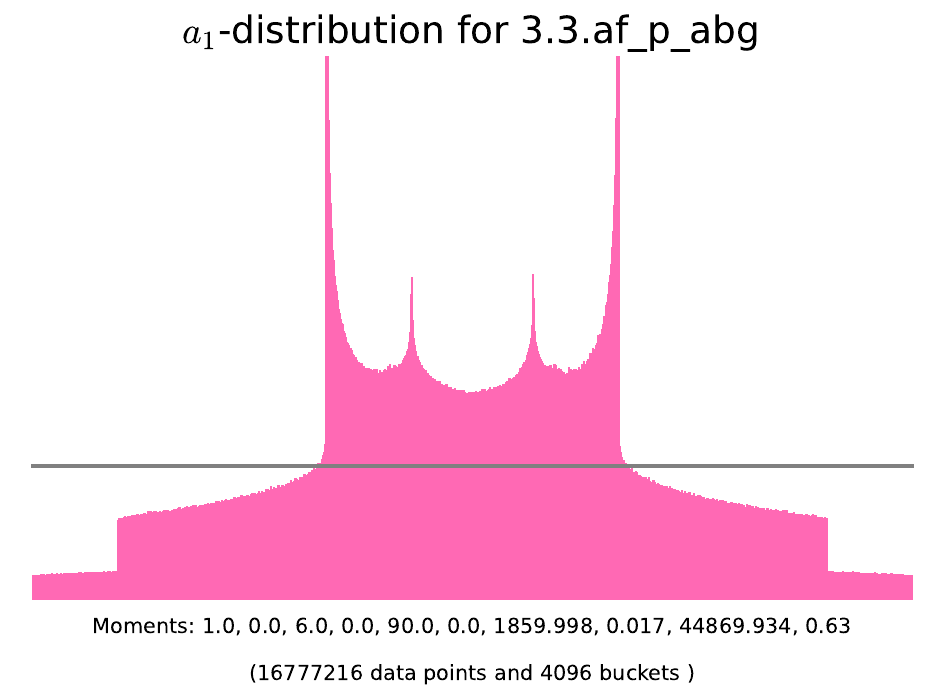}
    \caption{
      \href{https://www.lmfdb.org/Variety/Abelian/Fq/3/3/af_p_abg}{\texttt{3.3.af\_p\_abg}}}
  \end{subfigure}
  \hspace{2cm}
  \begin{subfigure}{0.32\textwidth}
    \centering
    \includegraphics[width=0.8\textwidth]{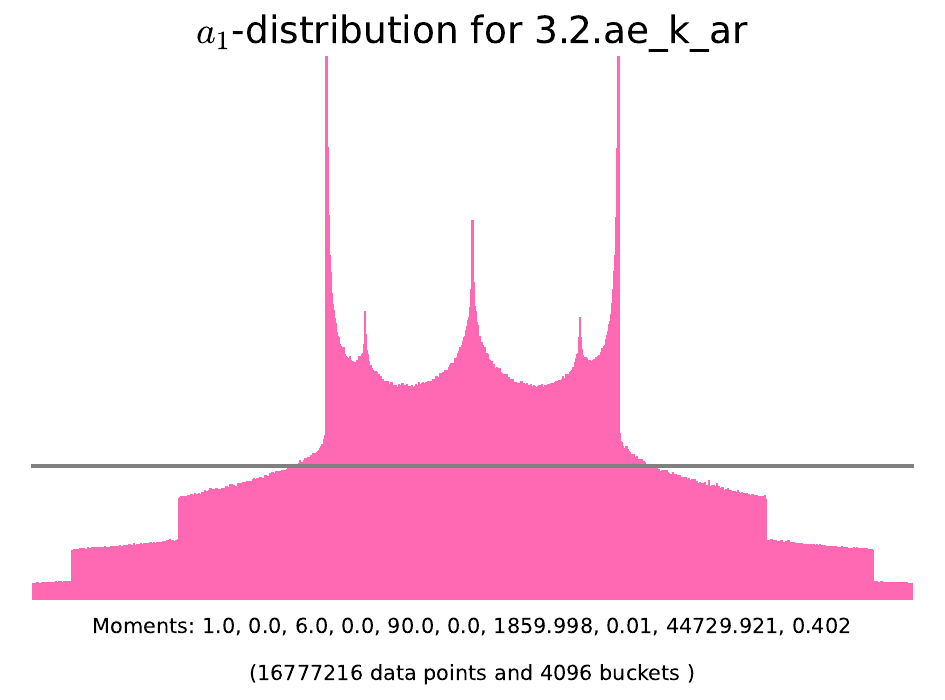}
    \caption{\href{https://www.lmfdb.org/Variety/Abelian/Fq/3/2/ae_k_ar}{\texttt{3.2.ae\_k\_ar}}}
  \end{subfigure}
  \caption{$a_1$-distributions for non-simple ordinary abelian
    threefolds of splitting type \ref{item:E1^2xE}.}
  \label{fig:non-simple-ord-threefolds-ii}
\end{figure}
\end{example}

\subsection{Simple almost ordinary threefolds}
\label{sec:simple-ao-threefolds} Let $X$ be a simple and almost
ordinary abelian threefold over $\FF_q$. Recall that $X$ is in fact
absolutely simple, so that the Frobenius polynomial $P_{(r)}(T)$ is
irreducible for every positive integer $r$.

\begin{lemma}[Node X-D in \Cref{fig:proof-3}]
  Let $X$ be a simple almost ordinary abelian threefold over
  $\FF_q$. The Serre--Frobenius group of $X$ can be read from
  \Cref{table:simple-ao-threefolds}.
\end{lemma}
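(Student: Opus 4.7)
The plan is to combine Zarhin's neatness theorem with a case analysis based on the CM structure of $K = \QQ(\alpha)$. Since $X$ is simple, not ordinary, and not supersingular, the preceding Fact gives that $X$ is absolutely simple, so $P_X(T) = h_X(T)$ is irreducible of degree $6$ and all six Frobenius eigenvalues are distinct. By \cite[Theorem 1.1]{Zarhin2015EigenFrob}, some base extension $X_{(m)}$ is neat, and Theorem \ref{mainthm:structure-thm} reduces the lemma to determining $\delta_X$ and $m_X$ in each of the three rows of the table.

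If $X$ is itself neat, I would argue along the lines of Remark \ref{rmk:neat2}: because the eigenvalues are distinct and $X$ is not supersingular, neatness prevents any nontrivial multiplicative relation among the normalized eigenvalues beyond the tautological ones $u_j\overline{u}_j = 1$. This yields $\delta_X = 3$ and hence $\SF(X) \cong \UU(1)^3$, giving the first row of Table \ref{table:simple-ao-threefolds}.

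If $X$ is not neat, then either $\Gamma_X$ has torsion (failure of N-a) or some squared relation fails to be symmetric (failure of N-b), and in either case one nontrivial multiplicative relation among $u_1, u_2, u_3$ is forced. Since $X$ is not supersingular, this drops the angle rank only by one, so $\delta_X = 2$. The torsion order $m_X$ is then the order of the root of unity appearing in the surviving relation, which I would pin down by analyzing roots of unity in the relevant CM field. When $\sqrt{q} \in K$ the root of unity lies in $K\unit$, so $\QQ(\zeta_{m_X})$ must be a CM subfield of the degree-$6$ CM field $K$; combined with $\varphi(m_X) \mid 6$ this leaves $m_X \in \{1,2,3,4,6\}$. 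When $\sqrt{q} \notin K$, the surviving relation instead lives in $K(\sqrt{q})$ of degree $12$, and using that the slope-$1/2$ eigenvalue pair $\alpha_3, \overline{\alpha}_3$ becomes of the form $\pm\zeta\sqrt{q}$ in this field, the additional torsion orders $8$ and $12$ become available.

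The main obstacle will be ruling out the intermediate possibilities still allowed by the degree constraints (such as $m_X \in \{5,7,9,10,14,18\}$) using finer arithmetic of $K$ and its CM structure, and verifying that each listed order is actually attained. The latter is accomplished by the explicit LMFDB isogeny classes cited as examples in Table \ref{table:simple-ao-threefolds}, each of which serves as an existence proof for the corresponding value of $m_X$.
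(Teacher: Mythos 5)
There are several genuine gaps in your proposal.

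First, the "Neat" column in Table \ref{table:simple-ao-threefolds} refers to the base extension $Y = X_{(m_X)}$ (whose angle group $U_Y$ is torsion free by Remark \ref{rmk:delta-invariant-base-extension}), not to $X$ itself; the paper works with $Y$ throughout. This is not merely cosmetic: deciding torsion-freeness of $\Gamma_X$ directly is not what feeds into Remark \ref{rmk:neat2}, which is stated for the base-extended, torsion-free situation.

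Second, your justification for $\delta_X = 2$ in the non-neat case — "one nontrivial multiplicative relation... drops the angle rank only by one" — is not an argument. A priori the failure of neatness does not limit the number of independent relations. The paper's proof invokes \cite[Theorem 5.7]{lenstra1993} (Lenstra–Zarhin) to get the lower bound $\delta_X \geq 2$ for simple almost ordinary abelian varieties; combined with non-maximality from non-neatness this pins down $\delta_X = 2$. Without that citation, your step has no support.

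Third, and most seriously, your torsion-order analysis is incomplete and partly incorrect. The constraint "$\varphi(m_X) \mid 6$" leaves $m_X \in \{7,9,14,18\}$ on the table (as you acknowledge), and your plan to rule these out by "finer arithmetic of $K$" is left entirely unexecuted; that is precisely the content of the lemma. The paper's mechanism is a norm computation: by \cite[Theorem 1.1]{Zarhin2015EigenFrob} the sextic CM field $\QQ(\alpha)$ contains a quadratic imaginary subfield $B$ with $\mathrm{Norm}_{\QQ(\alpha)/B}(u_1^{2m}) = (u_1 u_2 u_3)^{2m} = 1$. Since $U_Y$ is torsion free this forces $u_1 u_2 u_3 = \zeta$, a primitive $m$-th root of unity (primitivity from minimality of $m$), and then $\zeta^2 = \mathrm{Norm}_{\QQ(\alpha)/B}(u_1^2) \in B$, so $\varphi(\ord(\zeta^2)) \leq 2$. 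This is the sharp constraint that the degree bound alone does not give, and it is what actually yields $m \in \{1,3\}$ (odd case), $m \in \{2,4,6\}$ (even, $\sqrt{q} \in \QQ(\alpha)$), and $m/2 \in \{1,2,3,4,6\}$ (even, $\sqrt{q} \notin \QQ(\alpha)$). Finally, your description of the slope-$1/2$ eigenvalue pair as "of the form $\pm\zeta\sqrt{q}$" is wrong: $\alpha_3$ is not a supersingular Weil number (the threefold is absolutely simple and not supersingular), so it is not a root of unity times $\sqrt{q}$; the extra orders $8$ and $12$ in the third row come from the doubling $m = 2\cdot\ord(\zeta^2)$ when $\sqrt{q}\notin\QQ(\alpha)$, not from any such factorization of $\alpha_3$.
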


\begin{table}[H]
  \caption{Serre--Frobenius groups of simple almost ordinary
    threefolds.}
  \setlength{\arrayrulewidth}{0.3mm} \setlength{\tabcolsep}{5pt}
  \renewcommand{\arraystretch}{1.2}
  \begin{longtable}{|c|c|c|c|c|c|}
    \hline
    \rowcolor{header_color} 
    Def. \ref{def:neat} & $\sqrt{q} \in \QQ(\pi_X) $ & $\cong$ class & Generator& $m \in M$ & Example \\ \hline
    Neat & - & $\UU(1)^3$ & $(u_1,u_2,u_3)$ & $\{$1$\}$ & \href{https://www.lmfdb.org/Variety/Abelian/Fq/3/2/ab_ab_c}{\texttt{3.2.ab\_ab\_c}}\\ \hline
    Not neat & Yes & $\UU(1)^2\times C_m$ & $(u_1,u_2,\zeta_m u_1u_2)$ & $\{$1, 2, 3, 4, 6$\}$ & \Cref{ex:simple-ao-X-rank-2}\\ \hline
    Not neat & No & $\UU(1)^2\times C_m$ & $(u_1,u_2,\zeta_m u_1u_2)$ & $\{$4, 6, 8, 12$\}$ & \Cref{ex:simple-ao-X-rank-2} \\ \hline
  \end{longtable}
  \addtocounter{table}{-1}
  \label{table:simple-ao-threefolds}
\end{table}

\begin{proof}
  Let $m \colonequals m_X$ be the torsion order of $U_X$, and consider
  the base extension $Y \colonequals X_{(m)}$. By \cite[Theorem
  5.7]{lenstra1993}, we know that $\delta_X = \delta_Y \geq
  2$. Furthermore, since $Y$ is absolutely simple, by the discussion
  in \Cref{subsec:simple-threefolds}, the roots of
  $P_Y(T) = P_{(m)}(T)$ are distinct and non-supersingular. If $Y$ is
  neat, Remark \ref{rmk:neat2} implies that $\delta_X = \delta_Y =
  3$. Assume then that $Y$ is not neat, so that $\delta_X = 2$. Let
  $\alpha = \alpha_1$ be a Frobenius eigenvalue of $X$. By
  \cite[Theorem 1.1]{Zarhin2015EigenFrob} and the discussion
  thereafter, we have that the sextic CM-field
  $\QQ(\alpha) = \QQ(\alpha^m)$ contains an imaginary quadratic field
  $B$, and
  $(u_1u_2u_3)^{2m} = \mathrm{Norm}_{\QQ(\alpha)/B}(u_1^{2m}) =
  1$. Further,
  \(\mathrm{Norm}_{\QQ(\alpha)/B}(\alpha_1) =
  \alpha_1\alpha_2\alpha_3\). Since $U_Y$ has no torsion, this implies
  that $(u_1u_2u_3)^m = 1$. Moreover, this means that
  $u_1u_2u_3 = \zeta$ for some primitive\footnote{The primitivity of
    $\zeta$ follows from the fact that $m$ is the minimal positive
    integer such that $U_Y = U_{(m)}$ is torsion free.} $m$-th root of
  unity $\zeta$. Therefore,
  \begin{equation}
    \zeta^2  = \mathrm{Norm}_{\QQ(\alpha)/B}(u_1^2) \in B.
  \end{equation}
  If $m$ is odd, $\zeta^2$ is also primitive, so that
  $\varphi(m) \leq 2$ and $m \in \brk{1,3}$. If $m$ is even, then we
  may distinguish between two cases. If $\sqrt{q} \in \QQ(\alpha)$, we
  know that $u_1 \in \QQ(\alpha)$ so that in fact
  $\pm \zeta = \mathrm{Norm}_{\QQ(\alpha)/B}(u_1) \in B$ and
  $\varphi(m)\leq 2$ implies that $m \in \brk{2,4,6}$. If
  $\sqrt{q}\not\in\QQ(\alpha)$, then $\zeta^2$ is a primitive
  $m/2$-root of unity and $m/2 \in \brk{1,2,3,4,6}$.
   
  In the setting where \(Y\) is not neat and
  \(\sqrt{q} \notin \QQ(\alpha)\), we notice that $u_1u_2u_3 = \pm 1$
  implies that
  $\sqrt{q} = \pm \alpha_1\alpha_2\alpha_3/q \in \QQ(\alpha)$, so the
  cases $m = 1,2$ don't occur when $\sqrt{q} \not\in \QQ(\alpha)$.
  Similarly, if $u_1u_2u_3 = \zeta_3$, then
  $\sqrt{q} = (\alpha_1\alpha_2\alpha_3)^3/q^4 \in \QQ(\alpha)$. Thus,
  the torsion orders \(m = 1,2,3\) do not occur in this case.
\end{proof}

\begin{example}[$a_1$-distributions of simple almost ordinary abelian
  threefolds with angle rank $2$] \label{ex:simple-ao-X-rank-2} The
  histograms corresponding to the following examples are presented in
  \Cref{fig:simple-almost-ord-threefolds-rank-2}. In these examples we
  use \sage \cite{sagemath} to initialize the degree-6 number field
  $K = \QQ(\alpha)$ corresponding to the Frobenius polynomial, find
  the corresponding quadratic subfield $B$, and check that
  $\mathrm{Norm}_{\QQ(\alpha)/B}(u_1)$ is the root of unity in
  question. The code for generating the histograms is available on the
  \github repository \cite{Frob_dist_code}.

\begin{figure}[H]
  \centering
  \begin{subfigure}{0.24\textwidth}
    \centering
    \includegraphics[width=\textwidth]{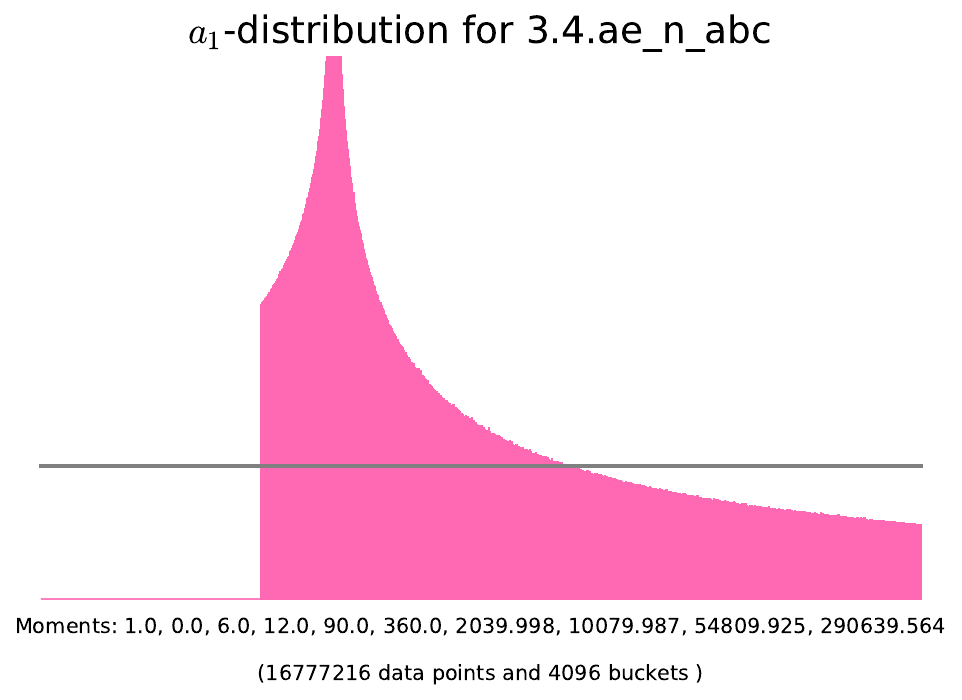}
    \caption{
      \href{https://www.lmfdb.org/Variety/Abelian/Fq/3/4/ae_n_abc}{\texttt{3.4.ae\_n\_abc}}}
  \end{subfigure}
  \hfill
  \begin{subfigure}{0.24\textwidth}
    \centering
    \includegraphics[width=\textwidth]{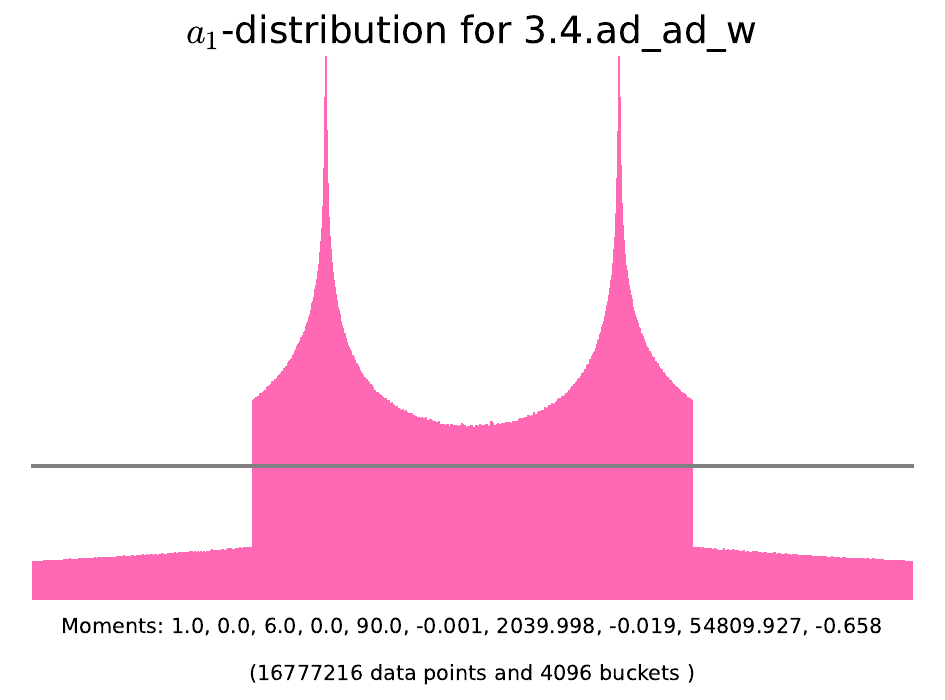}
    \caption{\href{https://www.lmfdb.org/Variety/Abelian/Fq/3/4/ad_ad_w}{\texttt{3.4.ad\_ad\_w}}}
  \end{subfigure}
  \hfill
  \begin{subfigure}{0.24\textwidth}
    \centering
    \includegraphics[width=\textwidth]{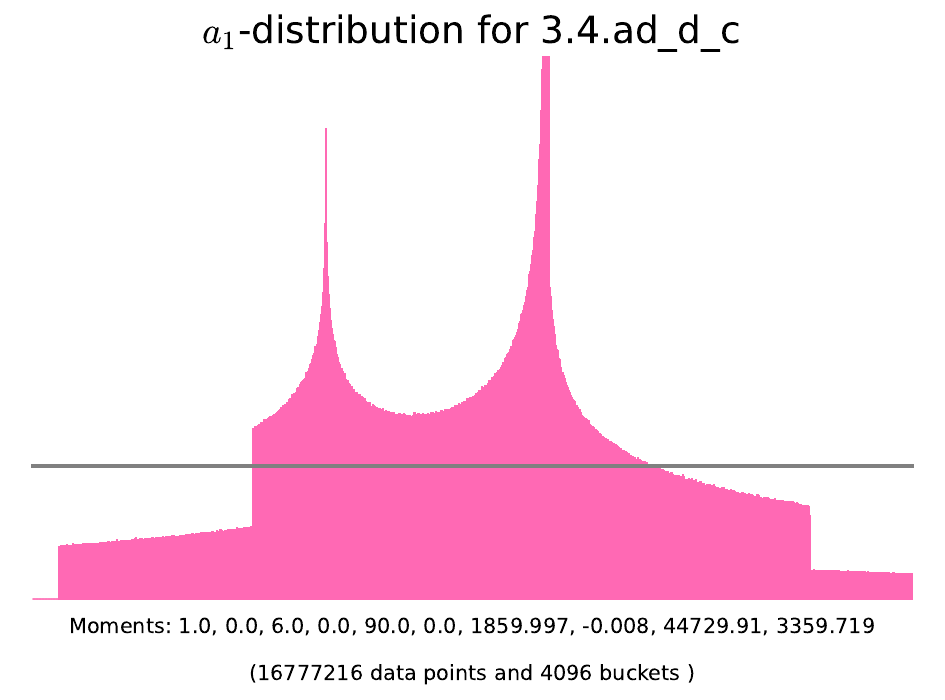}
    \caption{\href{https://www.lmfdb.org/Variety/Abelian/Fq/3/4/ad_d_c}{\texttt{3.4.ad\_d\_c}}}
  \end{subfigure}
  \hfill
  \begin{subfigure}{0.24\textwidth}
    \centering
    \includegraphics[width=\textwidth]{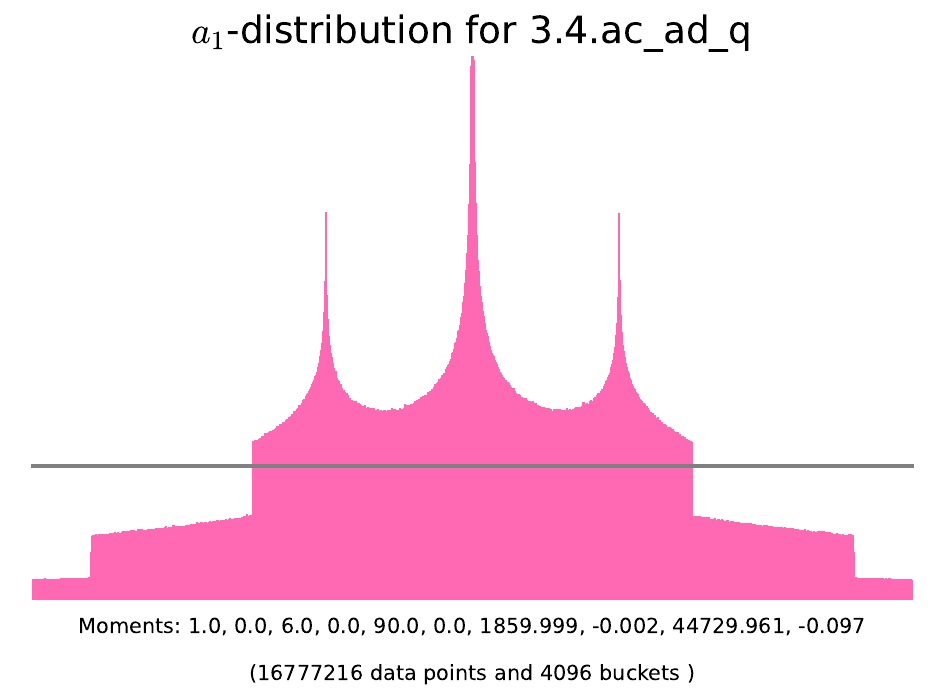}
    \caption{
      \href{https://www.lmfdb.org/Variety/Abelian/Fq/3/4/ac_ad_q}{\texttt{3.4.ac\_ad\_q}}}
  \end{subfigure}
  \hfill
  \begin{subfigure}{0.32\textwidth}
    \centering
    \includegraphics[width=0.8\textwidth]{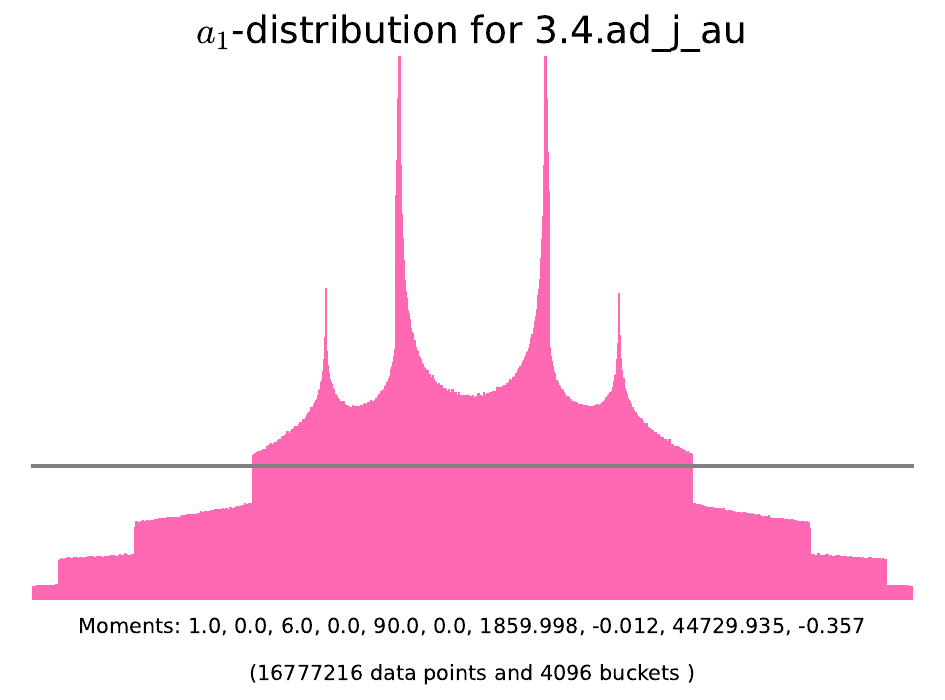}
    \caption{\href{https://www.lmfdb.org/Variety/Abelian/Fq/3/4/ad_j_au}{\texttt{3.4.ad\_j\_au}}}
  \end{subfigure}
  \hfill
  \begin{subfigure}{0.32\textwidth}
    \centering
    \includegraphics[width=0.8\textwidth]{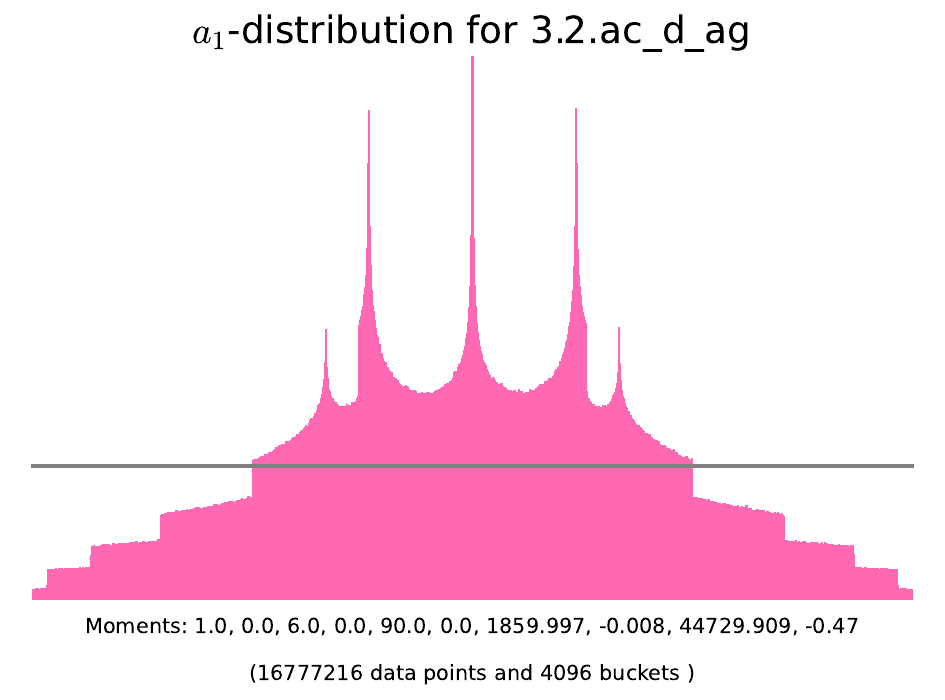}
    \caption{\href{https://www.lmfdb.org/Variety/Abelian/Fq/3/2/ac_d_ag}{\texttt{3.2.ac\_d\_ag}}}
  \end{subfigure}
  \hfill
  \begin{subfigure}{0.32\textwidth}
    \centering
    \includegraphics[width=0.8\textwidth]{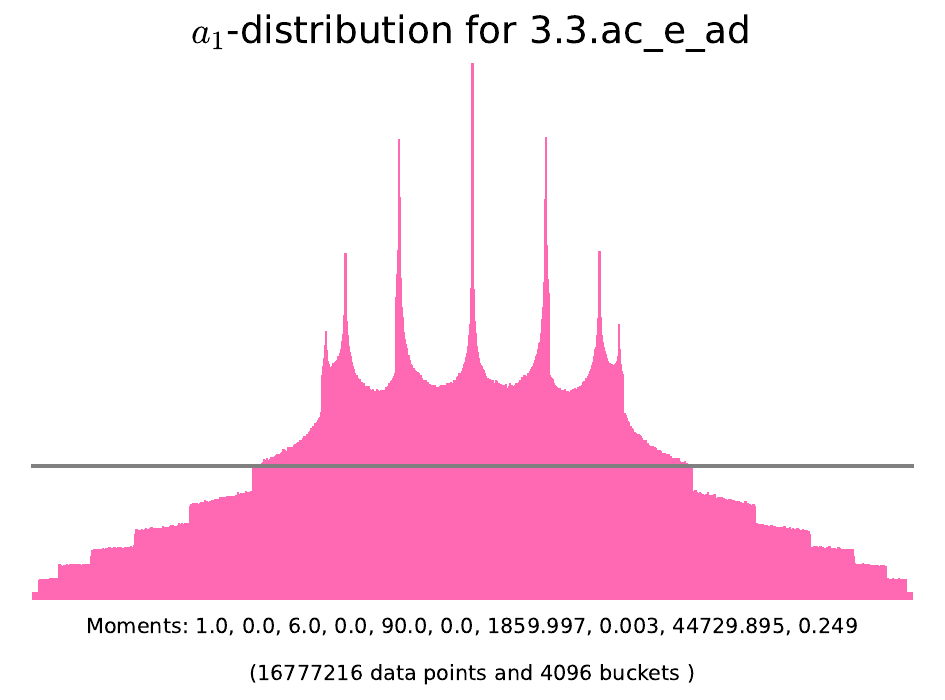}
    \caption{\href{https://www.lmfdb.org/Variety/Abelian/Fq/3/3/ac_e_ad}{\texttt{3.3.ac\_e\_ad}}}
  \end{subfigure}
  \caption{$a_1$-distributions of simple almost ordinary abelian
    threefolds of angle rank $2$.}
  \label{fig:simple-almost-ord-threefolds-rank-2}
\end{figure}
\end{example}

\subsection{Non-simple almost ordinary threefolds}
\label{sec:non-simple-ao-threefolds}
Since $X$ is not simple, we have that $X\sim S\times E$ for some
surface $S$ and some elliptic curve $E$. For this section, we let
$\pi_1, \overline{\pi}_1, \pi_2, \overline{\pi}_2$ and
$\alpha, \overline{\alpha}$ be the Frobenius eigenvalues of $S$ and
$E$ respectively. The normalized eigenvalues will be denoted by
$u_1 \colonequals \pi_1/\sqrt{q}, u_2 = \pi_2/\sqrt{q}$ and
$u \colonequals \alpha/\sqrt{q}$.  If $X$ has a geometric
supersingular factor, by Honda--Tate theory, it must have a
supersingular factor over the base field; and without loss of
generality we may assume that this factor is $E$.

\begin{figure}[H]
  \centering \includegraphics[width=0.5\textwidth]{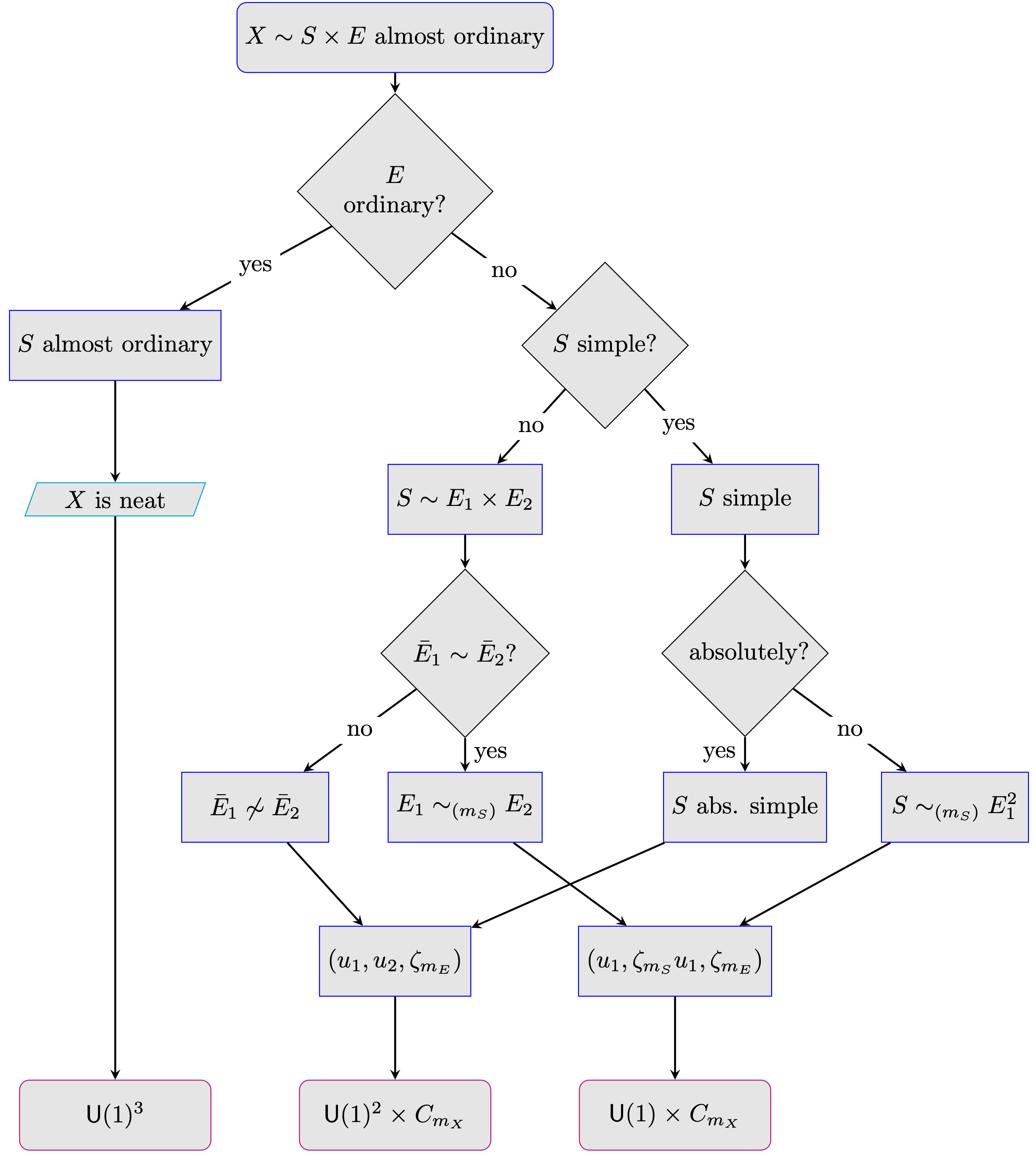}
  \caption{Serre--Frobenius groups of non-simple almost ordinary
    threefolds.}
  \label{flow:ns-ao-X}
\end{figure}

\begin{lemma}[Node X-E in \Cref{fig:proof-3}]
  Let $X \sim S\times E$ be a non-simple almost ordinary abelian
  threefold over $\FF_q$. The Serre--Frobenius group of $X$ can be
  read from Flowchart \ref{flow:ns-ao-X}. In particular, if $X$ has no
  supersingular factor, then $\delta_X = 3$. If $E$ is supersingular,
  then $\delta_X \in \brk{1,2}$ and $m_X = \lcm(m_S, m_E) $.  The list
  of possible torsion orders $m_X$ in this case is given
  \Cref{table:p-rank-2-threefolds}.
\end{lemma}
\begin{proof}
  First, suppose that $X$ has no supersingular factor. Thus $E$ is
  ordinary and $S$ is almost ordinary and absolutely simple. This
  implies that $\QQ(\pi_1^r)$ and $\QQ(\alpha^r)$ are CM-fields of
  degrees $4$ and $2$ respectively, for every positive integer $r$. In
  particular,
  $\#\brk{\pi_1^r, \overline{\pi}_1^r, \pi_2^r,
    \overline{\pi}_2^r,\alpha^r, \overline{\alpha}^r}$ $= 6$ for every
  $r$. Let $m = m_X$ and consider the base extension $X_{(m)}$. Since
  $X_{(m)}$ is not simple, \cite[Theorem 1.1]{Zarhin2015EigenFrob}
  implies that $X_{(m)}$ is neat. The eigenvalues of $X_{(m)}$ are all
  distinct and not supersingular, so that
  $\delta(X) = \delta(X_{(m)}) = 3$ by Remark \ref{rmk:neat2}. The
  case where $X$ has a supersingular factor follows from
  \Cref{lemma:A1xB}.
\end{proof}

\begin{table}[H]
  \caption{Serre--Frobenius groups of non-simple almost ordinary
    threefolds $X = S\times E$.}
  \setlength{\arrayrulewidth}{0.3mm} \setlength{\tabcolsep}{5pt}
  \renewcommand{\arraystretch}{1.2}
  \begin{longtable}{|c|c|c|c|c|c|}
    \hline
    \rowcolor{header_color} 
    \(\delta_E\) & $\cong$ class & Generator & \(d = \log_p(q)\) & $m \in M(p,d)$ & Example \\ \hline
    $1$ & $\UU(1)^3$ & $(u_1,u_2,u_3)$ & - & $\brk{1}$ & \href{https://www.lmfdb.org/Variety/Abelian/Fq/3/2/ac_d_ae}{\texttt{3.3.ac\_d\_ae}} \\ \hline
    $0$ & $\UU(1)^2\times C_m$ & $(u_1,u_2,\zeta_{m_E})$ & - & $m= m_E \in \{1,2,3,4,6,8,12\}$ & \Cref{fig:non-simple-almost-ord-threefolds-rank-2} \\ \hline
    $0$ & $\UU(1)\times C_m$ & $(u_1,\zeta_{m_S}u_1,\zeta_{m_E})$ & even & $m = \lcm(m_S, m_E)\in \brk{1,2,3,4,6, 12}$ & \Cref{fig:non-simple-almost-ord-threefolds-rank-1} \\ \hline
    $0$ & $\UU(1)\times C_m$ & $(u_1,\zeta_{m_S}u_1,\zeta_{m_E})$ & odd & $m = \lcm(m_S, m_E)\in \brk{4, 8, 12, 24}$ & \Cref{fig:non-simple-almost-ord-threefolds-rank-1} \\ \hline
  \end{longtable}
  \addtocounter{table}{-1}
  \label{table:p-rank-2-threefolds}
\end{table}
    
\begin{figure}[H]
  \centering
  \begin{subfigure}{0.24\textwidth}
    \centering
    \includegraphics[width=\textwidth]{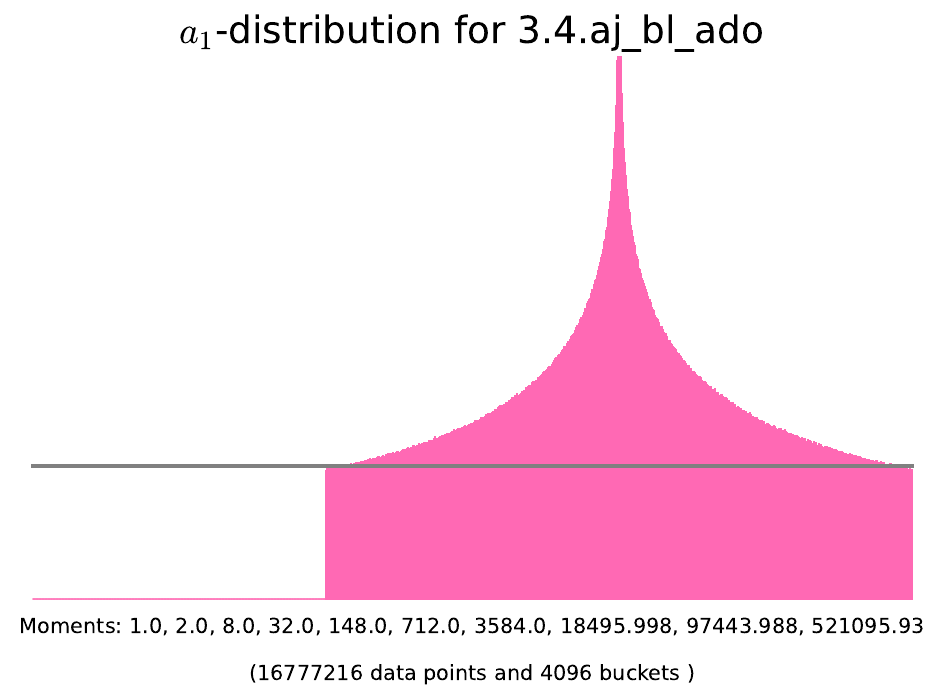}
    \caption{
      \href{https://www.lmfdb.org/Variety/Abelian/Fq/3/4/aj_bl_ado}{\texttt{3.4.aj\_bl\_ado}}}
  \end{subfigure}
  \hfill
  \begin{subfigure}{0.24\textwidth}
    \centering
    \includegraphics[width=\textwidth]{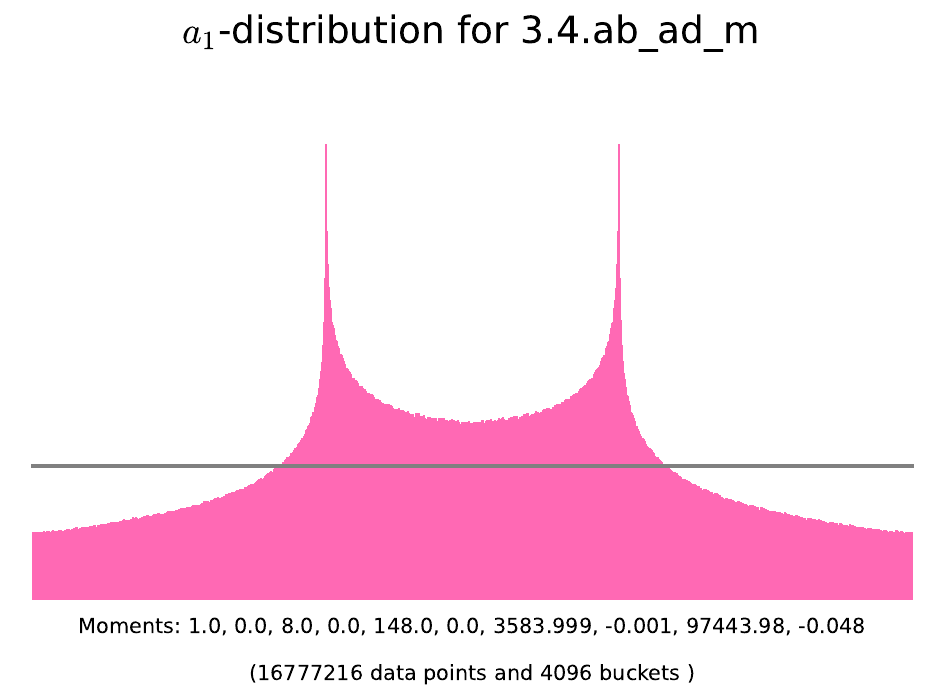}
    \caption{\href{https://www.lmfdb.org/Variety/Abelian/Fq/3/4/ab_ad_m}{\texttt{3.4.ab\_ad\_m}}}
  \end{subfigure}
  \hfill
  \begin{subfigure}{0.24\textwidth}
    \centering
    \includegraphics[width=\textwidth]{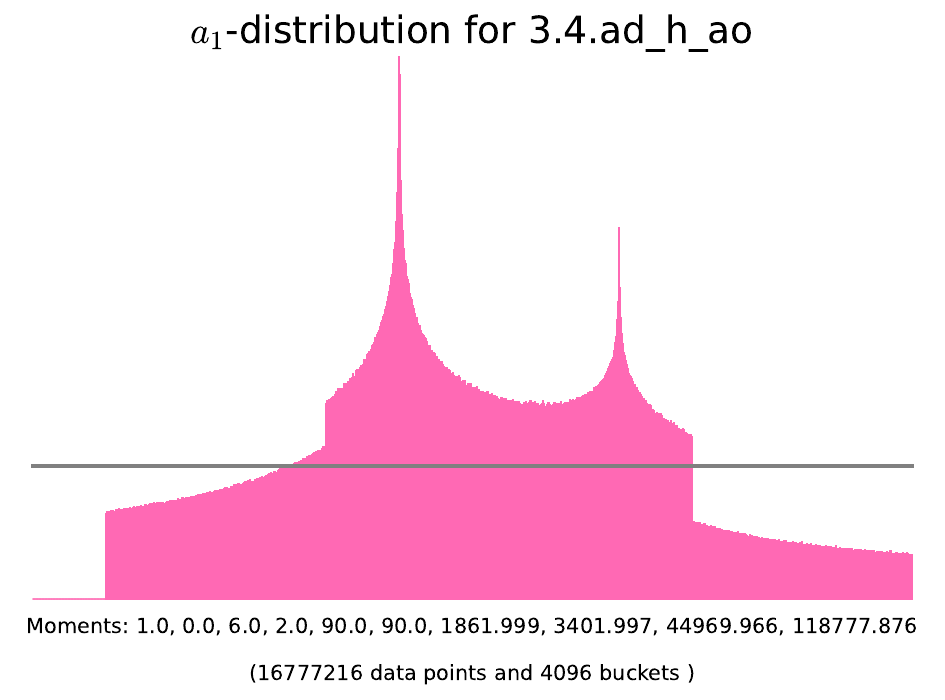}
    \caption{\href{https://www.lmfdb.org/Variety/Abelian/Fq/3/4/ad_h_ao}{\texttt{3.4.ad\_h\_ao}}}
  \end{subfigure}
  \hfill
  \begin{subfigure}{0.24\textwidth}
    \centering
    \includegraphics[width=\textwidth]{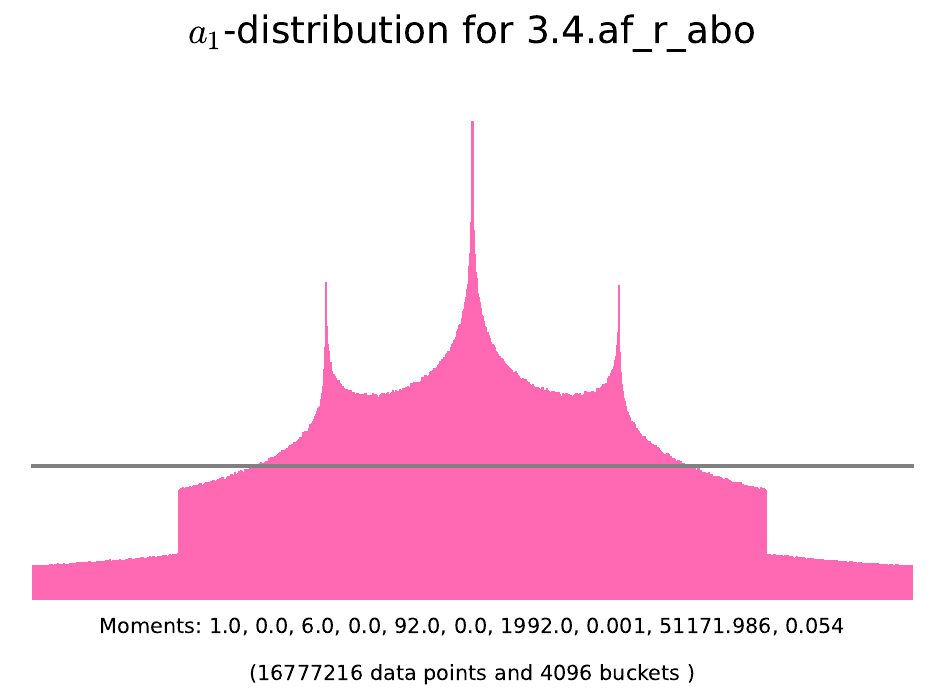}
    \caption{
      \href{https://www.lmfdb.org/Variety/Abelian/Fq/3/4/af_r_abo}{\texttt{3.4.af\_r\_abo}}}
  \end{subfigure}
  \hfill
  \begin{subfigure}{0.32\textwidth}
    \centering
    \includegraphics[width=0.8\textwidth]{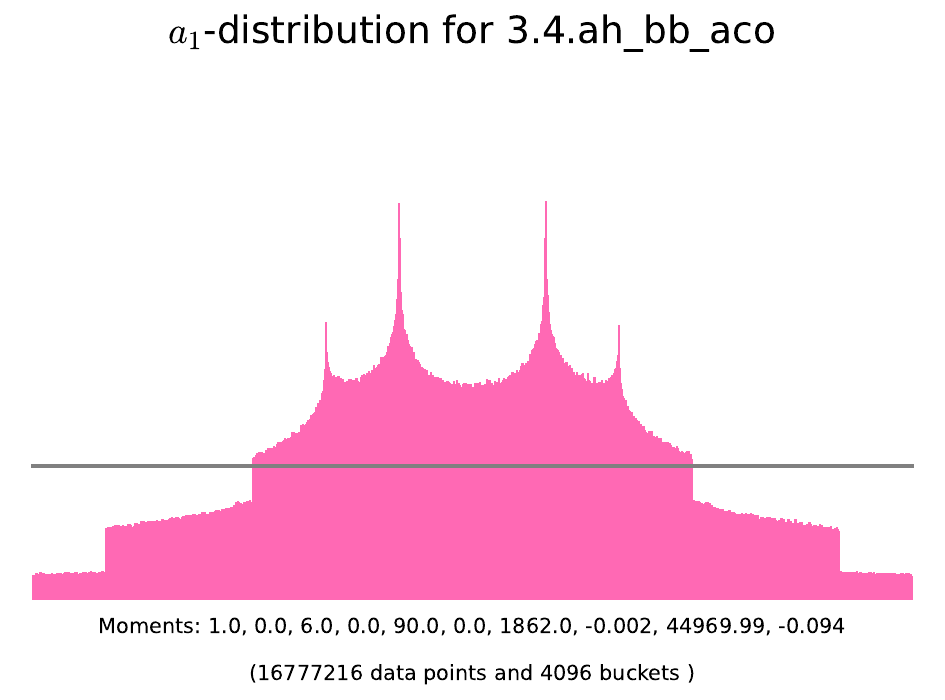}
    \caption{\href{https://www.lmfdb.org/Variety/Abelian/Fq/3/4/ah_bb_aco}{\texttt{3.4.ah\_bb\_aco}}}
  \end{subfigure}
  \hfill
  \begin{subfigure}{0.32\textwidth}
    \centering
    \includegraphics[width=0.8\textwidth]{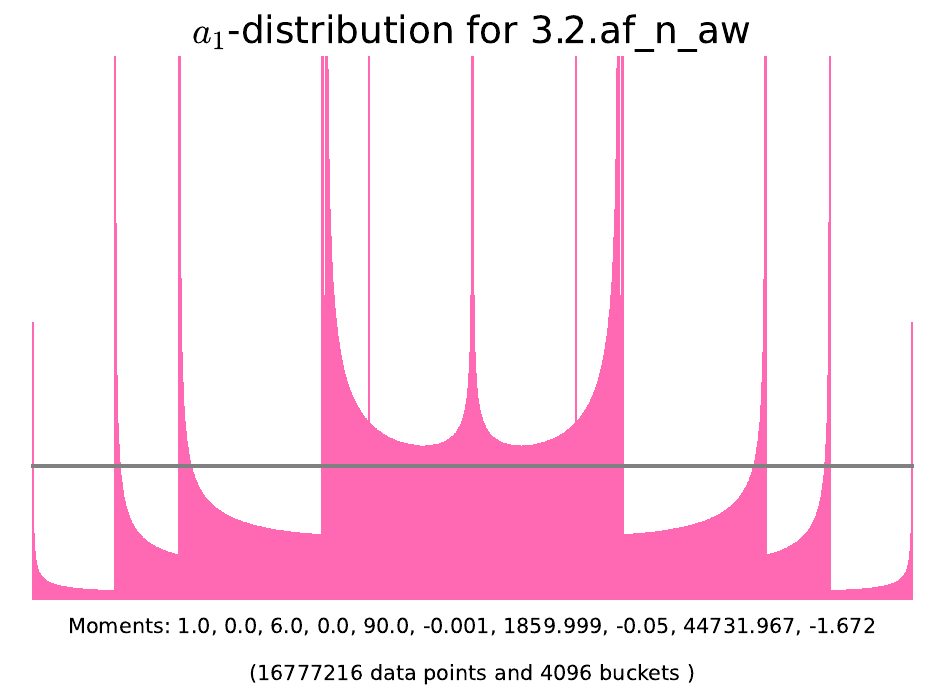}
    \caption{\href{https://www.lmfdb.org/Variety/Abelian/Fq/3/2/af_n_aw}{\texttt{3.2.af\_n\_aw}}}
  \end{subfigure}
  \hfill
  \begin{subfigure}{0.32\textwidth}
    \centering
    \includegraphics[width=0.8\textwidth]{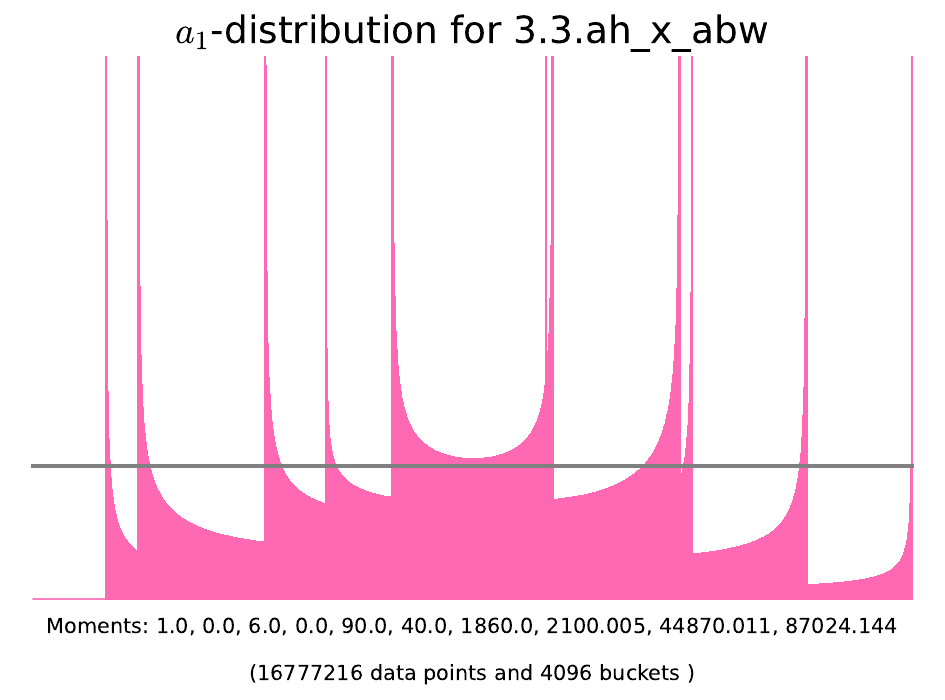}
    \caption{\href{https://www.lmfdb.org/Variety/Abelian/Fq/3/3/ah_x_abw}{\texttt{3.3.ah\_x\_abw}}}
  \end{subfigure}
  \caption{$a_1$-distributions of non-simple almost ordinary abelian
    threefolds of angle rank $2$.}
  \label{fig:non-simple-almost-ord-threefolds-rank-2}
\end{figure}

\begin{figure}[H]
  \centering
  \begin{subfigure}{0.24\textwidth}
    \centering
    \includegraphics[width=\textwidth]{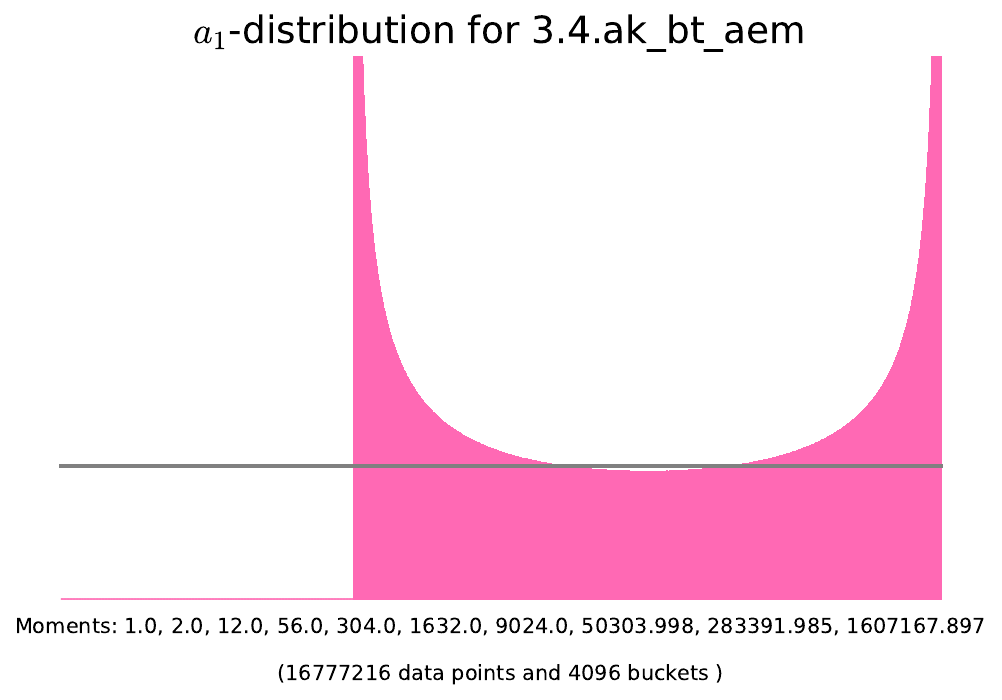}
    \caption{\href{https://www.lmfdb.org/Variety/Abelian/Fq/3/4/ak_bt_aem}{\texttt{3.4.ak\_bt\_aem}}}
  \end{subfigure}
  \hfill
  \begin{subfigure}{0.24\textwidth}
    \centering
    \includegraphics[width=\textwidth]{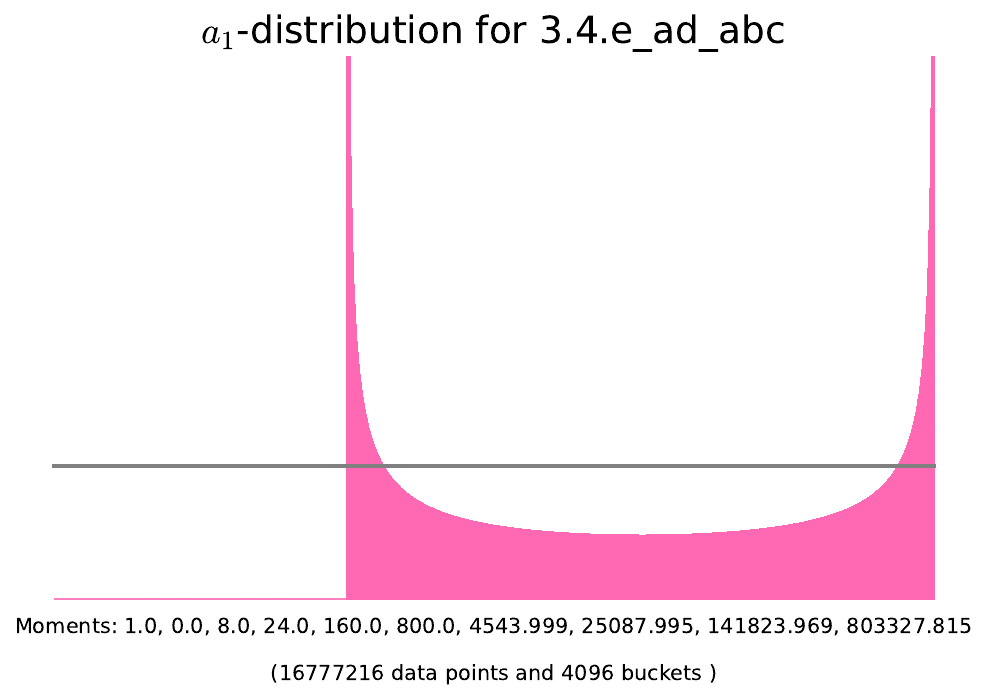}
    \caption{\href{https://www.lmfdb.org/Variety/Abelian/Fq/3/4/e_ad_abc}{\texttt{3.4.e\_ad\_abc}}}
  \end{subfigure}
  \hfill
  \begin{subfigure}{0.24\textwidth}
    \centering
    \includegraphics[width=\textwidth]{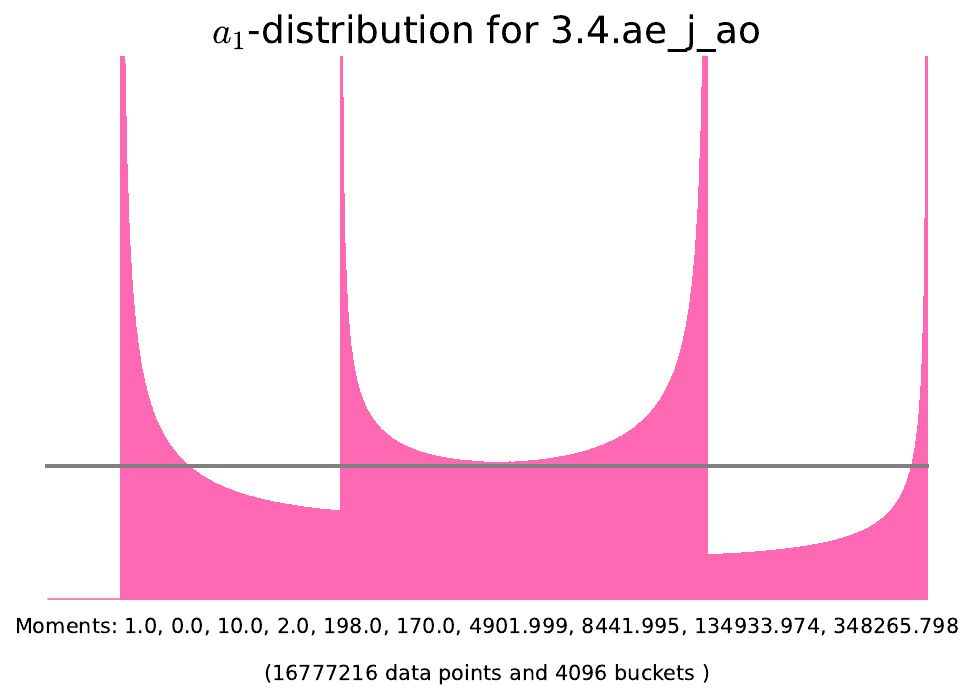}
    \caption{\href{https://www.lmfdb.org/Variety/Abelian/Fq/3/4/ae_j_ao}{\texttt{3.4.ae\_j\_ao}}}
  \end{subfigure}
  \hfill
  \begin{subfigure}{0.24\textwidth}
    \centering
    \includegraphics[width=\textwidth]{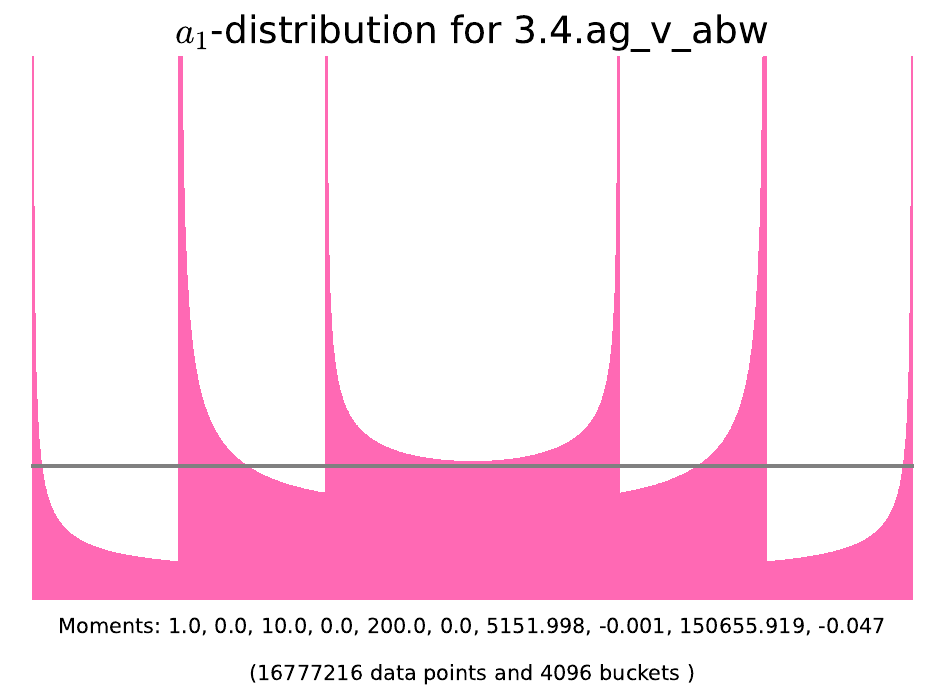}
    \caption{\href{https://www.lmfdb.org/Variety/Abelian/Fq/3/4/ag_v_abw}{\texttt{3.4.ag\_v\_abw}}}
  \end{subfigure}
  \hfill
  \begin{subfigure}{0.32\textwidth}
    \centering
    \includegraphics[width=0.8\textwidth]{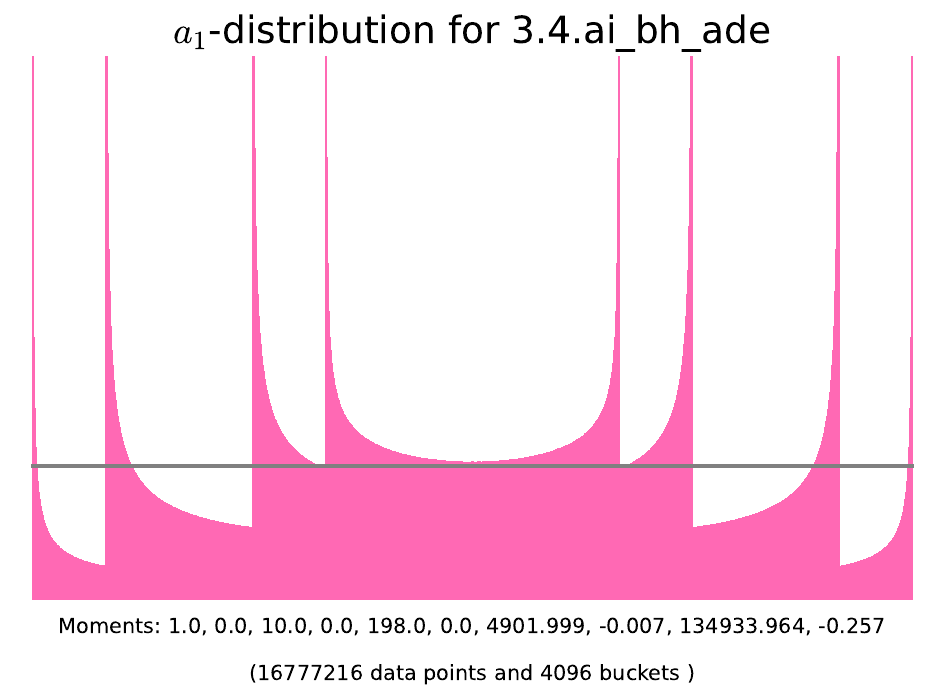}
    \caption{\href{https://www.lmfdb.org/Variety/Abelian/Fq/3/4/ai_bh_ade}{\texttt{3.4.ai\_bh\_ade}}}
  \end{subfigure}
  \hfill
  \begin{subfigure}{0.32\textwidth}
    \centering
    \includegraphics[width=0.8\textwidth]{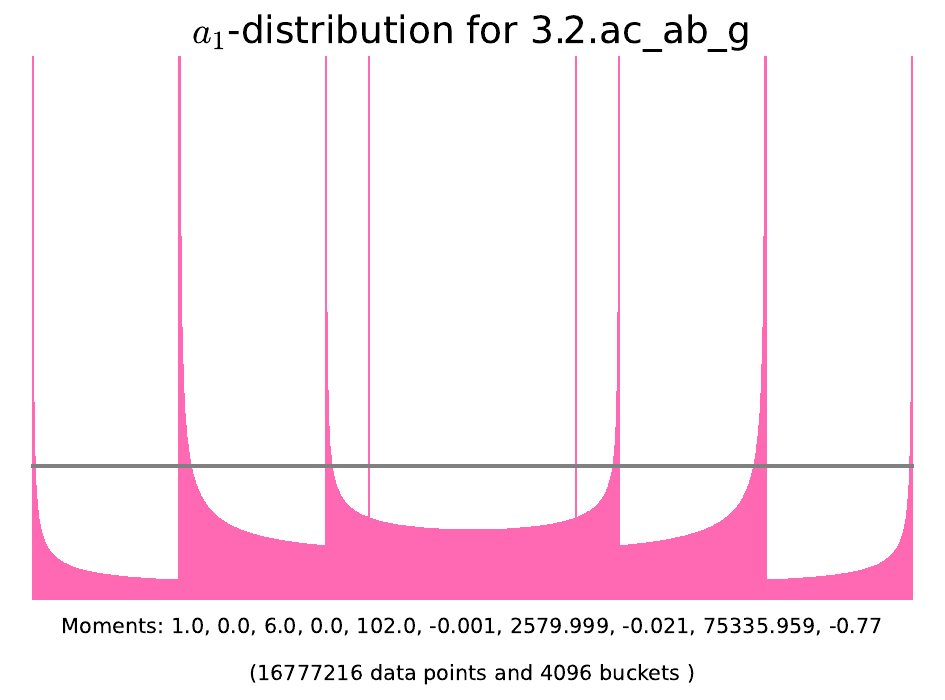}
    \caption{\href{https://www.lmfdb.org/Variety/Abelian/Fq/3/2/ac_ab_g}{\texttt{3.2.ac\_ab\_g}}}
  \end{subfigure}
  \hfill
  \begin{subfigure}{0.32\textwidth}
    \centering
    \includegraphics[width=0.8\textwidth]{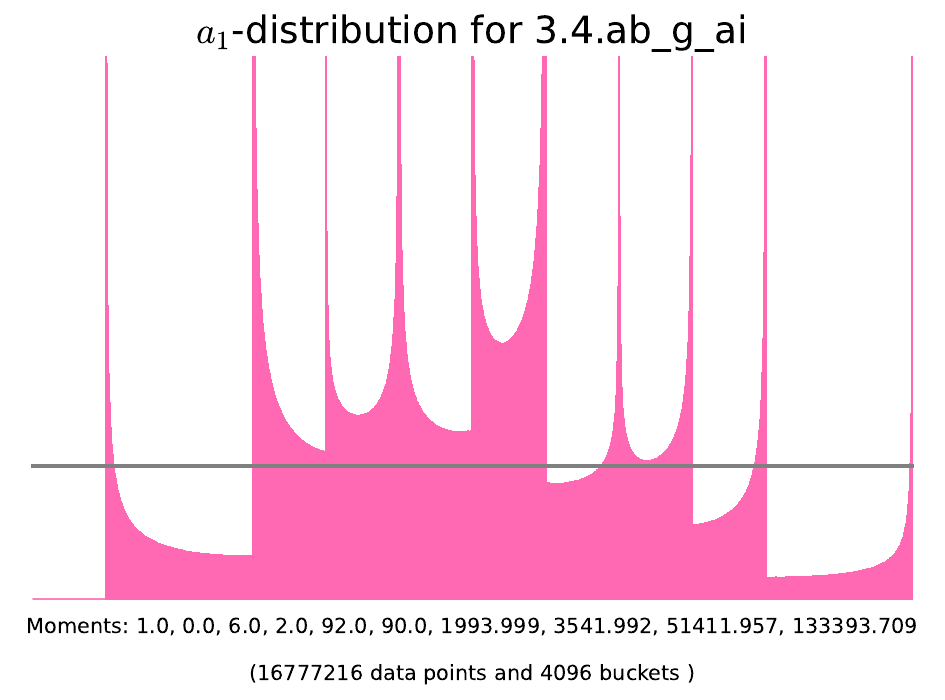}
    \caption{\href{https://www.lmfdb.org/Variety/Abelian/Fq/3/4/ab_g_ai}{\texttt{3.4.ab\_g\_ai}}}
  \end{subfigure}
  \hspace{2cm}
  \begin{subfigure}{0.32\textwidth}
    \centering
    \includegraphics[width=0.8\textwidth]{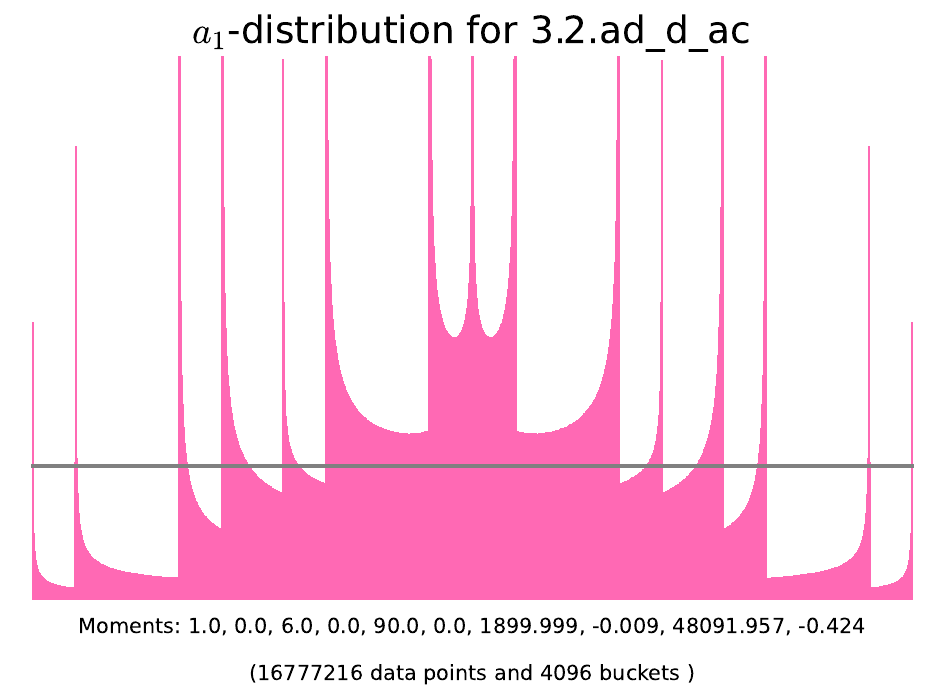}
    \caption{\href{https://www.lmfdb.org/Variety/Abelian/Fq/3/2/ad_d_ac}{\texttt{3.2.ad\_d\_ac}}}
  \end{subfigure}
  \caption{$a_1$-distributions of non-simple almost ordinary abelian
    threefolds of angle rank $1$.}
  \label{fig:non-simple-almost-ord-threefolds-rank-1}
\end{figure}

\subsection{Abelian threefolds of K3-type}
\label{sec:K3-type-X}
In this section $X$ will be an abelian threefold defined over $\FF_q$
of $p$-rank $1$. The $q$-Newton polygon of such a variety has slopes
$(0,\tfrac12,\tfrac12,\tfrac12,\tfrac12,1)$. This is the
three-dimensional instance of abelian varieties of K3 type, which were
studied by Zarhin in \cite{Zarhin1990K3} and \cite{Zarhin1991K3}.

\begin{defn}
  An abelian variety $A$ defined over $\FF_q$ is said to be of
  \cdef{K3-type} if the set of Newton slopes is either $\brk{0,1}$ or
  $\brk{0,1/2,1}$, and the segments of slope $0$ and $1$ have length
  one.
\end{defn}

By \cite[Theorem 5.9]{Zarhin1991K3}, simple abelian varieties of
K3-type have maximal angle rank. As a corollary, we have another piece
of the classification.

\begin{lemma}[Node X-F in \Cref{fig:proof-3}]
  Let $X$ be a simple abelian threefold over $\FF_q$ of $p$-rank
  $1$. Then $X$ has maximal angle rank and $\SF(X) \cong \UU(1)^3$.
\end{lemma}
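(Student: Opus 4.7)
The plan is to identify $X$ as an abelian variety of K3-type, so that Zarhin's theorem applies and immediately yields maximal angle rank; the conclusion $\SF(X) \cong \UU(1)^3$ will then follow from the structure theorem together with the observation that $\SF(X)$ is a closed subgroup of $\UU(1)^3$.

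First I would verify that the hypotheses ``$X$ simple'' and ``$p$-rank of $X$ equal to $1$'' force the Newton polygon of $X$ to have slope multiset $\{0, 1/2, 1/2, 1/2, 1/2, 1\}$, i.e.\ slope set $\{0, 1/2, 1\}$ with the extremal slopes each of horizontal length one (as depicted in Figure \ref{fig:NP-p-rank1}). Since the Newton polygon of a simple abelian variety is symmetric about its midpoint, the $p$-rank-$1$ hypothesis gives a single slope-$0$ segment of horizontal length $1$ and, by symmetry, a single slope-$1$ segment of horizontal length $1$; the remaining horizontal length $2g - 2 = 4$ must contribute vertical height $g - 1 = 2$, forcing a unique intermediate slope of $1/2$. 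This matches the definition of K3-type given just above the lemma. (Note that the $P_X(T) = h_X(T)^3$ possibility of Section \ref{subsec:simple-threefolds} is excluded, since that case has $p$-rank $0$ by Theorem \ref{thm:xing}.)

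Having placed $X$ in the K3-type setting, I would then invoke \cite[Theorem 5.9]{Zarhin1991K3} (cited immediately before the lemma) to conclude that $X$ has maximal angle rank, $\delta_X = 3$. Finally, the structure theorem (Theorem \ref{mainthm:structure-thm}) gives $\SF(X) \cong \UU(1)^3 \times C_{m_X}$; but $\SF(X)$ embeds as a closed subgroup of $\UU(1)^3$, and a closed subgroup whose identity component has dimension $3$ must coincide with the ambient group. Hence $\SF(X) = \UU(1)^3$ and $m_X = 1$.

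There is really no substantive obstacle: the content of the lemma is entirely absorbed by Zarhin's theorem on K3-type abelian varieties, and the rest is bookkeeping. The only points requiring care are (i) the short Newton-polygon calculation that justifies the K3-type hypothesis, and (ii) the dimension/connectedness argument ruling out a nontrivial component group at the end.
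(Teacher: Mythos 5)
Your proposal is correct and takes essentially the same route as the paper: recognize that a simple $p$-rank-$1$ threefold is of K3-type (the paper leaves this implicit by pointing to Figure \ref{fig:NP-p-rank1}), invoke Zarhin's \cite[Theorem 5.9]{Zarhin1991K3} for maximal angle rank, and conclude via the structure theorem. The two small details you supply explicitly — the Newton-polygon computation and the observation that maximal angle rank forces trivial angle torsion order — are both sound, though for the former the phrase ``forcing a unique intermediate slope of $1/2$'' tacitly relies on the lattice-point (denominator) constraint: a slope $a/b$ in lowest terms requires a segment of horizontal length a multiple of $b$, which together with the symmetry of the polygon leaves $1/2$ as the only option for the remaining four horizontal units.
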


There are several examples of such \(X\), one of them being
\href{https://www.lmfdb.org/Variety/Abelian/Fq/3/2/ab_a_a}{\texttt{3.2.ab\_a\_a}}. Now
assume that $X$ is not simple, so that $X \sim S \times E$ for some
surface $S$ and elliptic curve $E$.

\begin{lemma}[Node X-G in \Cref{fig:proof-3}]
  Let $X \sim S\times E$ be a non-simple abelian threefold over
  $\FF_q$ of $p$-rank $1$. The Serre--Frobenius group of $X$ is given
  by \Cref{table:p-rank-1-threefolds}.
\end{lemma}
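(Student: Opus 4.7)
The proof will proceed by enumerating geometric isogeny decompositions. Since $p$-rank is additive and $X$ has $p$-rank $1$, the decomposition $X \sim S \times E$ forces either (a) $S$ almost ordinary with $E$ supersingular, or (b) $S$ supersingular with $E$ ordinary. Each of these subdivides further by whether the surface factor $S$ is simple, yielding four subcases to analyze separately.

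For the case where $S$ is absolutely simple almost ordinary and $E$ is supersingular, Lemma \ref{lemma:simple-ao-surfaces} gives $\delta_S = 2$ and $m_S = 1$, while the normalized Frobenius of $E$ contributes a root of unity $\zeta_{m_E}$ with $m_E \in \{1,3,4,6,8,12\}$ from Table \ref{table:elliptic-curves}. A Poonen-style argument (analogous to the one used in the proof of Lemma \ref{lemma:SB}) shows that this root of unity is multiplicatively independent from the rank-$2$ torsion-free part of $U_S$, since $\QQ(\alpha_S)$ is a CM sextic field containing no nontrivial supersingular Weil number. This yields $\SF(X) \cong \UU(1)^2 \times C_{m_E}$.

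In the three remaining subcases, $X$ splits completely geometrically as a product of elliptic curves, so Proposition \ref{prop:SF-in-products} applies. When $S$ is non-simple almost ordinary, $S \sim E_1 \times E_2$ with $E_1$ ordinary and $E_2$ supersingular, giving $X \sim E_1 \times E_2 \times E$; hence $\delta_X = 1$ and $m_X = \lcm(m_{E_2}, m_E)$. When $S$ is simple supersingular and $E$ ordinary, $S$ acquires a decomposition as $F^2$ over a minimal extension $\FF_{q^r}$ specified by Table \ref{table:ss-simple-surfaces}, and the normalized eigenvalues of $S$ generate the cyclic torsion subgroup $\langle \zeta_{m_S}\rangle$ already tabulated there; pairing with the non-torsion $u_E$ yields $\delta_X = 1$ and $m_X = m_S$. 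Finally, when $S$ is non-simple supersingular with $S \sim E_1 \times E_2$ (both supersingular) and $E$ ordinary, we have $\delta_X = 1$ and $m_X = \lcm(m_{E_1}, m_{E_2})$.

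The main obstacle is compiling the resulting list of possible orders $m_X$ into Table \ref{table:p-rank-1-threefolds}. The possible lcms of pairs of supersingular torsion orders over $\FF_q$, needed for the subcases where $X$ has two supersingular elliptic factors, can be read off Figure \ref{fig:venn-diagram-nonsimple-ss-surface}; the possible $m_S$ values for simple supersingular surfaces come from Table \ref{table:ss-simple-surfaces}. Realizability of each table entry will be verified via Honda--Tate theory by exhibiting an explicit Weil polynomial of the appropriate form, with concrete isogeny classes drawn from the LMFDB serving as witnesses.
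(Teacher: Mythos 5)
Your proof is correct and follows essentially the same route as the paper's: stratify $X \sim S\times E$ by how the $p$-rank $1$ distributes between the factors (forcing $S$ almost ordinary with $E$ supersingular, or $S$ supersingular with $E$ ordinary), then by simplicity of $S$, and compute $U_X = U_S \oplus U_E$ in each case; your splitting of the paper's single ``$S$ supersingular, $E$ ordinary'' case into simple and non-simple subcases is a harmless refinement whose union recovers the same set $\{1,2,3,4,5,6,8,10,12,24\}$. One small slip in the first subcase: for a simple almost ordinary surface $S$, $\QQ(\alpha_S)$ is a CM \emph{quartic} field (not sextic), and the Poonen-style argument is unnecessary overkill there — since Lemma \ref{lemma:simple-ao-surfaces} gives $m_S = 1$, the group $U_S \cong \ZZ^2$ is torsion-free while $U_E$ is torsion, so $U_S \cap U_E$ is trivial and $U_X = U_S \oplus U_E \cong \ZZ^2 \oplus C_{m_E}$ automatically.
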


We consider three cases:

\begin{enumerate}[label=(\thecounter-\alph*), leftmargin = 2cm]
\item \label{case:S-simple-ao} $S$ is simple and almost ordinary, and
  $E$ is supersingular.
\item \label{case:S-nonsimple-ao} $S$ is non-simple and almost
  ordinary, and $E$ is supersingular.
\item \label{case:E-ordinary} $S$ is supersingular and $E$ is
  ordinary.
\end{enumerate}

\begin{proof}
  As in Section \ref{sec:non-sim-ord-threefolds}, we let
  $\pi_1, \overline{\pi}_1, \pi_2, \overline{\pi}_2$ and
  $\alpha, \overline{\alpha}$ be the Frobenius eigenvalues of $S$ and
  $E$ respectively. Denote the normalized eigenvalues by
  $u_1 \colonequals \pi_1/\sqrt{q}, u_2 \colonequals \pi_2/\sqrt{q}$
  and $u \colonequals \alpha/\sqrt{q}$.

  Suppose first that $X$ is of type \ref{case:S-simple-ao}. By
  \Cref{lemma:simple-ao-surfaces}, the set $\brk{u_1,u_2}$ is
  multiplicatively independent. Since $u$ is a root of unity,
  $U_X = \langle u_1, u_2, u\rangle = U_S\oplus U_E \cong \ZZ^2\oplus
  C_m$ for $m \in M = \brk{1,2,3,4,6,8,12}$ the set of possible
  torsion orders for supersingular elliptic curves. Thus in this case,
  $\SF(X) \cong \UU(1)^2 \times C_m$ and is generated by
  $(u_1,u_2,\zeta_m)$.

  If $X$ is of type \ref{case:S-nonsimple-ao}, then
  $S\sim E_1\times E_2$ with $E_1$ ordinary and $E_2$
  supersingular. By \Cref{lemma:A1xB},
  $\SF(X) \cong \UU(1) \times C_m$, with $m$ in the set of possible
  torsion orders of non-simple supersingular surfaces.

  If $X$ is of type \ref{case:E-ordinary}, we have
  $U_X = U_E \oplus U_S \cong \ZZ\oplus C_m$ for $m$ in the set
  $M = \brk{1, 2,3,4,5,6,8,10,12,24}$ of possible torsion orders of
  supersingular surfaces from \Cref{lemma:simple-ss-surfaces} and
  \Cref{lemma:non-simple-SS-surfaces}.
\end{proof}

\begin{table}[H]
  \caption{Serre--Frobenius groups of abelian threefolds of $p$-rank
    1.}
  \setlength{\arrayrulewidth}{0.3mm} \setlength{\tabcolsep}{5pt}
  \renewcommand{\arraystretch}{1.2}
  \begin{longtable}{|c|c|c|c|c|}
    \hline
    \rowcolor{header_color} 
    Splitting type& $\cong$ class& Generator & $m \in M$ & Example\\ \hline
    Absolutely simple & $\UU(1)^3$ & $(u_1, u_2, u_3)$ & $\brk{1}$ & \href{https://www.lmfdb.org/Variety/Abelian/Fq/3/2/ab_a_a}{\texttt{3.2.ab\_a\_a}}  \\ \hline
    \ref{case:S-simple-ao} & $\UU(1)^2\times C_m$ & $(u_1, u_2, \zeta_{m_E})$ & $m = m_E \in \brk{1,2,3,4,6,8,12}$ & - \\ \hline
    \ref{case:S-nonsimple-ao} & $\UU(1)\times C_m$ & $(u_1, \zeta_{m_1}, \zeta_{m_2})$ & $m = \lcm(m_1,m_2)$ in \Cref{table:non-simple-ss-S-torsion} & - \\  \hline
    \ref{case:E-ordinary} & $\UU(1)\times C_m$ & $(u_1, \zeta_{m_1}, \zeta_{m_2})$ & $m = \lcm(m_1,m_2) \in \brk{1,2,3,4,5,6,8,10,12,24}$ & \Cref{fig:prank1-X-rank-1} \\ \hline
  \end{longtable}
  \addtocounter{table}{-1}
  \label{table:p-rank-1-threefolds}
\end{table}

The following examples are all of splitting type
\ref{case:E-ordinary}, since this splitting type contains all the new
Serre--Frobenius groups appearing in
\Cref{table:p-rank-1-threefolds}. The histograms corresponding to
these examples are presented in \Cref{fig:prank1-X-rank-1}.

\begin{figure}[H]
  \centering
  \begin{subfigure}{0.24\textwidth}
    \centering
    \includegraphics[width=1\textwidth]{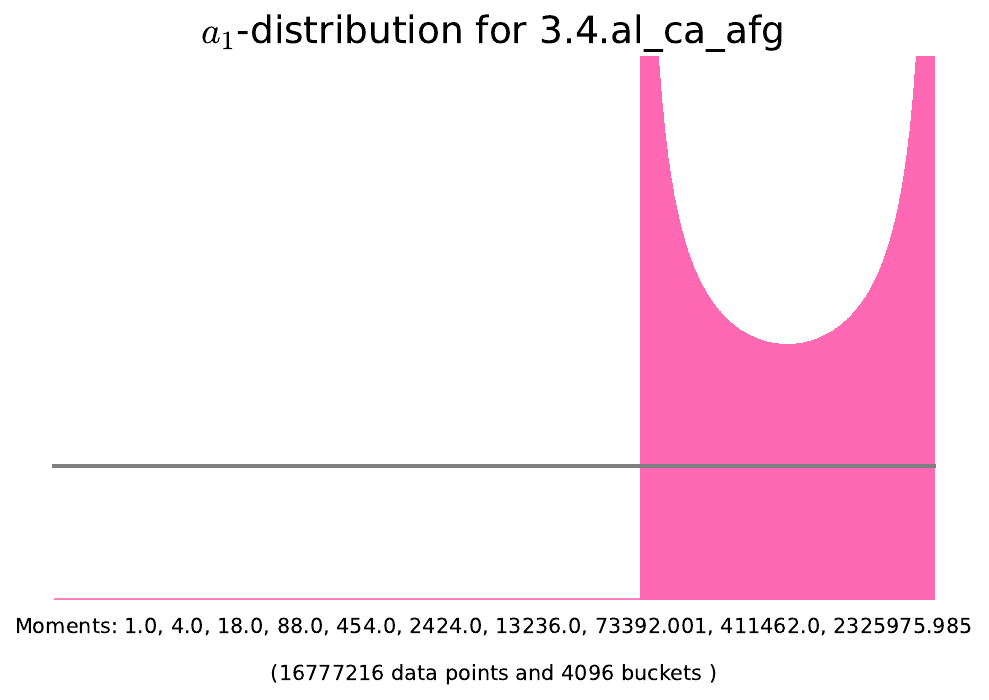}
    \caption{\href{https://www.lmfdb.org/Variety/Abelian/Fq/3/4/al_ca_afg}{\texttt{3.4.al\_ca\_afg}}}
  \end{subfigure}
  \hfill
  \begin{subfigure}{0.24\textwidth}
    \centering
    \includegraphics[width=\textwidth]{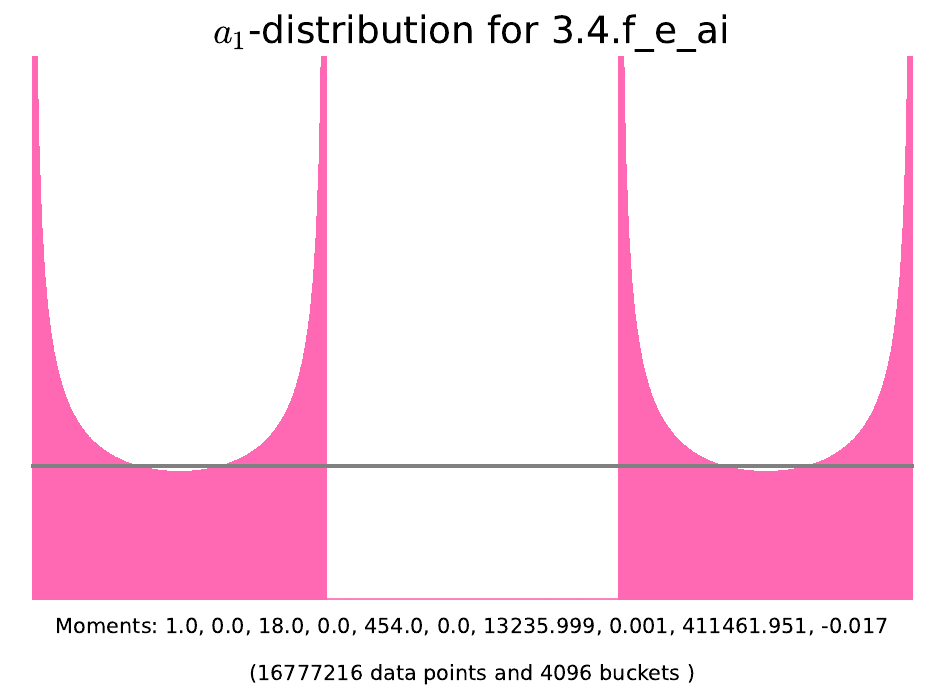}
    \caption{\href{https://www.lmfdb.org/Variety/Abelian/Fq/3/4/f_e_ai}{\texttt{3.4.f\_e\_ai}}}
  \end{subfigure}
  \hfill
  \begin{subfigure}{0.24\textwidth}
    \centering
    \includegraphics[width=\textwidth]{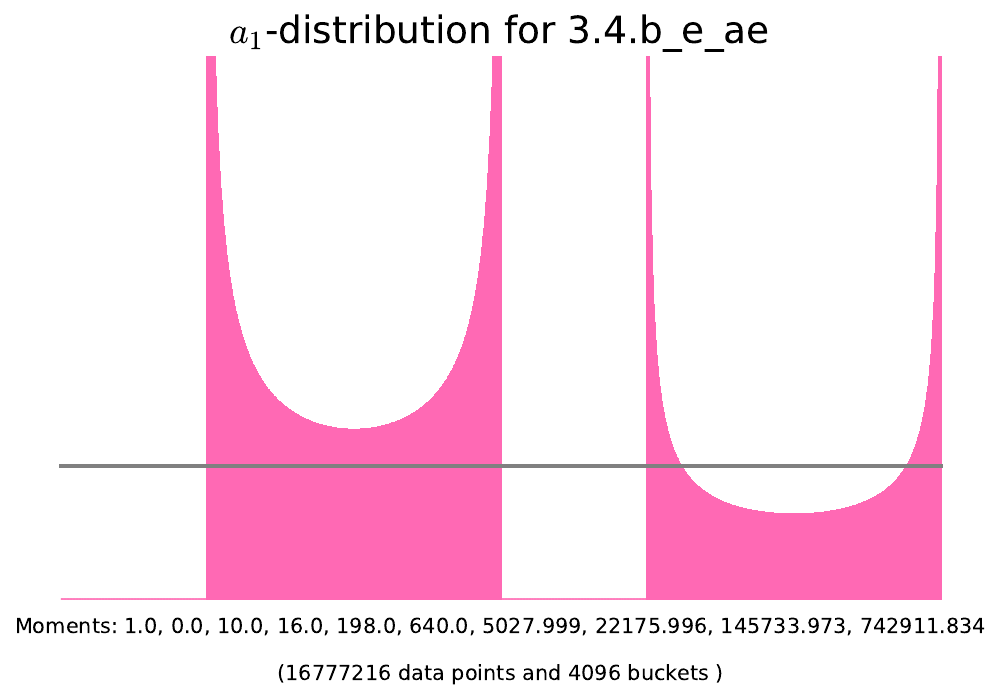}
    \caption{\href{https://www.lmfdb.org/Variety/Abelian/Fq/3/4/b_e_ae}{\texttt{3.4.b\_e\_ae}}}
  \end{subfigure}
  \hfill
  \begin{subfigure}{0.24\textwidth}
    \centering
    \includegraphics[width=\textwidth]{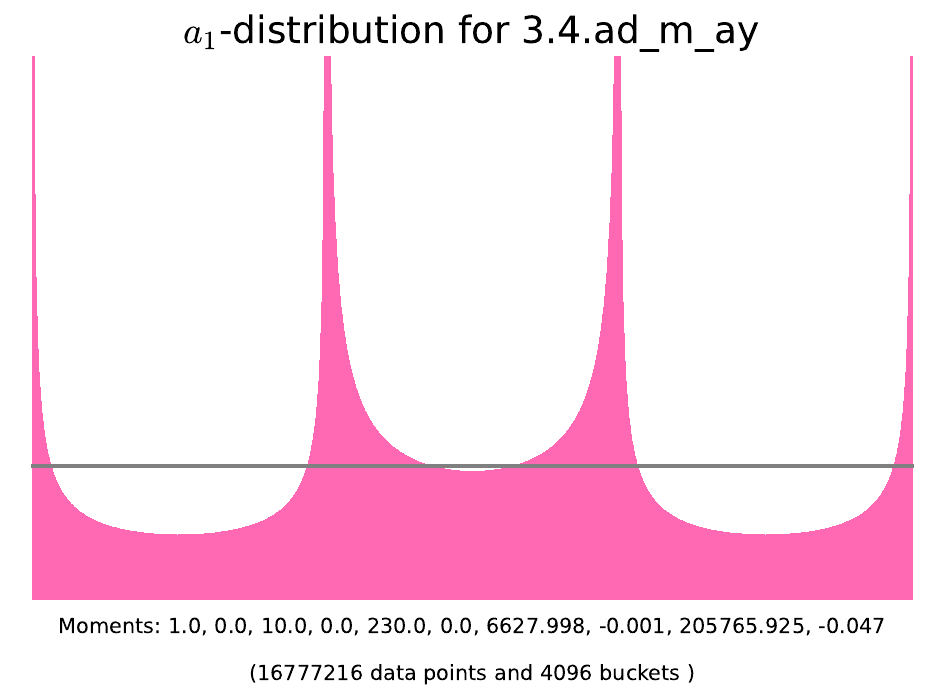}
    \caption{\href{https://www.lmfdb.org/Variety/Abelian/Fq/3/4/ad_m_ay}{\texttt{3.4.ad\_m\_ay}}}
  \end{subfigure}
\end{figure}
\begin{figure}[H]\ContinuedFloat
  \begin{subfigure}{0.24\textwidth}
    \centering
    \includegraphics[width=\textwidth]{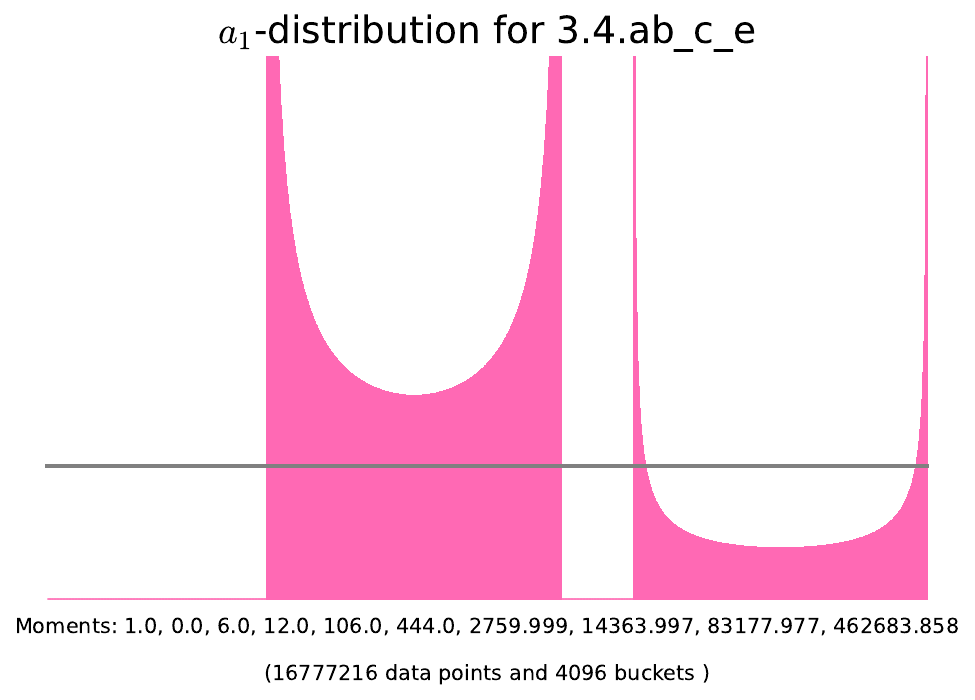}
    \caption{\href{https://www.lmfdb.org/Variety/Abelian/Fq/3/4/ab_c_e}{\texttt{3.4.ab\_c\_e}}}
  \end{subfigure}
  \hfill
  \begin{subfigure}{0.24\textwidth}
    \centering
    \includegraphics[width=\textwidth]{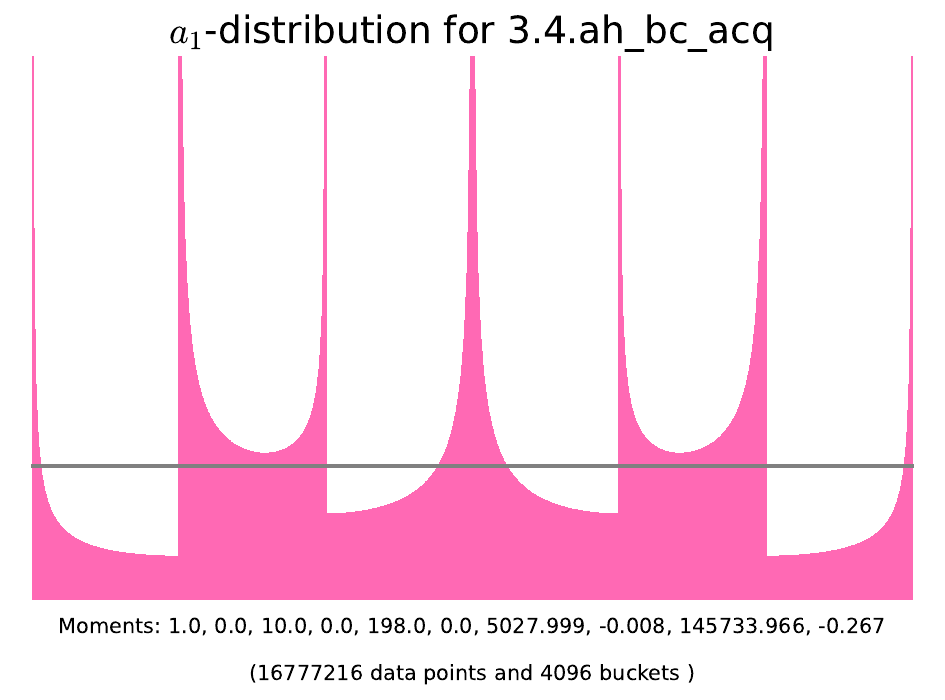}
    \caption{\href{https://www.lmfdb.org/Variety/Abelian/Fq/3/4/ah_bc_acq}{\texttt{3.4.ah\_bc\_acq}}}
  \end{subfigure}
  \hfill
  \begin{subfigure}{0.24\textwidth}
    \centering
    \includegraphics[width=\textwidth]{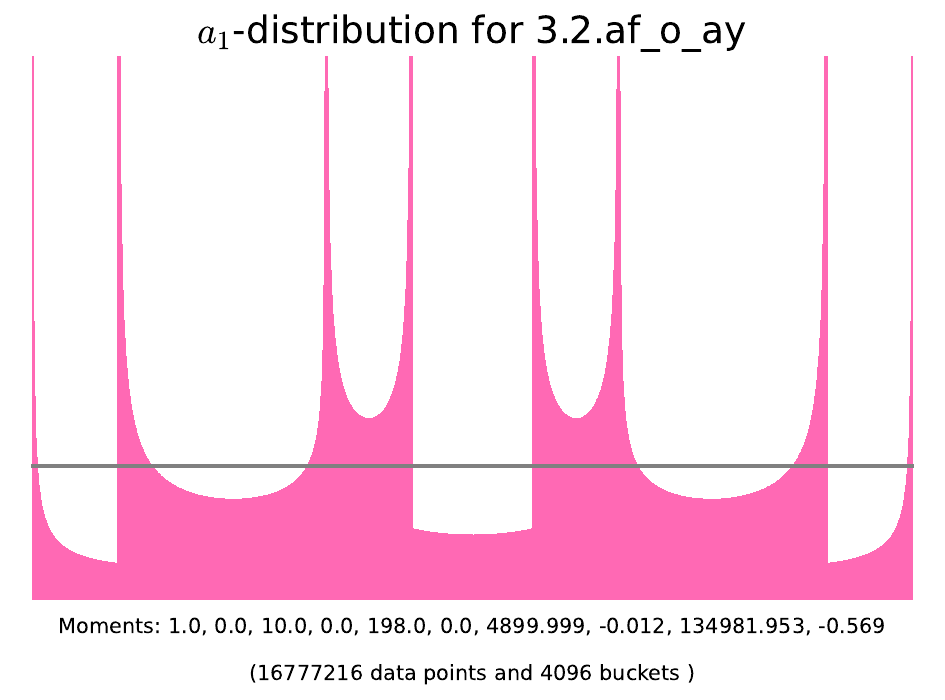}
    \caption{\href{https://www.lmfdb.org/Variety/Abelian/Fq/3/2/af_o_ay}{\texttt{3.2.af\_o\_ay}}}
  \end{subfigure}
  \hfill
  \begin{subfigure}{0.24\textwidth}
    \centering
    \includegraphics[width=\textwidth]{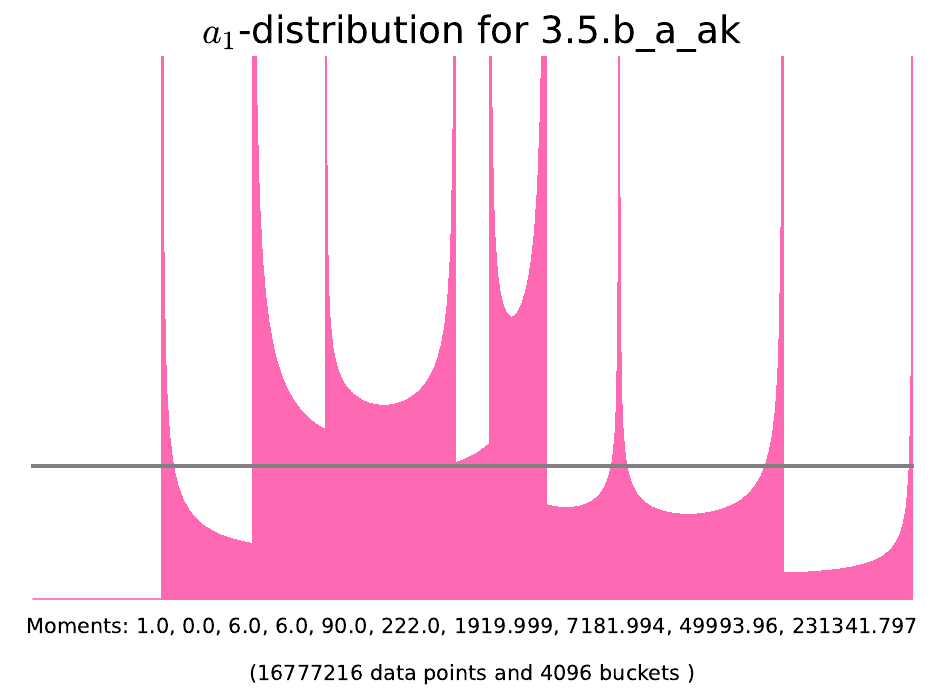}
    \caption{\href{https://www.lmfdb.org/Variety/Abelian/Fq/3/5/b_a_ak}{\texttt{3.5.b\_a\_ak}}}
  \end{subfigure}
  \hfill
  \begin{subfigure}{0.32\textwidth}
    \centering
    \includegraphics[width=0.8\textwidth]{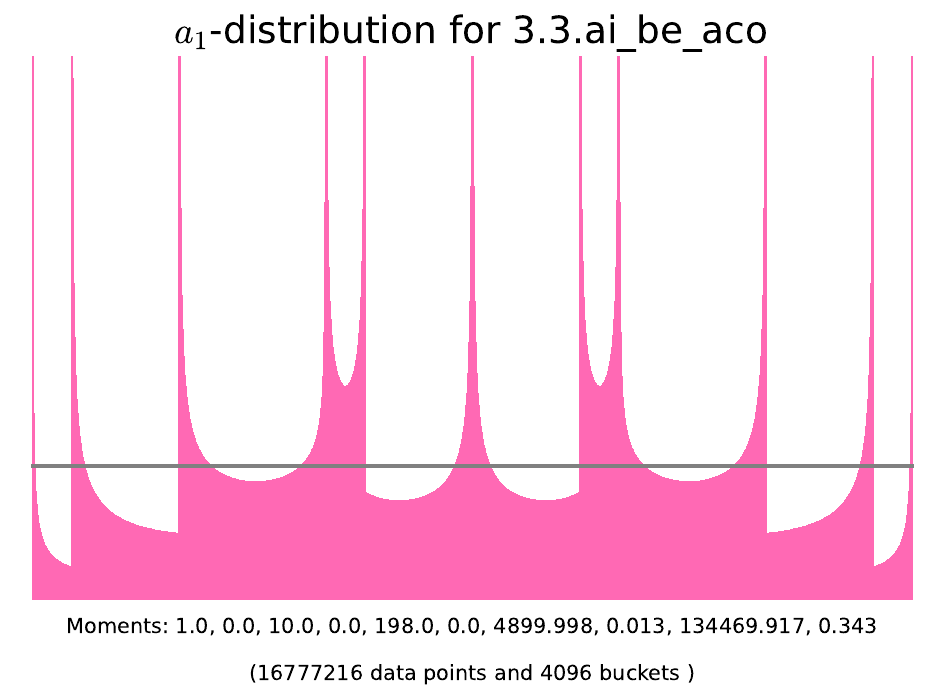}
    \caption{\href{https://www.lmfdb.org/Variety/Abelian/Fq/3/3/ai_be_aco}{\texttt{3.3.ai\_be\_aco}}}
  \end{subfigure}
  \hspace{2cm}
  \begin{subfigure}{0.32\textwidth}
    \centering
    \includegraphics[width=0.8\textwidth]{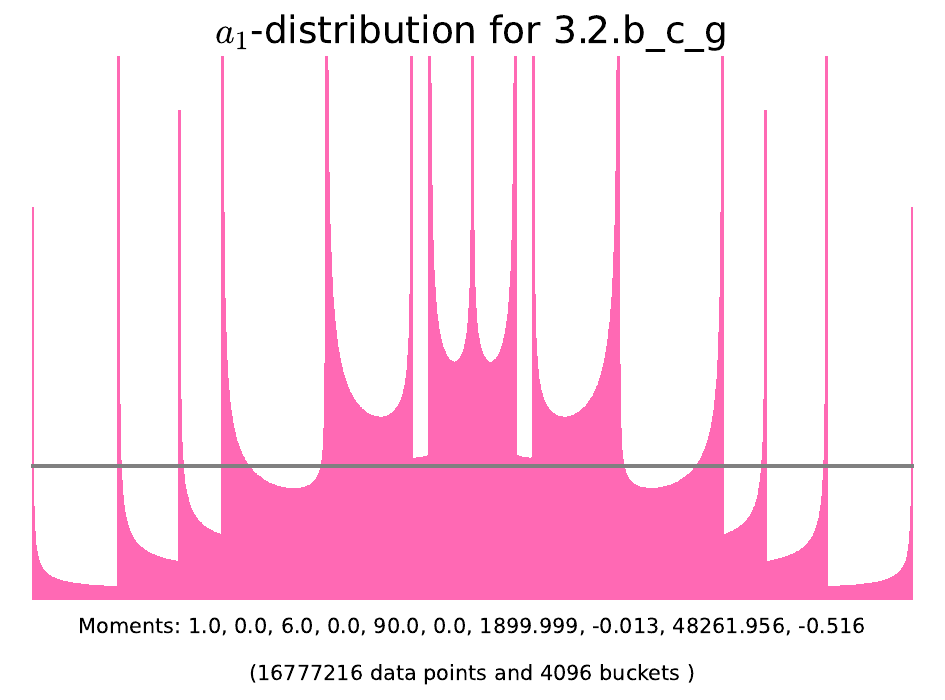}
    \caption{\href{https://www.lmfdb.org/Variety/Abelian/Fq/3/2/b_c_g}{\texttt{3.2.b\_c\_g}}}
  \end{subfigure}
  \caption{$a_1$-distributions of $p$-rank $1$ threefolds with angle
    rank 1}
  \label{fig:prank1-X-rank-1}
\end{figure}

\subsection{Absolutely simple p-rank 0 threefolds}\label{sec:absolutely-simple-prank0}
In this section, $X$ will be a non-supersingular $p$-rank $0$ abelian
threefold over $\FF_q$. Since the $q$-Newton polygon of the Frobenius
polynomial $P(T) = P_X(T)$ has slopes $\tfrac13$ and $\tfrac23$, each
with multiplicity three, it follows that $X$ is absolutely simple,
since the slope $1/3$ does not occur for abelian varieties of smaller
dimension. Let $e_r^2$ denote the dimension of
$\mathrm{End}^0(X_{(r)})$ over its center. We consider two cases:
\begin{enumerate}[label=(\thesubsection-\alph*), leftmargin = 2cm]
\item \label{case:xing} There exists $r\geq 1$ such that $e_r = 3$. In
  this case we have $P_{(r)}(T) = h_{(r)} (T)^3$ and $h_{(r)}(T)$ is
  as in \Cref{thm:xing}, so that $3$ divides $r\cdot\log_p(q)$.
\item \label{case:not-xing} $e_r = 1$ for every positive integer $r$.
\end{enumerate}

\begin{lemma}[Node X-H in \Cref{fig:proof-3}]
  Let $X$ be an absolutely simple abelian threefold of $p$-rank $0$
  defined over $\FF_q$. Then, the Serre--Frobenius group of $X$ is
  classified according to \Cref{table:X-prank0}. Furthermore, $X$ is
  of type \ref{case:xing}, $m_X$ is the smallest positive integer $r$
  such that $e_r = 3$.
\end{lemma}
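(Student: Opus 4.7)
The plan is to dispatch the two cases \ref{case:xing} and \ref{case:not-xing} separately.

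In case \ref{case:not-xing}, the hypothesis $e_r = 1$ for every $r \ge 1$ means $P_{(r)}(T)$ is irreducible of degree $6$ for every $r$, so the normalized Frobenius eigenvalues of every base extension are pairwise distinct. Since the Newton slopes of $X$ are $1/3$ and $2/3$ (Figure \ref{fig:NP-p-rank0}), no Frobenius eigenvalue is supersingular either. Applying \cite[Theorem 1.1]{Zarhin2015EigenFrob} to the absolutely simple $X$ provides a neat base extension $X_{(m_X)}$; Remark \ref{rmk:neat2} then yields $\delta_X = 3$ and hence $\SF(X) = \UU(1)^3$.

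For case \ref{case:xing}, let $r_0$ be the smallest positive integer with $e_{r_0} = 3$. Theorem \ref{thm:xing} gives $P_{(r_0)}(T) = h_{(r_0)}(T)^3$ with $h_{(r_0)}(T) = T^2 + aq^{r_0/3}T + q^{r_0}$ and $\gcd(a,p) = 1$. Writing $v \colonequals \alpha_1^{r_0}/\sqrt{q}^{r_0}$, the normalized eigenvalues of $X_{(r_0)}$ form the multiset $\{v, \overline{v}\}$ each with multiplicity $3$, so $u_j^{r_0} \in \{v, \overline{v}\}$ for every $j$. Since the Newton slopes of $h_{(r_0)}$ are $1/3$ and $2/3$, the element $v$ has $p$-adic absolute value different from $1$ and is therefore not a root of unity. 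Thus $U_{X_{(r_0)}} = \langle v \rangle$ is torsion-free of rank $1$, which by Remark \ref{rmk:delta-invariant-base-extension} gives $\delta_X = 1$ and $m_X \mid r_0$.

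For the reverse divisibility, choose signs $\epsilon_j \in \{\pm 1\}$ with $u_j^{r_0} = v^{\epsilon_j}$. The element $\zeta_j \colonequals u_j\, u_1^{-\epsilon_j}$ satisfies $\zeta_j^{r_0} = 1$, hence $\zeta_j \in \mathrm{Tors}(U_X)$ and $\zeta_j^{m_X} = 1$. Therefore $u_j^{m_X} = u_1^{\epsilon_j m_X} \in \{u_1^{m_X}, \overline{u}_1^{m_X}\}$ for every $j$, forcing $P_{(m_X)}(T)$ to be a power of the minimal polynomial of $\alpha_1^{m_X}$ and giving $e_{m_X} = 3$. Minimality of $r_0$ then yields $r_0 \mid m_X$; combined with the above, $m_X = r_0$, and Theorem \ref{mainthm:structure-thm} concludes $\SF(X) \cong \UU(1) \times C_{r_0}$. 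The main subtleties are the $p$-adic argument that $v$ is not a root of unity and the two-sided divisibility $m_X = r_0$; once these are in place, the remaining task is to enumerate the permissible values of $r_0$ subject to the constraint $3 \mid r_0 \log_p q$ of Theorem \ref{thm:xing} in order to fill the entries of Table \ref{table:X-prank0}.
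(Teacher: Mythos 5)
Your argument for case \ref{case:not-xing} matches the paper's, and your treatment of case \ref{case:xing} is actually cleaner than the paper's in one spot: you correctly allow $u_j^{r_0} \in \{v, \overline{v}\}$ and carry the sign $\epsilon_j$ through, whereas the paper writes $\alpha_j = \zeta \alpha_1$ outright without addressing the possibility $\alpha_j = \zeta\overline{\alpha}_1$. Your two-sided divisibility argument establishing $m_X = r_0$ is also sound (the $p$-adic valuation argument for $v$ not being a root of unity works, and $U_{(r_0)}$ torsion-free does force $m_X \mid r_0$ since the torsion generator of $U_X$ is a word in the $u_j$'s whose $r_0$-th power lies in $U_{(r_0)}$).

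However, there is a genuine gap in the final step. You say the set $M$ in Table \ref{table:X-prank0} is obtained by ``enumerat[ing] the permissible values of $r_0$ subject to the constraint $3 \mid r_0 \log_p q$.'' That constraint is far too weak: for suitable $q$ it permits $r_0 = 5, 9, 11, \dots$, whereas the table asserts $M = \{1,3,7\}$. What actually pins down $M$ is a field-theoretic argument in the style of Theorem \ref{thm:simple-ordinary-prime-splitting}: when $m_X > 1$, the field $\QQ(\alpha_1^{m_X})$ is a proper CM subfield of the sextic CM field $\QQ(\alpha_1)$, hence imaginary quadratic, and then \cite[Lemmas 4 and 5]{howe2002existence} force either $P(T) \in \QQ[T^{m_X}]$ (giving $m_X = 3$) or $\QQ(\alpha_1) = \QQ(\alpha_1^{m_X},\zeta_{m_X})$ with $\varphi(m_X) \mid 6$, which leaves only $m_X = 7$ after ruling out $m_X = 9$ exactly as in the proof of Theorem \ref{thm:simple-ordinary-prime-splitting}. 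Without this step the enumeration of $M$ is not justified.
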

\begin{table}[H]
  \setlength{\arrayrulewidth}{0.3mm} \setlength{\tabcolsep}{5pt}
  \renewcommand{\arraystretch}{1.2}
  \caption{Serre--Frobenius groups of absolutely simple abelian
    threefolds of $p$-rank $0$.}
  \begin{longtable}{|c|c|c|c|c|}
    \hline
    \rowcolor{header_color} 
    Case & $q = p^d$  & $\cong$ class & Generator & $m \in M$ \\ \hline
    \ref{case:xing}  & $3 \mid m_X\cdot d$ & $\UU(1) \times C_m $ & $(u_1, \zeta_mu_1, \zeta_m^\nu u_1)$ & $\brk{1,3,7}$ \\ \hline
    \ref{case:not-xing} & - & $\UU(1)^3$ & $(u_1,u_2,u_3)$ & $\brk{1}$   \\ \hline
  \end{longtable}
  \addtocounter{table}{-1}
  \label{table:X-prank0}
\end{table}

\begin{remark}
  The techniques for proving the Generalized Lenstra--Zarhin result in
  \cite[Theorem 1.5]{dupuy2022angle}, cannot be applied to this
  case. Thus, even the angle rank analysis in this case is
  particularly interesting.
\end{remark}

\begin{proof}
  Suppose first that $X$ is of type \ref{case:xing}, and let $m$ be
  the minimal positive integer such that $e_m = 3$. Maintaining
  previous notation, $P_{(m)}(T) = h_{(m)}(T)^3$ implies that
  $\alpha_2 = \zeta\cdot \alpha_1$ and $\alpha_3 = \xi\cdot \alpha_1$
  for $m$-th roots of unity $\zeta$ and $\xi$, whose orders have lcm
  $m$.  By \Cref{lemma:equiv-char-of-SFgroup}, this implies that
  $\SF(X) \cong \UU(1)\times C_m$. We conclude that $\delta_X = 1$ and
  $m = m_X$. To calculate the set $M$ of possible torsion orders,
  assume that $m_X = m > 1$. Then $\QQ(\alpha_1^m)$ is a quadratic
  imaginary subextension of $\QQ(\alpha_1)\supset \QQ$, and we can
  argue as in the proof of \Cref{thm:simple-ordinary-prime-splitting}
  (with $\ell =3$) to conclude that $m \in \brk{3,7}$.

  Assume now that $X$ is of type \ref{case:not-xing}. This implies
  that $\QQ(\alpha_1^r)$ is a degree $6$ CM-field for every positive
  integer $r$. If $m := m_X$, the base extension $X_{(m)}$ is neat and
  the Frobenius eigenvalues are distinct and not supersingular. By
  Remark \ref{rmk:neat2} we have that $\delta_X = 3$ and $m = 1$.
\end{proof}

\begin{example}[$a_1$-distribution for $p$-rank 0 non-supersingular
  threefolds of splitting type \ref{case:xing}] The histograms
  corresponding to these examples are presented in
  \Cref{fig:xing-type-threefolds}. Note that the first one already
  showed up in \Cref{fig:non-simple-ord-threefolds-i}, while the other
  ones appeared in \Cref{fig:hist-simple-ord-X}.
  \begin{enumerate}[label=(\Alph*)]
  \item ($m=1$) The isogeny class
    \href{https://www.lmfdb.org/Variety/Abelian/Fq/3/8/ag_bk_aea}{\texttt{3.8.ag\_bk\_aea}}
    satisfies $m_X=1$. Note that $3$ divides $m_X\cdot \log_2(8)$.
    
  \item ($m=3$) The isogeny class
    \href{https://www.lmfdb.org/Variety/Abelian/Fq/3/2/a_a_ac}{\texttt{3.2.a\_a\_ac}}
    has angle rank $1$ and irreducible Frobenius polynomial
    $P(T) = T^6 - 2T^3 + 8$. The cubic base extension gives the
    isogeny class
    \href{https://www.lmfdb.org/Variety/Abelian/Fq/3/8/ag_bk_aea}{\texttt{3.8.ag\_bk\_aea}}
    with reducible Frobenius polynomial
    $P_{(3)}(T) = (T^6 - 2T^3 + 8)^3$. Note that $3$ divides
    $m_X\cdot \log_2(2)$.
    
  \item ($m = 7$) The isogeny class
    \href{https://www.lmfdb.org/Variety/Abelian/Fq/3/8/ai_bk_aeq}{\texttt{3.8.ai\_bk\_aeq}}
    has angle rank $1$ and irreducible Frobenius polynomial
    $P(T) = T^6 - 8T^5 + 36T^4 - 120T^3 + 288T^2 - 512T + 512$. Its
    base change over a degree $m_X = 7$ extension is the isogeny class
    \texttt{3.2097152.ahka\_bfyoxc\_adesazpwa} with Frobenius
    polynomial
    \begin{equation*}
      P_{(7)}(T) = (T^2 - 1664T + 2097152)^3.
    \end{equation*}
    In this example, $q = 8$, so that $3$ divides
    $m_X\cdot \log_2(8)$.
  \end{enumerate}
\end{example}

\begin{figure}[h]
  \centering
  \begin{subfigure}{0.32\textwidth}
    \centering
    \includegraphics[width=0.8\textwidth]{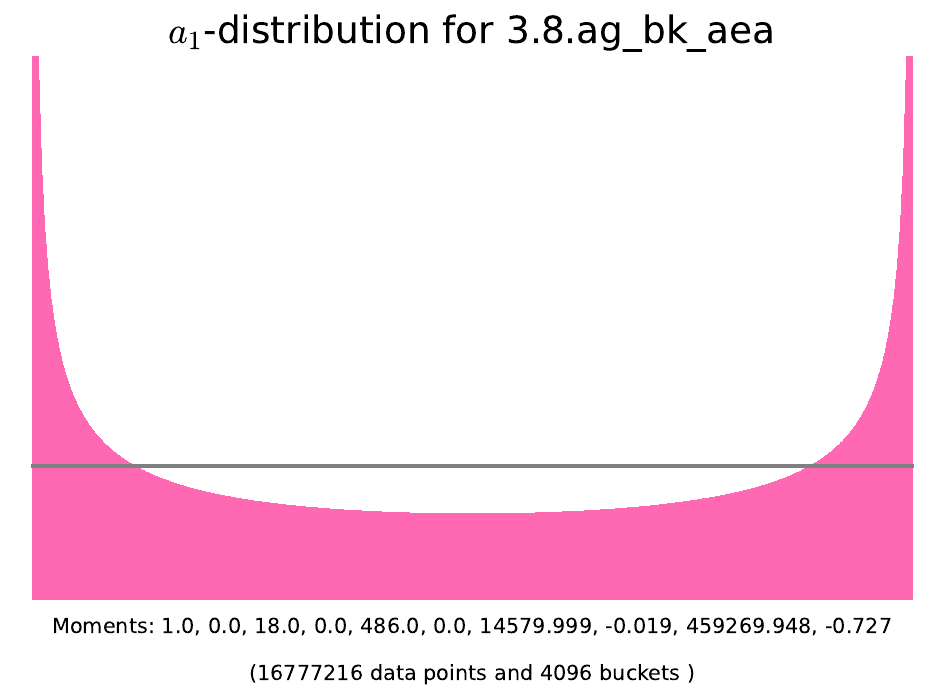}
    \caption{
      \href{https://www.lmfdb.org/Variety/Abelian/Fq/3/8/ag_bk_aea}{\texttt{3.8.ag\_bk\_aea}}}
  \end{subfigure}
  \hfill
  \begin{subfigure}{0.32\textwidth}
    \centering
    \includegraphics[width=0.8\textwidth]{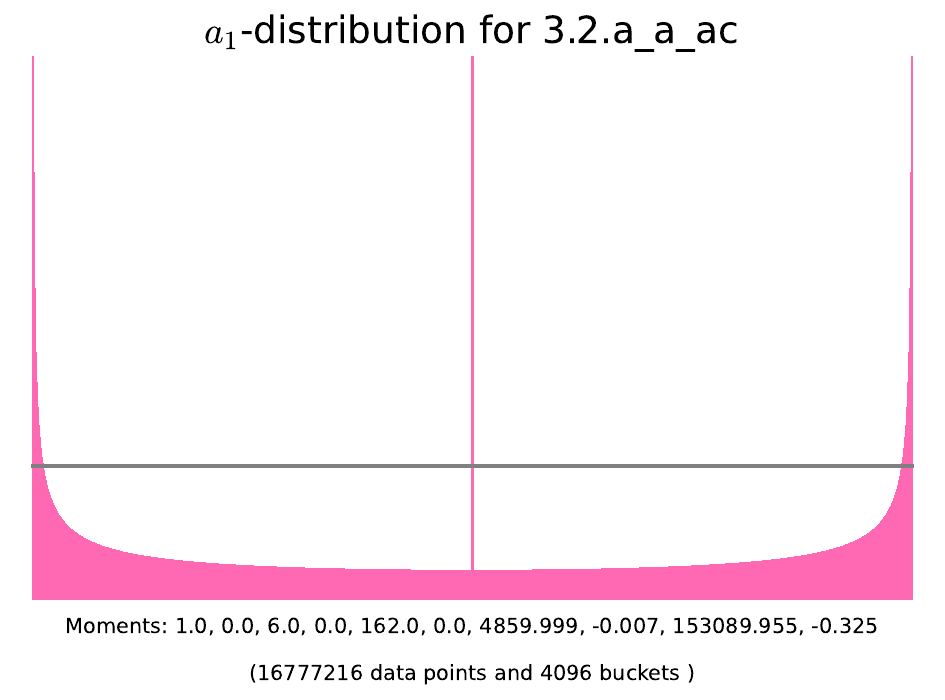}
    \caption{
      \href{https://www.lmfdb.org/Variety/Abelian/Fq/3/2/a_a_ac}{\texttt{3.2.a\_a\_ac}}}
  \end{subfigure}
  \hfill
  \begin{subfigure}{0.32\textwidth}
    \centering
    \includegraphics[width=0.8\textwidth]{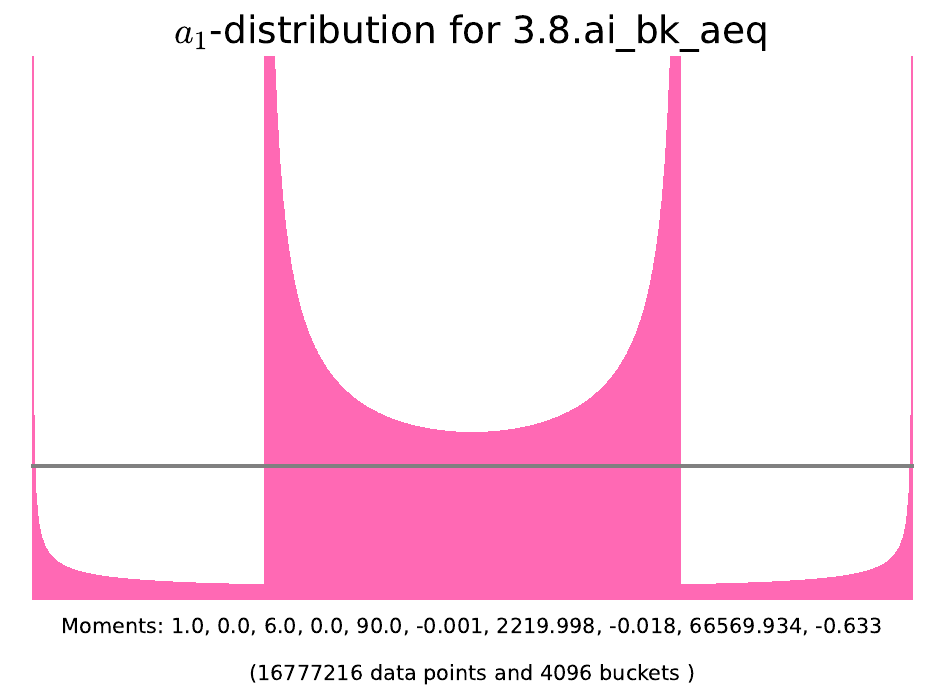}
    \caption{
      \href{https://www.lmfdb.org/Variety/Abelian/Fq/3/8/ai_bk_aeq}{\texttt{3.8.ai\_bk\_aeq}}}
  \end{subfigure}
  \caption{$a_1$-distribution for $p$-rank 0 non-supersingular
    threefolds of splitting type \ref{case:xing}.}
  \label{fig:xing-type-threefolds}
\end{figure}

\subsection{Simple supersingular threefolds}
Nart and Ritzenthaler \cite{nart&ritz08} showed that the only degree
$6$ supersingular $q$-Weil numbers are the conjugates of
\begin{align*}
  \pm\sqrt{q}\zeta_7, \pm\sqrt{q}\zeta_9, \quad \text{ when } q \text{ is a square, and} \\
  7^{d/2}\zeta_{28}, 3^{d/2}\zeta_{36}, \quad \text{ when } q \text{ is not a square.}
\end{align*}

Building on their work, Haloui \cite[Proposition 1.5]{haloui}
completed the classification of simple supersingular threefolds. This
classification is also discussed in \cite{Singh&McGuire&Zaytsev2014};
and we adapt their notation for the polynomials of Z-3 type. Denoting
by $(a_1, a_2, a_3)$ the isogeny class of abelian threefolds over
$\FF_q$ with Frobenius polynomial
$P_X(T) = T^6 + a_1T^5+ a_2T^4 + a_3T^3+ qa_2T^2 + q^2a_1T + q^3$, the
following lemma gives the classification of Serre--Frobenius groups of
simple supersingular threefolds, which is a corollary of Haloui's
result.

\begin{lemma}[Node X-I in \Cref{fig:proof-3}]
  Let $X$ be a simple supersingular abelian threefold defined over
  $\FF_q$. The Serre--Frobenius group of $X$ is classified according
  to \Cref{table:ss-simple-threefolds}.
\end{lemma}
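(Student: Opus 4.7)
The proof plan is to proceed by direct inspection of the classification of simple supersingular Weil polynomials of degree six, following Haloui \cite[Proposition 1.5]{haloui} (building on Nart--Ritzensthaler \cite{nart&ritz08}), and to compute the Serre--Frobenius group for each entry using the machinery already established in Section \ref{sec:ss-qST-groups}. Since $X$ is supersingular, $U_X$ is finite and $\SF(X) \cong U_X$ is cyclic; furthermore, by the Honda--Tate correspondence, the isogeny class of $X$ is determined by $h_X(T)$, and $P_X(T) = h_X(T)^{e_X}$. Thus it suffices to read off the order of the cyclic group generated by the normalized roots $u_j = \alpha_j/\sqrt{q}$ for each possible $h_X(T)$.

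The steps are as follows. First, I will enumerate the simple supersingular isogeny classes using Haloui's list, which gives all Frobenius polynomials $P_X(T) = T^6 + a_1T^5 + a_2T^4 + a_3T^3 + qa_2T^2 + q^2a_1T + q^3$ corresponding to simple supersingular threefolds, subdivided by whether $q$ is a square and by the degree $e_X \in \brk{1,3}$ of $P_X(T)/h_X(T)$. Second, for each such class I will identify the minimal polynomial $h_X(T)$ and match it to one of the three Zhu types (Z-1), (Z-2), (Z-3) from Table \ref{table:zhu}. Third, I will pass to the normalized polynomial $\tilde{h}_X(T) = h_X^{[1/\sqrt{q}]}(T)$ using the dictionary recalled at the end of Section \ref{sec:ss-qST-groups}: a Z-1 polynomial normalizes to $\Phi_m(T)$, a Z-2 polynomial normalizes to $\Phi_n(T^2)$, and a Z-3 polynomial normalizes to one of the $\Psi$-type polynomials described as in Equations \eqref{eq:psi_5,1}--\eqref{eq:psi_2,3} (and their analogues for degree six as given by \cite{Singh&McGuire&Zaytsev2014}). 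Fourth, from the roots of $\tilde{h}_X(T)$ I read off the torsion order $m_X$ and conclude $\SF(X) \cong C_{m_X}$; this populates the corresponding row of Table \ref{table:ss-simple-threefolds}.

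The routine but delicate part is the Z-3 case, since the normalized roots are products of a primitive $m$-th root of unity with a Jacobi symbol $\paren{\tfrac{q}{j}}$ or with a primitive eighth root of unity, so the order of the cyclic subgroup generated can be larger than $m$. To handle this cleanly, I will argue exactly as in the exemplar computation shown after Lemma \ref{lemma:simple-ss-surfaces} for $(0,0)$ in dimension two: write out $\tilde{h}_X(T)$ explicitly, verify that its roots lie in some $C_N \subset \UU(1)$, and then check primitivity of at least one root to conclude that $U_X = C_N$. In particular, for the $\Psi$-type entries one checks that the exponent of $2$ in $N$ comes from a factor $\zeta_8$ (or $\zeta_4$) and the odd part from a factor $\zeta_m$ with $\gcd(2,m)=1$, so the order of the product is the product of the orders.

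The only real obstruction is bookkeeping: one must be careful that a given minimal polynomial really occurs as $h_X(T)$ for some genuine \emph{simple} threefold with the stated $e_X$, and this is precisely what Haloui's classification guarantees, so no independent existence argument is required. Realizability of each entry is then witnessed by the corresponding LMFDB examples cited in Table \ref{table:ss-simple-threefolds}.
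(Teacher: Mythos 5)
Your proposal matches the paper's approach in essence: read off the simple supersingular isogeny classes from Haloui's classification, identify the Zhu type of $h_X$, normalize, and read off the cyclic group. One small difference worth noting: the paper explicitly invokes Xing's theorem (Theorem \ref{thm:xing}) to observe up front that $e_X = 1$ in every supersingular row, rather than leaving the case $e_X = 3$ as a possibility to be ruled out by consulting Haloui's list; this is worth spelling out because the $e_X = 3$ polynomials in Xing's theorem are in fact non-supersingular, so the $e_X \in \brk{1,3}$ dichotomy you mention collapses immediately. For the Z-3 entries, your primitivity argument is a valid route, but the paper achieves the same end more cleanly by noting the identities $h_{7,1}(T)\,h_{7,1}(-T) = \Phi_{28}(T)$ and $h_{3,3}(T)\,h_{3,3}(-T) = \Phi_{36}(T)$, which immediately show that the normalized roots are primitive $28$th and $36$th roots of unity respectively. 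Two small inaccuracies: Table \ref{table:ss-simple-threefolds} does not cite LMFDB examples (unlike some of the other tables), and the threefold Z-3 entries all involve a factor $\zeta_4$ rather than $\zeta_8$ (the $\zeta_8$ factor is the $p=2$ surface case, not present in dimension $3$), though your parenthetical ``(or $\zeta_4$)'' covers this. No Z-2 entries occur in the threefold table, so that branch of your dictionary is unused here.
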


\begin{table}[H]
  \setlength{\arrayrulewidth}{0.3mm} \setlength{\tabcolsep}{5pt}
  \renewcommand{\arraystretch}{1.3}
  \caption{Serre--Frobenius groups of simple supersingular
    threefolds.}
  \begin{longtable}{|c|c|c|c|c|c|c|}
    \hline
    \rowcolor{header_color} 
    $(a_1, a_2, a_3)$       & $p$  & $d$  & Type & $\tilde{h}(T)$ & $\SF(X)$ & Example \\ \hline
    $(\sqrt{q}, q, q\sqrt{q})$ & $7 \nmid (p^3-1) $   & even & Z-1  & $\Phi_7(T)$                  & $C_7$  &  \href{https://www.lmfdb.org/Variety/Abelian/Fq/3/9/d_j_bb}{\texttt{3.9.d\_j\_bb}}  \\ \hline
    $ (-\sqrt{q}, q, -q\sqrt{q}) $  & $7 \nmid (p^3-1)$               & even   & Z-1  & $\Phi_{14}(T)$                  & $C_{14}$ & \href{https://www.lmfdb.org/Variety/Abelian/Fq/3/9/ad_j_abb}{\texttt{3.9.ad\_j\_abb}}   \\ \hline
    $(0,0,q\sqrt{q})$    & $\not\equiv 1 \md 3$                     & even  & Z-1  & $\Phi_9(T)$                & $C_9$ & \href{https://www.lmfdb.org/Variety/Abelian/Fq/3/4/a_a_i}{\texttt{3.4.a\_a\_i}}   \\ \hline
    $(0,0,-q\sqrt{q})$    & $\not\equiv 1 \md 3$ & even & Z-1  & $\Phi_{18}(T)$               & $C_{18}$ & \href{https://www.lmfdb.org/Variety/Abelian/Fq/3/4/a_a_ai}{\texttt{3.4.a\_a\_ai}}\\ \hline
    $(\sqrt{7q}, 3q, q\sqrt{7q})$           & $=7$               & odd & Z-3  & $h_{7,1}(T)$ & $C_{28}$ & \href{https://www.lmfdb.org/Variety/Abelian/Fq/3/7/h_v_bx}{\texttt{3.7.h\_v\_bx}} \\ \hline
    $(-\sqrt{7q}, 3q, -q\sqrt{7q})$     & $=7$   & odd & Z-3  & $h_{7,1}(-T)$                  & $C_{28}$ &  \href{https://www.lmfdb.org/Variety/Abelian/Fq/3/7/ah_v_abx}{\texttt{3.7.ah\_v\_abx}}   \\ \hline
    $(0,0,q\sqrt{3q})$    & $=3$   & odd & Z-3  & $h_{3,3}(T)$  & $C_{36}$ & \href{https://www.lmfdb.org/Variety/Abelian/Fq/3/3/a_a_j}{\texttt{3.3.a\_a\_j}}\\ \hline
    $(0,0,-q\sqrt{3q})$    & $=3$   & odd & Z-3  & $h_{3,3}(-T)$  & $C_{36}$ & \href{https://www.lmfdb.org/Variety/Abelian/Fq/3/3/a_a_aj}{\texttt{3.3.a\_a\_aj}} \\ \hline
  \end{longtable}
  \addtocounter{table}{-1}
  \label{table:ss-simple-threefolds}
\end{table}

\begin{proof}
  By \Cref{thm:xing} and the discussion following it, we know that the
  Frobenius polynomial of every supersingular threefold $P_X(T)$,
  coincides with the minimal polynomial $h_X(T)$ with $e=1$ in some
  row of the table.  The first four rows of
  \Cref{table:ss-simple-threefolds} correspond to isogeny classes of
  type (Z-1). By the discussion in \Cref{sec:ss-qST-groups}, the
  minimal polynomials are of the form\footnote{Recall that
    $f^{[a]}(T) \colonequals a^{\deg f} f(T/a)$.}
  $\Phi_m^{[\sqrt{q}]}(T)$ and the normalized polynomials are just the
  usual cyclotomic polynomials $\Phi_m(T)$.

  The last four rows of \Cref{table:ss-simple-threefolds} correspond
  to isogeny classes of type (Z-3). The normalized Frobenius
  polynomials are
  $h_{7,1}(\pm T) = \, T^6 \pm \sqrt{7}T^5 + 3T^4 \pm \sqrt{7}T^3
  +3T^2 \pm \sqrt{7}T + 1$, and
  $h_{3,3}(\pm T) = \, T^6 \pm \sqrt{3}T^3 + 1.$ Noting that
  $h_{7,1}(T)h_{7,1}(-T) = \Phi_{28}(T)$ and
  $h_{3,3}(T)h_{3,3}(-T) = \Phi_{36}(T)$ we conclude that the unit
  groups $U_X$ are generated by $\zeta_{28}$ and $\zeta_{36}$
  respectively.
\end{proof}

\subsection{Non-simple supersingular threefolds}
\label{subsec:non-simple-ss-3fold}

If $X$ is a non-simple supersingular abelian threefold over $\FF_q$,
then there are two cases:
\begin{enumerate}[label=(\thecounter-\alph*), leftmargin = 2cm]
\item \label{case:X-SS-SxE} $X \sim S \times E$, with $S$ a simple
  supersingular surface over $\FF_q$ and $E$ a supersingular elliptic
  curve.
\item \label{case:X-SS-ExExE} $X \sim E_1 \times E_2 \times E_3$,
  where each $E_i$ is a supersingular elliptic curve.
\end{enumerate}

The classification of the Serre--Frobenius group in these cases can be
summarized in the following lemma.

\begin{lemma}[Node X-J in \Cref{fig:proof-3}]
  If $X$ is a non-simple supersingular abelian threefold as in Case
  \ref{case:X-SS-SxE}, then $\SF(X) \cong C_m$, for $m \in M(p,d)$,
  where
  \begin{itemize}
  \item if $d$ is even,
    $M(p,d) = \{3, 4, 5, 6, 8, 10, 12, 15, 20, 24, 30 \}$, and
  \item if $d$ is odd, $M(p,d) = \{{4, 8, 12, 20, 24} \}$.
  \end{itemize}
\end{lemma}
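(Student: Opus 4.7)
The plan is to first reduce the statement to computing a least common multiple of two torsion orders, and then to run a finite case check against the tables for supersingular elliptic curves and simple supersingular surfaces established earlier in the paper.

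First I would observe that because $X \sim S \times E$ with both factors supersingular, every normalized Frobenius eigenvalue of $X$ is a root of unity, so $\delta_X = 0$ and by Theorem \ref{mainthm:structure-thm} one has $\SF(X) = U_X$, a finite cyclic subgroup of $\UU(1)$. The subgroup $U_X$ is generated jointly by the normalized eigenvalues of $S$ and those of $E$, i.e.\ $U_X = U_S \cdot U_E$. Since $U_S = C_{m_S}$ and $U_E = C_{m_E}$ are two finite cyclic subgroups of the abelian group $\UU(1)$, their product is the cyclic group $C_{\lcm(m_S, m_E)}$. Hence $m_X = \lcm(m_S, m_E)$, which is precisely the specialization of Proposition \ref{prop:SF-in-products} to this case (with $s=0$ and $n_1 = 1$).

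Next I would enumerate the admissible pairs $(m_S, m_E)$. From Table \ref{table:elliptic-curves} the order $m_E$ of a supersingular elliptic curve over $\FF_q$ lies in $\{1,3,4,6\}$ when $d$ is even and in $\{4,8,12\}$ when $d$ is odd; from Table \ref{table:ss-simple-surfaces} the order $m_S$ for a simple supersingular surface lies in $\{3,4,5,6,8,10,12\}$ when $d$ is even and in $\{2,6,8,10,12,24\}$ when $d$ is odd. Each entry of both tables also records a congruence condition on the characteristic $p$; for a given pair to be realized over some $\FF_q$, these conditions must be simultaneously satisfiable.

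The bulk of the argument is then a mechanical case split. For each formal pair $(m_S, m_E)$ I would intersect the two lists of $p$-constraints, discard pairs with empty intersection (for instance $m_S = 3$ or $m_S = 6$ forces $p \equiv 1 \pmod 3$, whereas $m_E = 3$ or $m_E = 6$ forces $p \not\equiv 1 \pmod 3$, so these combinations do not occur), and record $\lcm(m_S, m_E)$ for the surviving pairs. Collecting the results recovers the asserted sets $\{3,4,5,6,8,10,12,15,20,24,30\}$ for $d$ even and $\{4,8,12,20,24\}$ for $d$ odd. The only obstacle is ensuring that the bookkeeping is complete in both directions: every $m$ in the claimed set must arise as $\lcm(m_S,m_E)$ for some compatible pair (exhibited by a witness prime $p$ together with explicit isogeny classes for $S$ and $E$), and no additional value of $m$ slips through. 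Since the total number of formal pairs is small, this is a direct tabulation and no further theoretical input beyond Tables \ref{table:elliptic-curves} and \ref{table:ss-simple-surfaces} is needed.
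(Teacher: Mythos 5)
Your proof is correct and follows essentially the same route as the paper: identify $\delta_X = 0$ so that $\SF(X) \cong U_X$ is finite cyclic, observe that $U_X = U_S \cdot U_E$ forces $m_X = \lcm(m_S, m_E)$ (two finite subgroups of $\UU(1)$ generate a cyclic group of the $\lcm$ of their orders), and then tabulate against the $p$- and $d$-constraints in Tables \ref{table:elliptic-curves} and \ref{table:ss-simple-surfaces}. The one imprecision is the parenthetical appeal to Proposition~\ref{prop:SF-in-products} ``with $s=0$ and $n_1 = 1$'': since $S$ is a \emph{simple} surface, $X = S \times E$ is not already a power of an elliptic curve over $\FF_q$, so $n_1 \neq 1$ in general; taken literally that specialization would give $m_X = m_E$, which is wrong. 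In fact $n_1 = \lcm(m_S, m_E)$ and the inner torsion order of the resulting elliptic curve over $\FF_{q^{n_1}}$ is then $1$, which recovers the correct answer — but your direct argument via products of cyclic subgroups is cleaner and is what actually carries the proof, so this slip is cosmetic.
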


\begin{proof}
  In this case, $m = \lcm(m_S, m_E)$, since this is the degree of the
  smallest extension over which the Serre--Frobenius group becomes
  connected. The list of values for $m_E$ and $m_S$ come from
  \Cref{table:elliptic-curves} and \Cref{table:ss-simple-surfaces}.
\end{proof}

\begin{lemma}[Node X-J in \Cref{fig:proof-3}]
  If $X$ is a non-simple supersingular abelian threefold as in Case
  \ref{case:X-SS-ExExE}, then $\SF(X) \cong C_m$, for $m \in M(p,d)$,
  where
  \begin{itemize}
  \item if $d$ is even, $M(p,d) = \{1, 2, 3, 4, 6, 12\}$, and
  \item if $d$ is odd, $M(p,d) = \{4, 8, 12\}$.
  \end{itemize}
\end{lemma}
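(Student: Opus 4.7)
The plan is to apply Proposition \ref{prop:SF-in-products} to reduce the classification to a combinatorial computation of least common multiples. Because every $E_i$ is supersingular, each normalized Frobenius eigenvalue $u_i = \alpha_i/\sqrt{q}$ is a primitive $m_{E_i}$-th root of unity, so the angle group $U_X = \langle u_1, u_2, u_3 \rangle$ is a finite subgroup of $\UU(1)$ and hence cyclic. A standard fact about subgroups of $\UU(1)$ generated by roots of unity gives $|U_X| = \lcm(m_{E_1}, m_{E_2}, m_{E_3})$, so by Theorem \ref{mainthm:structure-thm} we have $\SF(X) \cong C_m$ with $m = \lcm(m_{E_1}, m_{E_2}, m_{E_3})$. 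This agrees with Proposition \ref{prop:SF-in-products} in the degenerate case where $s = 0$ and $r = 1$.

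I would then enumerate the possible values of each $m_{E_i}$ using Table \ref{table:elliptic-curves}. When $d$ is even the admissible values are $\{1,3,4,6\}$, subject to the constraints that $m_{E_i} \in \{3,6\}$ requires $p \not\equiv 1 \pmod 3$ and $m_{E_i} = 4$ requires $p \not\equiv 1 \pmod 4$. When $d$ is odd the admissible values are $\{4, 8, 12\}$, with $m_{E_i} = 8$ restricted to $p = 2$ and $m_{E_i} = 12$ restricted to $p = 3$. Since all three $E_i$ live over the same $\FF_q$, these congruence restrictions apply simultaneously across the three factors.

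A straightforward case analysis on triples $(m_{E_1}, m_{E_2}, m_{E_3})$ then produces the claimed sets of LCMs. For $d$ even, every possible LCM lies in $\{1, 3, 4, 6, 12\}$; the value $m = 12$ arises only from triples containing both an element of $\{3, 6\}$ and the element $4$, which requires both $p \not\equiv 1 \pmod 3$ and $p \not\equiv 1 \pmod 4$, satisfied for example by $p = 2$. For $d$ odd, the only possible LCMs are $\{4, 8, 12\}$, because $p = 2$ forbids $m_{E_i} = 12$ and $p = 3$ forbids $m_{E_i} = 8$, so no mixed $\lcm(8,12) = 24$ can occur.

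Finally, realizability of every listed $m$ would be verified by exhibiting explicit products: for $d$ even, take $p = 2$ for $m \in \{1, 3, 6, 12\}$ and $p = 3$ for $m = 4$; for $d$ odd, take $p = 2$ for $m \in \{4, 8\}$ and $p = 3$ for $m \in \{4, 12\}$. In each instance Theorem \ref{thm:waterhouse} guarantees the existence of supersingular elliptic curves over $\FF_q$ realizing the required individual torsion orders. The only real obstacle is keeping track of the congruence compatibility between the $m_{E_i}$, which is simplified by the observation that $p = 2$ satisfies both $p \not\equiv 1 \pmod 3$ and $p \not\equiv 1 \pmod 4$, covering all the mixed-order cases when $d$ is even.
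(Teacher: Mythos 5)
Your proof is correct and follows essentially the same route as the paper: reduce to $m = \lcm(m_{E_1}, m_{E_2}, m_{E_3})$ and then enumerate the attainable LCMs subject to the congruence constraints in Table \ref{table:elliptic-curves}. The paper invokes Proposition \ref{prop:SF-in-products} and identifies $m$ as the degree of the extension over which the $E_i$ become pairwise isogenous; your version is slightly more direct in that you compute $U_X = \langle u_1, u_2, u_3\rangle$ as a finite subgroup of $\UU(1)$ generated by roots of unity of orders $m_{E_i}$, so its order is the LCM by elementary group theory, and then apply Theorem \ref{mainthm:structure-thm}. This bypasses the more general bookkeeping of the proposition and is arguably a cleaner derivation of the same fact. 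Your added observations---that $p=2$ satisfies both $p\not\equiv 1\pmod 3$ and $p\not\equiv 1\pmod 4$, and that $8$ and $12$ cannot coexist for $d$ odd since they require $p=2$ and $p=3$ respectively, ruling out $24$---make the enumeration and realizability explicit in a way the paper leaves implicit.
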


\begin{proof}
  We know that $m$ is the degree of the extension over which all the
  elliptic curve factors $E_i$ become isogenous.  This is precisely
  the least common multiple of the $m_{E_i}$'s. From
  \Cref{table:elliptic-curves}, we can calculate the various
  possibilities for the $\lcm$'s depending on the parity of $d$.
\end{proof}

\subsection{Acknowledgements}

We would like to thank David Zureick-Brown, Kiran Kedlaya, Francesc
Fit\'{e}, Brandon Alberts, Edgar Costa, and Andrew Sutherland for
useful conversations about this paper. We thank Yuri Zarhin for
providing us with useful references, and Hendrik Lenstra for pointing
out one of the missing cases in \Cref{sec:elliptic-curves}. We would
also like to thank Everett Howe for helping us with a missing piece of
the puzzle in \Cref{thm:simple-ordinary-prime-splitting}. This project
started as part of the Rethinking Number Theory workshop in 2021. We
thank the organizers of the workshop for giving us the opportunity and
space to collaborate, and the funding sources for the workshop: AIM,
the Number Theory Foundation, and the University of Wisconsin-Eau
Claire Department of Mathematics. We are also grateful to Rachel Pries
for her guidance at the beginning of the workshop, which helped launch
this project. Finally, we thank the anonymous referee for the
elucidating and pertinent suggestions that improved the exposition and
results in the paper.

\bibliography{refs.bib}{} \bibliographystyle{chicago}

\begin{thebibliography}{}

\bibitem[\protect\citeauthoryear{Ahmadi and Shparlinski}{Ahmadi and
  Shparlinski}{2010}]{ahmadi2010shparlinski}
Ahmadi, O. and I.~E. Shparlinski (2010).
\newblock On the distribution of the number of points on algebraic curves in
  extensions of finite fields.
\newblock {\em Math. Res. Lett.\/}~{\em 17\/}(4), 689--699.

\bibitem[\protect\citeauthoryear{Arango-Pi\~neros, Bhamidipati, and
  Sankar}{Arango-Pi\~neros et~al.}{}]{Frob_dist_code}
Arango-Pi\~neros, S., D.~Bhamidipati, and S.~Sankar.
\newblock Code accompanying ``{F}robenius distributions of abelian varieties
  over finite fields''.
\newblock \url{https://github.com/sarangop1728/Frobenius-distributions-AVs-Fq}.

\bibitem[\protect\citeauthoryear{Chai, Conrad, and Oort}{Chai
  et~al.}{2014}]{Chai-conrad-oort}
Chai, C.-L., B.~Conrad, and F.~Oort (2014).
\newblock {\em Complex multiplication and lifting problems}, Volume 195 of {\em
  Mathematical Surveys and Monographs}.
\newblock American Mathematical Society, Providence, RI.

\bibitem[\protect\citeauthoryear{Chi}{Chi}{1992}]{Chi1992}
Chi, W.~C. (1992).
\newblock {$\ell$}-adic and {$\lambda$}-adic representations associated to
  abelian varieties defined over number fields.
\newblock {\em Amer. J. Math.\/}~{\em 114\/}(2), 315--353.

\bibitem[\protect\citeauthoryear{Deuring}{Deuring}{1941}]{deuring1941typen}
Deuring, M. (1941).
\newblock Die {T}ypen der {M}ultiplikatorenringe elliptischer
  {F}unktionenk\"{o}rper.
\newblock Volume~14, pp.\  197--272.

\bibitem[\protect\citeauthoryear{Diem and Naumann}{Diem and
  Naumann}{2003}]{claus_weil-restriction}
Diem, C. and N.~Naumann (2003).
\newblock On the structure of {W}eil restrictions of abelian varieties.
\newblock {\em J. Ramanujan Math. Soc.\/}~{\em 18\/}(2), 153--174.

\bibitem[\protect\citeauthoryear{Dupuy, Kedlaya, Roe, and Vincent}{Dupuy
  et~al.}{2021}]{dupuy2022LMFDB}
Dupuy, T., K.~Kedlaya, D.~Roe, and C.~Vincent ([2021] \copyright 2021).
\newblock Isogeny classes of abelian varieties over finite fields in the
  {LMFDB}.
\newblock In {\em Arithmetic geometry, number theory, and computation}, Simons
  Symp., pp.\  375--448. Springer, Cham.

\bibitem[\protect\citeauthoryear{Dupuy, Kedlaya, and Zureick-Brown}{Dupuy
  et~al.}{2024}]{dupuy2022angle}
Dupuy, T., K.~S. Kedlaya, and D.~Zureick-Brown (2024).
\newblock Angle ranks of abelian varieties.
\newblock {\em Math. Ann.\/}~{\em 389}, 169–185.

\bibitem[\protect\citeauthoryear{Fit\'{e}}{Fit\'{e}}{2015}]{fite2015equidistribution}
Fit\'{e}, F. (2015).
\newblock Equidistribution, {$L$}-functions, and {S}ato-{T}ate groups.
\newblock In {\em Trends in number theory}, Volume 649 of {\em Contemp. Math.},
  pp.\  63--88. Amer. Math. Soc., Providence, RI.

\bibitem[\protect\citeauthoryear{Fit\'{e}, Kedlaya, Rotger, and
  Sutherland}{Fit\'{e} et~al.}{2012}]{fite_kedlaya_rotger_sutherland_2012}
Fit\'{e}, F., K.~S. Kedlaya, V.~Rotger, and A.~V. Sutherland (2012).
\newblock Sato-{T}ate distributions and {G}alois endomorphism modules in genus
  2.
\newblock {\em Compos. Math.\/}~{\em 148\/}(5), 1390--1442.

\bibitem[\protect\citeauthoryear{Fité, Kedlaya, and Sutherland}{Fité
  et~al.}{2023}]{fite2023satotate}
Fité, F., K.~S. Kedlaya, and A.~V. Sutherland (2023).
\newblock Sato-{T}ate groups of abelian threefolds.

\bibitem[\protect\citeauthoryear{Haloui}{Haloui}{2010}]{haloui}
Haloui, S. (2010).
\newblock The characteristic polynomials of abelian varieties of dimensions 3
  over finite fields.
\newblock {\em J. Number Theory\/}~{\em 130\/}(12), 2745--2752.

\bibitem[\protect\citeauthoryear{Honda}{Honda}{1968}]{honda1968isogeny}
Honda, T. (1968).
\newblock Isogeny classes of abelian varieties over finite fields.
\newblock {\em J. Math. Soc. Japan\/}~{\em 20}, 83--95.

\bibitem[\protect\citeauthoryear{Howe and Zhu}{Howe and
  Zhu}{2002}]{howe2002existence}
Howe, E.~W. and H.~J. Zhu (2002).
\newblock On the existence of absolutely simple abelian varieties of a given
  dimension over an arbitrary field.
\newblock {\em J. Number Theory\/}~{\em 92\/}(1), 139--163.

\bibitem[\protect\citeauthoryear{Kraj\'{\i}\v{c}ek and
  Scanlon}{Kraj\'{\i}\v{c}ek and Scanlon}{2000}]{krajicek-scanlon-2000}
Kraj\'{\i}\v{c}ek, J. and T.~Scanlon (2000).
\newblock Combinatorics with definable sets: {E}uler characteristics and
  {G}rothendieck rings.
\newblock {\em Bull. Symbolic Logic\/}~{\em 6\/}(3), 311--330.

\bibitem[\protect\citeauthoryear{Lenstra and Zarhin}{Lenstra and
  Zarhin}{1993}]{lenstra1993}
Lenstra, Jr., H.~W. and Y.~G. Zarhin (1993).
\newblock The {T}ate conjecture for almost ordinary abelian varieties over
  finite fields.
\newblock In {\em Advances in number theory ({K}ingston, {ON}, 1991)}, Oxford
  Sci. Publ., pp.\  179--194. Oxford Univ. Press, New York.

\bibitem[\protect\citeauthoryear{{LMFDB Collaboration}}{{LMFDB
  Collaboration}}{2024}]{lmfdb}
{LMFDB Collaboration}, T. (2024).
\newblock The {L}-functions and modular forms database.
\newblock \url{https://www.lmfdb.org}.
\newblock [Online; accessed 15 March 2024].

\bibitem[\protect\citeauthoryear{Maisner and Nart}{Maisner and
  Nart}{2002}]{maisner&nart2002}
Maisner, D. and E.~Nart (2002).
\newblock Abelian surfaces over finite fields as {J}acobians.
\newblock {\em Experiment. Math.\/}~{\em 11\/}(3), 321--337.
\newblock With an appendix by Everett W. Howe.

\bibitem[\protect\citeauthoryear{Milne}{Milne}{2017}]{milne2017algebraic}
Milne, J.~S. (2017).
\newblock {\em Algebraic groups}, Volume 170 of {\em Cambridge Studies in
  Advanced Mathematics}.
\newblock Cambridge University Press, Cambridge.
\newblock The theory of group schemes of finite type over a field.

\bibitem[\protect\citeauthoryear{Morris}{Morris}{1977}]{morris}
Morris, S.~A. (1977).
\newblock {\em Pontryagin duality and the structure of locally compact abelian
  groups}.
\newblock London Mathematical Society Lecture Note Series, No. 29. Cambridge
  University Press, Cambridge-New York-Melbourne.

\bibitem[\protect\citeauthoryear{Nart and Ritzenthaler}{Nart and
  Ritzenthaler}{2008}]{nart&ritz08}
Nart, E. and C.~Ritzenthaler (2008).
\newblock Jacobians in isogeny classes of supersingular abelian threefolds in
  characteristic 2.
\newblock {\em Finite Fields Appl.\/}~{\em 14\/}(3), 676--702.

\bibitem[\protect\citeauthoryear{Oort}{Oort}{2008}]{oortAbvar}
Oort, F. (2008).
\newblock Abelian varieties over finite fields.
\newblock In {\em Higher-dimensional geometry over finite fields}, Volume~16 of
  {\em NATO Sci. Peace Secur. Ser. D Inf. Commun. Secur.}, pp.\  123--188. IOS,
  Amsterdam.

\bibitem[\protect\citeauthoryear{Pontrjagin}{Pontrjagin}{1934}]{pontrjagin}
Pontrjagin, L. (1934).
\newblock The theory of topological commutative groups.
\newblock {\em Ann. of Math. (2)\/}~{\em 35\/}(2), 361--388.

\bibitem[\protect\citeauthoryear{Rűck}{Rűck}{1990}]{ruck1990abelian}
Rűck, H.-G. (1990).
\newblock Abelian surfaces and {J}acobian varieties over finite fields.
\newblock {\em Compositio Math.\/}~{\em 76\/}(3), 351--366.

\bibitem[\protect\citeauthoryear{{Sage Developers}}{{Sage
  Developers}}{2024}]{sagemath}
{Sage Developers}, T. (2024).
\newblock {S}agemath, the {S}age {M}athematics {S}oftware {S}ystem ({V}ersion
  9.5).
\newblock \url{https://www.sagemath.org}.

\bibitem[\protect\citeauthoryear{Serre}{Serre}{1998}]{serre1997abelian}
Serre, J.-P. (1998).
\newblock {\em Abelian {$\ell$}-adic representations and elliptic curves},
  Volume~7 of {\em Research Notes in Mathematics}.
\newblock A K Peters, Ltd., Wellesley, MA.
\newblock With the collaboration of Willem Kuyk and John Labute, Revised
  reprint of the 1968 original.

\bibitem[\protect\citeauthoryear{Serre}{Serre}{2013}]{serre_oeuvres}
Serre, J.-P. (2013).
\newblock {\em Oeuvres/{C}ollected papers. {IV}. 1985--1998}.
\newblock Springer Collected Works in Mathematics. Springer, Heidelberg.
\newblock Reprint of the 2000 edition [MR1730973].

\bibitem[\protect\citeauthoryear{Serre}{Serre}{2020}]{serre2020rational}
Serre, J.-P. ([2020] \copyright 2020).
\newblock {\em Rational points on curves over finite fields}, Volume~18 of {\em
  Documents Math\'{e}matiques (Paris) [Mathematical Documents (Paris)]}.
\newblock Soci\'{e}t\'{e} Math\'{e}matique de France, Paris.
\newblock With contributions by Everett Howe, Joseph Oesterl\'{e} and
  Christophe Ritzenthaler.

\bibitem[\protect\citeauthoryear{Singh, McGuire, and Zaytsev}{Singh
  et~al.}{2014}]{Singh&McGuire&Zaytsev2014}
Singh, V., G.~McGuire, and A.~Zaytsev (2014).
\newblock Classification of characteristic polynomials of simple supersingular
  abelian varieties over finite fields.
\newblock {\em Funct. Approx. Comment. Math.\/}~{\em 51\/}(2), 415--436.

\bibitem[\protect\citeauthoryear{Sutherland}{Sutherland}{2019}]{sutherland2013sato}
Sutherland, A.~V. ([2019] \copyright 2019).
\newblock Sato-{T}ate distributions.
\newblock In {\em Analytic methods in arithmetic geometry}, Volume 740 of {\em
  Contemp. Math.}, pp.\  197--248. Amer. Math. Soc., [Providence], RI.

\bibitem[\protect\citeauthoryear{Tate}{Tate}{1966}]{tate1966endomorphisms}
Tate, J. (1966).
\newblock Endomorphisms of abelian varieties over finite fields.
\newblock {\em Invent. Math.\/}~{\em 2}, 134--144.

\bibitem[\protect\citeauthoryear{Tate}{Tate}{1971}]{tate1971classes}
Tate, J. (1971).
\newblock Classes d'isog\'{e}nie des vari\'{e}t\'{e}s ab\'{e}liennes sur un
  corps fini (d'apr\`es {T}. {H}onda).
\newblock In {\em S\'{e}minaire {B}ourbaki. {V}ol. 1968/69: {E}xpos\'{e}s
  347--363}, Volume 175 of {\em Lecture Notes in Math.}, pp.\  Exp. No. 352,
  95--110. Springer, Berlin.

\bibitem[\protect\citeauthoryear{Waterhouse}{Waterhouse}{1969}]{waterhouse1969abelian}
Waterhouse, W.~C. (1969).
\newblock Abelian varieties over finite fields.
\newblock {\em Ann. Sci. \'{E}cole Norm. Sup. (4)\/}~{\em 2}, 521--560.

\bibitem[\protect\citeauthoryear{Weil}{Weil}{1949}]{weil_numberofsols}
Weil, A. (1949).
\newblock Numbers of solutions of equations in finite fields.
\newblock {\em Bull. Amer. Math. Soc.\/}~{\em 55}, 497--508.

\bibitem[\protect\citeauthoryear{Xing}{Xing}{1994}]{xing94}
Xing, C. (1994).
\newblock The characteristic polynomials of abelian varieties of dimensions
  three and four over finite fields.
\newblock {\em Sci. China Ser. A\/}~{\em 37\/}(2), 147--150.

\bibitem[\protect\citeauthoryear{Xing}{Xing}{1996}]{xing96}
Xing, C. (1996).
\newblock On supersingular abelian varieties of dimension two over finite
  fields.
\newblock {\em Finite Fields Appl.\/}~{\em 2\/}(4), 407--421.

\bibitem[\protect\citeauthoryear{Zarhin}{Zarhin}{1991}]{Zarhin1991K3}
Zarhin, Y.~G. (1991).
\newblock Abelian varieties of {$K3$} type and {$\ell$}-adic representations.
\newblock In {\em Algebraic geometry and analytic geometry ({T}okyo, 1990)},
  ICM-90 Satell. Conf. Proc., pp.\  231--255. Springer, Tokyo.

\bibitem[\protect\citeauthoryear{Zarhin}{Zarhin}{1993}]{Zarhin1990K3}
Zarhin, Y.~G. (1993).
\newblock Abelian varieties of {$K3$} type.
\newblock In {\em S\'{e}minaire de {T}h\'{e}orie des {N}ombres, {P}aris,
  1990--91}, Volume 108 of {\em Progr. Math.}, pp.\  263--279. Birkh\"{a}user
  Boston, Boston, MA.

\bibitem[\protect\citeauthoryear{Zarhin}{Zarhin}{1994}]{Zarhin1994-tate-nonsimple}
Zarhin, Y.~G. (1994).
\newblock The {T}ate conjecture for nonsimple abelian varieties over finite
  fields.
\newblock In {\em Algebra and number theory ({E}ssen, 1992)}, pp.\  267--296.
  de Gruyter, Berlin.

\bibitem[\protect\citeauthoryear{Zarhin}{Zarhin}{2015}]{Zarhin2015EigenFrob}
Zarhin, Y.~G. (2015).
\newblock Eigenvalues of {F}robenius endomorphisms of abelian varieties of low
  dimension.
\newblock {\em J. Pure Appl. Algebra\/}~{\em 219\/}(6), 2076--2098.

\bibitem[\protect\citeauthoryear{Zhu}{Zhu}{2001}]{zhu2001supersingular}
Zhu, H.~J. (2001).
\newblock Supersingular abelian varieties over finite fields.
\newblock {\em J. Number Theory\/}~{\em 86\/}(1), 61--77.

\bibitem[\protect\citeauthoryear{Zywina}{Zywina}{2022}]{zywina-monodromy}
Zywina, D. (2022).
\newblock Determining monodromy groups of abelian varieties.
\newblock {\em Res. Number Theory\/}~{\em 8\/}(4), Paper No. 89, 53.

\end{thebibliography}
\end{document}